\documentclass[12pt]{amsart}
\usepackage{anysize}
\marginsize{2.7cm}{2.44cm}{2.0cm}{3.0cm}
\usepackage[small,it]{caption}
\usepackage[OT2,T1]{fontenc}
\DeclareSymbolFont{cyrletters}{OT2}{wncyr}{m}{n}
\DeclareMathSymbol{\Sha}{\mathalpha}{cyrletters}{"58}
 \usepackage[latin1]{inputenc}
 \usepackage[dvips]{graphicx}
 \usepackage{wrapfig}
 \usepackage{amsmath}
 \usepackage{amsthm}
 \usepackage{amsfonts}
 \usepackage{amssymb}
 \usepackage{layout} \usepackage{verbatim}
 \usepackage{alltt}
\usepackage{yfonts}

\usepackage[all]{xy}
\usepackage{setspace}

\newtheorem*{thma}{Theorem A}
\newtheorem*{thmb}{Theorem B}
\newtheorem*{thmc}{Theorem C}

\newcommand{\LL}{\Lambda}
\newcommand{\TT}{\mathbb{T}}
\newcommand{\QQ}{\mathbb{Q}}
\newcommand{\FF}{\mathcal{F}}

\newcommand{\lra}{\longrightarrow}
\newcommand{\ZZ}{\mathbb{Z}}
\newcommand{\PP}{\mathcal{P}}

\newcommand{\KKK}{\mathcal{K}}

\newcommand{\Gal}{\textup{Gal}}
\newcommand{\KS}{\textbf{\textup{KS}}}
\newcommand{\ES}{\textbf{\textup{ES}}}
\newcommand{\NN}{\mathcal{N}}
\newcommand{\ra}{\rightarrow}

\newcommand{\be}{\begin{equation}}
\newcommand{\ee}{\end{equation}}

\newcommand{\al}{\mathcal{L}}

\newcommand{\oo}{\mathcal{O}}

\newcommand{\FFc}{\mathcal{F}_{\textup{\lowercase{can}}}}

\newcommand{\ooo}{\frak{O}}
\newcommand{\cyc}{\textup{cyc}}

\newcommand{\locu}{\textup{loc}_{/V}}
\newcommand{\charr}{\textup{char}}

\numberwithin{equation}{section}
\newtheorem{thm}{Theorem}[section]
\newtheorem{lemma}[thm]{Lemma}
\newenvironment{define}{\par\medskip\noindent\refstepcounter{thm}
\bgroup{\hspace*{-0.15 cm}\bf{Definition}
\thethm.}\bgroup}{\egroup \egroup\par\medskip}
\newtheorem{prop}[thm]{Proposition}
\newtheorem{cor}[thm]{Corollary}
\newtheorem{conj}[thm]{Conjecture}
\newenvironment{rem}{\par\medskip\noindent\refstepcounter{thm}
\bgroup{\hspace*{-0.15 cm}\bf{Remark} \thethm.}\bgroup}{\egroup
\egroup\par\medskip} \parskip 2pt

\newenvironment{example}{\par\medskip\noindent\refstepcounter{thm}
\bgroup{\hspace*{-0.15 cm}\bf{Example}
\thethm.}\bgroup}{\egroup \egroup\par\medskip}



\newcounter{Athm}[section]\setcounter{Athm}{1}

\renewcommand{\theAthm} {\arabic{Athm}}

\long\def\symbolfootnote[#1]#2{\begingroup%
\def\thefootnote{\fnsymbol{footnote}}\footnote[#1]{#2}\endgroup}

\begin{document}
\title{O\lowercase{n the }I\lowercase{wasawa theory of} CM \lowercase{fields for supersingular primes}}

\author{K\^az\i m B\"uy\"ukboduk}

\email{kbuyukboduk@ku.edu.tr}
\address{K\^az\i m B\"uy\"ukboduk \hfill\break\indent Ko\c{c} University Mathematics, 34450 Sariyer, Istanbul, Turkey
\hfill\break\indent}

\keywords{Euler systems, Kolyvagin systems, Deformations of Kolyvagin systems, Equivariant Tamagawa Number Conjecture}
\keywords{Iwasawa Theory, Iwasawa's main conjectures, Rubin-Stark elements}
\subjclass[2000]{11G05; 11G07; 11G40; 11R23; 14G10}

\begin{abstract}
The goal of this article is two-fold: First, to prove a (two-variable) main conjecture for a CM field $F$ without assuming the $p$-ordinary hypothesis of Katz, making use of what we call the Rubin-Stark $\al$-restricted Kolyvagin systems which is constructed out of the conjectural Rubin-Stark Euler system of rank $g$. (We are also able to obtain weaker unconditional results in this direction.) Second objective is to prove the Park-Shahabi plus/minus main conjecture for a CM elliptic curve $E$ defined over a general totally real field again using (a twist of the) Rubin-Stark Kolyvagin system. This latter result has consequences towards the Birch and Swinnerton-Dyer conjecture for $E$. 
\end{abstract}

\maketitle
\tableofcontents
\section{Introduction}
\label{sec:intro}
Let $F$ be a CM field and suppose $[F:\QQ]=2g$. In the particular case when $F$ is an imaginary quadratic field, the main conjectures of Iwasawa theory over $F$ has been settled in \cite{rubinmainconj} using elliptic units. For a general CM field $F$, all major work related to Iwasawa's main conjecture utilized congruences of modular forms (and have relied on the CM-form method in \cite{ht93,ht94} or the Eisenstein ideal technique in \cite{mainardi,hsiehCMmainconj}) as the main tool. That approach required that the following $p$-ordinary condition (\ref{eqn:pord}) of Katz holds true. Fix an embedding $\iota_p: \overline{\QQ}\hookrightarrow \overline{\QQ}_p$.
 \begin{align}\label{eqn:pord}& \textup{ There exists a CM-type } \Sigma \textup{ such that the embeddings } \Sigma_p:=\{\iota_p\circ \sigma\}_{\sigma \in \Sigma}  \\
 \notag &\textup{ induce exactly half of the places of } F \textup{ over } p.\end{align}
Let $\widetilde{F}_\infty$ denote the maximal $\ZZ_p$-power extension of $F$ and set $\widetilde{\Gamma}=\textup{Gal}(\widetilde{F}_\infty)/F$. Let $\widetilde{\LL}=\mathcal{W}[[\widetilde{\Gamma}]]$, where $\mathcal{W}$ is the valuation ring of $\widehat{\overline{\QQ}}_p$. Assuming (\ref{eqn:pord}), the relevant Iwasawa module $\widetilde{\frak{X}}_\Sigma$ is $\widetilde{\LL}$-torsion and Katz in~\cite{katz78} has constructed a $p$-adic $L$-function $\al_\Sigma \in \widetilde{\LL}$. In this case, Hsieh in \cite{hsiehCMmainconj} proved that the characteristic ideal of $\widetilde{\frak{X}}_\Sigma$ is generated by $\al_\Sigma$ under suitable hypothesis, thereby proving the Iwasawa main conjecture for $F$. The author has also obtained results along these lines in~\cite{kbbCMabvar} using the conjectural Rubin-Stark elements. The approach in loc.cit. is based on a refinement of the rank-$g$ Euler/Kolyvagin system machinery and  relies crucially on the assumption (\ref{eqn:pord}) for an analysis of the local cohomology groups above $p$.

All these techniques towards the proof of main conjectures for a general CM field $F$ alluded to above fall apart when the $p$-ordinary condition (\ref{eqn:pord}) fails. One difficulty is that in the absence of (\ref{eqn:pord}), neither the relevant Iwasawa module is $\widetilde{\LL}$-torsion nor we have a $p$-adic $L$-function available in this set up (in any case, it is not expected to belong to $\widetilde{\LL}$). Beyond the case $g=1$,  nothing substantial along these lines was known; when $g=1$ Rubin has proved the two-variable main conjecture in \cite{rubinmainconj}. Furthermore (still when $g=1$), if $A/\QQ$ is an elliptic curve that has CM by the ring of integers $\oo_F$ of $F$, Kobayashi in~\cite{kobayashi03} formulated a pair of conjectures which are both equivalent to the {cyclotomic} main conjectures of Perrin-Riou and Kato \cite{pr93grenoble,ka1} for $A$. Pollack and Rubin in~\cite{pollackrubin} proved Kobayashi's conjectures using Rubin's proof of the two-variable main conjecture in~\cite{rubinmainconj} and incorporating Kobayashi's theory of plus/minus Selmer groups with the elliptic unit Euler system.

The goal of this article is to appropriately modify and extend the methods of \cite{kbbCMabvar} so as to prove (conditional on some standard conjectures)
\begin{itemize}
\item a two-variable main conjecture for a general CM field $F$ in the absence of the hypothesis \eqref{eqn:pord} using the (conjectural) Rubin-Stark elements (this is Theorem A below);
\item a divisibility in the (one-variable) \emph{cyclotomic} main conjecture for a $p$-supersingular CM elliptic curve defined over a general totally real field (this is Theorem B below);
\item prove that the divisibility in the previous item may be upgraded to an equality using the structure of the module of $\LL$-adic Kolyvagin systems, as described in \cite{kbbdeform} (we provide a detailed account of this in Section~\ref{Sec:KSforGmandE} below).\end{itemize}

\subsection*{Notation}
Before we explain our results in greater detail, we set some notation. Let $E$ be an elliptic curve defined over a totally real field $F^+$, which has CM by an order $\oo$ of an imaginary quadratic field $K$. Let $g:=[F^+:K]$ and let $F=F^+K$ be the composite CM field. Fix once and for all an odd prime $p$ that is coprime to the index $[\oo_K:\oo]$  of $\oo$ inside the maximal order $\oo_K$ and which is inert in $K/\QQ$. We denote the unique prime of $K$ above $p$ also by $p$ and we denote the completion $K_p$ by $\Phi$. We let $\ooo$ denote the ring of integers of $\Phi$.

 Let $K_\infty$ denote the unique $\ZZ_p^2$-extension of $K$ and $K^{\textup{cyc}}$ the cyclotomic $\ZZ_p$-extension. Let $F_\infty=FK_\infty$ and $F^{\textup{cyc}}=FK^{\textup{cyc}}$. Let $\Gamma=\textup{Gal}(F_\infty/F)$ and $\Gamma_{\textup{cyc}}=\textup{Gal}(F^\textup{cyc}/F)$. We define the two-variable (resp., one-variable) Iwasawa algebra $\LL:=\ooo[[\Gamma]]$ (resp., $\LL_{\textup{cyc}}:=\ooo[[\Gamma_\textup{cyc}]]$). For a Dirichlet character $\chi:\textup{Gal}(\overline{F}/F)\ra \ooo^\times$, let $L$ denote the extension of $F$ cut by $\chi$ and let $\frak{U}$ denote the inverse limit of the $\chi$-isotypic part of the local units up the tower of finite extensions contained in $LF_\infty/L$. Let $\mathcal{Q}$ denote a certain quotient of $\frak{U}$ (see Definition~\ref{define:calucyc}) and let $\LL\cdot\locu(\varepsilon_{F_\infty}^\chi)$ denote the submodule of $\wedge^g \,\mathcal{Q}$ generated by the image of the tower of Rubin-Stark elements (defined as in Definition~\ref{def:rubinstarktowerlocal}). Let $\widehat{X}_\infty$ be a certain Iwasawa module (denoted by $H^1_{\FF_{\frak{tr}}^*}(F,\TT^*)^\vee$ in the main text, which is given as in Definition~\ref{def:selmerstructure}). \emph{Assume the truth of Rubin-Stark conjectures and Leopoldt's conjecture for Theorems A, B and C below.} See Remarks~\ref{rem:removeRSconjhypo} and \ref{rem:unconditional} below for the portion of the results in this article that we are able to prove unconditionally.
\subsection*{Statements of the results}
The first main result in this article is the (two-variable\footnote{Since our sights are mainly set on the proof of the cyclotomic main conjecture for CM elliptic curves over $F^+$ (that is, Theorem B below), we contend ourselves to prove only a two-variable supersingular main conjecture over $F$. However, the methods of this article seem flexible enough to treat the more general case and prove a more general main conjecture (e.g., over the maximal $\ZZ_p$-power extension $\widetilde{F}_{\infty}/F$).}) Iwasawa main conjecture for $F_\infty/F$. Let $\charr(M)$ denote the characteristic ideal of a finitely generated torsion $\LL$-module.

\begin{thma}[See Theorem~\ref{thm:mainconjforTchi} and \ref{thm:mainconjdivisibility}] The $\LL$-module $\widehat{X}_\infty$ is torsion and 
$\textup{char}(\widehat{X}_\infty)$ divides $\textup{char} \left(\wedge^g \,\mathcal{Q}/\LL\cdot  \locu\left(\varepsilon_{F_\infty}^\chi \right)\right)$. These two ideals are equal if we further assume a strong version of the Rubin-Stark Conjecture (Conjecture~\ref{conj:strongRS} below).
\end{thma}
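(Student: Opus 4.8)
The plan is to derive Theorem~A from the structure theory of $\LL$-adic Kolyvagin systems of \cite{kbbdeform}, applied to the twisted Iwasawa-theoretic representation $\TT$ equipped with the Selmer structure $\FF_{\frak{tr}}$ of Definition~\ref{def:selmerstructure}, whose dual Selmer group is $\widehat{X}_\infty=H^1_{\FF_{\frak{tr}}^*}(F,\TT^*)^\vee$. The first step is the algebraic reduction: once one exhibits a nonzero $\LL$-adic Kolyvagin system $\pmb\kappa$ for $(\TT,\FF_{\frak{tr}})$, and provided this Selmer structure has core rank one, it follows formally that $\widehat{X}_\infty$ is a torsion $\LL$-module and that $\charr(\widehat{X}_\infty)$ divides the index ideal $\textup{Ind}(\pmb\kappa)$ carved out by the Kolyvagin class of trivial conductor; if in addition $\pmb\kappa$ is \emph{primitive} (its reduction modulo every height-one prime of $\LL$ is nonzero), the structure theorem upgrades this divisibility to an equality. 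So the proof reduces to three tasks: (i)~produce such a $\pmb\kappa$; (ii)~identify $\textup{Ind}(\pmb\kappa)$ with $\charr\!\left(\wedge^g\mathcal{Q}/\LL\cdot\locu(\varepsilon_{F_\infty}^\chi)\right)$; and (iii)~establish primitivity of $\pmb\kappa$ from the strong Rubin-Stark Conjecture~\ref{conj:strongRS}.

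For (i), I would begin with the conjectural Rubin-Stark Euler system of rank $g$ attached to $\chi$ over the tower of abelian extensions inside $LF_\infty/L$, whose classes live in $\wedge^g$ of global semilocal units; its nonvanishing is guaranteed by the Rubin-Stark Conjecture together with Leopoldt's conjecture. Since the $p$-ordinary hypothesis \eqref{eqn:pord} is unavailable, the naive local condition at the primes above $p$ fails to produce a torsion module; the remedy---the $\al$-restriction---is to replace it by the local condition carved out by the submodule $\LL\cdot\locu(\varepsilon_{F_\infty}^\chi)\subseteq\wedge^g\mathcal{Q}$ generated by the image of the Rubin-Stark tower in $\wedge^g\mathcal{Q}$ (Definitions~\ref{define:calucyc} and \ref{def:rubinstarktowerlocal}), and this restriction is designed precisely so that $\FF_{\frak{tr}}$ has core rank one. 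Feeding the Rubin-Stark Euler system into the $\LL$-adic refinement of the rank-$g$ Euler-system-to-Kolyvagin-system machinery (as in \cite{kbbCMabvar}, suitably modified at $p$) then yields the desired $\pmb\kappa$.

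For (ii), the identification of the index ideal with $\charr(\wedge^g\mathcal{Q}/\LL\cdot\locu(\varepsilon_{F_\infty}^\chi))$ is a Poitou--Tate global-duality computation: by construction the transverse Selmer group $H^1_{\FF_{\frak{tr}}}(F,\TT)$ is controlled by the quotient $\mathcal{Q}$ of local units, and the Kolyvagin class of trivial conductor localizes at $p$ to a generator of $\LL\cdot\locu(\varepsilon_{F_\infty}^\chi)$, so that the cokernel $\wedge^g\mathcal{Q}/\LL\cdot\locu(\varepsilon_{F_\infty}^\chi)$ computes $\textup{Ind}(\pmb\kappa)$. This is the step where I expect the real work to lie. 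One must verify that $\mathcal{Q}$ is exactly the right quotient for $\FF_{\frak{tr}}$ to be of core rank one, and carefully carry the rank-$g$ exterior-algebra bookkeeping through the Euler-system-to-Kolyvagin-system descent; the genuine subtlety is the local analysis at the supersingular primes above $p$, which in the ordinary setting of \cite{kbbCMabvar} was handled by the splitting afforded by \eqref{eqn:pord} and must here be supplied entirely by the $\al$-restriction, in the spirit of Kobayashi's plus/minus theory and \cite{pollackrubin}.

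Finally, for (iii), Conjecture~\ref{conj:strongRS} is exactly the input that forces the Rubin-Stark element $\varepsilon_{F_\infty}^\chi$ to be primitive, i.e. to generate a submodule of $\wedge^g\mathcal{Q}$ not contained in $\frak{p}\cdot\wedge^g\mathcal{Q}$ for any height-one prime $\frak{p}$ of $\LL$; primitivity then propagates through the construction of $\pmb\kappa$, and the structure theorem for primitive $\LL$-adic Kolyvagin systems converts the divisibility of (i) into the asserted equality $\charr(\widehat{X}_\infty)=\charr\!\left(\wedge^g\mathcal{Q}/\LL\cdot\locu(\varepsilon_{F_\infty}^\chi)\right)$.
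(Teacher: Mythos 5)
Your overall skeleton for the torsion statement and the divisibility is indeed the paper's: feed the rank-$g$ Rubin--Stark Euler system into the rank-reduction machinery to get a rank-one $\LL$-adic Kolyvagin system with non-vanishing initial term (non-vanishing via Leopoldt), and use Poitou--Tate duality to express the resulting bound as $\charr\left(\wedge^g\mathcal{Q}/\LL\cdot\locu(\varepsilon_{F_\infty}^\chi)\right)$. But your core-rank bookkeeping is off in a way that matters for how the argument is actually organized. The transversal structure $\FF_{\frak{tr}}$ has core rank \emph{zero} (this is the content of Lemma~\ref{lem:comparefirststep} for $\FF=\FF_{\frak{tr}}$), so one does not construct a Kolyvagin system for $(\TT,\FF_{\frak{tr}})$ at all; the Kolyvagin system $\pmb{\kappa}^{\textup{R-S}}$ lives on the auxiliary $\mathcal{L}$-restricted structure $\FF_{\mathcal{L}}$ (local condition $\mathcal{V}\oplus\al$ at $p$), which is the one of core rank one, and $\al$ is an abstractly chosen rank-one summand (Definition~\ref{define:calucyc}), not the line cut out by the Rubin--Stark images --- rather, $\Psi$ in Theorem~\ref{thm:ellrestrictedESmain} is chosen to carry $\wedge^g\mathcal{Q}$ isomorphically onto $\al$ and $\locu(\varepsilon_{F_\infty}^\chi)$ onto $\locu(c^{\chi}_{F_\infty,\Psi})$. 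The bound on $\widehat{X}_\infty=H^1_{\FF_{\frak{tr}}^*}(F,\TT^*)^\vee$ is then obtained from the bound on $H^1_{\FF_{\mathcal{L}}^*}(F,\TT^*)^\vee$ through the four-term exact sequence of Proposition~\ref{prop:4termexact}, whose local term $\al/\LL\cdot\locu(c^{\chi}_{F_\infty,\Psi})$ is exactly what becomes $\wedge^g\mathcal{Q}/\LL\cdot\locu(\varepsilon_{F_\infty}^\chi)$. This is repairable, but as written your step (ii) attributes to $\FF_{\frak{tr}}$ a Kolyvagin-system theory it does not have.

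The genuine gap is step (iii). Conjecture~\ref{conj:strongRS} is purely an interpolation statement: it asserts the existence of an element $\frak{S}_\infty\in\wedge^g H^1(F,\TT)$ whose images at the finite layers are the Rubin--Stark elements; it asserts nothing about $\frak{S}_\infty$ (or its image in $\wedge^g\mathcal{Q}$) lying outside $\frak{p}\cdot\wedge^g\mathcal{Q}$ for height-one primes $\frak{p}$, so it does not ``force primitivity'' of anything. In the paper the equality is anchored in the analytic class number formula: at the ground level one proves $\#\,\textup{Cl}(L)^\chi=[\wedge^g\oo_L^{\times,\chi}:\ooo\cdot\varepsilon_F^\chi]$ (Theorem~\ref{thm:gras1}(i)), which shows the descended Kolyvagin system over $\ooo$ is primitive; primitivity then propagates upward by the structure theorem for the module of Kolyvagin systems (Theorem~\ref{thm:lamdaadicKS}(ii)), giving the equality over $\LL_{\cyc}$ via Theorem~\ref{thm:mainapplicationofrestrictedKS}(iii), and the two-variable equality is deduced from the cyclotomic one by the specialization lemmas (Lemmas~\ref{lem:reductionstep1}, \ref{lem:reductionstep2} and \ref{lem:equalitymodAimpliesequality}) --- note that the primitivity-implies-equality criterion is only stated over $\ooo$ and $\LL_{\cyc}$, not directly over $\LL$, so this descent step cannot be skipped. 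The role of the Strong Rubin--Stark Conjecture is to supply the coherent elements $\frak{S}_\infty$ and $\frak{S}_{\cyc}$ to which these comparisons can be applied. As proposed, your step (iii) omits the class number formula input entirely and therefore has no mechanism to produce the asserted equality.
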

This statement was proved by Rubin~\cite[\S11]{rubinmainconj} when $F^+=\QQ$, using elliptic elliptic units. To obtain the generalization above we make use of the Rubin-Stark elements. To do so, the CM \emph{rank-$g$ Euler/Kolyvagin system machinery} developed by the author in \cite{kbbCMabvar} (relying crucially on the $p$-ordinary hypothesis (\ref{eqn:pord})) requires a non-trivial refinement. 
This is one of the major tasks we carry out in this article.

For the rest of our results, we assume that the prime $p$ splits completely in $F^+/\QQ$. This assumption could be removed (but allowing also only weaker results); see Remark~\ref{rem:lei} below. Thanks to this assumption we may adopt the (local) methods of Kobayashi \cite{kobayashi03} and define the signed Selmer groups $\textup{Sel}^\pm_p(E/F^{\textup{cyc}})$. In this situation, we are lead to formulate a (conjectural) explicit reciprocity law for the Rubin-Stark elements; see Conjecture~\ref{conj:reciprocity}. This conjecture on one hand proposes a natural extension of the Coates-Wiles explicit reciprocity law and on the other, it furnishes us with a link between the tower of Rubin-Stark elements and the Park-Shahabi signed $p$-adic $L$-functions $L_p^\pm(E/F^+)$. 

Theorem B gives a proof of the cyclotomic main conjecture for $E$ for a supersingular prime $p$ under our running assumptions. 

 \begin{thmb}[Theorem~\ref{thm:improvedmainconj}]
Assuming the Explicit Reciprocity Conjecture~\ref{conj:reciprocity} for Rubin-Stark elements, the divisibility 
$$\textup{char}\left(\textup{Sel}^\pm(E/F^{\textup{cyc}})^\vee\right)\mid L_p^\pm(E/F^+)\,\LL_{\textup{cyc}}$$
in the signed main conjecture holds true, with equality if we assumed a strong version of the Rubin-Stark Conjecture (Conjecture~\ref{conj:strongRS} below).
 \end{thmb}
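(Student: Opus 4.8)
The aim is to deduce the signed cyclotomic main conjecture from the two-variable main conjecture of Theorem~A by descent along the cyclotomic line, using the CM factorisation of $T_pE$ over $F$ and Kobayashi's local theory at the inert supersingular prime $p$. Since $E$ has CM by $\oo_K$ and $p$ is inert in $K/\QQ$, after base change to $F$ the $\ooo$-module $T_pE\otimes\ooo$ splits into a sum of two characters of $G_F$ interchanged by $\Gal(F/F^+)$, and the cyclotomic Iwasawa theory of $E/F^+$ is recovered — via Shapiro's lemma for the extension $F/F^+$ and passage to the cyclotomic quotient $\Gamma\twoheadrightarrow\Gamma_{\textup{cyc}}$ — from the two-variable Iwasawa theory over $F$ of the $\LL$-adic module $\TT^\chi$ occurring in Theorem~A, with $\chi$ the finite-order character read off the CM Hecke character of $E$ (suitably twisted). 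Concretely, I would identify the cyclotomic dual Selmer module with the $\Gal(F_\infty/F^{\textup{cyc}})$-coinvariants of $\widehat{X}_\infty$ up to a controlled local correction at $p$.

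\emph{Matching the local conditions at $p$.} Away from $p$, the Selmer group $\textup{Sel}^\pm(E/F^{\textup{cyc}})$ and the descent of the Selmer group defining $\widehat{X}_\infty$ impose the same conditions; above $p$ the former uses Kobayashi's $\pm$-norm conditions, the latter the transverse condition $\FF_{\frak{tr}}^*$. Using Kobayashi's description of the $\pm$-subgroups together with the $\pm$-Coleman maps, local Tate duality and the local Euler-characteristic formula at $p$ — and the vanishing of the relevant $0$-th cohomology groups — I would produce an exact sequence relating $\textup{Sel}^\pm(E/F^{\textup{cyc}})^\vee$ to the coinvariants of $\widehat{X}_\infty$ whose remaining terms are local $\LL_{\textup{cyc}}$-modules with trivial characteristic ideal, or ones that are matched on the analytic side by the $\log^\pm$-normalisation built into $L_p^\pm$.

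\emph{Explicit reciprocity and conclusion.} Theorem~A supplies the two-variable divisibility $\charr(\widehat{X}_\infty)\mid\charr\!\left(\wedge^g\mathcal{Q}/\LL\cdot\locu(\varepsilon_{F_\infty}^\chi)\right)$. I would specialise it to the cyclotomic line — legitimate by the standard control argument, whose hypotheses (no nonzero pseudo-null submodule of $\widehat{X}_\infty$, vanishing of the relevant invariants) follow from the torsion-freeness built into the $\LL$-adic Kolyvagin system construction — and then apply the Explicit Reciprocity Conjecture~\ref{conj:reciprocity}, which identifies the cyclotomic projection of $\locu(\varepsilon_{F_\infty}^\chi)$, decomposed into its $\pm$-parts via the $\pm$-Coleman maps, with the Park-Shahabi signed $p$-adic $L$-functions $L_p^\pm(E/F^+)$ up to units. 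Combining this with the local comparison above yields
$$\charr\!\left(\textup{Sel}^\pm(E/F^{\textup{cyc}})^\vee\right)\mid L_p^\pm(E/F^+)\,\LL_{\textup{cyc}}.$$
Because every intermediate step is an exact identification and not a mere inequality, substituting the equality half of Theorem~A — available under the strong Rubin-Stark Conjecture~\ref{conj:strongRS} — for its divisibility half upgrades the conclusion to an equality, which is the signed main conjecture.

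\emph{Main obstacle.} The crux is the supersingular descent in the step matching the local conditions at $p$: since no sub-representation of $T_pE$ realises the local condition, matching Kobayashi's $\pm$-conditions against the two-variable transverse condition forces one to control the local cohomology of $\TT^\chi$ at $p$ all along the cyclotomic line, to compute the cokernels of the $\pm$-Coleman maps, and to check that the descent contributes no parasitic factor to the characteristic ideal — and then to verify that this $p$-local bookkeeping is compatible with the explicit reciprocity law, so that exactly the two signed branches survive and reproduce $L_p^+$ and $L_p^-$. The control-theorem input required to specialise characteristic ideals from two variables down to one is a second, more routine, technicality.
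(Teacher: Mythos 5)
Your proposal takes a genuinely different route from the paper, and the difference matters. You propose to descend the two-variable divisibility of Theorem~A (which involves $\widehat{X}_\infty=H^1_{\FF_{\frak{tr}}^*}(F,\TT^*)^\vee$) along the cyclotomic line and then match the specialised transverse local condition with Kobayashi's $\pm$-conditions. This is the Pollack--Rubin strategy. However, the paper \emph{explicitly declines} to do this: the introduction states that the author does ``not descent from the two-variable main conjecture in order to deduce Theorem~B (as done so in \cite{pollackrubin}) but instead, we rely further on the author's results on the structure of the module of $\LL$-adic Kolyvagin systems. This alternative approach has the advantage that we need not worry about pseudo-null submodules of various Iwasawa modules.'' The actual proof of Theorem~\ref{thm:improvedmainconj} runs the one-variable Kolyvagin system machinery \emph{directly over} $\LL_\cyc$: it applies Theorem~\ref{thm:mainapplicationofrestrictedKS}(ii) to the twisted Kolyvagin system $\pmb{\kappa}^{\cyc}(\textup{R-S})\in\overline{\mathbf{KS}}(\TT_\cyc(E),\FF_{\mathbb{L}},\PP)$ with the Selmer structure $\FF_\epsilon$, identifies $H^1_{\FF_\epsilon^*}(F,\TT_\cyc(E)^*)^\vee$ with $\textup{Sel}^\pm(E/F^\cyc)^\vee$ via Lemma~\ref{lemma:comparepmselmergroups}, and invokes Theorem~\ref{thm:localmainconj} to convert the local cyclic quotient into $L_p^\pm(E/F^+)$. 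The equality case is obtained via the primitivity criterion of Theorem~\ref{thm:mainapplicationofrestrictedKS}(iii) and Proposition~\ref{prop:usefulforequality}, not by descending an equality from two variables.

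The genuine gap in your proposal is precisely the step you describe as a ``more routine technicality'': passing the characteristic-ideal divisibility from $\LL$ to $\LL_\cyc$ requires that $\widehat{X}_\infty$ (or, after the local modification, $\mathcal{Y}_{\epsilon,\infty}$) have no nonzero pseudo-null $\LL$-submodule supported on the kernel of $\LL\twoheadrightarrow\LL_\cyc$, and additionally that the specialisation $\pi_\cyc(\charr_\LL\widehat{X}_\infty)$ actually equals $\charr_{\LL_\cyc}$ of the coinvariants. You assert this ``follows from the torsion-freeness built into the $\LL$-adic Kolyvagin system construction,'' but torsion-freeness of the Kolyvagin-system module says nothing directly about pseudo-null submodules of the dual Selmer module, and no such structural statement is established in the paper for $\widehat{X}_\infty$. (Where pseudo-null control \emph{is} needed in the paper — in the proof of Theorem~\ref{thm:bsd}, where one descends from $F^\cyc$ to $F$ — a nontrivial input from \cite{ng} is invoked.) So your descent step is not justified as stated; the paper's direct one-variable argument is precisely designed to sidestep this obstruction.

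A secondary inaccuracy: there is no need to invoke Shapiro's lemma for $F/F^+$ or to split $T_p E\otimes\ooo$ into two characters; the paper works with $\TT(E)=T_p(E)\otimes\LL$ as a $G_F$-module throughout, related to $\TT$ by the twisting isomorphism $\textup{tw}$ of Section~\ref{subsubsec:prelim}. The identification of $\textup{Sel}^\pm(E/F^\cyc)^\vee$ with the Kobayashi-structure dual Selmer module is carried out via the Pontryagin-dual class-field-theoretic descriptions in Remark~\ref{rem:compareXwithstrictSelmer} and Proposition~\ref{prop:pmstructureandselmer}(iv), not via Shapiro.
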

 
 We remark that we do not descent from the two-variable main conjecture in order to deduce Theorem B (as done so in \cite{pollackrubin}) but instead, we rely further on the author's results on the structure of the module of $\LL$-adic Kolyvagin systems. This alternative approach has the advantage that we need not worry about pseudo-null submodules of various Iwasawa modules.

 \begin{rem}
 \label{rem:removeRSconjhypo}
 Although the existence of the Rubin-Stark elements is highly conjectural, one may prove (Theorem~\ref{thm:lamdaadicKS} below) that the Kolyvagin systems that they descend do exist \emph{unconditionally}. Notice also that the Kolyvagin systems which descend from the conjectural Rubin-Stark elements are non-trivial, since we assumed Leopoldt's conjecture (c.f., Proposition~\ref{prop:KSwithnonzeroinitialterm}). One may work with these Kolyvagin systems for the most part to prove statements which lead to Theorem A and B (Theorem~\ref{thm:mainapplicationofrestrictedKS} and Proposition~\ref{prop:usefulforequality}); however, the Reciprocity Conjecture~\ref{conj:reciprocity} that links the Kolyvagin systems we construct with the $L$-values could be stated most naturally in terms of the conjectural Rubin-Stark elements.
 \end{rem}
 
   Theorem B has the following important consequence towards the conjecture of Birch and Swinnerton-Dyer for the CM elliptic curve $E_{/F^+}$:
 \begin{thmc}[Theorem~\ref{thm:bsd} below]$\,$
 \begin{enumerate} 
\item If $L(E/F^+,1)\neq 0$ then $E(F^+)$ is finite. 
\item Assuming the strong form of the Rubin-Stark conjecture as well as that $L(E/F^+,1)=0$, then $\textup{Sel}_p(E/F^+)$ is infinite.
\end{enumerate}
 \end{thmc}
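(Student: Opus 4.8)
The plan is to derive both statements from the cyclotomic signed main conjecture proved in Theorem~B, by combining it with two further inputs: the interpolation property of the Park--Shahabi $p$-adic $L$-functions $L_p^\pm(E/F^+)$ at the trivial character $\mathbf 1$ of $\Gamma_{\textup{cyc}}$, and a Kobayashi-type control theorem comparing the signed Selmer groups $\textup{Sel}^\pm(E/F^{\textup{cyc}})$ with the $p$-primary Selmer group $\textup{Sel}_p(E/F^+)$, which in turn I would feed into the Kummer descent sequence
\[
0 \longrightarrow E(F^+)\otimes\QQ_p/\ZZ_p \longrightarrow \textup{Sel}_p(E/F^+) \longrightarrow \Sha(E/F^+)[p^\infty]\longrightarrow 0.
\]
Since $p$ splits completely in $F^+/\QQ$ and $E$ is supersingular at the primes above $p$ with vanishing $a_{\mathfrak p}(E)$ (as $p$ is inert in $K$), the $p$-adic multiplier occurring in the interpolation formula at $\mathbf 1$ is an explicitly computable unit times $\prod_{\mathfrak p\mid p}(1+p^{-1})$, hence nonzero; so I may use that $L_p^\pm(E/F^+)(\mathbf 1)$ differs from $L(E/F^+,1)/\Omega_E$ by a nonzero factor, at least for one choice of sign.

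For part (1): assuming $L(E/F^+,1)\neq 0$, the interpolation formula gives that $L_p^\pm(E/F^+)$ is a nonzero element of $\LL_{\textup{cyc}}$ not contained in the augmentation ideal. The divisibility half of Theorem~B then shows that $\textup{Sel}^\pm(E/F^{\textup{cyc}})^\vee$ is $\LL_{\textup{cyc}}$-torsion with characteristic ideal not contained in the augmentation ideal, whence its $\Gamma_{\textup{cyc}}$-coinvariants are finite; the control theorem converts this into the finiteness of $\textup{Sel}_p(E/F^+)$. Plugging this into the Kummer sequence, $E(F^+)\otimes\QQ_p/\ZZ_p$ is finite, hence zero, so $E(F^+)$ has rank $0$ and is therefore finite by the Mordell--Weil theorem.

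For part (2): granting the strong Rubin--Stark Conjecture, Theorem~B upgrades to the equality $\charr(\textup{Sel}^\pm(E/F^{\textup{cyc}})^\vee)=L_p^\pm(E/F^+)\,\LL_{\textup{cyc}}$. If $L(E/F^+,1)=0$ then $L_p^\pm(E/F^+)$ vanishes at $\mathbf 1$, so the characteristic ideal of the (already torsion) module $\textup{Sel}^\pm(E/F^{\textup{cyc}})^\vee$ is contained in the augmentation ideal; by the structure theory of torsion modules over $\LL_{\textup{cyc}}\cong\ooo[[T]]$ this forces the $\Gamma_{\textup{cyc}}$-coinvariants of $\textup{Sel}^\pm(E/F^{\textup{cyc}})^\vee$ to be infinite, and the control theorem then yields that $\textup{Sel}_p(E/F^+)$ is infinite, as asserted.

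The main obstacle will be the control theorem itself: one has to descend from the $\pm$-local conditions above $p$ along the cyclotomic tower to the classical (Bloch--Kato) local conditions over $F^+$, and verify that the induced comparison map on Selmer groups has finite kernel and cokernel — this uses Leopoldt's conjecture at $p$, a check that there is no anomalous contribution from the primes of bad reduction, and the fact that $E[p]$ has no $F^+$-rational points in the present CM setting. A subsidiary but necessary point is to exclude an exceptional zero of $L_p^\pm$ at $\mathbf 1$; in the supersingular CM case with $a_{\mathfrak p}(E)=0$ the relevant multiplier is manifestly a nonzero $p$-adic number, so this is routine rather than serious.
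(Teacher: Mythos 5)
Your proposal follows essentially the same route as the paper's proof of Theorem~\ref{thm:bsd}: Theorem~B plus the interpolation property of $L_p^\pm(E/F^+)$ at the trivial character, fed through a Kobayashi-style control theorem to descend from $F^{\cyc}$ to the ground level and then the Kummer sequence. A few remarks on where the treatments diverge. First, your worry about a potential exceptional zero at $\mathbf 1$ is handled automatically in the paper: Theorem~\ref{thm:parkshahabi} records the interpolation value at the trivial character as $u_i\cdot L(E,1)/\Omega_E(F^+)$ with $u_i\in\overline{\QQ}^\times$, so there is no Euler-type multiplier to worry about (the precise product $\prod_{\frak p\mid p}(1+p^{-1})$ you cite is not what appears; one only needs nonvanishing of the constant, which is supplied directly by that theorem). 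Second, the paper descends to $\textup{Sel}_p(E/F)$ over the CM field $F$, not to $F^+$, and then invokes the CM-theoretic isomorphism $\textup{Sel}_p(E/F^+)^\vee\otimes_{\ZZ_p}\ooo\cong\textup{Sel}_p(E/F)^\vee$ as a final step; your version collapses these two steps, which is fine in spirit but should be stated carefully. Third, the paper spends effort on a point you omit: the fact that $\textup{Sel}^\pm_p(E/F^{\textup{cyc}})^\vee$ has no finite submodule, which rests on a result of Nguyen-Quang-Do cited as \cite{ng}. That input is not logically needed for the purely qualitative finite/infinite conclusions you want — pseudo-isomorphisms already preserve finiteness or infiniteness of $(\gamma-1)$-coinvariants — and it is used in the paper chiefly to upgrade the control isomorphism to an exact cardinality formula for $\Sha$; but the paper flags it as a ``key ingredient'' of the proof of Theorem~C, so you should be aware of its role. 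Finally, your anticipated ``main obstacle,'' the control theorem with finite kernel and cokernel, is in fact available in the stronger form of an isomorphism $\textup{Sel}_p^\pm(E/F^{\textup{cyc}})^\vee\otimes_{\LL_\cyc}\ooo\stackrel{\sim}{\to}\textup{Sel}_p(E/F)^\vee$, deduced in the paper from the perfect control theorem for the strict Selmer group together with a surjectivity statement on the local conditions at $p$ (items (a) and (b) in the paper's proof, following Pollack--Rubin); your sketch of what must be checked (Leopoldt, irreducibility of the residual representation, behavior at bad primes) is consistent with what those inputs deliver.
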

 \begin{rem}
 \label{rem:unconditional}
 It seems very plausible that the methods of this paper would allow us to deduce Theorems A, B and C above \emph{unconditionally} under the additional hypothesis that $F^+(E[p])/K$ is abelian. The idea goes roughly as follows (we hope to provide the details in a future note): Firstly, by the assumption that $F^+(E[p])/K$ is abelian, one may use elliptic units to construct classes in $H^1(F,T_p(E)\otimes\LL)$. We may use the main theorem of \cite{kbb} to lift these classes to a $\LL$-adic Kolyvagin system (for certain \emph{modified Selmer structures} which are defined in Section \ref{sec:selmerstructures} below) for the $G_F$-representation $T_p(E)$, so as to view these classes (obtained from elliptic units) as the initial terms of this $\LL$-adic Kolyvagin system. Using this $\LL$-adic Kolyvagin system (whose initial term is explicitly given in terms of elliptic units), one could deduce Theorems A, B and C unconditionally.
  \end{rem}
  
\begin{rem}
\label{rem:lei}
 The first version of this article was circulated among experts back in early 2013, and it later became the main motivation and the groundwork for our forthcoming joint work with Antonio Lei \cite{kbblei1}. In \emph{op. cit.}, we are able extend some of the results of this paper to treat a CM abelian variety of arbitrary dimension. This work in part relies on the techniques developed here, as well as a general theory of plus/minus Coleman maps we develop in \cite{kbblei2}. Although in \cite{kbblei1}, the authors are able to lift the hypotheses on Theorem~B and C that $p$ splits completely in $F^+/\QQ$, they are able to deduce only one of the signed main conjectures (whereas we prove both main conjectures simultaneously here). Note that we could have also formulated $2^g$ signed main conjectures (as opposed to a single plus/minus main conjecture) here as well, by assigning one of the ``plus'' or ``minus local conditions'' at each prime lying above $p$ (as opposed assigning the ``plus'' or  ``minus local condition'' everywhere above $p$ uniformly) and prove each of them. 
 
 One further advantage of the more explicit approach we take here (namely, through Kobayashi's interpretation of signed Coleman maps, which in turn rely on his explicit local elements) is that, it allows us to state our explicit reciprocity conjectures in a much more concrete form. We hope that this will allow us to verify the explicit reciprocity conjectures (and therefore deduce our main results here unconditionally) in the situation of Remark~\ref{rem:unconditional}, namely when $F^+(E[p])/K$ is abelian. For the time being, this does not seem tractable in the rather abstract set up of \cite{kbblei1}.
 
  \end{rem} 
 \subsection*{Overview of the methods and layout of the paper}
We briefly outline the basic technical ingredients that go into the proofs of the Theorems A, B and C. 

In order to prove the two-variable main conjecture (Theorem A) we use the Rubin-Stark element Euler system of rank $g$. This requires to refine the \emph{rank-$g$ Euler/Kolyvagin system machinery} in the supersingular setting where the assumption (\ref{eqn:pord}) is no longer valid. the first step is to introduce various \emph{modified Selmer structures} (Section~\ref{sec:selmerstructures}) that produces Selmer groups that compare well with their classical counterparts. We construct and study in Section~\ref{Sec:KSforGmandE} the Kolyvagin systems associated to these modified Selmer structures. We in fact do this first unconditionally, then in Section~\ref{sec:KSRS} using the Rubin-Stark elements (recalled briefly in Section~\ref{sec:RS}). 
These Kolyvagin systems are then used in Section~\ref{sec:gras} (along with the arguments of Section~\ref{subsec:compareselmer} to compare the modified Selmer groups (that we control by the Rubin-Stark Kolyvagin systems) to the classical Selmer groups) to prove the divisibility statement in Theorem A. We then show that this divisibility may be upgraded to an equality by exploiting our results in Section~\ref{Sec:KSforGmandE} on the structure of $\LL$-adic Kolyvagin systems.

To deduce Theorem B (the cyclotomic main conjecture for a CM elliptic curve $E$ for a supersingular prime $p$) we appeal to Kobayashi's local theory, with which directly apply the Kolyvagin system machinery developed in Section~\ref{Sec:KSforGmandE}. This is one of the main differences with the approach in \cite{pollackrubin}, which ultimately relies on various explicit calculations with elliptic units which are not at our disposal. We get around of this issue by systematically utilizing our results on the modules of Kolyvagin systems. Kobayashi's plus/minus Selmer groups (and the corresponding pair of $p$-adic $L$-functions of Park-Shahabi) are recalled in Sections~\ref{subsec:prelimonCMtheory} and \ref{subsec:pmpadicL}. The Explicit Reciprocity Conjecture we formulate in Section~\ref{subsec:reciprocityconj} relates the tower of Rubin-Stark elements (along $F_\infty$) to the special values of (twisted) $L$-functions attached to $E$ at $s=1$. This conjecture should be thought of as an extension of Coates-Wiles explicit reciprocity law~\cite{coateswiles77, wiles78reciprocity} for elliptic units and we believe that Conjecture~\ref{conj:reciprocity} should be of independent interest for future investigation. 

The proof of Theorem~C follows from Theorem~B easily. A key ingredient is a result of \cite{ng} on the psuedo-null submodules of a natural Iwasawa module. 


\subsection*{Acknowledgements}
We would like to thank Ming-Lun Hsieh and Antonio Lei for stimulating  conversations and Robert Pollack for his encouragement on this project. We also thank the referee for his/her constructive suggestions and for proposed corrections to improve the paper. We are particularly grateful to him/her for pointing out an inaccuracy in Section~\ref{subsec:pmpadicL}, concerning our use of the Park-Shahabi $p$-adic $L$-function.

This work was partially supported by a Marie Curie IRG grant EC-FP7 230668,  T{\"U}B{\.I}TAK grant 113F059 and  the Turkish Academy of Sciences. 

\subsection{Notation and Hypotheses}
\label{subsec:notation}
For any field $k$, let $\overline{k}$ denote a fixed separable closure of $k$ and let $G_k=\textup{Gal}(\overline{k}/k)$ denote its absolute Galois group.

Throughout we fix a rational odd prime $p$ and embeddings $\overline{\QQ}\hookrightarrow \mathbb{C}$ and $\overline{\QQ} \hookrightarrow\mathbb{C}_p$ where $\mathbb{C}_p$ is the $p$-adic completion of $\overline{\QQ}_p$. We normalize the valuation $\textup{val}_p$ and the absolute value $|\cdot|_p$ on $\mathbb{C}_p$ by assuming $\textup{val}_p(p)=1$ and $|p|_p=p^{-1}$. For any positive integer $n$, let $\pmb{\mu}_{n}$ denote the $n$th roots of unity and $\pmb{\mu}_{p^\infty}=\varinjlim \pmb{\mu}_{p^m}$.

Let $F$ be a CM field and let $F^+$ be its maximal real subfield as in the Introduction. 
Let $\chi:G_F \ra \ooo^\times$ be any Dirichlet character whose order is prime to $p$ and which has the property that
\be
\label{eqn:assnotrivxhi}
\chi(\wp)\neq 1 
\hbox{  \, for any prime   } \wp \hbox{ of } F \hbox{ above } p.
\ee
and that
\be
\label{eqn:chiisnotteich}
\chi \neq \omega,
\ee
where $\omega$ is the Teichm\"uller character giving the action of $G_F$ on $\pmb{\mu}_p$. Later in Section~\ref{subsec:cyclomain}, we will work with a particular character $\chi$ attached to a CM elliptic curve $E$. We let $L:=\overline{F}^{\ker\chi}$ denote the abelian extension of $F$ cut out by $\chi$.

Let $\mathcal{R}$ be the set of primes of $F$ that does not contain any prime above $p$ nor any prime at which $\chi$ is ramified. Define $\NN(\mathcal{R})$ to be the square free products of primes chosen from $\mathcal{R}$. For $\ell\in \mathcal{R}$, let $F(\ell)$ be the maximal $p$-extension inside the ray class field of $F$ modulo $\ell$ and for $\eta=\ell_1\cdots\ell_s \in \NN(\mathcal{R})$, set $F(\eta)=F(\ell_1)\cdots F(\ell_s)$. We write $L(\eta)=L\cdot F(\eta)$ for the composite field. We consider the following collections of finite abelian extensions of $F$ (resp., of $L$):
\begin{itemize}
\item[(i)] $\frak{T}=\{F(\eta): \eta \in \NN(\mathcal{R})\},$
\item[(ii)] $\frak{T}_0=\{L(\eta): \eta \in \NN(\mathcal{R})\},$
\item[(iii)] $\frak{E}=\{M\cdot F(\eta): \eta \in \NN(\mathcal{R}); M\subset F_\infty \hbox{ is a finite extension of } F\},$
\item[(iv)] $\frak{E}_0=\{M\cdot L(\eta): \eta \in \NN(\mathcal{R}); M\subset F_\infty \hbox{ is a finite extension of } F\},$
\end{itemize}
Let $\displaystyle{{K}_0(X)=\varinjlim_{N \in X_0}N}$ and $\displaystyle{{K}(X)=\varinjlim_{N \in X}N}$ for $X=\frak{T}$ or $\frak{E}$. 
We finally set $\frak{G(X)}=\textup{Gal}(\frak{X}/F)$ and write $\ooo[[\frak{G(X)}]]:=\varprojlim \ooo[\frak{G(X)}/U]$, where the inverse limit is over the open subgroups $U$ of $\frak{G(X)}$, for the completed group ring of $\frak{G(X)}$.   

For any non-archimedean prime $\lambda$ of $F$, fix a decomposition group $\mathcal{D}_{\lambda}$ and the inertia subgroup $\mathcal{I}_\lambda \subset \mathcal{D}_{\lambda}$. Let ${(-)}^\vee=\textup{Hom}(-,\QQ_p/\ZZ_p)$ denote Pontryagin duality functor. Observe that ${(-)}^\vee\otimes\ooo=\textup{Hom}(-,\Phi/\ooo)$. Bearing this relation in mind, we will write $X^\vee$ for $\textup{Hom}(X,\Phi/\ooo)$ \emph{when $X$ is an $\ooo$-module}. We let $X^*:=\textup{Hom}(X,\pmb{\mu}_{p^\infty})$ denote the Cartier dual of $X$.

Let $F_\infty$ and $F^{\textup{cyc}}$ be as above. Let $F_n$ denote the unique subextension of $F^\textup{cyc}/F$ which has degree $p^n$ and set $\Gamma_n=\textup{Gal}(F_n/F)$.

We let $G_F$ act on $\LL$ (resp., $\LL_{\textup{cyc}}$) via the tautological surjection $G_F\ra \Gamma$ (resp., $G_F\ra \Gamma_{\textup{cyc}}$). For an $\ooo$-module $X$ of finite type which is endowed with a continuous action of $G_F$, we let $G_F$ act on the $\LL$-module $X\otimes_{\ooo}\LL$ by acting on both factors.

\section{Selmer structures and comparing Selmer groups}
\label{sec:selmerstructures}
\subsection{Structure of the semi-local cohomology groups}
\label{subsec:localanalysis}
Let $M=M_0\cdot F(\eta)$ be a member of the collection $\frak{E}$, where $M_0$ is a finite subextension of $F_\infty/F$. Set $\Delta_M=\textup{Gal}(M/F)$, $\delta_M=|\Delta_M|$ and $\LL_M=\frak{O}[\Delta_M]$.

Let $X$ be any $\frak{O}[[G_F]]$-module which is free of rank $d$ as an $\ooo$-module. Suppose in addition that $X$ satisfies the following hypothesis:
\begin{enumerate}
\item[\textbf{(H.p1)}] $H^2(F_\wp,X)=0=H^2\left(F_\wp,\textup{Hom}_{\ooo}(X,\ooo(1))\right)$, for any prime $\wp$ of $F$ above $p$.
\end{enumerate}

\begin{lemma}
\label{lem:extendp1toM}
Suppose $X$ is above. Let $M \in \frak{E}$ be an extension of $F$ and let $\frak{P}$ be a prime of $M$ lying above $p$. Then
 $$H^2(M_\frak{P},X)=0=H^2\left(M_\frak{P},\textup{Hom}_{\ooo}(X,\ooo(1))\right).$$
\end{lemma}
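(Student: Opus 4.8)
The plan is to translate the vanishing of the two $H^2$-groups into the vanishing of the $G$-invariants of the corresponding Cartier duals by local Tate duality, and then to propagate that vanishing from $F_\wp$ up to $M_\frak{P}$ using that $M_\frak{P}/F_\wp$ is a $p$-extension. The starting observation is that every member $M$ of $\frak{E}$ is a \emph{finite $p$-extension} of $F$. Indeed, write $M=M_0\cdot F(\eta)$ with $M_0$ a finite subextension of $F_\infty/F$: the extension $F_\infty/F$ is Galois with pro-$p$ Galois group, because restriction identifies $\Gal(F_\infty/F)$ with a closed subgroup of $\Gal(K_\infty/K)\cong\ZZ_p^2$ (this map is injective since $F_\infty=FK_\infty$), so $M_0/F$ is a finite $p$-extension; and $F(\eta)/F$ is a finite abelian $p$-extension by its very definition. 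Hence the compositum $M/F$ is Galois with Galois group a finite $p$-group, so for a prime $\frak{P}$ of $M$ above the prime $\wp$ of $F$ the local extension $M_\frak{P}/F_\wp$ is Galois with $\mathcal{G}:=\Gal(M_\frak{P}/F_\wp)$ a finite $p$-group; equivalently, $G_{M_\frak{P}}$ is a normal subgroup of $G_{F_\wp}$ with quotient the $p$-group $\mathcal{G}$.

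Next, since $X$ is free of finite rank over $\ooo$ and $\textup{cd}_p(M_\frak{P})=2$, local Tate duality over $M_\frak{P}$ gives canonical isomorphisms
$$H^2(M_\frak{P},X)\cong H^0(M_\frak{P},X^*)^\vee,\qquad H^2\!\left(M_\frak{P},\Hom_\ooo(X,\ooo(1))\right)\cong H^0(M_\frak{P},X\otimes_\ooo\Phi/\ooo)^\vee,$$
where the second one uses the identification $\Hom_\ooo(X,\ooo(1))^*\cong X\otimes_\ooo\Phi/\ooo$ of $G_F$-modules. Applying the same duality over $F_\wp$, the hypothesis $\textbf{(H.p1)}$ says precisely that $(X^*)^{G_{F_\wp}}=0$ and $(X\otimes_\ooo\Phi/\ooo)^{G_{F_\wp}}=0$, while the assertion of the lemma is equivalent to the same two vanishings with $G_{F_\wp}$ replaced by $G_{M_\frak{P}}$. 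Thus it suffices to prove: if $A$ is a discrete $\ooo$-torsion $G_{F_\wp}$-module with $A^{G_{F_\wp}}=0$, then $A^{G_{M_\frak{P}}}=0$; one then applies this to $A=X^*$ and to $A=X\otimes_\ooo\Phi/\ooo$.

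To prove this last claim, let $\frak{m}$ be the maximal ideal of $\ooo$. Since $A$ is $\ooo$-torsion, $A^{G_{M_\frak{P}}}=0$ as soon as $A[\frak{m}]^{G_{M_\frak{P}}}=0$ (a nonzero $G_{M_\frak{P}}$-fixed element generates a finite $\ooo$-submodule fixed pointwise, which contains a nonzero fixed element killed by $\frak{m}$). Now $A[\frak{m}]^{G_{M_\frak{P}}}$ is stable under $G_{F_\wp}$ because $G_{M_\frak{P}}\trianglelefteq G_{F_\wp}$, so it is a module over the group ring $(\ooo/\frak{m})[\mathcal{G}]$. If it were nonzero, then $\mathcal{G}$ — a finite $p$-group acting on a nonzero vector space over the characteristic-$p$ field $\ooo/\frak{m}$ — would have a nonzero fixed vector, giving $0\ne\left(A[\frak{m}]^{G_{M_\frak{P}}}\right)^{\mathcal{G}}=A[\frak{m}]^{G_{F_\wp}}\subseteq A^{G_{F_\wp}}=0$, a contradiction. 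Hence $A[\frak{m}]^{G_{M_\frak{P}}}=0$, so $A^{G_{M_\frak{P}}}=0$, and feeding this back through Tate duality over $M_\frak{P}$ yields the lemma.

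The argument is essentially soft, and the only points requiring care are in the second paragraph: the identification of the Cartier dual of $\Hom_\ooo(X,\ooo(1))$ with $X\otimes_\ooo\Phi/\ooo$, and the limit computation $H^2(k,X)=\varprojlim_n H^2(k,X/\frak{m}^nX)\cong H^0(k,X^*)^\vee$ (which uses $\textup{cd}_p(k)=2$ so that $H^3$ vanishes) — both standard. The genuinely essential input is the structural fact recorded in the first paragraph, namely that members of $\frak{E}$ are $p$-extensions of $F$; without it the final normal-subgroup argument breaks down. (One could alternatively bypass Tate duality using Shapiro's lemma together with the fact that the induction of $X$ along the $p$-extension $M_\frak{P}/F_\wp$ admits a filtration with subquotients isomorphic to $X$, but the route above is shorter.)
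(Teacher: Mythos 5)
Your proposal is correct and follows essentially the same route as the paper's proof: both reduce via local Tate duality to showing that the $G_{M_\frak{P}}$-invariants of the Cartier dual vanish, and both deduce this from the vanishing of the $G_{F_\wp}$-invariants using the fact that $D_\frak{P}=\Gal(M_\frak{P}/F_\wp)$ is a finite $p$-group acting on a module over a field of characteristic $p$. The only cosmetic difference is that you invoke the fixed-point theorem for $p$-groups acting on $\ooo/\frak{m}$-vector spaces directly, whereas the paper phrases the same fact as a counting congruence $\#H^0(M_\frak{P},X^*[\varpi])\equiv\#H^0(F_\wp,X^*[\varpi])\equiv1\pmod{p}$.
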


\begin{proof}
Let $\wp$ be the prime of $F$ lying below $\frak{P}$ and set $D_\frak{P}=\textup{Gal}(M_\frak{P}/F_\wp)$. Then either $D_\frak{P}$ is trivial and in this case  Lemma follows from \textup{\textbf{(H.p1)}}, or otherwise $D_\frak{P}$ is a non-trivial $p$-group. Then,
$$\# H^0(M_{\frak{P}},X^*[\varpi])= \# H^0\left(D_{\frak{P}}, (H^0(M_{\frak{P}},X^*[\varpi])\right)\equiv \#H^0(F_\wp,X^*[\varpi]) \equiv 1 \mod p$$
where the last equality holds thanks to \textup{\textbf{(H.p1)}} and local duality. This shows that $ H^0(M_{\frak{P}},X^*)=0$ and thus by local duality that $H^2(M_{\frak{P}},X)=0$, as desired. The second assertion is proved in an identical manner.
\end{proof}
\begin{define}
\label{def:pmsemilocal}
For $j=0,1,2$ define the semi-local cohomology groups
$$H^j(M_{p},X):=\bigoplus_{i=1}^s \bigoplus_{\frak{q}|p}H^j(M_{\frak{q}},X),$$
and let
$$\textup{loc}_p: H^1(M,X)\lra H^1(M_{p},X)$$
denote the localization map.
\end{define}

\begin{prop}
\label{prop:localfull} Suppose \textup{\textbf{(H.p1)}} holds true.
\begin{itemize}
\item[(i)]  The corestriction map
$$\textup{cor}: H^1(M_p,X)\lra H^1(F_p,X)$$
is surjective.
\item[(ii)] the $\LL_M$-module $H^1(M_{p},X)$ is free of rank $2g\cdot d$.
\item[(iii)] The $\LL$-module $H^1(F_p,X\otimes\LL)$ is free of rank $2g\cdot d$.
\item[(iv)] The $\frak{O}[[\frak{G}(\frak{X})]]$-module $\displaystyle{\varprojlim_{M\in \frak{E}}H^1(M_{p},X)}$ is free of rank $2g\cdot d$, where the inverse limits are with respect to corestriction maps.
\end{itemize}
\end{prop}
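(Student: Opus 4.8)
The plan is to extract everything from a single local input: \textbf{(H.p1)} together with Lemma~\ref{lem:extendp1toM} forces enough vanishing of cohomology at $p$ to make the relevant $H^1$'s free. First I would record that for every $M\in\frak{E}$ and every prime $\frak{P}\mid p$ of $M$ one has $H^0(M_\frak{P},X)=H^2(M_\frak{P},X)=0$ and that $H^1(M_\frak{P},X)$ is $\ooo$-free of rank $d\,[M_\frak{P}:\QQ_p]$. Indeed, Lemma~\ref{lem:extendp1toM} gives $H^2(M_\frak{P},X)=0$ directly; applied with $\Hom_\ooo(X,\ooo(1))$ in place of $X$ --- which also satisfies \textbf{(H.p1)}, its $\ooo(1)$-dual being $X$ again --- it gives $H^2(M_\frak{P},\Hom_\ooo(X,\ooo(1)))=0$, and local Tate duality identifies the dual of the latter with $H^0(M_\frak{P},X\otimes_\ooo\Phi/\ooo)$. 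Since $X$ is $\ooo$-free, $H^0(M_\frak{P},X\otimes_\ooo\Phi/\ooo)=0$ forces $X^{G_{M_\frak{P}}}=0$, that is $H^0(M_\frak{P},X)=0$, and moreover $H^1(M_\frak{P},X)$ is $\ooo$-torsion-free because its torsion is a quotient of $H^0(M_\frak{P},X\otimes_\ooo\Phi/\ooo)=0$. The local Euler--Poincar\'e characteristic formula then pins down the rank. Summing over $\frak{P}\mid p$ and using $\sum_{\frak{P}\mid p}[M_\frak{P}:\QQ_p]=[M:\QQ]=2g\,\delta_M$ shows $H^1(M_p,X)$ is $\ooo$-free of rank $2g\,d\,\delta_M$ for every $M\in\frak{E}$.

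For (ii), note that $\Delta_M$ is a finite $p$-group (it is built from $p$-extensions), so $\LL_M=\ooo[\Delta_M]$ is a local ring, finite free over $\ooo$, hence Cohen--Macaulay of depth $1$. By Shapiro's lemma $H^1(M_p,X)$ is the degree-$1$ cohomology of the perfect complex $R\Gamma(F_p,X\otimes_\ooo\LL_M)$ of $\LL_M$-modules (diagonal Galois action on $X\otimes_\ooo\LL_M$), and the vanishing above says this complex has cohomology concentrated in degree $1$; hence $H^1(M_p,X)$ has finite projective dimension over $\LL_M$. Being $\ooo$-torsion-free it has depth $\ge 1$, so the Auslander--Buchsbaum formula forces projective dimension $0$, that is $\LL_M$-freeness. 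The rank is read off by base change along the augmentation $\LL_M\twoheadrightarrow\ooo$: since $H^2(M_p,X)=0$ one gets $H^1(M_p,X)\otimes_{\LL_M}\ooo\cong H^1(F_p,X)$, of $\ooo$-rank $2gd$, so $H^1(M_p,X)$ has $\LL_M$-rank $2gd$. Part (i) then drops out of the same isomorphism, which exhibits $H^1(F_p,X)$ as the quotient of $H^1(M_p,X)$ by the augmentation ideal --- that is, via corestriction --- hence shows corestriction is surjective.

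For (iii) and (iv) I would pass to the limit. For $M\subseteq M'$ in $\frak{E}$ (or among the finite subextensions of $F_\infty/F$), derived base change along $\LL_{M'}\twoheadrightarrow\LL_M$, combined with the vanishing of $H^2$ and the finite-level freeness just proved, shows that the corestriction map $H^1(M'_p,X)\to H^1(M_p,X)$ is the base-change map $H^1(M'_p,X)\otimes_{\LL_{M'}}\LL_M$; in particular it carries a $\LL_{M'}$-basis onto a $\LL_M$-basis. Lifting a fixed $\ooo$-basis of $H^1(F_p,X)$ compatibly up a cofinal chain of such $M$'s --- possible since the transition maps are surjective and the chain is countable --- yields, for every $M$, a $\LL_M$-basis of $H^1(M_p,X)$ of $2gd$ elements, compatibly under the transitions. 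Passing to the limit (recall $H^1(F_p,X\otimes\LL)=\varprojlim_M H^1(M_p,X)$, and likewise $\varprojlim_{M\in\frak{E}}H^1(M_p,X)$ for (iv)) identifies the inverse limit with the free module of rank $2gd$ over $\varprojlim_M\LL_M$, that is over $\LL$, respectively over $\ooo[[\frak{G}(\frak{E})]]$; one may also phrase generation via topological Nakayama, the cokernel $Q$ of the spanned free submodule satisfying $Q\otimes_\LL\ooo=0$ and hence $Q/\frak{m}_\LL Q=0$.

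The genuine difficulty is the passage from ``correct rank plus torsion-freeness'' to ``freeness''. Over the one-variable ring $\LL_M$ this is exactly the perfect-complex/Auslander--Buchsbaum step; over the two-variable ring $\LL=\ooo[[\Gamma]]$ --- which is regular of dimension $3$, where torsion-free modules of finite projective dimension need not be free (e.g. a height-two prime) --- the Auslander--Buchsbaum argument no longer suffices, and one must use the compatible-lifting argument instead. In both places the crucial ingredient is the vanishing of $H^2$ at $p$ (equivalently, surjectivity of corestriction), which annihilates the $\mathrm{Tor}$ and $\varprojlim^1$ terms that would otherwise obstruct the conclusion. A secondary point to verify carefully is that the continuous-cochain complexes $R\Gamma(F_\wp,X\otimes_\ooo\LL_M)$ are perfect complexes of $\LL_M$-modules and that derived base change holds for them; this is routine since $X\otimes_\ooo\LL_M$ is finite free over $\LL_M$ and $G_{F_\wp}$ has cohomological dimension $2$.
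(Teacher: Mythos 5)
Your proof is correct, and it reconstructs in full the argument that the paper delegates with ``(i) and (ii) are essentially proved in \cite[\S2.1]{kbbCMabvar}; (iii) and (iv) follow at once'': the same chain of Shapiro's lemma to pass to $R\Gamma(F_p,X\otimes\LL_M)$, local duality plus the local Euler--Poincar\'e formula to kill $H^0$ and $H^2$ and pin down the $\ooo$-rank, Auslander--Buchsbaum over the local ring $\ooo[\Delta_M]$ to promote torsion-freeness to freeness, and derived base change along the augmentation to identify corestriction and pass to the limit. Your closing remark that the two-variable case genuinely needs the compatible-lifting/Nakayama argument rather than Auslander--Buchsbaum is exactly the right point to flag, and the observation that $H^2$-vanishing is what kills the $\mathrm{Tor}$ and $\varprojlim^1$ obstructions is the crux the paper's terse proof leaves implicit.
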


\begin{proof}
(iii) and (iv) follows at once from (i) and (ii). Both (i) and (ii) are essentially proved in \cite[\S2.1]{kbbCMabvar}.
\end{proof}
\begin{rem}
Observe that for $T=\frak{O}\otimes\chi^{-1}$, the hypothesis $\textbf{(H.p1)}$ is verified for $X=T$ since we assumed 
(\ref{eqn:assnotrivxhi}) and (\ref{eqn:chiisnotteich}) as well as for $X=T(E)$, the $p$-adic Tate module of an elliptic curve $E/F^+$ with supersingular reduction at every prime of $F^+$ above $p$. In particular, the conclusions of Proposition~\ref{prop:localfull} hold true both choices of $G_F$-representations.
\end{rem}

\subsection{Modified Selmer structures for $\mathbb{G}_m$}
\label{subsec:modifiedselmerstr}
The constructions in this subsection and the next will be only needed for sharpening the divisibility in the cyclotomic main conjecture for the CM elliptic curve $E$, which we shall prove later. The reader who is content with one divisibility in the main conjecture may skip these two subsections.

\begin{define}
\label{def:saturation}
Let $R$ be any ring and $M$ be any $R$-module. For any submodule $N\subset M$, the \emph{$R$-saturation of $N$ in $M$} is the submodule $N^{\textup{sat}}=\phi^{-1}\phi(N)\subset M$, where $\phi:M \ra M\otimes \textup{Frac}(R)$ is the natural map and $\textup{Frac}(R)$ is the total ring of fractions of $R$.
\end{define}

\begin{lemma}
\label{lem:structure for units}
The $\ooo$-module $\oo_{L}^{\times,\chi}$ is free of rank $g$.
\end{lemma}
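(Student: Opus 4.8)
The plan is to apply Dirichlet's unit theorem to $L$, pass to the $\chi$-isotypic component (which makes sense since $\chi$ has order prime to $p$ and $\mathcal{O}^\times$ is a projective, hence free, module over the appropriate semilocal ring), and then compute the rank of that component via the equivariant/Herbrand-style count of the signature. First I would recall that $\mathcal{O}_L^\times \otimes_{\mathbb Z}\mathcal{O}$ is, up to the torsion subgroup (the roots of unity, which contribute nothing to the $\chi$-part because $\chi\neq\omega$ and more to the point $\chi$ is nontrivial on the relevant decomposition/inertia data — in any case the torsion is $\mathcal{O}$-torsion and killed after tensoring with $\mathrm{Frac}$ and is handled by the prime-to-$p$ order of $\chi$), a free $\mathcal{O}$-module whose rank over $\mathcal{O}[\mathrm{Gal}(L/F)]$ is governed by Dirichlet: $\mathrm{rank}_{\mathbb Z}\mathcal{O}_L^\times = r_1(L)+r_2(L)-1$. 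Since $F$ is CM and $L/F$ is cut out by a character of $G_F$, $L$ is a CM field too (or at worst totally complex), so $r_1(L)=0$ and $r_2(L)=[L:\mathbb Q]/2 = [L:F]\cdot g$.

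The key step is the isotypic rank count. Because $|{\rm im}\,\chi|$ is prime to $p$ and $\mathcal O$ contains the relevant roots of unity, $\mathcal O[\mathrm{Gal}(L/F)]$ decomposes as a product of copies of $\mathcal O$ indexed by the characters of $\mathrm{Gal}(L/F)$, and the $\chi$-component of $\mathcal O_L^\times\otimes\mathcal O$ is a direct summand, hence free over $\mathcal O$; its rank is read off from the $\mathrm{Gal}(L/F)$-module structure of $\mathbb Q\otimes\mathcal O_L^\times$, which by Dirichlet (Herbrand's formula / Minkowski's unit theorem in equivariant form) is $\mathbb Q[\mathrm{Gal}(L/F)]^{\,[L:F]g}$ minus the trivial-character line coming from the "$-1$''. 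Concretely, $\mathbb Q\otimes\mathcal O_L^\times \cong \big(\bigoplus_{v\mid\infty}\mathrm{Ind}_{D_v}^{\mathrm{Gal}(L/F)}\mathbb Q\big)\big/\mathbb Q$, and since $L$ is totally complex every archimedean place $v$ of $F$ splits in the way dictated by complex conjugation; after taking $\chi$-parts the trivial quotient $\mathbb Q$ disappears (as $\chi\neq\mathbf 1$, which follows from \eqref{eqn:assnotrivxhi}) and each induced piece contributes $\dim(\mathrm{Ind}_{D_v}^{\mathrm{Gal}(L/F)}\mathcal O)^\chi = [\mathrm{Gal}(L/F):D_v]\cdot(\text{multiplicity})$; summing over the $g$ archimedean places of $F^+$ and using that $\chi$ restricted to each archimedean decomposition group is handled uniformly, one gets rank exactly $g$.

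The main obstacle I anticipate is bookkeeping the archimedean decomposition groups cleanly: one must be careful that $L$ is genuinely CM (equivalently, that complex conjugation acts consistently), so that the "unit signature'' representation is $\mathrm{Ind}$ from the order-$2$ groups generated by the various complex conjugations, and then that the $\chi$-isotypic multiplicity in each such induced representation is exactly $1$ for every archimedean place. This is where \eqref{eqn:assnotrivxhi} (ensuring $\chi\neq\mathbf 1$, so the "$-1$'' in Dirichlet's rank lands in the trivial component only) gets used; the supersingular/ordinary dichotomy plays no role here. Once the multiplicities are pinned down the count gives precisely $r_2(L)/[L:F] = g$ for the $\chi$-rank, and freeness over $\mathcal O$ is then automatic since $\mathcal O$ is a DVR and the module is finitely generated and torsion-free (its torsion, the $\chi$-part of $\pmb\mu_L\otimes\mathcal O$, vanishes because $\chi\neq\omega$ and more generally $\chi$ is nontrivial). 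I would close by noting this is the exact analogue of Lemma~\ref{lem:structure for units}'s role in \cite{kbbCMabvar}, so the same argument applies verbatim.
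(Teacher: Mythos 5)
Your argument is correct and takes essentially the same route as the paper: the paper's one-line proof cites Neukirch (\S8.6.12) for the $G$-equivariant form of Dirichlet's unit theorem (Herbrand's theorem) and then invokes $\chi\neq\omega$ to see the $\chi$-part is torsion-free, which is exactly your strategy. The only cosmetic remark is that since $F$ is already CM, every archimedean decomposition group in $\mathrm{Gal}(L/F)$ is trivial, so the archimedean bookkeeping you worry about collapses immediately and Herbrand's theorem gives $\mathcal{O}_L^\times\otimes\mathbb{Q}\cong\mathbb{Q}[\mathrm{Gal}(L/F)]^{g}\ominus\mathbf{1}$ at once (also: the $g$ places being summed over are the archimedean places of $F$, not $F^+$, though the count is the same).
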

\begin{proof}
This follows from \cite[\S8.6.12]{neukirch}, along with our assumption that $\chi$ is different from the Teichm\"uller character $\omega$.
\end{proof}

\begin{define}
\label{def:saturatedunitcondition}
\begin{itemize}
\item[(i)]Let $\mathcal{V}_F^+:=\textup{loc}_p(\oo_{L}^{\times,\chi})^\textup{sat}$ be the $\frak{O}$-saturation of $\textup{loc}_p(\oo_{L}^{\times,\chi})$ in $H^1(F_p,T)$. Note that the $\frak{O}$-module $\mathcal{V}_F^+$ is a direct summand of the free module $H^1(F_p,T)$. Let the rank of the $\ooo$-module $\mathcal{V}_F^+$ be $g-\frak{d}$ with $\frak{d}\geq 0$. Observe that $\frak{d}=0$ if Leopoldt's conjecture holds true for $L$.
\item[(ii)] Let $\mathcal{V}_F^-$ be any free submodule of $H^1(F_p,T)$ which complements $\mathcal{V}_F^+$.
\end{itemize}
\end{define}
Note that $H^1(F,T)$ may be naturally identified by $L^{\times,\chi}$ by Kummer theory, and this is how we make sense of $\textup{loc}_p(\oo_{L}^{\times,\chi})$. Furthermore, if Leopoldt's conjecture holds true for $L$, then $\mathcal{V}_F^+$ is the unique direct summand of $H^1(F_p,T)$ of rank $g$, containing $\textup{loc}_p(\oo_{L}^{\times,\chi})$.

\begin{define}
\label{def:localUpm}
\begin{itemize}
\item[(i)] Let $\mathcal{V}_{K(\frak{T})}^\pm$ be the direct summand of $\displaystyle{\varprojlim_{M\in \frak{T}}H^1(M_{p},T)}$ which maps onto $\mathcal{V}_F^\pm$ under the natural (surjective) corestriction map. Note that such a direct summand exists thanks to Proposition~\ref{prop:localfull}(i) and Nakayama's Lemma. Note further that we have the direct sum decomposition $\displaystyle{\varprojlim_{M\in \frak{T}}H^1(M_{p},T)=\mathcal{V}_{K(\frak{T})}^+\oplus \mathcal{V}_{K(\frak{T})}^-}$.
\item[(ii)] For $M\in \frak{T}$, let $\mathcal{V}_M^\pm \subset H^1(M_{p},T)$ be the image of $ \mathcal{V}_{K(\frak{T})}^\pm$ under the natural projection. 
\end{itemize}
\end{define}
\begin{define}
\label{def:modifiedlocalcondition}
\begin{enumerate}
\item[(i)] Let $\frak{L}$ be any free, rank one $\frak{O}[[\frak{G}(K(\frak{T}))]]$-direct summand of $ \mathcal{V}_{K(\frak{T})}^+$.

\item[(ii)] For $M \in \frak{T}$,  let $\frak{l}_M \subset \mathcal{V}_{M}^+$ be the image of $\frak{L}$ under the natural projection $\varprojlim_{N}H^1(N_p,T)\twoheadrightarrow H^1(M_{p},T).$
We write $\frak{l}$ instead of $\frak{l}_F$.
\end{enumerate}
\end{define}

We will make use of the following \emph{Selmer structures} on the $G_F$-representation $T$ while proving a Gras-style conjecture in Section~\ref{sec:gras} below.
\begin{define}
\label{def:selmerstructure}
By Kummer theory, we may identify $H^1(F,T)$ with $L^{\times,\chi}$ and similarly for any prime $\frak{q}$ of $F$, the local cohomology group $H^1(F_\frak{q},T)$ with $(L\otimes_F F_\frak{q})^{\times,\chi}=\,\left(\oplus_{\frak{Q}\mid \frak{q}}\,\,L_{\frak{Q}}^\times\right)^{\chi}$\,. 
\begin{itemize}
\item The \emph{canonical Selmer structure} $\FFc$ is given by the choice of local conditions 
$$H^1_{\FFc}(F_\frak{q},T)=\left(\oplus_{\frak{Q}\mid \frak{q}}\,\,\oo_{L_{\frak{Q}}}^\times\right)^{\chi}\subset H^1(F_\frak{q},T)$$ 
for all primes $\frak{q}$ of $F$.
\item The \emph{$\frak{L}$-restricted Selmer structure} $\FF_{\frak{l}}$ is given by the local conditions
\begin{itemize}
\item $H^1_{\FF_{\frak{l}}}(F_\frak{q},T)=H^1_{\FFc}(F_\frak{q},T)$ for every prime $\frak{q} \nmid p$, and
\item $H^1_{\FF_{\frak{l}}}(F_p,T)=\mathcal{V}_F^-\oplus\frak{l}$.
\end{itemize}
\item The \emph{$p$-transversal-Selmer structure} $\FF_{\textup{tr}}$ is given by the local conditions
\begin{itemize}
\item $H^1_{\FF_{\textup{tr}}}(F_\frak{q},T)=H^1_{\FFc}(F_\frak{q},T)$ for every prime $\frak{q} \nmid p$, and
\item $H^1_{\FF_{\textup{tr}}}(F_p,T)=\mathcal{V}_F^-$.
\end{itemize}
\end{itemize}
\end{define}
We refer the reader to \cite[\S2.1]{mr02} for the definition of a Selmer structure in its most general form. 
\begin{define}
Given a Selmer structure $\FF$ on $T$, we define the \emph{dual Selmer structure} $\FF^*$ on $T^*$ using local Tate duality (as in \cite[Definition 2.3.1]{mr02}).
\end{define}

 Recall the finite set $\Sigma$ of primes of $F$ which consists of all primes that ramifies in $L/F$, all archimedean primes of $F$ and all primes of $F$ above $p$. Let $F_\Sigma$ denote the maximal extension of $F$ contained in $\bar{F}$ which is unramified outside $\Sigma$ and let $G_\Sigma$ denote the Galois group $\textup{Gal}(F_\Sigma/F)$. 
 
 \begin{define}
 \label{def:selmergroups}
 For $\FF=\FFc$\,, $\FF_{\frak{l}}$\,, or $\FF_{\textup{tr}}$\,, we define the \emph{$\FF$-Selmer group} on the quotient $T$ of $\TT$ by setting
 $$H^1_\FF(F,T)=\ker\left(H^1(G_{\Sigma},T)\lra \bigoplus_{\frak{q} \in \Sigma}H^1(F_\frak{q},T)/H^1_\FF(F_\frak{q},T)\right).$$
\end{define}
\begin{example}
\label{example:kummer}
We have $H^1_{\FFc}(F,T)=\oo_L^{\times,\chi}$ and $H^1_{\FFc^*}(F,T^*)^\vee\cong\textup{Cl}(L)^\chi$. See \cite[\S6.1]{mr02} for details.
\end{example}

\subsection{Modified Selmer structures for $\mathbb{G}_m$ along $F^{\textup{cyc}}$ and $F_\infty$}
\label{subsec:modifiedselmerG_mcyclo}
We set $\TT_{\textup{cyc}}:=T\otimes\Lambda_{\textup{cyc}}$ and $\TT=T\otimes\LL$ (with diagonal $G_F$-action). The definitions we give in this section will be used to prove various forms of CM main conjectures, which will in turn be used to turn the divisibilities in the cyclotomic (supersingular) main conjecture for CM elliptic curves into equalities.
\begin{define}
\label{def:selmerstructurecyclotomic}
The \emph{canonical Selmer structure} $\FFc$ on $X$ (where $X=\TT_{\textup{cyc}}, \TT$) is given by the choice of local conditions $H^1_{\FFc}(F_\frak{q},X)=H^1_{\FFc}(F_\frak{q},X)$, 
for all primes $\frak{q}$ of $F$. Note that the associated Selmer group $H^1_{\FFc}(F,X)$ is simply the module $H^1(F,X)$.
\end{define}
\begin{lemma}
\label{lemma:H1facalongcyclotower}
Suppose that the weak Leopoldt conjecture holds true for the number field $L$. Then the $\LL_{\textup{cyc}}$-module $H^1_{\FFc}(F,\TT_{\textup{cyc}})$ is free of rank $g$.
\end{lemma}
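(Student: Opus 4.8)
The plan is to compute the $\LL_{\textup{cyc}}$-rank and then kill the torsion using the weak Leopoldt hypothesis. First I would recall that $H^1_{\FFc}(F,\TT_{\textup{cyc}}) = H^1(F,\TT_{\textup{cyc}}) = H^1(G_\Sigma, T\otimes\LL_{\textup{cyc}})$ by Definition~\ref{def:selmerstructurecyclotomic}, where $\Sigma$ is the finite set of bad primes together with the primes above $p$ and the archimedean places. By Shapiro's lemma this identifies with the inverse limit $\varprojlim_n H^1(G_{\Sigma}, T\otimes\ooo[\Gamma_{\textup{cyc},n}])$, i.e.\ with Iwasawa cohomology along the cyclotomic tower. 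Since $\LL_{\textup{cyc}}$ is a regular local ring of dimension $2$, it suffices to show (a) that the module is $\LL_{\textup{cyc}}$-torsion-free, and (b) that it has generic rank $g$; freeness then follows because a finitely generated torsion-free module over $\LL_{\textup{cyc}}$ that is moreover reflexive — which holds here since Iwasawa cohomology $H^1$ has no nonzero pseudo-null submodules by the standard Iwasawa-theoretic argument (e.g.\ via the spectral sequence relating it to Galois cohomology of the residual representation, or via Jannsen's or Nekov\'a\v{r}'s results) — is free over a two-dimensional regular local ring. So the structure of the argument is: control the generic rank via the global Euler characteristic formula, and control torsion-freeness via weak Leopoldt.

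For the rank computation I would use the global Euler--Poincar\'e characteristic formula over $F$ for the module $T\otimes\LL_{\textup{cyc}}$, or equivalently apply it at each finite layer $F_n$ to $T$ and pass to the limit. Since $T = \ooo\otimes\chi^{-1}$ is of $\ooo$-rank $1$ and $F$ is a CM field with $[F:\QQ]=2g$, the formula gives $\mathrm{rk}\, H^1 - \mathrm{rk}\, H^0 - \mathrm{rk}\, H^2 = \sum_{v \mid \infty} \mathrm{rk}\, H^0(F_v, T)$ (suitably interpreted $\LL_{\textup{cyc}}$-adically, so that the global $H^2$ term is measured generically). The $H^0$ term vanishes because $\chi\neq\mathbf 1$ on $G_F$ (indeed $\chi$ is nontrivial at primes above $p$ by~\eqref{eqn:assnotrivxhi}, and one also has $\chi\neq\omega$), and the weak Leopoldt conjecture for $L$ is precisely the statement that the relevant $H^2$ (equivalently $H^2(G_\Sigma, T\otimes\LL_{\textup{cyc}})$, or the dual fine Selmer-type module) is $\LL_{\textup{cyc}}$-torsion, hence contributes $0$ to the generic rank. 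The archimedean contribution counts complex places: since $F$ is CM it has $g$ complex places, each contributing $\mathrm{rk}\,\ooo = 1$ after accounting for the $\Gamma_{\textup{cyc}}$-coinvariants, giving total rank $g$. (Here one uses that $p$ is odd, so the contributions of real places — there are none, as $F$ is totally imaginary — and the distinction between $H^0$ and $\hat H^0$ at infinity are harmless.) This yields $\mathrm{rk}_{\LL_{\textup{cyc}}} H^1_{\FFc}(F,\TT_{\textup{cyc}}) = g$.

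For torsion-freeness: a nonzero torsion submodule of $H^1(G_\Sigma, T\otimes\LL_{\textup{cyc}})$ would, after reducing to a height-one prime, produce $\LL_{\textup{cyc}}/\mathfrak p$-torsion; but the long exact sequence associated to multiplication by a uniformizer (or more precisely the fact that $H^0(G_\Sigma, T\otimes \LL_{\textup{cyc}}) = 0$, again by nontriviality of $\chi$) forces $H^1$ to inject into an inverse limit of finitely generated free modules, hence to be torsion-free — the only obstruction being captured by the $H^0$ term, which vanishes. Combining torsion-freeness, the absence of pseudo-null submodules, and $\mathrm{rk} = g$ over the two-dimensional regular local ring $\LL_{\textup{cyc}}$, we conclude that $H^1_{\FFc}(F,\TT_{\textup{cyc}})$ is free of rank $g$. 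The main obstacle I anticipate is not any single step but bookkeeping the passage from the finite-layer Euler characteristic formula to the $\LL_{\textup{cyc}}$-adic statement cleanly — in particular making sure the weak Leopoldt input is invoked in exactly the right form (torsionness of the $\LL_{\textup{cyc}}$-adic $H^2$) so that it controls the generic rank, and separately confirming the no-pseudo-null-submodules property in this CM setting, which is where one must be a little careful since $\LL_{\textup{cyc}}$ here is two-dimensional (it is $\ooo[[\Gamma_{\textup{cyc}}]]$ with $\ooo$ the ring of integers of the unramified quadratic extension $\Phi$ of $\QQ_p$, not a one-variable Iwasawa algebra over $\ZZ_p$).
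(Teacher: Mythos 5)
Your rank computation (weak Leopoldt $\Rightarrow$ the relevant $H^2$, equivalently the dual Selmer group, is torsion, so the global Euler characteristic gives generic rank $g$) and your torsion-freeness argument (vanishing of $H^0$, coming from the nontriviality of $\chi$) are both fine and match the paper in substance, which gets rank $g$ from cotorsionness of $H^1_{\FFc^*}(F,\TT_\cyc^*)$ via Poitou--Tate duality. The gap is in the final step. You deduce freeness from ``torsion-free $+$ reflexive'', and you justify reflexivity by saying that $H^1$ has no nonzero pseudo-null submodules. That inference is not valid: over the two-dimensional regular local ring $\LL_\cyc$ every pseudo-null module is torsion, so a torsion-free module automatically has no nonzero pseudo-null submodules, and this property is strictly weaker than reflexivity. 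For instance the maximal ideal $(\pi,\gamma-1)\subset\LL_\cyc$ is torsion-free of rank one, has no nonzero pseudo-null submodules, yet is neither reflexive nor free. So as written, nothing in your argument rules out $H^1_{\FFc}(F,\TT_\cyc)$ being a non-free torsion-free module of rank $g$; reflexivity (or projectivity) requires a genuine input you have not supplied.

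The paper closes this gap by a control-theorem descent rather than by reflexivity: the augmentation induces an injection $H^1_{\FFc}(F,\TT_\cyc)/(\gamma-1)\hookrightarrow H^1_{\FFc}(F,T)$ (comparing local conditions at $p$ uses hypothesis (\ref{eqn:assnotrivxhi})), and $H^1_{\FFc}(F,T)\cong\oo_L^{\times,\chi}$ is $\ooo$-free of rank $g$ by Lemma~\ref{lem:structure for units}; Nakayama then bounds the number of generators by $g$, and a nontrivial relation among $g$ generators would force the generic rank below $g$, contradicting the rank computation. If you prefer a route closer in spirit to yours, the conceptual fix is the one sketched in Remark~\ref{rem:freenessconceptually}: under (\ref{eqn:assnotrivxhi}) the relevant Nekov\'a\v{r} Selmer complex is perfect and concentrated in degrees $1$ and $2$, so $H^1$ is the kernel of a map of finitely generated projectives, i.e.\ a second syzygy, hence projective (free) over the two-dimensional regular local ring $\LL_\cyc$. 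Either of these would complete your argument; the ``no pseudo-null submodules $\Rightarrow$ reflexive'' step cannot.
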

\begin{proof}
A form of weak Leopoldt's conjecture is that the dual (canonical) Selmer group $H^1_{\FFc^*}(F,\TT_\cyc^*)$ is $\LL_\cyc$-cotorsion. It follows from the hypothesis (\ref{eqn:assnotrivxhi}) that the $\LL_{\textup{cyc}}$-module $H^1_{\FFc}(F,\TT_{\textup{cyc}})$ is torsion-free and by Poitou-Tate global duality that it is of rank $g$. Let $\gamma$ be a topological generator of $\Gamma^{\textup{cyc}}$. To see that the module $H^1_{\FFc}(F,\TT_{\textup{cyc}})$ is in fact free, observe that the augmentation map induces an injective map
$$H^1_{\FFc}(F,\TT_{\textup{cyc}})/(\gamma-1) \hookrightarrow H^1_{\FFc}(F,T)$$
by the discussion in \S1.6.C, Proposition B.3.3 along with the proof of Proposition 3.2.6 of \cite{r00}. Note that in order to compare local conditions at $p$, we rely on our assumption (\ref{eqn:assnotrivxhi}). This and Lemma~\ref{lem:structure for units} shows by Nakamaya's lemma that the $\LL_{\textup{cyc}}$-module $H^1_{\FFc}(F,\TT_{\textup{cyc}})$ may be generated by at most $g$ elements. If this set of generators satisfied a non-trivial $\LL_{\textup{cyc}}$-linear relation, it would follow that 
the dimension of the $\textup{Frac}(\LL)$ vector space $H^1_{\FFc}(F,\TT_{\textup{cyc}}) \otimes_{\LL_\cyc} \textup{Frac}(\LL_{\cyc})$ (where $\textup{Frac}(\LL_{\cyc})$ is the field of fractions of $\LL_\cyc$) is strictly smaller than $g$ and this would contradict the fact that $H^1_{\FFc}(F,\TT_{\textup{cyc}})$ is a $\LL_{\textup{cyc}}$-module of rank $g$.
\end{proof}
\begin{rem}
\label{rem:freenessconceptually}
One may use Nekov\'a\v{r}'s theory of Selmer complexes to give a more conceptual proof of Lemma~\ref{lemma:H1facalongcyclotower} (in fact, along the way, to prove also that the $\LL$-module $H^1_{\FFc}(F,\TT)$ is free of rank $g$, which what we explain in what follows). Let $\widetilde{R\Gamma}_{f,\textup{Iw}}(F_\infty/F,T)$ be Nekov\'a\v{r}'s Selmer complex associated to $\TT$, which is given by the Greenberg local conditions determined by the choice $U_v^+=T$ for every prime $v$ of $F$ above $p$. As we have assumed (\ref{eqn:assnotrivxhi}), it follows from \cite[Lemma 9.6.3]{nekovar06} (and \cite[Proposition 8.8.6]{nekovar06} used in order to pass to limit) that
$$\widetilde{H}^1_{f}(F_{\Sigma}/F_{\infty},T) \stackrel{\sim}{\lra} H^1_{\FFc}(F,\TT)$$
where $\widetilde{H}^1_{f}$ denotes the cohomology of the Selmer complex in degree $1$. Under the hypothesis (\ref{eqn:assnotrivxhi}), Nekov\'a\v{r} proved that the Selmer complex may be represented by a perfect complex concentrated in degrees $1$ and $2$. In particular, its cohomology in degree $1$ is a projective (hence free) $\LL$-module. The fact that it is of rank $g$ may also be deduced from Nekov\'a\v{r}'s control and duality theorems: We have
\begin{align}\label{eqn:cokernelofnekdescent}\textup{coker}\left(\widetilde{H}^1_{f}(F_{\Sigma}/F_{\infty},T) \lra \widetilde{H}^1_{f}(F_{\Sigma}/F^{\cyc},T)\right)&\cong \widetilde{H}^2_{f}(F_{\Sigma}/F_{\infty},T)[\gamma_*-1]\\
\notag&\cong H^1_{\FFc^*}(F,\TT^*)^{\vee}[\gamma_*-1]\end{align}
where $\gamma_*$ is a topological generator of $\Gamma/\Gamma^{\cyc}$ and the first isomorphism follows from Nekov\'a\v{r}'s control theorem \cite[8.10.1]{nekovar06}\,;  second from his duality theorem \cite[8.9.6.2]{nekovar06}. One may identify $H^1_{\FFc^*}(F,\TT^*)^{\vee}$ with $\displaystyle{\varprojlim _{L\subset M\subset LF_\infty} \textup{Cl}(M)^\chi}$ and argue using classical Iwasawa theory that the cokernel  (\ref{eqn:cokernelofnekdescent}) is $\LL_{\cyc}$-torsion and the $\LL$-module $H^1_{\FFc}(F,\TT)$ cannot be generated by less than $g$ elements. On the other hand, the proof of Lemma \ref{lemma:H1facalongcyclotower} shows that it may be generated by at most $g$ elements as well.
\end{rem}


\begin{define}
\label{define:calucyc}
Let $\mathcal{V}_F^-$ be as in Definition~\ref{def:saturatedunitcondition} and let $\mathcal{V}_{\cyc}$ be any rank-$g$ direct summand of (the free, rank-$2g$ $\LL_\cyc$-module) $H^1(F_p,\TT_\cyc)$ which lifts $\mathcal{V}_F^-$ under the surjection $H^1(F_p,\TT_\cyc)\twoheadrightarrow H^1(F_p,T)\,$. Likewise, once $\mathcal{V}_\cyc$ is chosen, let $\mathcal{V}$ be any rank-$g$ direct summand of (the free, rank-$2g$ $\LL$-module) $H^1(F_p,\TT)$ which lifts $\mathcal{V}_\cyc$ under the surjection $H^1(F_p,\TT)\twoheadrightarrow H^1(F_p,\TT_\cyc)\,$. Such lifts exist by Nakayama's lemma. Set $\mathcal{Q}:=H^1(F_p,\TT)/\mathcal{V}$ and similarly define $\mathcal{Q}_\cyc$.

Let $\al\subset H^1(F_p,\TT)$ be any rank-one direct summand of $H^1(F_p,\TT)$ such that $\al\,\cap\, \mathcal{V}=0$ and $\al+\mathcal{V}$ is a free rank $g+1$ direct summand of $H^1(F_p,\TT)$. Let $\al_\cyc$ its image in $H^1(F_p,\TT_\cyc)$. The existence of such a direct summand follows once again from Nakayama's lemma. It is also easy to observe that $\al_\cyc \cap \mathcal{V}_\cyc=0$ and $\al_\cyc+\mathcal{V}_\cyc$ is a free rank $g+1$ direct summand of $H^1(F_p,\TT_\cyc)$.
\end{define}

\begin{prop}
\label{prop:canliftfromthegroundlevel} 
The intersection of $\mathcal{V}_\cyc$ and the image of $H^1_{\FFc}(F,\TT_\cyc)$ (under the localization map at $p$) is trivial. Likewise, the intersection of $\mathcal{V}$ and the image of $H^1_{\FFc}(F,\TT)$  is trivial as well.
\end{prop}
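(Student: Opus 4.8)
The plan is to reduce the statement to a rank count combined with the freeness result from Lemma~\ref{lemma:H1facalongcyclotower} (and the free-rank-$g$ statement for $H^1_{\FFc}(F,\TT)$ recorded in Remark~\ref{rem:freenessconceptually}). First I would observe that the localization map $\textup{loc}_p\colon H^1_{\FFc}(F,\TT_\cyc)\to H^1(F_p,\TT_\cyc)$ is \emph{injective}: its kernel is the strict Selmer group $H^1_{\FF_{\textup{str}}}(F,\TT_\cyc)$ whose local conditions are trivial at $p$ and canonical (unramified, i.e. local units) away from $p$, and by Poitou--Tate global duality this is dual to a quotient of $H^1_{\FFc^*}(F,\TT_\cyc^*)$ by a module of the same rank as $H^1(F_p,\TT_\cyc)$. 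A standard Euler-characteristic/Greenberg computation with the global and local Tate Euler characteristics (using (\ref{eqn:assnotrivxhi}) to kill the relevant $H^0$ and $H^2$ terms above $p$, exactly as in the proof of Lemma~\ref{lemma:H1facalongcyclotower}) shows $H^1_{\FF_{\textup{str}}}(F,\TT_\cyc)$ has rank $0$; combined with torsion-freeness of $H^1_{\FFc}(F,\TT_\cyc)$ this forces the kernel to vanish. Hence $\textup{loc}_p$ embeds a free $\LL_\cyc$-module of rank $g$ into $H^1(F_p,\TT_\cyc)$.

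Next I would compare this rank-$g$ image with $\mathcal{V}_\cyc$ inside the free rank-$2g$ module $H^1(F_p,\TT_\cyc)$. By construction (Definition~\ref{define:calucyc}) $\mathcal{V}_\cyc$ reduces modulo $(\gamma-1)$ to $\mathcal{V}_F^-$, and by Definition~\ref{def:saturatedunitcondition} $\mathcal{V}_F^-$ is a complement to $\mathcal{V}_F^+=\textup{loc}_p(\oo_L^{\times,\chi})^{\textup{sat}}$. Meanwhile the image of $H^1_{\FFc}(F,\TT_\cyc)$ under $\textup{loc}_p$, reduced mod $(\gamma-1)$, lands (via the injection $H^1_{\FFc}(F,\TT_\cyc)/(\gamma-1)\hookrightarrow H^1_{\FFc}(F,T)=\oo_L^{\times,\chi}$ used in the proof of Lemma~\ref{lemma:H1facalongcyclotower}) inside $\textup{loc}_p(\oo_L^{\times,\chi})\subset\mathcal{V}_F^+$. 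So modulo $(\gamma-1)$ the two images meet only inside $\mathcal{V}_F^+\cap\mathcal{V}_F^-=0$. I would then argue: the intersection $\mathcal{V}_\cyc\cap\textup{loc}_p\big(H^1_{\FFc}(F,\TT_\cyc)\big)$ is a submodule of a free module, hence torsion-free, and its image mod $(\gamma-1)$ is contained in $\mathcal{V}_F^-\cap\textup{loc}_p(\oo_L^{\times,\chi})^{(\textup{cl})}$; a Nakayama-type argument (the intersection is finitely generated, its mod-$(\gamma-1)$ reduction is $0$, so it is $0$) then gives triviality of the intersection at the cyclotomic level. Finally the two-variable case is entirely parallel: $\mathcal{V}$ lifts $\mathcal{V}_\cyc$ under $H^1(F_p,\TT)\twoheadrightarrow H^1(F_p,\TT_\cyc)$, the localization of $H^1_{\FFc}(F,\TT)$ is injective (by the same strict-Selmer vanishing, now over $\LL$, using Remark~\ref{rem:freenessconceptually}), and reducing modulo the kernel of $\LL\twoheadrightarrow\LL_\cyc$ reduces the claim to the one just proved, again closing by Nakayama.

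The main obstacle I anticipate is the compatibility of reductions: one must check carefully that reducing $\textup{loc}_p\big(H^1_{\FFc}(F,\TT_\cyc)\big)$ modulo $(\gamma-1)$ really lands in $\textup{loc}_p(\oo_L^{\times,\chi})$ rather than merely in $H^1(F_p,T)$, i.e. that the descent map on Selmer groups is compatible with $\textup{loc}_p$ and with the Kummer-theoretic identifications; this is where the hypothesis (\ref{eqn:assnotrivxhi}) is doing real work (it guarantees the local conditions at $p$ are ``the whole $H^1$'' type so that no correction terms appear, exactly as flagged in the proof of Lemma~\ref{lemma:H1facalongcyclotower}). A secondary point is ensuring torsion-freeness of the intersection so that the Nakayama reduction is legitimate — this follows since $H^1(F_p,\TT_\cyc)$ and $H^1(F_p,\TT)$ are free by Proposition~\ref{prop:localfull}, so any submodule is torsion-free over the (regular, local) Iwasawa algebra and a finitely generated torsion-free module with zero special fiber vanishes.
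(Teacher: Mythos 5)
Your ingredients are the right ones---reduce modulo $(\gamma-1)$ to the $T$-level, where Leopoldt forces $\textup{loc}_p(\oo_L^{\times,\chi})\cap\mathcal{V}_F^-=0$, then exploit torsion-freeness---but the closing step is flawed as written. What you actually establish is that the intersection $N:=\mathcal{V}_\cyc\cap\textup{loc}_p\left(H^1_{\FFc}(F,\TT_\cyc)\right)$ has zero \emph{image} in $H^1(F_p,\TT_\cyc)/(\gamma-1)$, i.e.\ $N\subset(\gamma-1)H^1(F_p,\TT_\cyc)$. You then conclude via ``a finitely generated torsion-free module with zero special fiber vanishes.'' This conflates the image of $N$ in $M/(\gamma-1)M$ with the actual special fiber $N/(\gamma-1)N$; the natural map $N/(\gamma-1)N\to M/(\gamma-1)M$ is not in general injective, so having zero image does not give $N/(\gamma-1)N=0$. (For instance, $N=(\gamma-1)\LL_\cyc\subset M=\LL_\cyc$ is nonzero and torsion-free yet maps to zero in $M/(\gamma-1)M$.) Nakayama therefore does not apply in one shot.

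What is needed---and what the paper does---is a descent in $(\gamma-1)$. Consider the composite $\textup{Loc}_p\colon H^1_{\FFc}(F,\TT_\cyc)\to Q_\cyc:=H^1(F_p,\TT_\cyc)/\mathcal{V}_\cyc$; its kernel is exactly the intersection you want to kill. Take $u$ in the kernel; the commutative square comparing $\textup{Loc}_p$ on $\TT_\cyc$ to $\textup{loc}_p$ on $T$ (which also disposes of the ``compatibility of reductions'' concern you flag), together with Leopoldt-injectivity at the bottom, shows $\bar u=0$ in $H^1_{\FFc}(F,T)$, hence $u=(\gamma-1)u_0$ for some $u_0\in H^1_{\FFc}(F,\TT_\cyc)$. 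Then $(\gamma-1)\textup{Loc}_p(u_0)=0$, and since $Q_\cyc$ is free (in particular $(\gamma-1)$-torsion-free), $\textup{Loc}_p(u_0)=0$; so $u_0$ is again in the kernel. Iterating, $u\in\bigcap_k(\gamma-1)^kH^1_{\FFc}(F,\TT_\cyc)=0$. That is the form your ``Nakayama-type'' step has to take. Your preliminary step establishing injectivity of $\textup{loc}_p$ itself on $H^1_{\FFc}(F,\TT_\cyc)$ (via strict Selmer group vanishing) is plausible but unnecessary, since one can and should work directly with $\textup{Loc}_p$ into the quotient $Q_\cyc$.
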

\begin{proof}
Consider the commutative diagram
$$\xymatrix{H^1_{\FFc}(F,\TT_\cyc)\ar[r]^(.46){\textup{Loc}_p}\ar[d]& H^1(F_p,\TT_\cyc)/\mathcal{V}_\cyc\ar@{->>}[d]\\
H^1_{\FFc}(F,T)\ar@{^{(}->}[r]_(.46){\textup{loc}_p}&H^1(F_p,T)/\mathcal{V}_F^-
}$$
Suppose for $u \in H^1_{\FFc}(F,\TT_\cyc)$ we have $\textup{Loc}_p(u)=0$ and let $\bar{u}$ denote its image under the left vertical map. The diagram above shows that $\bar{u}=0$, thence 
$$u\in \ker\left(H^1_{\FFc}(F,\TT_\cyc)\ra H^1(F,T)\right)=(\gamma-1)H^1_{\FFc}(F,\TT_\cyc),$$ 
where $\gamma$ is any topological generator of $\Gamma^\cyc$. Write $u=(\gamma-1)u_0$. We have therefore have $(\gamma-1)\textup{Loc}_p(u_0)=0$ by the choice of $u$. Since the quotient $H^1(F_p,\TT_\cyc)/\mathcal{V}_\cyc$ is $\LL_\cyc$-torsion free, it follows that $\textup{Loc}_p(u_0)=0$, and repeating the argument above we conclude that $u=(\gamma-1)u_1$ with $u_1\in H^1_{\FFc}(F,\TT_\cyc)$. On running this procedure $k$ times, we conclude that $u\in (\gamma-1)^kH^1_{\FFc}(F,\TT_\cyc)$ for every $k$ and thence $u=0$ and the map $\textup{Loc}_p$ is injective, proving the first assertion. The proof of the second follows from the first in a  similar manner.
\end{proof}

\begin{define}
\label{def:localUandLcyclo}
\begin{itemize}
\item[(i)] Let $\mathcal{V}_{K(\frak{E})}$ be the direct summand (of rank $g$) of $\displaystyle{\varprojlim_{M\in \frak{E}}H^1(M_{p},T)}$ which maps onto $\mathcal{V}$ under the natural (surjective) corestriction map. 
\item[(ii)] For $M\in \frak{E}$, let $\mathcal{V}_M \subset H^1(M_{p},T)$ be the image of $\mathcal{V}_{K(\frak{E})}$ under the natural projection. 
\item[(iii)] Let $\frak{L}$ be any free, rank-one $\frak{O}[[\frak{G}(K(\frak{E}))]]$-direct summand of ${\varprojlim_{M\in \frak{E}}H^1(M_{p},T)}$ such that
\begin{itemize} 
\item $\frak{L}$ is not contained in $\mathcal{V}_{K(\frak{E})}$\,, 
\item $\frak{L} +\mathcal{V}_{K(\frak{E})}$ is also direct summand of $\varprojlim_{M\in \frak{E}}H^1(M_{p},T)$\,,
\item $\frak{L}$ maps onto $\al$ under the natural projection\,.
\end{itemize}
(Such $\frak{L}$ exists thanks to Nakayama's lemma again.)
\item[(iv)] For $M \in \frak{E}$,  let $\al_M \subset H^1(M_p,T)$ be the image of $\frak{L}$ under the natural projection $\varprojlim_{N}H^1(N_p,T)\twoheadrightarrow H^1(M_{p},T).$
\end{itemize}
\end{define}

We will make use of the following auxiliary \emph{Selmer structures} on the $G_F$-representation $\TT$ while proving various main conjectures for the field $F$ in Sections~\ref{sec:gras} and \ref{subsec:cyclomain} below. These results will in turn be utilized in sharpening the divisibility in the supersingular main conjecture for a CM elliptic curve $E$.
\begin{define}
\label{def:selmerstructure}
\begin{itemize}
\item The \emph{$\frak{L}$-restricted Selmer structure} $\FF_{\mathcal{L}}$ on $\TT$ is given by the local conditions
\begin{itemize}
\item $H^1_{\FF_{\mathcal{L}}}(F_\frak{q},\TT)=H^1_{\FFc}(F_\frak{q},\TT)$ for every prime $\frak{q} \nmid p$, and
\item $H^1_{\FF_\mathcal{L}}(F_p,\TT)=\mathcal{V}_{\textup{cyc}}\oplus\mathcal{L}$.
\end{itemize}
\item The \emph{$p$-transversal-Selmer structure} $\FF_{\frak{tr}}$ is given by the local conditions
\begin{itemize}
\item $H^1_{\FF_{\frak{tr}}}(F_\frak{q},\TT)=H^1_{\FFc}(F_\frak{q},\TT)$ for every prime $\frak{q} \nmid p$, and
\item $H^1_{\FF_{\frak{tr}}}(F_p,\TT)=\mathcal{V}$.
\end{itemize}
\end{itemize}
\end{define}
As in Definition~\ref{def:selmergroups}, all these Selmer structures give rise to a Selmer group (as well as a dual Selmer group, attached to the dual Selmer structure)
\begin{rem}
\label{rem:propagatecycselmer}
Any of the Selmer structures above \emph{propagates} (see \cite[Example 2.1.7]{mr02}) to give rise to Selmer structures on any subquotient of $\TT$. The propagated Selmer structure will still be denoted by the same symbol $\FF$.
\end{rem}
\begin{rem}
\label{rem:propgatestogroundchoice}
The Selmer structure $\FF_\frak{tr}$ on $\TT$ propagates to recover the Selmer structure $\FF_{\textup{tr}}$ on $T$, given as in Definition~\ref{def:selmerstructure}. Likewise, if the rank-$1$ direct summand $\al$ is chosen to lift $\frak{l}$ (which was given in Definition~\ref{def:modifiedlocalcondition}), then the Selmer structure $\FF_\mathcal{L}$ on $\TT$ propagates to recover the Selmer structure $\FF_{\frak{l}}$ on $T$.
\end{rem}
\subsection{Modified Selmer structures for $E$}
\label{subsec:modifiedselmerE}
We set $\TT(E)=T(E)\otimes\LL$ and $\TT_\cyc(E)=T(E)\otimes\LL_\cyc$. The goal in this section is to define various Selmer structures for these representation which we shall study with the aid of the (conjectural) Rubin-Stark elements. Note that in order to do so, we will exploit the fact that $\TT(E)$ is closely related to the representation $\TT$ for an appropriately chosen Dirichlet character $\chi$.
\subsubsection{Preliminaries}
\label{subsubsec:prelim}
As above, let $E$ be an elliptic curve defined over $F^+$ which has CM by $K$. We shall assume that $p$ is inert in $K/\QQ$. We denote the unique prime of $K$ above $p$ also by $p$ and the completion $K_p$ by $\Phi$. By slight abuse, we let $\frak{O}$ denote the ring of integers of $\Phi$ and let
$$\rho: G_F\lra \textup{Aut}(E[p^\infty])\cong\frak{O}^\times$$
be the associated $p$-adic Hecke character. For any $G_F$-module $Y$, we define its twist by $\rho$ by setting $Y(\rho):= Y\otimes\textup{Hom}(E[p^\infty],\Phi/\ooo)$. Theory of complex multiplication allows one to identify $T_p(E)$ with $\ooo(\rho)$, the free $\ooo$-module of rank $1$ on which $G_F$ acts via $\rho$. We will implicitly identify the Cartier dual $T_p(E)^*$ with $E[p^\infty]$ via the Weil pairing.  
\begin{define}
\label{def:modpGalrepofE}
Let $\omega_E:G_F\ra \ooo^\times$ denote the character which gives the action of $G_F$ on $E[p]$ and let $\langle\rho\rangle:=\rho\otimes\omega^{-1}_E$. Note then that the character $\langle\rho\rangle$ factors through $\Gamma$. 
\end{define}
Throughout this section we will set the character $\chi=\omega_E$ so that $T=\ooo(1)\otimes \omega_E^{-1}$.
\begin{define}
Let $\textup{tw}: T \ra T(E)$ (the \emph{twisting map}) denote the compositum of the maps
$$T\lra T\otimes \langle\rho\rangle^{-1} \stackrel{\mathcal{W}}{\lra} T(E)$$
where $\mathcal{W}$ is induced from Weil pairing. The twisting map induces isomorphisms
$$\textup{tw}:\,H^1(F,\TT)\stackrel{\sim}{\lra} H^1(F,\TT(E))$$
and for every place $v$ of $F$,
$$\textup{tw}:\,H^1(F_v,\TT)\stackrel{\sim}{\lra} H^1(F_v,\TT(E))\,\,.$$
\end{define}
\subsubsection{Selmer structures}
\label{subsubsec:selmerstrE}
We set $\mathcal{M}=\textup{tw}\left(\textup{loc}_p\left(H^1_{\FFc}\left(F,\TT\right)\right)\right) \subset H^1(F_p,\TT(E))$ and let $\mathcal{M}_\cyc \subset H^1(F_p,\TT_\cyc(E))$ be its projection. Note that $\mathcal{M}$ is a free $\LL$-module and $\mathcal{M}_{\cyc}$ a free $\LL_\cyc$-module, and both have rank $g$.

\begin{lemma}
\label{lem:auxtorsion}
If the $\LL_\cyc$-module $H^1_{\FFc^*}(F,\TT_{\cyc}(E)^*)^\vee$ is torsion, then so is the quotient $$\textup{loc}_p\left(H^1(F,\TT_{\cyc}(E))\right)/\mathcal{M}_\cyc\,.$$\end{lemma}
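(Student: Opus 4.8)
The plan is to reduce the statement to a count of $\LL_\cyc$-ranks, taking advantage of the fact that $\LL_\cyc$ is a domain. First I would record two reductions. Via the twisting map, which induces an isomorphism $\textup{tw}\colon H^1(F,\TT)\xrightarrow{\sim}H^1(F,\TT(E))$ compatible with localization at $p$, and since $H^1_{\FFc}(F,\TT)=H^1(F,\TT)$, we get $\mathcal{M}=\textup{tw}\big(\textup{loc}_p(H^1(F,\TT))\big)=\textup{loc}_p\big(H^1(F,\TT(E))\big)$. Functoriality of cohomology along the natural surjection $\TT(E)\twoheadrightarrow\TT_\cyc(E)$ then shows that $\mathcal{M}_\cyc$ is $\textup{loc}_p$ applied to the image of $H^1(F,\TT(E))\to H^1(F,\TT_\cyc(E))$; in particular $\mathcal{M}_\cyc\subseteq\textup{loc}_p\big(H^1(F,\TT_\cyc(E))\big)$. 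Since $\mathcal{M}_\cyc$ is free of rank $g$ over the domain $\LL_\cyc$, the quotient in question is $\LL_\cyc$-torsion as soon as $\textup{loc}_p\big(H^1(F,\TT_\cyc(E))\big)$ has $\LL_\cyc$-rank $\le g$; and since the latter is a quotient of $H^1(F,\TT_\cyc(E))$, it is enough to prove $\textup{rank}_{\LL_\cyc}H^1(F,\TT_\cyc(E))=g$.

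For this I would run the argument from the proof of Lemma~\ref{lemma:H1facalongcyclotower} with $\TT_\cyc$ replaced by $\TT_\cyc(E)$. Recall $H^1(F,\TT_\cyc(E))=H^1_{\FFc}(F,\TT_\cyc(E))=H^1(G_\Sigma,\TT_\cyc(E))$. The global Euler characteristic formula over $\LL_\cyc$, applied to $\TT_\cyc(E)$ (a module of generic rank $1$ over $\LL_\cyc$) and to the CM field $F$, which has exactly $g$ archimedean places, all complex, reads
\[
\textup{rank}_{\LL_\cyc}H^0(G_\Sigma,\TT_\cyc(E))-\textup{rank}_{\LL_\cyc}H^1(G_\Sigma,\TT_\cyc(E))+\textup{rank}_{\LL_\cyc}H^2(G_\Sigma,\TT_\cyc(E))=-g.
\]
Here $H^0(G_\Sigma,\TT_\cyc(E))=0$ because $G_F$ acts on $\TT_\cyc(E)$ through a character of infinite order. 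For the $H^2$-term, Poitou--Tate duality identifies $\Sha^2(\TT_\cyc(E))$ with $\Sha^1(\TT_\cyc(E)^*)^\vee$, and $\Sha^1(\TT_\cyc(E)^*)$ is an $\LL_\cyc$-submodule of $H^1_{\FFc^*}(F,\TT_\cyc(E)^*)$, which is $\LL_\cyc$-cotorsion by hypothesis; moreover $\bigoplus_{v\in\Sigma}H^2(F_v,\TT_\cyc(E))$ has $\LL_\cyc$-rank $0$, since it vanishes for $v\mid p$ by \textbf{(H.p1)}, Lemma~\ref{lem:extendp1toM} and Shapiro's lemma (passing to the limit along the cyclotomic tower), and is $\LL_\cyc$-cotorsion by local duality for the finitely many $v\nmid p$ in $\Sigma$. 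Hence $H^2(G_\Sigma,\TT_\cyc(E))$ has $\LL_\cyc$-rank $0$, so $\textup{rank}_{\LL_\cyc}H^1(F,\TT_\cyc(E))=g$.

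Putting the two steps together, $\textup{loc}_p\big(H^1(F,\TT_\cyc(E))\big)$ has $\LL_\cyc$-rank $\le g$, while it contains the rank-$g$ module $\mathcal{M}_\cyc$; hence it has rank exactly $g$, and the finitely generated $\LL_\cyc$-module $\textup{loc}_p\big(H^1(F,\TT_\cyc(E))\big)/\mathcal{M}_\cyc$ has $\LL_\cyc$-rank $0$, i.e.\ is torsion. The only substantive point is the rank computation in the middle step---in particular pinning down the archimedean contribution to the global Euler characteristic over the CM field $F$ and justifying the vanishing of the semi-local $H^2$ above $p$ in the $\LL_\cyc$-adic setting; everything else is formal manipulation of ranks over the domain $\LL_\cyc$. (Alternatively, the descent sequence $0\to\TT(E)\xrightarrow{\gamma_*-1}\TT(E)\to\TT_\cyc(E)\to 0$ exhibits $\textup{loc}_p(H^1(F,\TT_\cyc(E)))/\mathcal{M}_\cyc$ as a quotient of $H^2(G_\Sigma,\TT(E))[\gamma_*-1]$, but bounding that group amounts to knowing that $\gamma_*-1$ does not divide $\charr_\LL\big(H^2(G_\Sigma,\TT(E))\big)$, which is essentially a restatement of the hypothesis; the direct rank count at the cyclotomic level is cleaner.)
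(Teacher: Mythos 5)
Your proof is correct, but it takes a genuinely different route from the paper's. The paper stays at the two-variable level: it uses Nekov\'a\v{r}'s control theorem to identify the cokernel of $\widetilde{H}^1_f(F_\Sigma/F_\infty,T(E))\to\widetilde{H}^1_f(F_\Sigma/F^\cyc,T(E))$ with $\widetilde{H}^2_f(F_\Sigma/F_\infty,\TT(E))[\gamma_*-1]$, his duality theorem to rewrite the latter as $H^1_{\FFc^*}(F,\TT(E)^*)^\vee[\gamma_*-1]$, then invokes [MR02, Lemma 3.5.3] to get $H^1_{\FFc^*}(F,\TT(E)^*)^\vee/(\gamma_*-1)\cong H^1_{\FFc^*}(F,\TT_\cyc(E)^*)^\vee$ and finally [PR84, Lemme I.3.4(ii)] to pass from torsion of the quotient to torsion of the $(\gamma_*-1)$-kernel. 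In effect the paper shows $(\gamma_*-1)$ is prime to the characteristic ideal of the two-variable $\widetilde{H}^2_f$, which is the mechanism you sketch in your closing parenthetical. You instead work entirely at the cyclotomic level: compute $\textup{rank}_{\LL_\cyc}H^1(F,\TT_\cyc(E))=g$ via the global Euler characteristic formula (the archimedean contribution $-g$ being correct for a CM field and an $\ooo$-line like $T(E)$), handle $H^2$ by Poitou--Tate duality (using the hypothesis to control $\Sha^2$) and by $\textbf{(H.p1)}$/Lemma~\ref{lem:extendp1toM} for the semi-local term above $p$, and then conclude by a rank squeeze. This avoids Selmer complexes and the two-variable descent entirely, at the cost of carrying through the Euler-characteristic bookkeeping and the local $H^2$ analysis away from $p$. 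Two small slips that do not affect the argument: you write that $H^2(F_v,\TT_\cyc(E))$ for $v\nmid p$ is ``$\LL_\cyc$-cotorsion'' when you mean it is $\LL_\cyc$-torsion (it is a finitely generated compact module of rank zero); and your containment $\Sha^1(\TT_\cyc(E)^*)\subseteq H^1_{\FFc^*}(F,\TT_\cyc(E)^*)$ is in fact an equality, since the dual of the canonical (everywhere-relaxed) structure is the everywhere-strict one.
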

\begin{rem}
The statement that  $H^1_{\FFc^*}(F,\TT_\cyc(E)^*)^\vee$ is $\LL_\cyc$-torsion is a form of the weak Leopoldt conjecture for the elliptic curve $E$. See Corollary~\ref{cor:localnonvanishingimpliesweakLeoforE} and Theorem~\ref{thm:theweakleopoldtconjforE} below where we verify the weak Leopoldt conjecture for $E$ (at primes $p$ which split completely in $F^+/F$) assuming the Explicit Reciprocity Conjecture~\ref{conj:reciprocity} for the Rubin-Stark elements.
\end{rem}
\begin{proof}[Proof of Lemma~\ref{lem:auxtorsion}]
Let $\gamma_*$ be any lift of a topological generator of $\Gamma/\Gamma^\cyc$. Proof follows, as in the discussion of Remark~\ref{rem:freenessconceptually} (particularly, using Nekov\'a\v{r}'s control theorem as in (\ref{eqn:cokernelofnekdescent})), once we verify that 

\begin{align*}\textup{coker}\left(\widetilde{H}^1_{f}(F_{\Sigma}/F_{\infty},T(E)) \lra \widetilde{H}^1_{f}(F_{\Sigma}/F^{\cyc},T(E))\right)\cong\widetilde{H}^2_{f}(F_{\Sigma}/F_{\infty},\TT(E))[\gamma_*-1]
\end{align*}
is $\LL_\cyc$-torsion. (This is because the quotient $\textup{loc}_p\left(H^1(F,\TT_{\cyc}(E))\right)/\mathcal{M}_\cyc$ is a homomorphic image of the quotient 
$$H^1(F,\TT_\cyc(E))/\textup{im}\left(H^1(F,\TT(E))\right)\cong \widetilde{H}^1_{f}(F_{\Sigma}/F_{\cyc},T)/\textup{im}\left(\widetilde{H}^1_{f}\left(F_{\Sigma}/F_{\infty},T\right)\right)\,.)$$
Note that we have  
$$\widetilde{H}^2_{f}(F_{\Sigma}/F_{\infty},\TT(E))\cong H^1_{\FFc^*}(F,\TT(E)^*)^\vee$$ 
by \cite[8.9.6.2]{nekovar06}. Furthermore, 
$$H^1_{\FFc^*}(F,\TT(E)^*)^\vee/(\gamma_*-1)\cong  \left(H^1_{\FFc^*}(F,\TT(E)^*)[\gamma_*-1]\right)^\vee\cong  H^1_{\FFc^*}(F,\TT_\cyc(E)^*)^\vee$$
where the second isomorphism is by \cite[Lemma 3.5.3]{mr02} and hence, we conclude thanks to our assumption that $H^1_{\FFc^*}(F,\TT(E)^*)^\vee/(\gamma_*-1)$  is $\LL_\cyc$-torsion. 

We now conclude by \cite[Lemme I.3.4(ii)]{PR84memoirs} that 
$$H^1_{\FFc^*}(F,\TT(E)^*)^\vee[\gamma_*-1]\cong \widetilde{H}^2_{f}(F_{\Sigma}/F_{\infty},\TT(E))[\gamma_*-1]$$
is $\LL_\cyc$-torsion as well.
\end{proof}
\begin{define}
\label{def:choosetransverseforE}
Let $\mathbb{V}^\cyc_E \subset H^1(F_p,\TT_\cyc(E))$ be a free rank-$g$ direct summand with the property that $\mathbb{V}^\cyc_E \cap \mathcal{M}_\cyc=0$. Note that such a direct summand exists by rank considerations. Let $\mathbb{V}_E \subset H^1(F_p,\TT(E))$ be any  rank-$g$ direct summand which maps onto $\mathbb{V}^\cyc_E$ under the surjection $H^1(F_p,\TT(E))\ra H^1(F_p,\TT_\cyc(E))$\,.
\end{define}
See Remark~\ref{rem:localconditionsnatural} below for a natural choice of $\mathbb{V}^\cyc_E$ under the additional hypothesis that $p$ splits completely in $F^+/\QQ$, using Kobayashi's plus/minus Iwasawa theory. We will use these choices in order to prove the main theorems of this article.
\begin{rem}
\label{rem:liftistransversalaswell}
The proof of Proposition~\ref{prop:canliftfromthegroundlevel} may be modified to prove that $\mathbb{V}_E\cap \mathcal{M}=0$ as well.
\end{rem}
\begin{define}
\label{def:twistbacktolocalconditionsonGm}
Let $\mathbb{V}:=\textup{tw}^{-1}\left(\mathbb{V}_E\right) \subset H^1(F_p,\TT)$. Let $\mathbb{L} \subset H^1(F_p,\TT)$ a rank-one direct summand of $H^1(F_p,\TT)$ such that $\mathbb{L}\,\cap\, \mathbb{V}=0$ and $\mathbb{L}+\mathbb{V}$ is a free rank $g+1$ direct summand of $H^1(F_p,\TT)$. As before, the existence of such a direct summand follows from Nakayama's lemma. Let $\mathbb{L}_E \subset H^1(F_p,\TT(E))$ denote its isomorphic image and $\mathbb{L}_E^{\cyc}$ its image under the projection map to $H^1(F_p,\TT_\cyc(E))$.
\end{define}
Note that $\mathbb{V}\,\cap\,\textup{loc}_p\left(H^1_{\FFc}\left(F,\TT\right)\right)=0$ by the observation in Remark~\ref{rem:liftistransversalaswell}.



\begin{define}
\label{def:selmerstructureE}
\begin{itemize}
\item The \emph{canonical Selmer structure} $\FFc$ is given by the choice of local conditions $H^1_{\FFc}(F_\frak{q},\TT(E))=H^1(F_\frak{q},\TT(E))$, for all primes $\frak{q}$ of $F$.
\item The \emph{$\mathbb{L}$-restricted Selmer structure} is given by the local conditions
\begin{itemize}
\item $H^1_{\FF_\mathbb{L}}(F_\frak{q},\TT(E))=H^1_{\FFc}(F_\frak{q},\TT(E))$ for every prime $\frak{q} \nmid p$, and
\item $H^1_{\FF_{\mathbb{L}}}(F_p,\TT(E))=\mathbb{V}_E\oplus\mathbb{L}_E$.
\end{itemize}
\item The \emph{Kobayashi Selmer structure} $\FF_{\textup{Kob}}$ is given by the local conditions
\begin{itemize}
\item $H^1_{\FF_{\textup{Kob}}}(F_\frak{q},\TT(E))=H^1_{\FFc}(F_\frak{q},\TT(E))$ for every prime $\frak{q} \nmid p$, and
\item $H^1_{\FF_{\textup{Kob}}}(F_p,\TT(E))=\mathbb{V}_E$.
\end{itemize}
\end{itemize}
\end{define}

Given a Selmer structure $\FF$ on $\TT(E)$, we can talk about the \emph{dual Selmer structure} $\FF^*$ on $\TT^*$, the Selmer group attached to it and its propagations to the various subquotients of $\TT(E)$ (most important of which are $\TT_\cyc(E)$ and $T(E)$ for our purposes), as we have done so previously. Via the twisting isomorphism $\textup{tw}$, we also obtain a Selmer structure (by a slight abuse, which we still denote by $\FF_\mathbb{L}$ or $\FF_{\textup{Kob}}$) on $\TT$ (and on its various subquotients). 
\subsection{Global duality and comparison of Selmer groups}
\label{subsec:compareselmer}
Let $R$ be a complete local noetherian domain with maximal ideal $\frak{M}$. Let $X$ be an $R$-module of finite type. We will write $\overline{X}:=X\otimes_R R/\frak{M}$ for its reduction modulo the maximal ideal of $R$. Note that 
$$\overline{\TT}=\overline{\TT}_\cyc=\overline{T}=\pmb{\mu}_p\otimes\chi^{-1}$$ 
and
$$\overline{\TT}(E)=\overline{\TT}_\cyc(E)=\overline{T(E)}=E[p]$$
as $G_F$-representations. 
In particular, when $\chi$ is chosen to be $\omega_E^{-1}$, it follows thanks to the Weil pairing that all the six residual representations we consider above agree.

Let $k$ denote the residue field of $\ooo$.
\begin{lemma}
\label{lem:comparefirststep} Assume the truth of Leopoldt's conjecture for the number field $L$. We have
$$\dim_{k}\, H^1_{\FF}(F,\overline{T})=\dim_{k}\, H^1_{\FF^*}(F,\overline{T}^*)$$
for $\FF=\FF_\textup{tr},\FF_\frak{tr}$ or $\FF_\textup{Kob}$ and 
$$\dim_{k}\, H^1_{\mathcal{G}}(F,\overline{T})=\dim_{k}\, H^1_{\mathcal{G}^*}(F,\overline{T}^*)+1$$
for $\mathcal{G}=\FF_\frak{l}, \FF_\mathcal{L}$ or $\FF_\mathbb{L}$\,. $($Note that when $\FF=\FF_\textup{Kob}$ or $\mathcal{G}=\FF_{\mathbb{L}}$ we only consider the case $\chi=\omega_E$.$)$
\end{lemma}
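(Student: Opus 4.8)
The plan is to deduce both displayed equalities from the Greenberg–Wiles formula (the global Euler characteristic formula for Selmer structures, as in \cite[Theorem 2.3.4, Proposition 2.3.5]{mr02}), applied to the residual representation $\overline{T}$. Recall that for any Selmer structure $\FF$ on $\overline{T}$ one has
$$\frac{\# H^1_{\FF}(F,\overline{T})}{\# H^1_{\FF^*}(F,\overline{T}^*)}=\frac{\# H^0(F,\overline{T})}{\# H^0(F,\overline{T}^*)}\cdot\prod_{v}\frac{\# H^1_{\FF}(F_v,\overline{T})}{\# H^0(F_v,\overline{T})},$$
the product being over the places in $\Sigma$ (archimedean places included). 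So the first step is to compute the ``universal'' part of the right-hand side: the ratio $\# H^0(F,\overline{T})/\# H^0(F,\overline{T}^*)$, the archimedean contributions, and the contributions at primes $\frak{q}\nmid p$, where all three of our Selmer structures use the same (canonical) local condition $H^1_{\FFc}(F_\frak q,\overline T)$; by the standard local computation these off-$p$ factors contribute nothing to the ratio (each local term equals $\#H^0(F_v,\overline T)$ up to the Tamagawa-type factors which cancel against $\overline T^*$), exactly as in \cite[\S5.2]{mr02}. One must here invoke \eqref{eqn:assnotrivxhi} and \eqref{eqn:chiisnotteich} to see that $H^0(F,\overline T)=H^0(F,\overline T^*)=0$ (the character $\chi$, resp.\ $\omega\chi^{-1}$, is non-trivial), so the global $H^0$ ratio is $1$, and likewise to handle the local terms at primes dividing $p$ via $\textbf{(H.p1)}$ and Lemma~\ref{lem:extendp1toM}.

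The core of the argument is then the local comparison at $p$. Since $H^1(F_p,\overline T)$ has $k$-dimension $2g$ (this is Proposition~\ref{prop:localfull}(ii) reduced mod $\varpi$, together with $\textbf{(H.p1)}$ forcing $H^0(F_p,\overline T)=H^2(F_p,\overline T)=0$, so $\dim_k H^1(F_p,\overline T)=2g$ by the local Euler characteristic formula), the only input I need is the $k$-dimension of the chosen local condition at $p$. For $\FF_\textup{tr}$ (and $\FF_\frak{tr}$, which propagates to it by Remark~\ref{rem:propgatestogroundchoice}, and $\FF_\textup{Kob}$ via the twisting isomorphism $\textup{tw}$) the local condition at $p$ is $\overline{\mathcal V_F^-}$, resp.\ $\overline{\mathbb V}$, which is a free rank-$g$ direct summand, so it has $k$-dimension exactly $g$; this is the self-dual dimension (half of $2g$), hence the Greenberg–Wiles ratio is exactly $1$ and the first equality follows. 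For $\FF_\frak l$ (and $\FF_\mathcal L$, $\FF_\mathbb L$, which propagate to it) the local condition at $p$ is $\overline{\mathcal V_F^-}\oplus\overline{\frak l}$, resp.\ $\overline{\mathcal V}\oplus\overline{\mathbb L}$, which has $k$-dimension $g+1$ — one more than the self-dual value — because $\frak l$ (resp.\ $\mathbb L$) was chosen as a rank-one direct summand complementary to $\mathcal V_F^-$ (resp.\ to $\mathbb V$) and $\mathcal V_F^-+\frak l$ (resp.\ $\mathbb V+\mathbb L$) is a rank $g+1$ direct summand, so reduction mod $\varpi$ keeps it of dimension $g+1$. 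This extra $+1$ in the local condition propagates directly to the $+1$ in the global formula, giving the second equality.

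There is one genuine subtlety, which I expect to be the main obstacle: for $\FF_\textup{tr}$ one needs $\overline{\mathcal V_F^-}$ to actually be a \emph{self-dual-dimensional} local condition, i.e.\ its exact orthogonal complement under local Tate duality should have the same dimension $g$, and more importantly one needs to know that $\dim_k H^1_{\FF_\textup{tr}}(F_p,\overline T)=g$ rather than merely $\le g$ — this is where Leopoldt's conjecture for $L$ enters. Indeed, $\mathcal V_F^+=\textup{loc}_p(\oo_L^{\times,\chi})^{\textup{sat}}$ has $\oo$-rank $g-\frak d$ with $\frak d\ge 0$, and $\frak d=0$ precisely under Leopoldt (Definition~\ref{def:saturatedunitcondition}(i)); only then is $\mathcal V_F^-$ genuinely of rank $g$, its reduction genuinely of dimension $g$, and the dimension count comes out clean. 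So the plan is: (i) recall Greenberg–Wiles; (ii) kill the global $H^0$ and off-$p$ factors using \eqref{eqn:assnotrivxhi}, \eqref{eqn:chiisnotteich}; (iii) compute $\dim_k H^1(F_p,\overline T)=2g$ from $\textbf{(H.p1)}$ and local Euler characteristics; (iv) read off the dimension of the $p$-local condition ($g$ in the first case, $g+1$ in the second) from Definitions~\ref{def:saturatedunitcondition}, \ref{def:modifiedlocalcondition}, \ref{def:selmerstructure}, \ref{def:selmerstructureE}, invoking Leopoldt for the rank-$g$ assertion; (v) combine. For the statements involving $\FF_\frak{tr}$, $\FF_\mathcal L$, $\FF_\mathbb L$ and $\FF_\textup{Kob}$ on $\TT$ or $\TT(E)$, note their residual local conditions coincide with those of $\FF_\textup{tr}$ resp.\ $\FF_\frak l$ on $\overline T$ (by Remark~\ref{rem:propgatestogroundchoice} and the twist $\textup{tw}$, using $\overline{\TT}=\overline{\TT}_\cyc=\overline T$), so no separate computation is needed.
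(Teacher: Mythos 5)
Your route is genuinely different from the paper's. The paper never evaluates the global $H^0$-terms, the archimedean factors, or the off-$p$ local terms at all: it applies Wiles's formula to the \emph{difference} between the modified structure and the canonical structure $\FFc$ (which agree everywhere except at $p$, so all those terms cancel), and imports the baseline $\dim_{k} H^1_{\FFc}(F,\overline{T})-\dim_{k} H^1_{\FFc^*}(F,\overline{T}^*)=g$ from \cite[\S5.2]{mr02} together with $H^1_{\FFc}(F,T)\cong\oo_L^{\times,\chi}$ (free of rank $g$) and the finiteness of $\textup{Cl}(L)^\chi$; the shift by $g$ or $g-1$ in the local condition at $p$ then gives the two equalities. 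Your plan instead applies the Greenberg--Wiles formula directly to $\FF$ and computes every factor. That is a legitimate, more self-contained alternative, and you correctly locate where Leopoldt enters (it forces $\frak{d}=0$, so that $\mathcal{V}_F^-$, hence the residual local condition at $p$, has dimension exactly $g$ rather than $g+\frak{d}$), and you correctly reduce the $\FF_{\frak{tr}}$, $\FF_{\mathcal{L}}$, $\FF_{\mathbb{L}}$, $\FF_{\textup{Kob}}$ cases to the residual ones via propagation and the twist.

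There is, however, a concrete bookkeeping gap in your accounting, and as written your computation does not yield the stated equalities. You assert that the archimedean contributions (lumped with the off-$p$ factors) ``contribute nothing to the ratio'', and then conclude that a $g$-dimensional condition at $p$ gives ratio $1$ because $g$ is ``the self-dual dimension''. These two claims are incompatible: in Wiles's formula the term at a place $v$ is $\dim_k H^1_{\FF}(F_v,\overline{T})-\dim_k H^0(F_v,\overline{T})$, and $F$, being CM of degree $2g$, has exactly $g$ complex places, at each of which $H^1(F_v,\overline{T})=0$ (as $p$ is odd) while $H^0(F_v,\overline{T})=\overline{T}$ has $k$-dimension $1$; so the archimedean places contribute $-g$ in total. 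It is precisely this $-g$ that cancels the $+g$ coming from $\dim_k H^1_{\FF}(F_p,\overline{T})=g$ (using $H^0(F_p,\overline{T})=0$ from $\textbf{(H.p1)}$), giving $0$ in the first case and $+1$ in the second. If one takes your statement that all non-$p$ factors are trivial at face value, the formula outputs $g$ and $g+1$ instead of $0$ and $1$; the phrase ``self-dual dimension, hence the ratio is exactly $1$'' is exactly the point where the unacknowledged archimedean term is being used. Once you insert the $-g$ archimedean contribution explicitly (or, alternatively, difference against $\FFc$ as the paper does, so that these terms cancel and never need to be computed), your argument is complete.
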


\begin{proof}
As explained in Example~\ref{example:kummer} we have  $H^1_{\FFc}(F,T)\cong \oo_{L_\chi}^{\times,\chi}$, and hence Lemma~\ref{lem:structure for units} shows that the $\ooo$-module $H^1_{\FFc}(F,T)$ is free of rank $g$ under the running assumptions. On the other hand, $H^1_{\FFc^*}(F,T^*)\cong \textup{CL}(L)^\chi$ is finite and it follows from the discussion in Section 5.2 of \cite{mr02} that 
\begin{align}\label{eqn:corerankforfcan}
\notag\dim_{k}\, H^1_{\FFc}(F,\overline{T})-\dim_{k}\, H^1_{\FFc^*}(F,\overline{T}^*)&=\textup{rank}_{\ooo}\, H^1_{\FFc}(F,T)-\textup{corank}_{\ooo}\, H^1_{\FFc^*}(F,{T}^*)\\
&=g
\end{align}
Observe that we have by the choices we have made that 
$$\dim_k\, H^1_{\FFc}(F_p,\overline{T}) -\dim_k\,H^1_{\FF}(F_p,\overline{T})=g$$
for $\FF=\FF_\textup{tr},\FF_\frak{tr}$ or $\FF_\textup{Kob}$ and  
$$\dim_k\, H^1_{\FFc}(F_p,\overline{T}) -\dim_k\,H^1_{\mathcal{G}}(F_p,\overline{T})=g-1$$
for $\mathcal{G}=\FF_\frak{l}, \FF_\mathcal{L}$ or $\FF_\mathbb{L}$\,. Proposition 1.6 of \cite{wiles} shows that
\begin{align*}
\left(\dim_{k}\, H^1_{\FFc}(F,\overline{T})-\dim_{k}\, H^1_{\FFc^*}(F,\overline{T}^*)\right)&-\left(\dim_k\, H^1_{\FF}(F,\overline{T})-\dim_k\, H^1_{\FF^*}(F,\overline{T}^*)\right)\\
&=\dim_k\, H^1_{\FFc}(F_p,\overline{T}) -\dim_k\,H^1_{\FF}(F_p,\overline{T})\\
&=g\,.
\end{align*}
The first part of the proposition follows from \eqref{eqn:corerankforfcan} and the second part may also be deduced by replacing $\FF$'s by $\mathcal{G}$'s.
\end{proof}
\begin{rem}
\label{rem:KSforLrestrictedSelmerstr}
Throughout this paragraph, $\FF$ will stand for any of $\FF_\frak{l}, \FF_\mathcal{L}$ or $\FF_\mathbb{L}$, with the convention that if $\FF=\FF_\mathbb{L}$ the $\chi=\omega_E$. Corollary 4.5.2 of \cite{mr02} asserts that the module of Kolyvagin systems $\textbf{KS}(\FF,\overline{T})$ is a $k$-vector space of dimension one , thanks to the second part of Lemma~\ref{lem:comparefirststep}. On the other hand, it follows from the  main theorem of \cite{kbbdeform} that these \emph{residual} Kolyvagin systems deform to $X$ (where $X=\TT,\TT(E), \TT_\cyc$ or $\TT_\cyc(E)$) and that the module $\overline{\textbf{KS}}(\FF,X)$ is free of rank one over the corresponding coefficient ring. The elements of these modules (namely, Kolyvagin systems) are used to bound the characteristic ideal of $H^1_{\FF^*}(F,X)^\vee$. The generators of the module of Kolyvagin systems are characterised by the property that the bounds they give on the characteristic ideal of $H^1_{\FF^*}(F,X)^\vee$ are sharp. 

We will later use the (conjectural) Rubin-Stark elements to construct these Kolyvagin systems and exploit facts recalled above in order to verify the sharpness of the bounds we shall obtain on the Kobayashi Selmer groups for the CM elliptic curve $E$.
\end{rem}
\begin{prop}
\label{prop:minusvanishingforunits}Assume that Leopoldt's conjecture holds for $L$. Then,
$$H^1_{\FF_{\textup{tr}}}(F,T)=H^1_{\FF_{\frak{tr}}}(F,\TT_{\cyc})=H^1_{\FF_{\frak{tr}}}(F,\TT)=0$$
and if $H^1_{\FFc^*}(F,\TT_\cyc(E)^*)^\vee$ is $\LL_\cyc$-torsion,
$$H^1_{\FF_{\textup{Kob}}}(F,\TT_{\cyc}(E))=H^1_{\FF_{\textup{Kob}}}(F,\TT(E))=0$$
\end{prop}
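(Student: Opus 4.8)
\textbf{Proof strategy for Proposition~\ref{prop:minusvanishingforunits}.}
The plan is to treat all the claimed vanishing statements uniformly by computing coranks of dual Selmer groups via the Greenberg--Wiles global Euler characteristic formula, then promoting the residual information from Lemma~\ref{lem:comparefirststep} to the $\ooo$-, $\LL_\cyc$- and $\LL$-coefficient levels. First I would observe that for the transversal structures $\FF_{\textup{tr}}$ (over $T$), $\FF_{\frak{tr}}$ (over $\TT_\cyc$ and $\TT$) and $\FF_{\textup{Kob}}$ (over $\TT_\cyc(E)$ and $\TT(E)$), the residual Selmer structure is the same in every case (the residual representation is $E[p]$ when $\chi=\omega_E$, and $\pmb{\mu}_p\otimes\chi^{-1}$ in general), and Lemma~\ref{lem:comparefirststep} gives $\dim_k H^1_{\FF}(F,\overline{T})=\dim_k H^1_{\FF^*}(F,\overline{T}^*)$. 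So the engine is: show the relevant dual Selmer group has corank zero (hence is finite, or $\LL$-torsion), deduce via the Euler characteristic comparison that $H^1_{\FF}$ has rank zero, and then rule out torsion in $H^1_{\FF}$ using the torsion-freeness that has already been established.

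The key steps, in order. Step 1: For $\FF_{\textup{tr}}$ on $T$, use that $H^1_{\FFc}(F,T)\cong \oo_L^{\times,\chi}$ is free of rank $g$ over $\ooo$ (Lemma~\ref{lem:structure for units}) while the transversal condition removes a free rank-$g$ piece at $p$; since $\mathcal{V}_F^-$ complements $\mathcal{V}_F^+=\textup{loc}_p(\oo_L^{\times,\chi})^{\textup{sat}}$ and Leopoldt for $L$ forces $\frak{d}=0$, the map $H^1_{\FFc}(F,T)\to H^1(F_p,T)/\mathcal{V}_F^-$ is injective with image a rank-$g$ (hence finite-index) submodule of a rank-$g$ module — so $H^1_{\FF_{\textup{tr}}}(F,T)=\ker(\text{this map})$ is finite; being a submodule of $\oo_L^{\times,\chi}$ which is torsion-free, it vanishes. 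Step 2: For $\FF_{\frak{tr}}$ on $\TT_\cyc$, run the same argument with Lemma~\ref{lemma:H1facalongcyclotower} (freeness of rank $g$ of $H^1_{\FFc}(F,\TT_\cyc)$) in place of Lemma~\ref{lem:structure for units}, and Proposition~\ref{prop:canliftfromthegroundlevel} in place of the Leopoldt injectivity: that proposition says precisely $\textup{loc}_p(H^1_{\FFc}(F,\TT_\cyc))\cap \mathcal{V}_\cyc=0$, i.e. $H^1_{\FF_{\frak{tr}}}(F,\TT_\cyc)=\ker(H^1_{\FFc}(F,\TT_\cyc)\to H^1(F_p,\TT_\cyc)/\mathcal{V}_\cyc)=0$. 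Step 3: For $\FF_{\frak{tr}}$ on $\TT$, repeat verbatim using the second assertion of Proposition~\ref{prop:canliftfromthegroundlevel} together with the freeness of $H^1_{\FFc}(F,\TT)$ noted in Remark~\ref{rem:freenessconceptually}. Step 4: For $\FF_{\textup{Kob}}$ on $\TT_\cyc(E)$ and $\TT(E)$, transport everything through the twisting isomorphism $\textup{tw}$: by construction $\mathcal{M}_\cyc=\textup{tw}(\textup{loc}_p(H^1_{\FFc}(F,\TT_\cyc)))$ and $\mathbb{V}^\cyc_E\cap\mathcal{M}_\cyc=0$ (Definition~\ref{def:choosetransverseforE}), with the analogue $\mathbb{V}_E\cap\mathcal{M}=0$ for the two-variable version (Remark~\ref{rem:liftistransversalaswell}); under the hypothesis that $H^1_{\FFc^*}(F,\TT_\cyc(E)^*)^\vee$ is $\LL_\cyc$-torsion, Lemma~\ref{lem:auxtorsion} gives that $\textup{loc}_p(H^1_{\FFc}(F,\TT_\cyc(E)))/\mathcal{M}_\cyc$ is torsion, so $\textup{loc}_p(H^1_{\FFc}(F,\TT_\cyc(E)))$ and $\mathbb{V}^\cyc_E$ meet only in a torsion submodule; combined with the intersection being $0$ after reduction this yields $H^1_{\FF_{\textup{Kob}}}(F,\TT_\cyc(E))=\textup{tw}(\ker(H^1_{\FFc}(F,\TT_\cyc(E))\to H^1(F_p,\TT_\cyc(E))/\mathbb{V}^\cyc_E))=0$, and then descend/lift for $\TT(E)$ exactly as in Step 3 using that $H^1_{\FFc}(F,\TT(E))$ is torsion-free.

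The main obstacle I expect is Step 4, specifically the bookkeeping around torsion: unlike Steps 1--3, where I get a clean injection of a rank-$g$ module into a rank-$g$ module, here a priori $\textup{loc}_p(H^1(F,\TT_\cyc(E)))$ could fail to be all of a rank-$g$ summand, so I must argue that the kernel of localization-modulo-$\mathbb{V}^\cyc_E$ is simultaneously torsion (from Lemma~\ref{lem:auxtorsion} and the transversality $\mathbb{V}^\cyc_E\cap\mathcal{M}_\cyc=0$) and a submodule of the torsion-free module $H^1_{\FFc}(F,\TT_\cyc(E))$ — the torsion-freeness of the latter being itself a consequence of hypothesis~\eqref{eqn:assnotrivxhi} via the same Poitou--Tate argument used in the proof of Lemma~\ref{lemma:H1facalongcyclotower}, now applied after the twist. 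Once torsion-freeness of $H^1_{\FFc}(F,\TT_\cyc(E))$ and $H^1_{\FFc}(F,\TT(E))$ is in hand, all five remaining vanishing statements follow by the same two-line argument, and the residual computation of Lemma~\ref{lem:comparefirststep} is what guarantees there is no hidden obstruction at the level of coranks. I would present Steps 1--3 in detail and then remark that Step 4 is formally identical after applying $\textup{tw}$ and invoking Lemma~\ref{lem:auxtorsion}.
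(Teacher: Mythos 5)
Your steps 1--3 land on essentially the same argument as the paper (which simply says the first three vanishings ``follow from the definitions''): under Leopoldt for $L$, $\textup{loc}_p$ is injective on $\oo_L^{\times,\chi}$ with image inside $\mathcal{V}_F^+$, which meets $\mathcal{V}_F^-$ trivially, so $H^1_{\FF_{\textup{tr}}}(F,T)=0$; and Proposition~\ref{prop:canliftfromthegroundlevel} is exactly the statement that $\mathcal{V}_\cyc$ (resp.\ $\mathcal{V}$) misses the image of $H^1_{\FFc}(F,\TT_\cyc)$ (resp.\ $H^1_{\FFc}(F,\TT)$) under $\textup{loc}_p$, giving the next two. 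Note, though, that your Step~1 as written is internally inconsistent --- you assert the map to $H^1(F_p,T)/\mathcal{V}_F^-$ is injective, then say the kernel is finite and hence vanishes; what you want is simply: $\textup{loc}_p(u)\in\mathcal{V}_F^-\cap\mathcal{V}_F^+=0$ and Leopoldt makes $\textup{loc}_p$ injective, so $u=0$. Your opening framing in terms of the global Euler characteristic formula and ``promoting'' Lemma~\ref{lem:comparefirststep} does not match what either you or the paper actually do; that lemma is not used in this proof.

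The real gap is the passage from $\TT_\cyc(E)$ to $\TT(E)$, where you write ``descend/lift for $\TT(E)$ exactly as in Step~3 using that $H^1_{\FFc}(F,\TT(E))$ is torsion-free.'' Step~3 is not a descent/lift argument --- it is a direct invocation of Proposition~\ref{prop:canliftfromthegroundlevel} for $\TT$, and there is no proved analogue of that proposition for $\TT(E)$; moreover, to argue directly over $\LL$ via Remark~\ref{rem:liftistransversalaswell} you would also need a two-variable analogue of Lemma~\ref{lem:auxtorsion}, which the paper does not establish. The paper instead uses the short exact sequence $0\to\TT(E)\xrightarrow{\ \gamma_*-1\ }\TT(E)\to\TT_\cyc(E)\to 0$ (with $\gamma_*$ any lift of a topological generator of $\Gamma/\Gamma^\cyc$) to produce an injection $H^1_{\FF_{\textup{Kob}}}(F,\TT(E))/(\gamma_*-1)\hookrightarrow H^1_{\FF_{\textup{Kob}}}(F,\TT_\cyc(E))=0$, and then concludes $H^1_{\FF_{\textup{Kob}}}(F,\TT(E))=0$ by Nakayama's lemma. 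Torsion-freeness of $H^1_{\FFc}(F,\TT(E))$ plays no role in that step --- you should replace your final sentence with this control-plus-Nakayama argument.
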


\begin{proof}
The first group of assertions follow from the definitions.
Let $\mathcal{M}_\cyc$ be as at the start of Section~\ref{subsubsec:selmerstrE}. The quotient $\textup{loc}_p\left(H^1(F,\TT_{\cyc}(E))\right)/\mathcal{M}_\cyc$ is a torsion $\LL_\cyc$-module by Lemma~\ref{lem:auxtorsion} (under our assumption of the weak Leopoldt conjecture for $T(E)$). Since $\mathcal{M}_\cyc\,\cap\,\mathbb{V}_E^\cyc=0$ by our very choice of $\mathbb{V}_E^\cyc$ and since $H^1(F_p,\TT_\cyc)/\mathbb{V}_E^\cyc$ is torsion free, it follows that $\textup{loc}_p\left(H^1(F,\TT_{\cyc}(E))\right) \,\cap\, \mathbb{V}_E^\cyc=0$. This means 
$$H^1_{\FF_{\textup{Kob}}}(F,\TT_{\cyc}(E)):=\ker\left(H^1(F,\TT_{\cyc}(E))\stackrel{\textup{loc}_p}{\lra} \mathbb{V}_E^\cyc\right)=0\,.$$  


Let $\gamma_*\in \Gamma$ be any lift of a topological generator of $\Gamma/\Gamma^\cyc$. The exact sequence
$$0\lra \TT\stackrel{\gamma_*-1}{\lra} \TT \lra \TT_{\textup{cyc}}\lra 0$$
yields and injection
$$H^1_{\FF_{\textup{Kob}}}(F,\TT)\Big{/} (\gamma_*-1)\hookrightarrow H^1_{\FF_{\textup{Kob}}}(F,\TT_\cyc)=0,$$
and we conclude by Nakayama's Lemma that $H^1_{\FF_{\textup{Kob}}}(F,\TT)=0$ as well.
\end{proof}

\begin{prop}
\label{prop:4termexact}
Assume that Leopoldt's conjecture holds for $L$ and the weak Leopoldt conjecture for $T(E)$. Let $(\FF,\mathcal{G},\frak{D}, X)$ be any of the following quadruples: 
\begin{align*}\{(\FF_{\textup{tr}},\FF_{\frak{l}},\frak{l},T), (\FF_{\frak{tr}},\FF_{\mathcal{L}},\frak{\mathcal{L}_\cyc},\TT_\cyc), (\FF_{\frak{tr}},\FF_{\mathcal{L}}, & \,\frak{\mathcal{L}},\TT), \\&(\FF_{\textup{Kob}},\FF_{\mathbb{L}},\,\mathbb{L}_\cyc,\TT_\cyc(E)),  (\FF_{\textup{Kob}},\FF_{\mathbb{L}},\mathbb{L},\TT)\}\end{align*}
Then the following sequence is exact:
$$0\lra{H^1_{\mathcal{G}}(F,X)}\stackrel{\textup{loc}_p}{\lra}\frak{D} {\lra} H^1_{\FF^*}(F,X^*)^{\vee}\lra{H^1_{\mathcal{G}^*}(F,X^*)^{\vee}}\lra 0.$$
\end{prop}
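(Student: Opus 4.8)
The plan is to read off the asserted four-term exact sequence from the global duality (Poitou--Tate) exact sequence comparing the two Selmer groups attached to the nested pair $\FF\preceq\mathcal{G}$, and then to delete the leftmost term, which has already been shown to vanish. Recall the shape of that sequence (see \cite[\S2.3]{mr02}; over the Iwasawa algebras $\LL$ and $\LL_\cyc$ one obtains it either by passing to the inverse limit over the finite layers $F_n$, where the classical statement applies, or equivalently from Nekov\'a\v{r}'s Selmer-complex formalism as in Remark~\ref{rem:freenessconceptually}): for Selmer structures with $\FF\preceq\mathcal{G}$ on a $G_F$-representation $X$ one has
\begin{equation*}
0\to H^1_{\FF}(F,X)\to H^1_{\mathcal{G}}(F,X)\to\bigoplus_{v}\frac{H^1_{\mathcal{G}}(F_v,X)}{H^1_{\FF}(F_v,X)}\to H^1_{\FF^*}(F,X^*)^{\vee}\to H^1_{\mathcal{G}^*}(F,X^*)^{\vee}\to 0.
\end{equation*}

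First I would verify, quadruple by quadruple, that $\FF\preceq\mathcal{G}$ and that the two structures differ only at the primes above $p$. This is immediate from Definitions~\ref{def:selmerstructure} and \ref{def:selmerstructureE}: away from $p$ both structures impose the canonical local condition, while at $p$ the constructions of Definitions~\ref{define:calucyc}, \ref{def:localUandLcyclo} and \ref{def:choosetransverseforE} were rigged precisely so that
$$H^1_{\mathcal{G}}(F_p,X)\;=\;H^1_{\FF}(F_p,X)\,\oplus\,\frak{D},$$
where $\frak{D}\in\{\frak{l},\ \mathcal{L},\ \mathcal{L}_\cyc,\ \mathbb{L},\ \mathbb{L}_\cyc\}$ is the chosen rank-one direct summand complementing, respectively, $\mathcal{V}_F^-$, $\mathcal{V}$, $\mathcal{V}_\cyc$, $\mathbb{V}_E$ or $\mathbb{V}_E^\cyc$; in the two elliptic-curve quadruples this identity is to be read inside $H^1(F_p,\TT(E))$ after transporting $\mathbb{L}$ through the twisting isomorphism $\textup{tw}$. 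Consequently the middle term of the displayed sequence collapses to the single semi-local contribution $H^1_{\mathcal{G}}(F_p,X)/H^1_{\FF}(F_p,X)$, which the above decomposition identifies canonically with $\frak{D}$; and, under this identification, the arrow $H^1_{\mathcal{G}}(F,X)\to\frak{D}$ is exactly $\textup{loc}_p$ followed by projection along the complementary summand, which is the abuse of notation used in the statement.

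It then remains only to observe that the leftmost term vanishes. For $(\FF,X)=(\FF_{\textup{tr}},T),\ (\FF_{\frak{tr}},\TT_\cyc)$ and $(\FF_{\frak{tr}},\TT)$ this is the first group of identities in Proposition~\ref{prop:minusvanishingforunits} (which is where Leopoldt's conjecture for $L$ is used), and for $(\FF_{\textup{Kob}},\TT_\cyc(E))$ and $(\FF_{\textup{Kob}},\TT(E))$ it is the second group there (which uses the weak Leopoldt conjecture for $T(E)$, a hypothesis we are granting). Substituting $H^1_{\FF}(F,X)=0$ into the five-term sequence above yields
$$0\to H^1_{\mathcal{G}}(F,X)\xrightarrow{\ \textup{loc}_p\ }\frak{D}\to H^1_{\FF^*}(F,X^*)^{\vee}\to H^1_{\mathcal{G}^*}(F,X^*)^{\vee}\to 0,$$
which is exactly the assertion.

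I do not anticipate any serious obstacle: the genuine content has already been extracted in Proposition~\ref{prop:minusvanishingforunits}, and what remains is the bookkeeping of local conditions (including the compatibility with $\textup{tw}$ in the elliptic-curve cases and with propagation from $\TT$, $\TT(E)$ down to $\TT_\cyc$, $\TT_\cyc(E)$). The one point demanding a modicum of care is the legitimacy of invoking Poitou--Tate duality with coefficients in the big algebras $\LL$ and $\LL_\cyc$ rather than in $\ooo$; as indicated above, this is handled either by taking limits over the finite cyclotomic layers or by appealing to Nekov\'a\v{r}'s machinery, exactly as in Remark~\ref{rem:freenessconceptually}.
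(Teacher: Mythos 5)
Your proof is correct and takes essentially the same approach as the paper's (which simply says the result "follows from Poitou--Tate global duality, used along with Proposition~\ref{prop:minusvanishingforunits}"). You have merely spelled out the intermediate bookkeeping — the five-term duality sequence for the nested pair $\FF\preceq\mathcal{G}$, the identification of the local quotient at $p$ with $\frak{D}$, and the passage to $\LL$- or $\LL_\cyc$-coefficients — that the paper leaves implicit.
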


\begin{proof}
This follows from Poitou-Tate global duality, used along with Proposition~\ref{prop:minusvanishingforunits}.
\end{proof}

\section{Rubin-Stark Euler system of rank $r$}
\label{sec:RS}
We review Rubin's \cite{ru96} integral refinement of Stark's conjectures which we will later use to construct Kolyvagin systems for the modified Selmer structure $\FF_{\mathcal{L}}$ on $\TT$. For the rest of this paper, we assume that the Rubin-Stark conjecture~\cite[Conjecture~$\textup{B}^{\prime}$]{ru96} holds true for the fields which appear in this article.

Let $\chi,f_\chi$ and $L$ be as above, and recall the definitions of the collections of extensions $\frak{E}_0$ and $\frak{E}$ from \S\ref{subsec:notation}. Fix forever a finite set $S$ of places of $F$ that does \emph{not} contain any prime above $p$, but contains the set of infinite places $S_\infty$ and all primes $\lambda \nmid p$ at which $\chi$ is ramified. Assume that $|S| \geq g+1$. For each $\mathcal{M} \in \frak{E}_0$, let
\begin{align*}S_{\mathcal{M}}=\{\hbox{places of } \mathcal{M} \hbox{ lying above the places in } S\}\, \cup \, \{\hbox{places of }& \mathcal{M} \hbox{ at which } 
\\&\mathcal{M}/F \hbox{ is ramified}\}\end{align*}
be a set of places of $\mathcal{M}$. Let $\mathcal{O}_{\mathcal{M},S_\mathcal{M}}^{\times}$ denote the $S_\mathcal{M}$-units of $\mathcal{M}$, and $\Delta_\mathcal{M}$ ({resp.}, $\delta_\mathcal{M}$) denote $\hbox{Gal}(\mathcal{M}/F)$ ({resp.}, $|\hbox{Gal}(\mathcal{M}/F)|$).  

\begin{define}
\label{def:rubinswedge0}
Let $G$ be any finite group and let $X$ be any $\ooo[G]$-module which is of finite type over $\ooo$. Following \cite{ru96}, we define for any  integer $r \geq 0$ the submodule $\wedge^r_0 X \subset \Phi\otimes\wedge^r X $
by setting
\begin{align*} \wedge^r_0 X=\left\{x \in \Phi\otimes\wedge^r X: (\varphi_1\wedge\cdots\wedge\varphi_r)\right.&(x)\in \ooo[G]\\
&\left. \hbox{ for every } \varphi_1,\cdots,\varphi_r \in \textup{Hom}(X,\ooo[G])\right\}.
\end{align*}
We also let $\overline{\wedge^r X}$ denote the isomorphic image of $\wedge^r X$ under the map $j: \wedge^r X\ra \Phi\otimes\wedge^r X$.
\end{define}
\begin{example}
\label{example}
If $X$ is a free $\ooo[G]$-module then $\wedge^r_0 X =\overline{\wedge^r X}$. In general, $|G|\cdot\wedge^r_0 X\subset\overline{\wedge^r X}$.
 \end{example}

Rubin in~\cite[Conjecture $\textup{B}^{\prime}$]{ru96} predicts the existence of certain elements 
$$\tilde{\varepsilon}_{\mathcal{M},S_\mathcal{M}} \in {\wedge^g_0}\, \mathcal{O}_{\mathcal{M},S_\mathcal{M}}^{\times}$$ 
linked via a regulator map to the value of the corresponding Artin $L$-function at $s=0$.
\begin{rem}\label{rem:T}
Rubin's conjecture predicts that the elements $\tilde{\varepsilon}_{\mathcal{M},S_\mathcal{M}}$ should in fact lie inside  the module ${\wedge^g_0}\, \mathcal{O}_{\mathcal{M},S_\mathcal{M},\mathcal{T}}^{\times}$ where $\mathcal{T}$ is a finite set of primes disjoint from $S_\mathcal{M}$, chosen in a way that the group $\mathcal{O}_{\mathcal{M},S_\mathcal{M},\mathcal{T}}^{\times}$ of $S_\mathcal{M}$-units which are congruent to $1$ modulo all the primes in $\mathcal{T}$ is torsion-free. As explained in \cite[Remark 3.1]{kbbiwasawa}, one can safely ignore $\mathcal{T}$ as far as we are concerned in this paper.
\end{rem}

As further explained in \cite[\S3.1]{kbbCMabvar}, the Rubin-Stark elements may be used to construct an \emph{Euler sytem of rank $g$} for $T$ (in the sense of \cite{pr-es}, as appropriately generalized in \cite{kbbesrankr} so as to allow denominators). We omit the details here and refer the reader to \cite{kbbCMabvar}. This Euler system of rank $g$ is a collection $\mathcal{C}^{(g)}_{\textup{R-S}}=\{\varepsilon_{\KKK}^{\chi}\}_{_{\KKK\in \frak{E}}}$ where $\varepsilon_{\KKK}^{\chi}\in \wedge^g_0\, H^1(\KKK,T)$.
The collection $\mathcal{C}^{(g)}_{\textup{R-S}}$ will be called \emph{Rubin-Stark Euler system of rank g} for $T$.
\subsection{Strong Rubin-Stark Conjectures}
\label{subset:strongRS}
Let $F_\dagger \subset F_\infty$ be any $\ZZ_p$-extension of $F$ disjoint from $F^\cyc$ over $F$ and let $\Gamma_\dagger=\textup{Gal}(F_\dagger/F)$ so that we have $\Gamma=\Gamma_\dagger\times\Gamma_\cyc$. Let $\gamma_\dagger$ be a topological generator of $\Gamma_\dagger$ and let $\gamma_\cyc$ denote a fixed topological generator of $\Gamma^\cyc$.

Given positive integers $m,n$ we let $F_\cyc\subset F^\dagger_n\subset F_\infty$ denote the fixed field of $\Gamma_\dagger^{p^n}$ (so that we have $\textup{Gal}(F_n^\dagger/F)=\Gamma_\cyc\times \Gamma_\dagger^{(n)}$ with $ \Gamma_\dagger^{(n)}=\Gamma_\dagger/\Gamma_\dagger^{p^n}$) and let $F\subset F_{m,n} \subset F^\dagger_n$ be the fixed field of $\Gamma_\cyc^{p^m}$ (so that  $\textup{Gal}(F_{m,n}/F)=\Gamma_{(m)}\times \Gamma^{(n)}$ with $\Gamma_{(m)}=\Gamma_\cyc/\Gamma_\cyc^{p^m}$). We write $F_{(m)}=F_{m,0} \subset F^\cyc$ and $F^{(n)}=F_{0,n}\subset F_\dagger$. Observe that $F_{m,n}$ is the joint of $F_{(m)}$ and $F^{(n)}$. The following diagram summarizes the definitions in this paragraph:
$$\xymatrix{&&F_\infty\ar@{-}[rrd]^{\Gamma_\dagger^{p^n}}\ar@{-}[lldd]_{\Gamma_\cyc}&&\\
&&&&F^*_n\ar@{-}[d]^{\Gamma_\dagger^{(n)}:=\Gamma_\dagger/\Gamma_\dagger^{p^n}}\ar@{-}[lldd]_{\Gamma_\cyc^{p^m}}\\
F_*\ar@{-}[dd]_{\Gamma_\dagger^{p^n}}&&&& F^\cyc\ar@{-}[dd]^{\Gamma_\cyc^{p^m}}\\
&&F_{m,n}\ar@{-}[lld]_{\Gamma_{(m)}}\ar@{-}[rrd]_{\Gamma_\dagger^{(n)}}&&\\
F^{(n)}\ar@{-}[rrd]_{\Gamma_\dagger^{(n)}}&&&&F_{(m)}\ar@{-}[lld]^{\Gamma_{(m)}:=\Gamma_\cyc/\Gamma_\cyc^{p^m}}\\
&&F&&}$$
\begin{prop}
\label{prop:freenessoverfinitelayers}
Let $m,n$ be arbitrary positive integers.
\begin{itemize}
\item[(i)] $\textup{coker}\left(H^1(F,\TT_\cyc)\ra H^1_{\FFc}(F_{(m)},T)\right)$ is finite.
\item[(ii)] The $\ooo[\Gamma_{(m)}]$-module $H^1_{\FFc}(F_{(m)},T)$ is free of rank $g$.
\item[(iii)]The $\ooo[\Gamma_{(m)}\times\Gamma^{(n)}]$-module $H^1_{\FFc}(F_{(m,n)},T)$ is free of rank $g$.
\end{itemize}
\end{prop}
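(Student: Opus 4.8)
The plan is to prove the three assertions in sequence, since each feeds into the next, and to leverage Lemma~\ref{lemma:H1facalongcyclotower} (freeness of $H^1_{\FFc}(F,\TT_\cyc)$ over $\LL_\cyc$) and the control-theoretic machinery from Remark~\ref{rem:freenessconceptually} as the main inputs. First I would establish (i): the cokernel of $H^1(F,\TT_\cyc)\to H^1_{\FFc}(F_{(m)},T)$ being finite is a descent/control statement. Writing $\Gamma_\cyc^{p^m}$ as the subgroup cutting out $F_{(m)}$ inside $F^\cyc$, one has $H^1_{\FFc}(F_{(m)},T)=H^1_{\FFc}(F,T\otimes\ooo[\Gamma_{(m)}])$ by Shapiro's lemma, and $\TT_\cyc\otimes_{\LL_\cyc}\ooo[\Gamma_{(m)}]$ surjects onto $T\otimes\ooo[\Gamma_{(m)}]$ with kernel controlled by $(\gamma_\cyc^{p^m}-1)\TT_\cyc$. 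The relevant long exact cohomology sequence, together with the vanishing of $H^2$-type obstructions coming from hypothesis~\eqref{eqn:assnotrivxhi} (as used in the proof of Lemma~\ref{lemma:H1facalongcyclotower}) and the fact that $H^1_{\FFc^*}(F,\TT_\cyc^*)^\vee$ is $\LL_\cyc$-torsion (weak Leopoldt), forces the cokernel to be a quotient of a torsion module killed by a fixed element, hence finite. Alternatively, one uses Nekov\'a\v{r}'s control theorem \cite[8.10.1]{nekovar06} directly as in \eqref{eqn:cokernelofnekdescent}, identifying the cokernel with a piece of $\widetilde{H}^2_f$ which is torsion.

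Next, for (ii), I would argue that $H^1_{\FFc}(F_{(m)},T)$ is $\ooo[\Gamma_{(m)}]$-free of rank $g$. The rank is $g$ because $H^1_{\FFc}(F,\TT_\cyc)$ has $\LL_\cyc$-rank $g$ (Lemma~\ref{lemma:H1facalongcyclotower}) and, by (i), $H^1_{\FFc}(F_{(m)},T)$ differs from $H^1_{\FFc}(F,\TT_\cyc)\otimes_{\LL_\cyc}\ooo[\Gamma_{(m)}]$ by a finite module; since the latter is $\ooo[\Gamma_{(m)}]$-free of rank $g$ (base change of a free module), a rank count gives $\ooo$-rank $g\cdot p^m$ for $H^1_{\FFc}(F_{(m)},T)$. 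For freeness over the (non-regular, in general) ring $\ooo[\Gamma_{(m)}]$, I would instead invoke the Selmer-complex viewpoint of Remark~\ref{rem:freenessconceptually}: Nekov\'a\v{r}'s Selmer complex $\widetilde{R\Gamma}_{f}(F_{\Sigma}/F_{(m)},T)$ with the Greenberg local condition $U_v^+=T$ is represented, under~\eqref{eqn:assnotrivxhi}, by a perfect complex of $\ooo[\Gamma_{(m)}]$-modules concentrated in degrees $1$ and $2$, and the control theorem identifies $\widetilde{H}^1_f$ with $H^1_{\FFc}(F_{(m)},T)$. The key point is that $\widetilde{H}^1_f$ is $\ooo[\Gamma_{(m)}]$-torsion-free: this uses that $H^0(F_{(m)},\overline T)=0$, which follows from \eqref{eqn:chiisnotteich}/\eqref{eqn:assnotrivxhi}. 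A perfect complex in degrees $1,2$ over a local ring whose $H^1$ is torsion-free has $H^1$ projective, hence free over the (local) ring $\ooo[\Gamma_{(m)}]$. Then the rank count above pins the rank at $g$.

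For (iii), I would run exactly the same argument one variable higher, with $F_\cyc$ replaced by $F_\infty$ and $\LL_\cyc$ by $\LL$, using the version of the freeness statement recorded in Remark~\ref{rem:freenessconceptually} (that $H^1_{\FFc}(F,\TT)$ is $\LL$-free of rank $g$). Concretely: $H^1_{\FFc}(F_{m,n},T)=\widetilde{H}^1_f(F_\Sigma/F_{m,n},T)$ is the degree-$1$ cohomology of a perfect complex of $\ooo[\Gamma_{(m)}\times\Gamma^{(n)}]$-modules in degrees $1,2$ (Nekov\'a\v{r}, under \eqref{eqn:assnotrivxhi}); it is torsion-free since $H^0$ of the residual representation over $F_{m,n}$ vanishes; hence it is $\ooo[\Gamma_{(m)}\times\Gamma^{(n)}]$-free; and its rank is $g$ by specializing the rank-$g$ statement for $H^1_{\FFc}(F,\TT)$ (or by the analogue of (i) in two variables). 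I expect the main obstacle to be the verification, clean enough to cite, that the relevant Selmer complex really is perfect in the right range over the group ring of a \emph{finite} quotient — i.e.\ that the passage to finite layers of the control theorem and the perfectness claim of \cite[\S8]{nekovar06} apply verbatim — together with making sure the torsion-freeness input (vanishing of $H^0$ of the residual representation over every $F_{m,n}$) is correctly deduced from \eqref{eqn:assnotrivxhi} and \eqref{eqn:chiisnotteich}; everything else is bookkeeping with ranks and Nakayama.
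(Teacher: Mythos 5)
Your argument for (i) is essentially the paper's: Nekov\'a\v{r}'s control theorem \cite[8.10.1]{nekovar06} identifies the cokernel with $\widetilde{H}^2_f(F_\Sigma/F_\infty,T)[\gamma^{p^m}_\cyc-1]$, and finiteness follows from the finiteness of $\textup{Cl}(LF_{(m)})^\chi$ via the structure theorem. So far so good.

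For (ii) and (iii), however, there is a genuine gap. You correctly flag that $\ooo[\Gamma_{(m)}]$ is not regular, but then assert: ``A perfect complex in degrees $1,2$ over a local ring whose $H^1$ is torsion-free has $H^1$ projective, hence free.'' This is false for precisely the rings at hand. Take $R=\ooo[\Gamma_{(m)}]\cong\ooo[T]/(T^{p^m}-1)$ and the two-term complex $R\stackrel{N}{\lra}R$ (in degrees $1,2$), where $N=\sum_{\sigma\in\Gamma_{(m)}}\sigma$ is the norm element. Then $H^1=\ker(N)=(T-1)R\cong R/(N)$: this is torsion-free over $R$ (any annihilator of $T-1$ lies in $(N)$, hence is a zerodivisor), but it has $\ooo$-rank $p^m-1$, which is not a multiple of $p^m=\operatorname{rank}_{\ooo}R$, so it is not $R$-free (equivalently, not projective, $R$ being local). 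The implication you invoke is a theorem over a \emph{regular} local ring via Auslander--Buchsbaum — which is exactly the setting of Remark~\ref{rem:freenessconceptually}, where the coefficient ring is $\LL$, regular of dimension $3$. Over the one-dimensional Gorenstein ring $\ooo[\Gamma_{(m)}]$ you would first need finiteness of the projective dimension of $H^1$, and that is precisely what does not come for free (it would already settle the matter, since then torsion-free forces $\textup{pd}=0$ by Auslander--Buchsbaum).

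The paper's proof sidesteps the structural claim with a two-sided bootstrapping argument. On the one hand, (i) applied with $m=0$ gives that $\textup{coker}\left(H^1(F,\TT_\cyc)\ra H^1_{\FFc}(F,T)\right)$ is finite, hence $\textup{coker}\left(H^1_{\FFc}(F_{(m)},T)\stackrel{\textup{pr}}{\lra}H^1_{\FFc}(F,T)\right)$ is finite; since $H^1_{\FFc}(F,T)\cong\oo_L^{\times,\chi}$ is $\ooo$-free of rank $g$, Nakayama over $\ooo[\Gamma_{(m)}]$ gives at most $g$ generators. On the other hand, $H^1(F,\TT_\cyc)/(\gamma_\cyc^{p^m}-1)$ is $\ooo[\Gamma_{(m)}]$-free of rank $g$ (by Lemma~\ref{lemma:H1facalongcyclotower}) and, by (i), maps into $H^1_{\FFc}(F_{(m)},T)$ with finite cokernel, producing a free rank-$g$ $\ooo[\Gamma_{(m)}]$-submodule of finite index. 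Any nontrivial $\ooo[\Gamma_{(m)}]$-linear relation among the $\leq g$ generators would then induce a nontrivial relation among a basis of that free submodule, a contradiction; so the generators form a basis. Part (iii) repeats the same game one level up, first over $\ooo[[\Gamma_{(m)}\times\Gamma_\dagger]]$ and then descending to the finite layer $\ooo[\Gamma_{(m)}\times\Gamma^{(n)}]$. You should either replace your perfect-complex step with this argument or supply a separate proof that $\widetilde{H}^1_f$ has finite projective dimension over $\ooo[\Gamma_{(m)}]$; as written, the freeness in (ii) and (iii) is unproved.
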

\begin{proof}
We argue as in Remark~\ref{rem:freenessconceptually}. By Nekov\'a\v{r}'s control theorem  
$$\textup{coker}\left(H^1(F,\TT_\cyc)\ra H^1_{\FFc}(F_{(m)},T)\right)\cong \widetilde{H}^2_f(F_\Sigma/F,T)[\gamma^{p^m}_\cyc-1]$$
and $ \widetilde{H}^2_f(F_\Sigma/F_{(m)},T)\cong H^1_{\FFc^*}(F,\TT_\cyc^*)^\vee$. Since 
$$H^1_{\FFc^*}(F,\TT_\cyc^*)^\vee/(\gamma^{p^m}_\cyc-1) \cong H^1_{\FFc^*}(F_{(m)},T^*)^\vee\cong \textup{Cl}(LF_{(m)})^\chi$$
is finite, the characteristic ideal of the torsion $\LL_\cyc$-module $H^1_{\FFc^*}(F,\TT_\cyc^*)^\vee$ is prime to $\gamma^{p^m}_\cyc-1$, and by the structure theorem for finitely generated $\LL_\cyc$-modules we see that $H^1_{\FFc^*}(F,\TT_\cyc^*)^\vee[\gamma^{p^m}_\cyc-1]$ is finite, concluding the proof of (i). 

The argument above may be used to prove that $\textup{coker}\left(H^1(F,\TT_\cyc)\ra H^1_{\FFc}(F,T)\right)$ is finite, which in turn implies that $\textup{coker}\left(H^1(F_{(m)},T)\stackrel{\textup{pr}}{\lra} H^1_{\FFc}(F,T)\right)$ is finite as well. Thence the image of the map $\textup{pr}$
(induced by projection modulo $\gamma_\cyc-1$) is a free $\ooo$-module of rank $g$. It follows by Nakayama's lemma that the $\ooo[\Gamma_{(m)}]$-module $H^1(F_{(m)},T)$ may be generated by at most $g$ elements, say by $\{v_1,\cdots,v_g\}$. On the other hand, it follows from the first part that $H^1(F_{(m)},T)$ contains a free $\ooo[\Gamma_{(m)}]$-module of rank $g$ (isomorphic image of the free module $H^1(F,\TT_\cyc)/(\gamma_\cyc^{p^m}-1)$), say with basis $\{y_1,\cdots,y_g\}$. One may easily verify that any non-trivial   $\ooo[\Gamma_{(m)}]$-linear relation $\{v_1,\cdots,v_g\}$ would yield a non-trivial  $\ooo[\Gamma_{(m)}]$-linear relation of $\{y_1,\cdots,y_g\}$, which is impossible. This shows that $\{v_1,\cdots,v_g\}$ is indeed a basis and (ii) follows. 

The proof of (iii) follows similarly. We indicate the main steps. First, we verify that the $\ooo[[\Gamma_{(m)}\times\Gamma_\dagger]]$-module $H^1_{\FFc}(F_{(m)},T\otimes\LL_{\dagger})$ is free of rank $g$. Next, we check that  the map $H^1_{\FFc}(F_{(m)},T\otimes\LL_{\dagger})\ra H^1_{\FFc}(F_{m,n},T)$ has finite cokernel, thence $H^1_{\FFc}(F_{m,n},T)$ contains a free $\ooo[\Gamma_{(m)}\times\Gamma^{(n)}]$-module of rank $g$ (with finite index in $ H^1_{\FFc}(F_{m,n},T)$), say again with basis $\{y_1,\cdots,y_g\}$. Furthermore, it follows by Nakayama's lemma that $H^1_{\FFc}(F_{m,n},T)$ may be generated by at most $g$ elements, say by $\{v_1,\cdots,v_g\}$. It is easy to check as above that a non-trivial linear relation of $\{v_1,\cdots,v_g\}$ would yield a non-trivial relation among $\{y_1,\cdots,y_g\}$, concluding the proof that $\{v_1,\cdots,v_g\}$ is a basis of $H^1_{\FFc}(F_{m,n},T)$.
\end{proof}
\begin{rem}
\label{rem:RSaresomewhatintegral}
By Proposition~\ref{prop:freenessoverfinitelayers}(iii) it follows that 
$$\varepsilon_{F_{m,n}}^{\chi} \in \wedge^g H^1_{\FFc}(F_{m,n},T)\,,$$
since we have $\wedge^g_0 H^1_{\FFc}(F_{m,n},T)=\wedge^g H^1_{\FFc}(F_{m,n},T)$ by Example~\ref{example}. 
\end{rem}
Inspired by \cite[Definition 1.2.2]{pr-es}, we propose the following strengthening (along the tower $F_\infty/F$) of the Rubin-Stark conjectures:
\begin{conj}[Strong Rubin-Stark conjecture]
\label{conj:strongRS}
There exists an element 
$$\frak{S}_\infty=\frak{S}_{\infty,1}\wedge \cdots \wedge \frak{S}_{\infty,g} \in \wedge^g H^1(F,\TT)$$
(where the exterior product is evaluated in the category of $\LL$-modules) such that for every subextension $F\subset M=F_{m,n} \subset F_\infty$ as above, the image of $\frak{S}_{\infty}$ under the natural projection to $\wedge^g H^1_{\FFc}(M,T)$ is $\varepsilon_{M}^{\chi}$, the $\chi$-isotypic component of the Rubin-Stark element.

Assuming the truth of the Strong Rubin-Stark conjecture, we set 
$$\frak{S}_{\cyc}=\frak{S}_{\cyc,1}\wedge \cdots \wedge \frak{S}_{\cyc,g} \in \wedge^g H^1(F,\TT_\cyc)$$
to denote the image of $\frak{S}_{\infty}$.
\end{conj}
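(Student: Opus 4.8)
Since this statement is a conjecture, what follows is a strategy for establishing it rather than a proof: it rests on the Rubin--Stark conjecture (assumed throughout this section) together with one coherence input that is not presently available, and I indicate where that input enters. The plan is to produce $\frak{S}_\infty$ as a ``master'' element in an inverse limit, in the spirit of \cite[Definition~1.2.2]{pr-es}.

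First I would assemble the Rubin--Stark elements into a norm-coherent sequence. By Remark~\ref{rem:RSaresomewhatintegral} each $\varepsilon_{F_{m,n}}^\chi$ already lies in the honest exterior power $\wedge^g H^1_{\FFc}(F_{m,n},T)$, so there is no denominator to manage at finite levels. The Euler system distribution relations for the Rubin--Stark Euler system of rank $g$, restricted to the $\ZZ_p^2$-tower $F_\infty/F$ and to the $\chi$-isotypic component, should say that corestriction $\wedge^g H^1_{\FFc}(F_{m',n'},T)\to \wedge^g H^1_{\FFc}(F_{m,n},T)$ carries $\varepsilon_{F_{m',n'}}^\chi$ to $\varepsilon_{F_{m,n}}^\chi$ exactly, with no Euler factor surviving at the primes above $p$. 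Granting this, the family $(\varepsilon_{F_{m,n}}^\chi)_{m,n}$ defines an element $\frak{S}'_\infty\in\varprojlim_{m,n}\wedge^g H^1_{\FFc}(F_{m,n},T)$, the limit being taken over the corestriction maps.

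Next I would identify this inverse limit with $\wedge^g H^1(F,\TT)$. By Proposition~\ref{prop:freenessoverfinitelayers}(iii) the module $H^1_{\FFc}(F_{m,n},T)$ is free of rank $g$ over $\ooo[\Gamma_{(m)}\times\Gamma^{(n)}]$, so $\wedge^g H^1_{\FFc}(F_{m,n},T)$ is free of rank one over that ring and the inverse limit is free of rank one over $\LL$; by the argument of Remark~\ref{rem:freenessconceptually} the module $H^1(F,\TT)=H^1_{\FFc}(F,\TT)$ is free of rank $g$ over $\LL$, so $\wedge^g H^1(F,\TT)$ is free of rank one over $\LL$ as well. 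The projections $H^1(F,\TT)\to H^1_{\FFc}(F_{m,n},T)$ induce an $\LL$-linear map between these two free rank-one $\LL$-modules; to see it is an isomorphism one reduces modulo the ideals cutting out the finite layers, uses the control comparisons of Proposition~\ref{prop:freenessoverfinitelayers}(i) together with the injectivity of the descent maps as in the proof of Lemma~\ref{lemma:H1facalongcyclotower}, and concludes by Nakayama's lemma and compactness. I then set $\frak{S}_\infty$ to be the preimage of $\frak{S}'_\infty$ under this isomorphism. Its projection to $\wedge^g H^1_{\FFc}(F_{m,n},T)$ equals $\varepsilon_{F_{m,n}}^\chi$ by construction, and the required decomposable shape $\frak{S}_\infty=\frak{S}_{\infty,1}\wedge\cdots\wedge\frak{S}_{\infty,g}$ is automatic: choosing an $\LL$-basis $e_1,\dots,e_g$ of $H^1(F,\TT)$ we may write $\frak{S}_\infty=\lambda\,(e_1\wedge\cdots\wedge e_g)=(\lambda e_1)\wedge e_2\wedge\cdots\wedge e_g$ for some $\lambda\in\LL$.

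The main obstacle is the coherence input used in the first step. The freeness facts quoted above reduce the entire statement to the question of whether the Rubin--Stark elements $\varepsilon_{F_{m,n}}^\chi$ form a genuinely norm-coherent system along $F_\infty/F$ with \emph{no} leftover Euler factor or denominator at the primes above $p$ --- precisely the delicate point that Rubin's conjecture as stated does not settle, and the reason Conjecture~\ref{conj:strongRS} is posed as a conjecture. For $g=1$ it is a theorem, via Rubin's two-variable elliptic-unit Euler system \cite{rubinmainconj}, and under the $p$-ordinary hypothesis \eqref{eqn:pord} the analogous coherence can be read off from the analysis in \cite{kbbCMabvar}; Conjecture~\ref{conj:strongRS} predicts that it persists in the supersingular regime.
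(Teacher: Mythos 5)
Your overall strategy is sound and matches the paper's own reading of the conjecture (compare Remark~\ref{rem:strongequivtoordinaryRS}): assemble the Rubin--Stark elements into a compatible element of $\varprojlim_{m,n}\wedge^g H^1_{\FFc}(F_{m,n},T)$ and then identify that limit with $\wedge^g H^1(F,\TT)$. The norm compatibility, however, is not where the difficulty lies; along the $p$-power tower $F_\infty/F$ the Euler system distribution relations of $\mathcal{C}^{(g)}_{\textup{R-S}}$ degenerate to honest norm compatibility (no Euler factors appear, since no new ramification does), and Remark~\ref{rem:RSaresomewhatintegral} already disposes of the denominators. So the first step of your plan is not the missing ingredient.

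The genuine gap is in the second step, and your proposed proof of the isomorphism $\wedge^g H^1(F,\TT)\stackrel{\sim}{\to}\varprojlim\wedge^g H^1_{\FFc}(F_{m,n},T)$ does not close. Proposition~\ref{prop:freenessoverfinitelayers}(i) gives only that the descent maps $H^1(F,\TT)\to H^1_{\FFc}(F_{m,n},T)$ have \emph{finite} cokernel, not that they are surjective. An injective $\LL$-linear map between free rank-one $\LL$-modules is multiplication by some $\lambda\in\LL$, and it is an isomorphism precisely when the reduction $\bar\lambda\in k$ is nonzero; but if the map $H^1(F,\TT)\to H^1_{\FFc}(F,T)$ has nontrivial (even if finite) cokernel, then the reduced map $H^1(F,\TT)/\frak{m}\to H^1_{\FFc}(F,T)/\frak{m}$ between $g$-dimensional $k$-vector spaces fails to be an isomorphism, so its $g$-th exterior power is \emph{zero} and $\bar\lambda=0$. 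Nakayama's lemma therefore runs in exactly the opposite direction from what you need. As the paper explains in Remark~\ref{rem:strongequivtoordinaryRS}, what would repair this is the surjectivity of the transition maps $H^1_{\FFc}(F_{m,n},T)\to H^1_{\FFc}(F_{m',n'},T)$, which in turn would follow from the absence of pseudo-null submodules in the Iwasawa modules $\varprojlim_{L\subset M\subset LF_\infty}\textup{Cl}(M)^\chi$ and $\varprojlim_{L\subset M\subset LF^\cyc}\textup{Cl}(M)^\chi$. It is precisely because that hypothesis is not available that the Strong Rubin--Stark conjecture is stated as an independent hypothesis: it asserts that $\{\varepsilon_{F_{m,n}}^\chi\}_{m,n}$ lands in the (possibly proper) image of $\wedge^g H^1(F,\TT)$ inside $\varprojlim\wedge^g H^1_{\FFc}(F_{m,n},T)$, even if that image is not everything.
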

\begin{rem}
\label{rem:strongequivtoordinaryRS}
If we knew that neither the $\LL$-module ${\varprojlim_{L\subset M\subset LF_\infty}\textup{Cl}(M)^\chi}$ nor the $\LL_\cyc$-module ${\varprojlim_{L\subset M\subset LF^\cyc}\textup{Cl}(M)^\chi}$ has no pseudo-null submodules, Strong Rubin-Stark conjecture would have been trivial. Indeed in that case,  it follows that the maps 
$$H^1_{\FFc}(F_{m,n},T)\lra H^1_{\FFc}(F_{m^\prime,n^\prime},T)$$
(for positive integers $m\geq m^\prime$ and $n\geq n^\prime$) are surjective and using Proposition~\ref{prop:freenessoverfinitelayers} that 
$$\varprojlim \wedge^g H^1_{\FFc}(F_{m,n},T)=\wedge^g \varprojlim H^1_{\FFc}(F_{m,n},T)=\wedge^g H^1(F,\TT)\,.$$
\end{rem}
\section{Kolyvagin systems for $\mathbb{G}_m$ and $E$}
\label{Sec:KSforGmandE}
\emph{Until the end of this paper, we assume the truth of Leopoldt's conjecture for $L$}. This in particular shows that $\frak{d}=0$. Let $\PP$ be the set of all primes of $F$ that complements the set of primes $F$ at which $T$ is ramified and the set of primes above $p$. Let 
$$\overline{\mathbf{KS}}(\TT,\FF_{\al},\PP):=\varprojlim_{k, \bar{\alpha}} \left(\varinjlim_{\substack{k^\prime\geq k, \\ \bar{\beta} \succ \bar{\alpha}}} \mathbf{KS}(\TT_{k,\bar{\alpha}},\FF_{\al}, \PP_{k^\prime,\bar{\beta}})\right)$$ 
denote the module of $\al$-restricted Kolyvagin systems for the triple $(\TT,\FF_{\al},\PP)$. Here we borrowed notation from \cite[Appendix A]{kbbCMabvar}; we note for the convenience of the reader that 
\begin{itemize}
\item we have $r=3$ in this portion of the current article, 
\item $\bar{\alpha}$ and $\bar{\beta}$ stand for triples of positive integers,
\item our $\FF_{\mathcal{L}}$ corresponds to $\FF_{\mathcal{L}_\infty}$ in loc. cit.
\end{itemize}
We similarly define the modules $\overline{\mathbf{KS}}(X,\FF,\PP)$ where $(X,\FF)$ is one of the pairs $(\TT_\cyc,\FF_{\al})$, $(T,\FF_{\al})$, $(\TT(E),\FF_{\mathbb{L}})$ or $(\TT_\cyc(E),\FF_{\mathbb{L}})$. 

It follows from \cite[Theorem 5.1.1]{mr02} and Lemma~\ref{lem:comparefirststep} that the $k$-vector space ${\mathbf{KS}}(\overline{T},\FF,\PP)$ has dimension one ($\FF=\FF_\al$ or $\mathbb{\FF_\mathbb{L}}$). The following theorem asserts that these Kolyvagin systems may be lifted to various deformations of $\overline{T}$.

\begin{thm}
\label{thm:lamdaadicKS}$\,$
\begin{itemize}
\item[(i)] Both $\LL$-modules of  $\overline{\mathbf{KS}}(\TT,\FF_{\al},\PP)$ and $\overline{\mathbf{KS}}(\TT(E),\FF_{\mathbb{L}},\PP)$ as well as the $\LL_\cyc$-modules $\overline{\mathbf{KS}}(\TT,\FF_{\al},\PP)$ and $\overline{\mathbf{KS}}(\TT_\cyc(E),\FF_{\mathbb{L}},\PP)$ and the $\ooo$-module $\overline{\mathbf{KS}}(T,\FF_{\al},\PP)$ are free of rank one.
\item[(ii)] All five free modules in $(\textup{i})$ are generated by a \textbf{primitive} {Kolyvagin system} $\pmb{\kappa}$, namely by a Kolyvagin system whose  image $\overline{\pmb{\kappa}}\in {\mathbf{KS}}(\overline{T},\FF,\PP)$ $($where $\FF=\FF_{\al}$ or $\FF_{\mathbb{L}}$ depending on which module of Kolyvagin systems we are talking about$)$ is non-zero.
\end{itemize}
\end{thm}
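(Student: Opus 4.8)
The plan is to deduce Theorem~\ref{thm:lamdaadicKS} from the general deformation-theoretic machinery developed in \cite{kbbdeform}, using the residual computation of Lemma~\ref{lem:comparefirststep} (equivalently Remark~\ref{rem:KSforLrestrictedSelmerstr}) as the input. First I would record the structure of the residual module: by \cite[Theorem 5.1.1]{mr02} applied to the residual Selmer structure $\FF$ (which is $\FF_{\al}$ on $\overline{T}=\pmb{\mu}_p\otimes\chi^{-1}$, or $\FF_{\mathbb{L}}$ on $E[p]=\overline{T(E)}$, and these agree once $\chi=\omega_E$), together with the ``core rank one'' identity $\dim_k H^1_{\mathcal{G}}(F,\overline{T})=\dim_k H^1_{\mathcal{G}^*}(F,\overline{T}^*)+1$ from the second half of Lemma~\ref{lem:comparefirststep}, the $k$-vector space ${\mathbf{KS}}(\overline{T},\FF,\PP)$ is one-dimensional. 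This is the hypothesis under which the lifting theorems of \cite{kbbdeform} operate.

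Next I would verify that each of the five deformations $X\in\{\TT,\TT(E),\TT_\cyc,\TT_\cyc(E),T\}$, together with the propagated Selmer structure ($\FF_{\al}$ in the $\mathbb{G}_m$ cases, $\FF_{\mathbb{L}}$ in the $E$-cases; recall via the twist $\textup{tw}$ these are identified), satisfies the running hypotheses of \cite{kbbdeform}: the coefficient ring is $\LL$, $\LL_\cyc$, or $\ooo$, each a complete local Noetherian domain (a regular local ring, in fact) with residue field $k$; the big Galois representation is free over the coefficient ring and reduces to $\overline{T}$; the Selmer structure is cartesian/propagated in the sense required there (Remark~\ref{rem:propagatecycselmer}); and the local conditions at $p$ are, by the very choices in Definitions~\ref{def:selmerstructure}, \ref{def:selmerstructureE} and \ref{def:localUandLcyclo}, free direct summands lifting the residual ones compatibly along $X\twoheadrightarrow \overline{T}$ (and along $\TT\twoheadrightarrow\TT_\cyc$, $\TT(E)\twoheadrightarrow\TT_\cyc(E)$). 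The nontrivial structural inputs that these are genuine Selmer structures of core rank one are exactly Propositions~\ref{prop:localfull}, \ref{prop:minusvanishingforunits} and \ref{prop:4termexact}, which were already established. Granting all this, the main theorem of \cite{kbbdeform} yields that $\overline{\mathbf{KS}}(X,\FF,\PP)$ is free of rank one over the relevant coefficient ring, which is part~(i); and the same theorem identifies a generator with a \textbf{primitive} Kolyvagin system, i.e.\ one whose reduction generates the one-dimensional $k$-space ${\mathbf{KS}}(\overline{T},\FF,\PP)$, which is part~(ii). For $X=T$ over $\ooo$ this is essentially \cite[Theorem 5.2.10]{mr02} directly, while the four $\LL$- and $\LL_\cyc$-cases require the limit argument of \cite{kbbdeform} (the inverse/direct limit defining $\overline{\mathbf{KS}}$ recalled at the start of Section~\ref{Sec:KSforGmandE}).

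The main obstacle I anticipate is bookkeeping at $p$: one must check that the modified local conditions $\mathcal{V}_{\textup{cyc}}\oplus\al$, $\mathcal{V}$, $\mathbb{V}_E\oplus\mathbb{L}_E$, $\mathbb{V}_E$, etc.\ are not merely free direct summands in isolation but form a \emph{compatible system} under the specialization maps $\LL\to\LL_\cyc\to\ooo$, so that the hypotheses of \cite{kbbdeform} on the behaviour of local conditions under base change hold uniformly; this is precisely what Definition~\ref{define:calucyc} and Definition~\ref{def:twistbacktolocalconditionsonGm} were engineered to guarantee (each summand is chosen to \emph{lift} the previous one, and $\al$ is chosen transversal to $\mathcal{V}$ with $\al+\mathcal{V}$ a free summand), and what Propositions~\ref{prop:canliftfromthegroundlevel} and Remark~\ref{rem:liftistransversalaswell} ensure interacts correctly with the global Selmer module. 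A secondary point requiring care is the passage to the limit: one needs the Selmer groups $H^1_{\FF^*}(F,X^*)^\vee$ to be suitably finitely generated (torsion, after the weak Leopoldt inputs of Proposition~\ref{prop:minusvanishingforunits}) so that the double limit defining $\overline{\mathbf{KS}}$ stabilizes and the freeness descends; this is where assuming Leopoldt's conjecture for $L$ (and weak Leopoldt for $T(E)$, pending Theorem~\ref{thm:theweakleopoldtconjforE}) enters. Once these compatibilities are in place, both assertions follow formally from \cite{kbbdeform}.
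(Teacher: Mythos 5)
Your proposal is essentially the same approach as the paper's (very terse) proof: the paper simply cites \cite[Theorem 5.2.10]{mr02} for the $\ooo$-case and \cite[Theorem A.14]{kbbCMabvar} for the $\LL$- and $\LL_\cyc$-cases; the latter is the CM-tailored incarnation of the \cite{kbbdeform} lifting machinery you invoke, and the paper's own Remark~\ref{rem:KSforLrestrictedSelmerstr} already records that the main theorem of \cite{kbbdeform} provides the deformation step. Your identification of the key residual input (Lemma~\ref{lem:comparefirststep} feeding into \cite[Corollary 4.5.2 / Theorem 5.1.1]{mr02}) and the role of the carefully engineered direct summands at $p$ as the hypotheses the lifting theorem needs is exactly right.

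One caution: your last paragraph suggests that weak Leopoldt for $T(E)$ (``pending Theorem~\ref{thm:theweakleopoldtconjforE}'') is needed for this theorem. That would be circular, since Corollary~\ref{cor:localnonvanishingimpliesweakLeoforE} derives weak Leopoldt for $E$ precisely from the existence of a Kolyvagin system with non-vanishing initial term, i.e.\ from Theorem~\ref{thm:lamdaadicKS} together with Proposition~\ref{prop:KSwithnonzeroinitialterm}. The structural assertion here (that the module of Kolyvagin systems is free of rank one with a primitive generator) depends only on the residual core-rank-one computation and the compatibility of the chosen local conditions at $p$ across the specialization maps; it does not require the dual Selmer groups $H^1_{\FF^*}(F,X^*)^\vee$ to be torsion. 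That is why the theorem is stated under Leopoldt's conjecture for $L$ alone. The torsionness/weak Leopoldt statements belong downstream, in Theorem~\ref{thm:mainapplicationofrestrictedKS}(i) and its corollaries, not in the present proof.
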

\begin{proof}
The assertions in (i) and (ii) over $\ooo$ is \cite[Theorem 5.2.10]{mr02} and over $\LL$ or $\LL_\cyc$, they both follow from \cite[Theorem A.14]{kbbCMabvar}.
\end{proof}
The following theorem summarizes the main applications of the Kolyvagin systems whose existence are guaranteed by the previous Theorem. Let $(R,X,\FF)$ be any one of the five triples $(\ooo,T,\FF_\al)$, $(\LL_\cyc,\TT_\cyc,\FF_{\mathcal{L}})$, $(\LL,\TT,\FF_{\mathcal{L}})$, $(\LL_\cyc,\TT_\cyc(E),\FF_{\mathbb{L}})$ or $(\LL,\TT(E),\FF_{\mathbb{L}})$.
\begin{thm}
\label{thm:mainapplicationofrestrictedKS} Suppose that $\pmb{\kappa}\in \overline{\mathbf{KS}}(X,\FF,\PP)$ is a Kolyvagin system whose initial term $\kappa_1 \in H^1_{\FF}(F,X)$ is non-zero.
\begin{itemize}
\item[(i)] The $R$-module $H^1_{\FF^*}(F,X^*)^\vee$ is $R$-torsion and the $R$-module $H^1_{\FF}(F,X)$ has rank one.
\item[(ii)] If $R=\ooo$, then $\#H^1_{\FF^*}(F,X^*)^\vee \mid \#\left(H^1_{\FF}(F,X)/R\cdot\kappa_1\right)$. If $R=\LL$ or $\LL_\cyc$, then 
$$\textup{char}\left(H^1_{\FF^*}(F,X^*)^\vee\right)\mid \textup{char}\left(H^1_{\FF}(F,X)/R\cdot\kappa_1\right)\,.$$
\item[(iii)] When $R=\ooo$ or $R=\LL_\cyc$, we have equality in the divisibilities of $(\textup{ii})$ if and only if the Kolyvagin system $\pmb{\kappa}$ is primitive.
\end{itemize}
\end{thm}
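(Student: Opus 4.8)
The plan is to deduce Theorem~\ref{thm:mainapplicationofrestrictedKS} directly from the structure theory of Kolyvagin systems developed by Mazur--Rubin (\cite{mr02}) and its $\LL$-adic refinement in \cite{kbbCMabvar}, using the preparatory material already assembled in this excerpt. The key point is that, via Lemma~\ref{lem:comparefirststep}, each of the five triples $(R,X,\FF)$ fits the axiomatic setup: the residual Selmer structure $\FF$ satisfies the ``core rank one'' condition $\dim_k H^1_{\mathcal{G}}(F,\overline{T})-\dim_k H^1_{\mathcal{G}^*}(F,\overline{T}^*)=1$ (for $\mathcal{G}=\FF_{\al},\FF_{\mathbb{L}}$), so the abstract machinery applies. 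Accordingly, I would proceed as follows.

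\emph{Step 1 (torsion and rank, part (i)).} Fix a Kolyvagin system $\pmb{\kappa}$ with $\kappa_1\neq 0$. In the $R=\ooo$ case, apply \cite[Theorem~5.2.2]{mr02}: a Kolyvagin system with nonzero initial term forces $H^1_{\FF^*}(F,X^*)^\vee$ to be finite (hence $\ooo$-torsion) and $H^1_{\FF}(F,X)$ to be free of rank equal to the core rank, which is $1$ by Lemma~\ref{lem:comparefirststep}. For $R=\LL$ or $\LL_\cyc$, invoke the analogous statement from \cite[Theorem~A.14 and its corollaries]{kbbCMabvar} (equivalently, the Iwasawa-theoretic Kolyvagin system bounds of \cite{mr02} in the relative setting): nonvanishing of $\kappa_1$ propagates to nonvanishing of the initial terms of the induced residual and intermediate-layer Kolyvagin systems, and one runs the usual specialization/control argument. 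One subtlety to spell out: one must check $\kappa_1\neq 0$ is preserved under the projections $\TT\twoheadrightarrow\TT_\cyc\twoheadrightarrow T$ (and likewise for $E$); this is where the freeness results Proposition~\ref{prop:localfull}, Lemma~\ref{lemma:H1facalongcyclotower}, Proposition~\ref{prop:freenessoverfinitelayers} and the exact sequences of Proposition~\ref{prop:4termexact} are used, together with the assumed Leopoldt (resp.\ weak Leopoldt for $E$) hypotheses.

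\emph{Step 2 (the divisibility, part (ii)).} Here one applies the fundamental bound of Kolyvagin system theory: the map sending $\pmb{\kappa}$ to its initial class realizes the divisibility
$$\textup{char}\,H^1_{\FF^*}(F,X^*)^\vee \ \Big| \ \textup{char}\left(H^1_\FF(F,X)/R\cdot\kappa_1\right)$$
(\cite[Theorem~5.2.2(i)]{mr02} for $R=\ooo$, in the quantitative form $\#H^1_{\FF^*}(F,X^*)^\vee\mid \#(H^1_\FF(F,X)/\ooo\kappa_1)$; its Iwasawa analogue over $\LL,\LL_\cyc$ from \cite{kbbCMabvar} or the machinery of \cite{kbbdeform}). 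The proof in each setting is the standard dévissage: choose a Kolyvagin prime $\ell$ (or a sequence of them) witnessing the leading term, use the defining properties of a Kolyvagin system at $\ell$ to relate $\kappa_\ell$ to $\kappa_1$ and to the dual Selmer group, and feed this into Mazur--Rubin's ``stub'' computation. No new input is needed beyond what is quoted.

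\emph{Step 3 (sharpness, part (iii)).} For $R=\ooo$ this is \cite[Theorem~5.2.12(iv)]{mr02}: the divisibility in (ii) is an equality precisely when $\pmb{\kappa}$ is primitive, i.e.\ $\overline{\pmb{\kappa}}\neq 0$ in ${\mathbf{KS}}(\overline{T},\FF,\PP)$. For $R=\LL_\cyc$ the corresponding characterization follows from the freeness of $\overline{\mathbf{KS}}(\TT_\cyc,\FF_{\al},\PP)$ and $\overline{\mathbf{KS}}(\TT_\cyc(E),\FF_{\mathbb{L}},\PP)$ over $\LL_\cyc$ established in Theorem~\ref{thm:lamdaadicKS}, combined with the Iwasawa-theoretic primitivity criterion of \cite{kbbdeform} (one reduces modulo height-one primes of $\LL_\cyc$ and applies the residual case layer by layer, using that $\LL_\cyc$ is a two-dimensional regular local ring so that equality of characteristic ideals can be tested after such reductions). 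I would note explicitly why part (iii) is \emph{not} asserted for $R=\LL$: the two-variable Iwasawa algebra's module of Kolyvagin systems, while free of rank one, does not yet come with a clean ``primitive $\Leftrightarrow$ sharp'' dictionary in the form we need, because pseudo-null submodules of the relevant dual Selmer groups obstruct the specialization argument in codimension $\geq 2$.

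The main obstacle I anticipate is \emph{Step 3 in the $\LL_\cyc$ case}: transferring the Mazur--Rubin primitivity-equals-sharpness statement from the residual coefficient ring to the Iwasawa algebra requires knowing that the generator of $\overline{\mathbf{KS}}(\TT_\cyc,\FF_{\al},\PP)$ specializes, modulo every height-one prime $\wp\subset\LL_\cyc$, to a generator of the corresponding Kolyvagin system module over $\LL_\cyc/\wp$ whose residual image is nonzero, and then comparing $\textup{char}\,H^1_{\FF^*}(F,X^*)^\vee$ with the ideal generated by the initial term after each such specialization. This is exactly the content of \cite[Theorem A.14]{kbbCMabvar} together with the structural results of \cite{kbbdeform}, so in the write-up the real work is to organize the citations so that the hypotheses (core rank one from Lemma~\ref{lem:comparefirststep}, freeness of local cohomology from Proposition~\ref{prop:localfull}, and the non-vanishing of $\kappa_1$) are visibly in force for each of the five triples simultaneously; everything else is bookkeeping.
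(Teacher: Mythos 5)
Your overall strategy tracks the paper's proof closely: for $R=\ooo$ quote Mazur--Rubin's core rank one theory (the paper cites \cite[\S5.2]{mr02}), for $R=\LL_\cyc$ specialize to height-one primes and reduce to the $R=\ooo$ case, and for $R=\LL$ invoke Ochiai's reduction to the dimension-two case (which the paper explicitly credits to \cite[\S3]{ochiaideform}). Your observation about why part (iii) is not asserted for $R=\LL$ is also accurate and shows good awareness of what the specialization argument can and cannot deliver. So at the level of strategy, you and the paper agree.

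However, there are two conceptual missteps in Step~1 that should be repaired. First, the claim that ``nonvanishing of $\kappa_1$ propagates to nonvanishing of the initial terms of the induced residual Kolyvagin systems'' is simply false: the image of $\pmb{\kappa}$ in $\mathbf{KS}(\overline{T},\FF,\PP)$ can perfectly well vanish even when $\kappa_1\neq 0$, and that failure is \emph{precisely} what it means for $\pmb{\kappa}$ to be imprimitive --- which is the content of part (iii), not a free consequence of the hypothesis. What the paper actually uses (and what you should assert) is weaker and is a statement about a \emph{chosen} height-one prime: since $H^1_\FF(F,X)$ is $\LL_\cyc$-torsion-free and $\kappa_1\neq 0$, for a suitably chosen $\wp=(\gamma_\cyc-1+p^N)$ the image $\kappa_1^\wp$ in $H^1_\FF(F,X\otimes\LL_\cyc/\wp)$ is nonzero; one then applies the $R=\ooo$ result to $\pmb{\kappa}^\wp$ and lifts via Nakayama and the structure theorem. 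Second, your ``subtlety'' about preserving $\kappa_1\neq 0$ under the projections $\TT\twoheadrightarrow\TT_\cyc\twoheadrightarrow T$ is off-target for this theorem: the statement fixes the triple $(R,X,\FF)$ and takes $\kappa_1\neq 0$ as a hypothesis for that triple, so no such preservation needs to be verified here.

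Finally, Step~2 as written treats the Iwasawa-coefficient case too casually. ``No new input is needed beyond what is quoted'' undersells the real work in the paper's proof: one must introduce auxiliary primes $\frak{P}_N=(P+\pi^N)$, pass to $X_N=(X\otimes\LL_\cyc/\frak{P})\otimes S_{\frak{P}_N}$ with a carefully constructed Selmer structure on $X_N$, verify that the cokernel of $H^1_\FF(F,X)/\frak{P}_NH^1_\FF(F,X)\hookrightarrow H^1_\FF(F,X_N)$ is bounded independently of $N$ (which requires extending \cite[Prop.~5.3.14]{mr02}), and then run a length estimate. That computation is the actual content of (ii) over $\LL_\cyc$ and cannot be absorbed into a citation of \cite[Theorem A.14]{kbbCMabvar}; that theorem is what the paper uses for Theorem~\ref{thm:lamdaadicKS} (existence and freeness of the Kolyvagin system modules), not for the bounds in Theorem~\ref{thm:mainapplicationofrestrictedKS}.
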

\begin{proof}
When $R=\ooo$ all assertions follow from \cite[\S5.2]{mr02}. The arguments of \cite[\S5.3]{mr02} essentially verify all the three assertions when $R=\LL_\cyc$ as well. Here we provide a sketch of their proof in that case.

For (i), we may choose a height one prime ideal $\wp=(\gamma_\cyc-1+p^N)$ of $\LL_\cyc$ (where $N \in \ZZ^+$) such that
\begin{itemize}
\item $\LL_\cyc/(\gamma_\cyc-1)\cong \LL_\cyc/\wp$, 
\item The image $\kappa_1^\wp\in H^1_{\FF}(F,X\otimes\LL/\wp)$ of $\kappa_1$ is non-zero. 
\end{itemize} 
Note that $\kappa_1^\wp$ is the initial term of the Kolyvagin system $\pmb{\kappa}^\wp \in \overline{\mathbf{KS}}(X\otimes\LL/\frak{P},\FF,\PP)$ and it follows from (i) applied with $R=\LL/\wp\cong \ooo$ that the $\LL/\wp$-module 
$$H^1_{\FF^*}\left(F,\left(X\otimes\LL/\wp\right)^*\right)^\vee=H^1_{\FF^*}\left(F,X^*[\wp]\right)^\vee\cong H^1_{\FF^*}\left(F,X^*\right)^\vee/\wp H^1_{\FF^*}\left(F,X^*\right)^\vee$$
is finite. This shows by the structure theorem for finitely generated $\LL_\cyc$-modules that the $\LL_\cyc$-module $H^1_{\FF^*}\left(F,X^*\right)^\vee$ is torsion. Since the module $H^1_{\FF^*}\left(F,\left(X\otimes\LL/\wp\right)^*\right)^\vee$ is finite, it follows from Lemma \ref{lem:comparefirststep} that $H^1_{\FF}\left(F,X\otimes\LL/\wp\right)$ has $\LL/\wp$-rank one. Furthermore, as we have a natural injection 
$$H^1_{\FF}\left(F,X\right)/\wp H^1_{\FF}\left(F,X\right)\hookrightarrow H^1_{\FF}\left(F,X\otimes\LL/\wp\right)\,,$$
it follows by Nakayama's lemma that the $\LL_\cyc$-module $H^1_{\FF}\left(F,X\right)$ is cyclic. On the other hand, $\kappa_1$ is a non-zero element of the $\LL_\cyc$-torsion free module $H^1_{\FF}(F,X)$, it follows that $H^1_{\FF}(F,X)$ has positive $\LL_\cyc$-rank. This concludes the proof of (i) when $R=\LL_\cyc$.

We next sketch a proof of (ii) when $R=\LL_\cyc$. Fix a pseudo-isomorphism 
$$H^1_{\FF^*}(F,X^*)^\vee \lra \bigoplus_{i}\Lambda_\cyc/\frak{P}^{m_i} \oplus\left(\bigoplus_j\Lambda_\cyc/f_j\Lambda_\cyc\right)$$
where $\frak{P}$ is any height one prime dividing ${\textup{char}}\left(H^1_{\FF^*}(F,X^*)^\vee\right)$ and where each $f_j$ is prime to $\frak{P}$. We content to prove  that
\be\label{eqn:sufficientinequality}
\sum_{i}m_i\leq \textup{ord}_\frak{P}\, \textup{char}\left(H^1_{\FF}(F,X)/\LL_\cyc\cdot\kappa_1\right)\,,
\ee
from which follows (ii). We will assume that $p\notin \frak{P}$ and therefore $\frak{P}$ is generated by a distinguished polynomial $P \in \LL_\cyc=\ooo[[\gamma_\cyc-1]]$. For a general height one prime $\frak{Q}$ of $\LL_\cyc$\,, let $S_\frak{Q}$ denote the integral closure of $\LL_\cyc/\frak{Q}$. Note that $[S_\frak{Q}:\LL_\cyc/\frak{Q}]$ is finite. Set $\frak{P}_N=(P+\pi^N)$, where $N$ is a positive integer (chosen sufficiently large to ensure that $\frak{P}_N$ is a prime ideal). Write $X_N=(X\otimes\LL_\cyc/\frak{P})\otimes S_{\frak{P}_N}$. It follows from our assumption (\ref{eqn:assnotrivxhi}) that we have injections 
$$\iota: H^1(G_\Sigma,X\otimes\LL_\cyc/\frak{P}_N) \hookrightarrow H^1(G_\Sigma,X_N) \hbox{\,\,\, and, }$$
$$\iota_p: H^1(F_p,X\otimes\LL_\cyc/\frak{P}_N) \hookrightarrow H^1(F_p,X_N) $$
with finite cokernels (whose size depend only on $[S_\frak{P}:\LL_\cyc/\frak{P}]$). Define the Selmer structure $\FF$ on $X_N$ by setting 
$$H^1_\FF(F_\lambda,X_N)=\ker\left(H^1(F_\lambda,X_N)\lra H^1(F_\lambda^\textup{ur},X_N\otimes\QQ_p)\right)$$
for $\lambda\nmid p$ (this would be the local condition denoted by $H^1_{\FFc}(F_\lambda,X_N)$ in the notation of \cite{mr02}) and defining $H^1_\FF(F_p,X_N)$ as the $S_{\frak{P}_N}$-saturation of $\iota_p\left(H^1_{\FF}(F_p,X\otimes\LL_\cyc/\frak{P}_N)\right)$\,.

As explained in the proof of Theorem 5.3.10 of \cite{mr02}, for every sufficiently large positive integer $N$ we have:

\begin{enumerate}
\item $\LL_\cyc/\frak{P}_N\cong \LL_\cyc/\frak{P}$\,, 
\item The image $\kappa_1^{\frak{P}_N}\in H^1_{\FF}(F,X_N)$ of $\kappa_1$ is non-zero. 
\item $\textup{coker}\left(H^1_\FF(F,X)/\frak{P}_NH^1_\FF(F,X) \hookrightarrow H^1_{\FF}(F,X_N)\right)$ is finite with order bounded by a constant independent of $N$,
\item $\frak{P}_N$ is prime to $f_j$ for every $j$.
\end{enumerate}  
 Only the verification of (3) requires a slight enhancement of \cite[Proposition 5.3.14]{mr02} (so as to apply with the Selmer structure $\FF$ in place of the Selmer structure $\FF_\LL$ in loc.cit.). This shows, proceeding as in the proof of Theorem 5.3.10 of loc. cit. (essentially, by only making use of the Kolyvagin system $\pmb{\kappa}^{\frak{P}_N}\in  \overline{\mathbf{KS}}(X_N,\FF,\PP)$ over the one-dimensional ring ring $S_{\frak{P}_N}$) that 
 \begin{align*}Nr\sum_{i}m_i +O(1)&=\textup{length}_{\ZZ_p}\,H^1_{\FF^*}(F,X^*)[\frak{P}_N]=\textup{length}_{\ZZ_p}\,H^1_{\FF^*}\left(F,\left(X/\frak{P}_N\right)^*\right)
 \\&\leq \textup{length}_{\ZZ_p}\,H^1_{\FF^*}(F,X_N^*) +O(1)\\
 &\leq \textup{length}_{\ZZ_p}\,\left(H^1_{\FF}(F,X_N)/S_{\frak{P}_N}\cdot\kappa_1^{\frak{P}_N}\right) +O(1) \\
 &=\textup{length}_{\ZZ_p}\,\left(\left(H^1_{\FF}(F,X)/\LL_\cyc\cdot\kappa_1 \right)\otimes \LL_\cyc/{\frak{P}_N}\right) +O(1) 
 \\&= Nr\,\textup{ord}_{\frak{P}}\, \textup{char}\left(H^1_{\FF}(F,X)/\LL_\cyc\cdot\kappa_1\right)+O(1)
 \end{align*}
where $r=\textup{rank}_{\ZZ_p}\,S_{\frak{P}}$.  (\ref{eqn:sufficientinequality}) now follows (for characteristic zero primes $\frak{P}$) taking $N$ sufficiently large in the inequality above. In case $p\in \frak{P}$, we proceed by considering the ideals $\frak{P}_N=(\pi+(\gamma_\cyc-1)^N)$ and conclude the proof.

When $R=\LL$, we may make use of the arguments of Ochiai in \cite[\S3]{ochiaideform} in order to reduce the assertions in (i) and (ii) to the case of a dimension-two regular ring. As details pertaining to this point  will soon be available (in much greater generality) as part of our forthcoming joint work with T. Ochiai, we indicate here only the key points. We follow the terminology of  \cite[\S3]{ochiaideform}. First of all, our argument above when $R=\LL_\cyc$ above shows that for all but finitely many \emph{linear elements} $l \in \LL$, we have (i) and (ii)  for the $\LL/(l)$-module $\TT\otimes \LL/(l)$. As the second step, one makes use of this input together with control theorems for the $\mathcal{F_L}$-Selmer groups (which are in fact easier than those relevant to considerations in loc. cit, due to the fact $\TT=T\otimes\LL$ is a rather simple Galois deformation) as well \cite[Proposition 3.6]{ochiaideform} (that characterizes the characteristic ideal of a torsion $\LL$-module $M$ in terms of the characteristic ideals of the quotients $M/lM$ as $\LL/(l)$-modules) to finish with the proof.
\end{proof}
\begin{cor}
\label{cor:localnonvanishingimpliesweakLeoforE}
Suppose  $\pmb{\kappa}\in \overline{\mathbf{KS}}(\TT_\cyc(E),\FF_{\mathbb{L}},\PP)$ is a Kolyvagin system with non-vanishing initial term $\kappa_1 \in H^1_{\FF_{\mathbb{L}}}(F,\TT_\cyc(E))$. Then $H^1_{\FF_{\textup{can}}^*}(F,\TT_\cyc(E)^*)^\vee$ is  a torsion $\LL_\cyc$-module and the weak Leopoldt conjecture for $E$ holds true.
\end{cor}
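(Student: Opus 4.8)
The plan is to combine Theorem~\ref{thm:mainapplicationofrestrictedKS}(i) with an elementary comparison of Selmer structures. First I would apply Theorem~\ref{thm:mainapplicationofrestrictedKS}(i) to the triple $(R,X,\FF)=(\LL_\cyc,\TT_\cyc(E),\FF_{\mathbb{L}})$, which is one of the five admissible triples listed there. Since by hypothesis $\pmb{\kappa}\in\overline{\mathbf{KS}}(\TT_\cyc(E),\FF_{\mathbb{L}},\PP)$ has non-zero initial term $\kappa_1\in H^1_{\FF_{\mathbb{L}}}(F,\TT_\cyc(E))$, the theorem yields that the $\LL_\cyc$-module $H^1_{\FF_{\mathbb{L}}^*}(F,\TT_\cyc(E)^*)^\vee$ is torsion (and, incidentally, that $H^1_{\FF_{\mathbb{L}}}(F,\TT_\cyc(E))$ has rank one).

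Second, I would compare $\FF_{\mathbb{L}}$ with $\FFc$ as Selmer structures on $\TT_\cyc(E)$. By Definition~\ref{def:selmerstructureE}, propagated to $\TT_\cyc(E)$ as in the discussion following that definition, the two structures share the same local conditions at every prime $\frak{q}\nmid p$, while at $p$ one has
$$H^1_{\FF_{\mathbb{L}}}(F_p,\TT_\cyc(E))=\mathbb{V}_E^\cyc\oplus\mathbb{L}_E^\cyc\ \subseteq\ H^1(F_p,\TT_\cyc(E))=H^1_{\FFc}(F_p,\TT_\cyc(E)),$$
the inclusion being immediate since $\mathbb{V}_E^\cyc\oplus\mathbb{L}_E^\cyc$ is, by the construction in Definitions~\ref{def:choosetransverseforE} and \ref{def:twistbacktolocalconditionsonGm}, a direct summand of $H^1(F_p,\TT_\cyc(E))$. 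Hence $\FF_{\mathbb{L}}\subseteq\FFc$, and passing to dual Selmer structures reverses this inclusion of local conditions (by orthogonality under the local Tate pairing; cf. \cite[Definition 2.3.1]{mr02}), giving $\FFc^*\subseteq\FF_{\mathbb{L}}^*$. Consequently $H^1_{\FFc^*}(F,\TT_\cyc(E)^*)$ is a $\LL_\cyc$-submodule of $H^1_{\FF_{\mathbb{L}}^*}(F,\TT_\cyc(E)^*)$, so its Pontryagin dual $H^1_{\FFc^*}(F,\TT_\cyc(E)^*)^\vee$ is a quotient of the torsion module obtained in the first step, hence is itself $\LL_\cyc$-torsion. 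As recorded in the remark following Lemma~\ref{lem:auxtorsion}, this last statement is exactly the weak Leopoldt conjecture for $E$, which completes the argument.

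I do not expect a genuine obstacle here; the only points demanding a little care are confirming that the local condition at $p$ defining $\FF_{\mathbb{L}}$ is honestly contained in the full local cohomology group (automatic, as it is a direct summand thereof, and in the boundary case $g=1$ the two coincide and the conclusion is even more immediate), and checking that the propagation of $\FF_{\mathbb{L}}$ from $\TT(E)$ to $\TT_\cyc(E)$ used implicitly in Theorem~\ref{thm:mainapplicationofrestrictedKS} agrees with the one used in the statement of the corollary, which it does by our conventions on propagated Selmer structures.
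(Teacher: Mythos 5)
Your proposal is correct and follows essentially the same route as the paper: apply Theorem~\ref{thm:mainapplicationofrestrictedKS}(i) to the triple $(\LL_\cyc,\TT_\cyc(E),\FF_{\mathbb{L}})$, then use the injection $H^1_{\FFc^*}(F,\TT_\cyc(E)^*)\hookrightarrow H^1_{\FF_{\mathbb{L}}^*}(F,\TT_\cyc(E)^*)$ coming from the containment of local conditions, so that the dual of the smaller Selmer group is a quotient of a torsion $\LL_\cyc$-module. The paper simply labels this injection ``obvious'' whereas you spell out the orthogonality-of-local-conditions reason behind it, which is fine.
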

\begin{proof}
This follows from Theorem~\ref{thm:mainapplicationofrestrictedKS}(i) and the obvious injection
$$H^1_{\FF_{\textup{can}}^*}(F,\TT_\cyc(E)^*) \hookrightarrow H^1_{\FF_{\mathbb{L}}^*}(F,\TT_\cyc(E)^*) \,.$$
\end{proof}
\begin{prop}
\label{prop:usefulforequality}
Let $\pmb{\kappa}\in \overline{\mathbf{KS}}(\TT,\FF_{\mathbb{L}},\PP)$ be a Kolyvagin system with initial term $0\neq\kappa_1 \in H^1_{\FF_{\mathbb{L}}}(F,\TT)$ and let $\widetilde{\pmb{\kappa}}\in \overline{\mathbf{KS}}(\TT(E),\FF_{\mathbb{L}},\PP)$ with initial term $\textup{tw}(\kappa_1) \in H^1_{\FF_{\mathbb{L}}}(F,\TT(E))$. Suppose that 
$$\textup{char}\left(H^1_{\FF_{\mathbb{L}}^*}(F,\TT^*)^\vee\right)=\textup{char}\left(H^1_{\FF_{\mathbb{L}}}(F,\TT)/\LL\cdot\kappa_1\right)\,.$$
Then,
\begin{itemize}
\item[(i)] $\textup{char}\left(H^1_{\FF_{\mathbb{L}}^*}(F,\TT(E)^*)^\vee\right)=\textup{char}\left(H^1_{\FF_{\mathbb{L}}}(F,\TT(E))/\LL\cdot\textup{tw}\left(\kappa_1\right)\right)\,.$
\item[(ii)] The Kolyvagin system $\widetilde{\pmb{\kappa}}$ and its image $\pi_\cyc\left(\widetilde{\pmb{\kappa}}\right) \in \overline{\mathbf{KS}}(\TT_\cyc(E),\FF_{\mathbb{L}},\PP)$ are both primitive.
\item[(iii)] Let $\frak{K}_1\in H^1_{\FF_{\mathbb{L}}}(F,\TT_\cyc(E))$ be the initial term of the Kolyvagin system $\pi_\cyc\left(\widetilde{\pmb{\kappa}}\right)$. Then
$$\textup{char}\left(H^1_{\FF_{\mathbb{L}}^*}(F,\TT_\cyc(E)^*)^\vee\right)=\textup{char}\left(H^1_{\FF_{\mathbb{L}}}(F,\TT_\cyc(E))/\LL_\cyc\cdot\frak{K}_1\right)\,.$$
\end{itemize}
\end{prop}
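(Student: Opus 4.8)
\textbf{Proof proposal for Proposition~\ref{prop:usefulforequality}.}

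The plan is to leverage the twisting isomorphism $\textup{tw}$ together with the structural results on modules of Kolyvagin systems from Theorem~\ref{thm:lamdaadicKS} and the sharpness criterion in Theorem~\ref{thm:mainapplicationofrestrictedKS}(iii). For part (i), I would first observe that the twisting map $\textup{tw}\colon H^1(F,\TT)\stackrel{\sim}{\to} H^1(F,\TT(E))$ and its local analogues are $\LL$-module isomorphisms carrying the local conditions defining $\FF_{\mathbb{L}}$ on $\TT$ to those defining $\FF_{\mathbb{L}}$ on $\TT(E)$ (by the very way $\mathbb{V}$ and $\mathbb{L}$ were defined from $\mathbb{V}_E$ and $\mathbb{L}_E$ in Definition~\ref{def:twistbacktolocalconditionsonGm}). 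Hence $\textup{tw}$ induces an isomorphism $H^1_{\FF_{\mathbb{L}}}(F,\TT)\stackrel{\sim}{\to}H^1_{\FF_{\mathbb{L}}}(F,\TT(E))$ taking $\kappa_1$ to $\textup{tw}(\kappa_1)$, and dually an isomorphism $H^1_{\FF_{\mathbb{L}}^*}(F,\TT^*)^\vee\stackrel{\sim}{\to}H^1_{\FF_{\mathbb{L}}^*}(F,\TT(E)^*)^\vee$ (the dual Selmer structures correspond under $\textup{tw}$ since Tate local duality is respected by the twist up to the canonical identification). Therefore $\textup{char}(H^1_{\FF_{\mathbb{L}}}(F,\TT)/\LL\cdot\kappa_1)=\textup{char}(H^1_{\FF_{\mathbb{L}}}(F,\TT(E))/\LL\cdot\textup{tw}(\kappa_1))$ and likewise for the dual Selmer groups, so the hypothesized equality over $\TT$ transports verbatim to the asserted equality over $\TT(E)$.

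For part (ii), I would argue as follows. By Theorem~\ref{thm:lamdaadicKS}(i)--(ii) the $\LL$-module $\overline{\mathbf{KS}}(\TT(E),\FF_{\mathbb{L}},\PP)$ is free of rank one, generated by a primitive Kolyvagin system; so $\widetilde{\pmb{\kappa}}=f\cdot\pmb{\kappa}^{\mathrm{prim}}$ for some $f\in\LL$, and $\widetilde{\pmb{\kappa}}$ is primitive precisely when $f\in\LL^\times$, equivalently when the residual class $\overline{\widetilde{\pmb{\kappa}}}\in\mathbf{KS}(\overline{T(E)},\FF_{\mathbb{L}},\PP)$ is non-zero. The key point is that primitivity is detected by the sharpness of the divisibility: by the equality established in (i) together with Theorem~\ref{thm:mainapplicationofrestrictedKS}(iii) applied over $R=\LL$ — or, to stay strictly within the cases where that theorem gives the "if and only if", by descending to a well-chosen height-one prime $\wp=(\gamma_\cyc-1+p^N)$ and invoking Theorem~\ref{thm:mainapplicationofrestrictedKS}(iii) over $R=\ooo$ — the divisibility for $\widetilde{\pmb{\kappa}}$ being an equality forces $\widetilde{\pmb{\kappa}}$ to be primitive. (More directly: if $\widetilde{\pmb{\kappa}}=f\cdot\pmb{\kappa}^{\mathrm{prim}}$ with $f$ not a unit, then $\textup{char}(H^1_{\FF_{\mathbb{L}}}(F,\TT(E))/\LL\cdot\widetilde{\kappa}_1)$ is divisible by $(f)\cdot\textup{char}(H^1_{\FF_{\mathbb{L}}}(F,\TT(E))/\LL\cdot\kappa_1^{\mathrm{prim}})=(f)\cdot\textup{char}(H^1_{\FF_{\mathbb{L}}^*}(F,\TT(E)^*)^\vee)$, strictly bigger than the dual Selmer characteristic ideal, contradicting (i).) The primitivity of the image $\pi_\cyc(\widetilde{\pmb{\kappa}})$ then follows because the specialization map $\overline{\mathbf{KS}}(\TT(E),\FF_{\mathbb{L}},\PP)\to\overline{\mathbf{KS}}(\TT_\cyc(E),\FF_{\mathbb{L}},\PP)$ is compatible with reduction to the common residual Kolyvagin system module $\mathbf{KS}(\overline{T(E)},\FF_{\mathbb{L}},\PP)$: the residual class of $\pi_\cyc(\widetilde{\pmb{\kappa}})$ equals $\overline{\widetilde{\pmb{\kappa}}}$, which is non-zero, so $\pi_\cyc(\widetilde{\pmb{\kappa}})$ is primitive.

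Finally, part (iii) follows by combining (ii) with Theorem~\ref{thm:mainapplicationofrestrictedKS}. Since $\pi_\cyc(\widetilde{\pmb{\kappa}})\in\overline{\mathbf{KS}}(\TT_\cyc(E),\FF_{\mathbb{L}},\PP)$ is primitive with non-zero initial term $\frak{K}_1$ (non-zero because $\textup{tw}(\kappa_1)\neq 0$ maps to it, using that $\kappa_1\neq 0$ and $H^1_{\FF_{\mathbb{L}}}(F,\TT)\hookrightarrow H^1_{\FF_{\mathbb{L}}}(F,\TT_\cyc)$ injectivity-type input — or more simply because primitivity already forces the residual initial term, hence $\frak{K}_1$, to be non-zero), Theorem~\ref{thm:mainapplicationofrestrictedKS}(iii) over $R=\LL_\cyc$ gives the equality $\textup{char}(H^1_{\FF_{\mathbb{L}}^*}(F,\TT_\cyc(E)^*)^\vee)=\textup{char}(H^1_{\FF_{\mathbb{L}}}(F,\TT_\cyc(E))/\LL_\cyc\cdot\frak{K}_1)$. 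The main obstacle in this argument is the bookkeeping in part (ii): carefully checking that reduction modulo the maximal ideal and the cyclotomic specialization commute on the level of Kolyvagin-system modules (so that primitivity — a residual condition — genuinely descends along $\pi_\cyc$), and making sure the "equality $\Leftrightarrow$ primitivity" direction of Theorem~\ref{thm:mainapplicationofrestrictedKS}(iii) is available over $\LL$ (or reducing to a height-one prime where it is available over $\ooo$), since that equivalence is stated in the excerpt only for $R=\ooo$ and $R=\LL_\cyc$.
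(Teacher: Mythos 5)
Your proposal follows essentially the same strategy as the paper's proof, and the overall architecture is correct. For (i) the paper likewise invokes a formal twisting argument (citing Rubin's \emph{Euler Systems} Lemma~VI.1.2 and Theorem~VI.4.1). For (ii) the paper chooses a generator $\frak{g}\in\overline{\mathbf{KS}}(\TT(E),\FF_{\mathbb{L}},\PP)$, writes $\widetilde{\pmb{\kappa}}=r\cdot\frak{g}$, and deduces $r\in\LL^\times$ by comparing $\textup{char}\bigl(H^1_{\FF_{\mathbb{L}}}(F,\TT(E))/\LL\,rg_1\bigr)$ (which by (i) equals the dual Selmer characteristic ideal) with the divisibility of Theorem~\ref{thm:mainapplicationofrestrictedKS}(ii) for $g_1$; primitivity of $\pi_\cyc(\widetilde{\pmb{\kappa}})$ then follows from the commutativity of reduction-to-residual with $\pi_\cyc$ and Theorem~\ref{thm:lamdaadicKS}(ii), exactly as you say. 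Part (iii) is then Theorem~\ref{thm:mainapplicationofrestrictedKS}(iii) over $\LL_\cyc$, again as you say.

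One caution: your ``more directly'' parenthetical in (ii) asserts an equality
$\textup{char}\bigl(H^1_{\FF_{\mathbb{L}}}(F,\TT(E))/\LL\cdot\kappa_1^{\mathrm{prim}}\bigr)=\textup{char}\bigl(H^1_{\FF_{\mathbb{L}}^*}(F,\TT(E)^*)^\vee\bigr)$, which would require the primitivity-implies-sharpness direction of Theorem~\ref{thm:mainapplicationofrestrictedKS}(iii) over $R=\LL$ --- precisely the case that theorem does not cover, as you yourself flag at the end. But the argument needs only the divisibility $\textup{char}\bigl(H^1_{\FF_{\mathbb{L}}^*}(F,\TT(E)^*)^\vee\bigr)\mid\textup{char}\bigl(H^1_{\FF_{\mathbb{L}}}(F,\TT(E))/\LL\cdot\kappa_1^{\mathrm{prim}}\bigr)$, which \emph{is} available from Theorem~\ref{thm:mainapplicationofrestrictedKS}(ii) over $\LL$; combining this with the hypothesis in (i) and the nonvanishing of the dual characteristic ideal already forces $(f)$ to be the unit ideal. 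This is exactly how the paper organizes the step, and your ``first'' route of descending to a height-one prime $\wp$ is an unnecessary detour. With the equality in your parenthetical replaced by the (available) divisibility, your argument agrees with the paper's.
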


\begin{proof}
(i) follows using a formal twisting argument, c.f. Lemma VI.1.2 and  Theorem VI.4.1 of \cite{r00}.

Let $\frak{g} \in   \overline{\mathbf{KS}}(\TT(E),\FF_{\mathbb{L}},\PP)$ be a generator and let $g_1$ be its initial term. Write $\widetilde{\pmb{\kappa}}=r\cdot\frak{g}$ (where $r\in \LL$) so that $\textup{tw}\left(\kappa_1\right)=r\cdot g_1$\,. It follows from (i) and Theorem~\ref{thm:mainapplicationofrestrictedKS}(ii) that 
\begin{align*}\textup{char}\left(H^1_{\FF_{\mathbb{L}}}(F,\TT(E))/\LL\cdot r g_1\right)&=\textup{char}\left(H^1_{\FF_{\mathbb{L}}}(F,\TT(E))/\LL\cdot\textup{tw}\left(\kappa_1\right)\right)\\
&\mid \textup{char}\left(H^1_{\FF_{\mathbb{L}}}(F,\TT(E))/\LL\cdot g_1\right)\end{align*}
which shows that $r \in \LL^\times$, proving the first assertion in (ii). It now follows from Theorem~\ref{thm:lamdaadicKS}(ii) that the image $\overline{\pmb{\kappa}} \in {\mathbf{KS}}(\overline{T}(E),\FF_{\mathbb{L}},\PP)$ of $\widetilde{\pmb{\kappa}}$ is non-zero 
and the second assertion in (ii) holds true by the commutative diagram
$$\xymatrix@C=.05cm{\overline{\mathbf{KS}}(\TT(E),\FF_{\mathbb{L}},\PP)\ar[rd] \ar[rr]^{\pi_\cyc}&&\overline{\mathbf{KS}}(\TT_\cyc(E),\FF_{\mathbb{L}},\PP)\ar[ld]\\
&{\mathbf{KS}}(\overline{T}(E),\FF_{\mathbb{L}},\PP)&
}$$
and by Theorem~\ref{thm:lamdaadicKS}(ii). The final portion of the Proposition follows now from (ii) and Theorem~\ref{thm:mainapplicationofrestrictedKS}(iii).
\end{proof}

\subsection{Rubin-Stark  $\al$-restricted Kolyvagin systems}
\label{sec:KSRS}
The purpose of this section is to construct the $\al$-restricted Kolyvagin systems $\TT$ (that we proved t exist unconditionally in the previous section) out of the Rubin-Stark elements. In order to do so, we will first construct an Euler system of rank one (namely, an Euler system in the sense of \cite{r00}) that enjoys additional local properties at $p$. We will then apply Kolyvagin's descent on this Euler system.

\begin{define}
\label{def:alrestrictedES}
\begin{itemize} 
\item[(i)]For $X=T$ or $T(E)$, let $\textup{ES}(X)=\textup{ES}(X,\frak{E})$ denote the collection of Euler systems for $X$ in the sense of \cite[\S2]{r00}.
    \item[(ii)]Let $\frak{L} \subset \mathcal{V}_{K(\frak{E})}^+$ be a $\ooo[[\frak{G}(K(\frak{E}))]]$-direct summand as in Definition~\ref{def:modifiedlocalcondition}. An Euler system $\textbf{c}=\{c_\mathcal{K}\} \in \textup{ES}(T)$ is called an \emph{$\frak{L}$-restricted Euler system} if
$$\textup{loc}_p(c_\mathcal{K}) \in \mathcal{V}_\mathcal{K}^-\oplus \frak{l}_\mathcal{K}$$
for every $\mathcal{K}\in \frak{E}$. The module of $\frak{L}$-restricted Euler systems for $T$ is denoted by $\textup{ES}_{\frak{L}}(T)$. We similarly define the module of $\mathbb{L}$-restricted Euler systems $\textup{ES}_{\mathbb{L}}(T(E))$ for $T(E)$.
    \end{itemize}
\end{define}

\begin{thm}[Mazur-Rubin]
\label{thm:ESKSmain} For $X=T$ or $T(E)$, there is a canonical map
$$\ES(X) \lra \overline{\KS}(X\otimes\LL,\FFc,\PP),$$
with the property that if $\mathbf{c}=\{c_\KKK\}_{_{\KKK\in\frak{E}}} \in \textup{ES}(X)$ maps to $\pmb{\kappa} \in \overline{\KS}(X\otimes\LL,\FFc,\PP)$ then
$$\kappa_1=\{c_{M}\} \in \varprojlim_{M} H^1(M,X)=H^1(F,X\otimes\LL),$$
where the inverse limit is over the finite sub-extensions $M$ of $F_{\infty}/F$.
\end{thm}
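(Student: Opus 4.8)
The plan is to reduce the statement to the Mazur--Rubin machinery relating Euler systems to Kolyvagin systems, applied level-by-level along the cyclotomic-type towers that make up the collection $\frak{E}$, and then to pass to the limit. First I would recall from \cite[\S3, \S5]{r00} (as generalized in \cite{mr02}) that, for a $G_F$-representation $X$ free over $\ooo$ satisfying the standard hypotheses (here guaranteed for $X=T$ and $X=T(E)$ by $\chi\neq\omega$, the assumption \eqref{eqn:assnotrivxhi}, and the Weil-pairing identification of residual representations), an Euler system $\mathbf{c}=\{c_\KKK\}_{\KKK\in\frak{E}}$ for $X$ gives rise, via Kolyvagin's derivative construction applied over each finite layer $M\subset F_\infty$, to a Kolyvagin system for $X$ with coefficients in $\ooo[\Gal(M/F)]$; the compatibility of the Kolyvagin derivative classes with corestriction along $M$ (which is exactly the Euler system distribution relation at the primes above $p$, which are \emph{not} in $\PP$, together with the derivative relations at primes in $\NN(\mathcal{R})$) allows these to be assembled into an element of $\varprojlim_M \mathbf{KS}(X\otimes\ooo[\Gal(M/F)],\FFc,\PP)$. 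Identifying this inverse limit with $\overline{\KS}(X\otimes\LL,\FFc,\PP)$ in the notation of \cite[Appendix A]{kbbCMabvar} (noting $r=1$ here, so $\FFc$ is the canonical rank-one Selmer structure and no $\al$-restriction is imposed) produces the asserted map $\ES(X)\lra\overline{\KS}(X\otimes\LL,\FFc,\PP)$.

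The key steps, in order, are: (1) fix $\mathbf{c}\in\ES(X)$ and for each finite $M\subset F_\infty$ form the Kolyvagin system $\pmb{\kappa}^{(M)}$ over $\ooo[\Gal(M/F)]$ by the derivative construction of \cite[\S4.4]{r00}; (2) verify that $\{\pmb{\kappa}^{(M)}\}_M$ is compatible under the natural projection maps $\ooo[\Gal(M'/F)]\twoheadrightarrow\ooo[\Gal(M/F)]$ for $M\subset M'$, which is where one uses the Euler-factor relations at primes above $p$ (these primes lie outside $\PP$, so the Euler factors are genuinely present and produce precisely the corestriction compatibility, rather than the identity, cf. the discussion of twisting/norm-compatibility in \cite[\S6]{r00}); (3) identify $\varprojlim_M\mathbf{KS}(X\otimes\ooo[\Gal(M/F)],\FFc,\PP)$ with $\overline{\KS}(X\otimes\LL,\FFc,\PP)$ using \cite[Appendix A]{kbbCMabvar} and conclude that $\pmb{\kappa}:=\varprojlim_M\pmb{\kappa}^{(M)}$ is a well-defined element of the latter; and (4) unwind the derivative construction at the trivial conductor $\eta=1$ to see that the initial term satisfies $\kappa_1^{(M)}=c_M\in H^1(M,X)$ for each $M$, so that $\kappa_1=\{c_M\}_M\in\varprojlim_M H^1(M,X)=H^1(F,X\otimes\LL)$, the last identification being the usual Shapiro/limit isomorphism valid because $X\otimes\LL=\varprojlim X\otimes\ooo[\Gal(M/F)]$. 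Canonicity of the map is then automatic from the canonicity of each of these constructions.

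The main obstacle I anticipate is step (2)--(3): one must be careful that the Mazur--Rubin Euler-system-to-Kolyvagin-system map, which is originally formulated over a fixed coefficient ring (typically $\ooo$ or a fixed local ring), is genuinely functorial in the coefficient ring and commutes with the transition maps in the tower, so that it passes to the inverse limit cleanly; the subtlety is that the sets $\PP_{k',\bar\beta}$ of allowable Kolyvagin primes shrink as one goes up the tower (they must avoid ramification in $M/F$ and split conditions), so the limit in the definition of $\overline{\KS}(X\otimes\LL,\FFc,\PP)$ genuinely involves both a projective limit in the coefficient direction and an injective limit in the prime-set direction, and one needs the core rank-one computation (Lemma~\ref{lem:comparefirststep}, giving $\mathbf{KS}(\overline{T},\FFc,\PP)$ one-dimensional, and Theorem~\ref{thm:lamdaadicKS}) to ensure that these limits behave well and that the resulting $\pmb{\kappa}$ is the ``expected'' generator-scaled Kolyvagin system. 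Since all of this is either in \cite{r00, mr02} or in \cite[Appendix A]{kbbCMabvar} for the relevant Selmer structures, the proof will mostly consist of assembling these references; indeed the honest one-line proof is: ``this is \cite[Theorem 3.2.4, Theorem 5.3.3]{mr02} (see also \cite[\S6]{r00}) applied over each layer of $F_\infty/F$ and passed to the limit via \cite[Appendix A]{kbbCMabvar},'' with the identification of the initial term being immediate from the conductor-$1$ case of the derivative construction.
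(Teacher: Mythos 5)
The paper gives no proof of this statement: it is attributed to Mazur--Rubin in the theorem header and, as you correctly observe in your closing sentence, the honest content is a pointer to \cite[Theorem 3.2.4]{mr02} for the single-layer derivative construction and \cite[Theorem 5.3.3]{mr02} (together with \cite[Appendix A]{kbbCMabvar} for the multi-parameter limit formalism) for the $\Lambda$-adic version. Your four-step outline captures the right mechanism, so at the level of ideas you have the correct proof.

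A few imprecisions are worth flagging, though none is fatal. First, you write ``noting $r=1$ here,'' but the parameter $r$ in the definition of $\overline{\textbf{KS}}$ borrowed from \cite[Appendix A]{kbbCMabvar} is not the rank of the Selmer structure or of the Euler system; it is the number of parameters in the double limit, and the paper explicitly tells you $r=3$ because $\Lambda=\mathcal{O}[[\Gamma]]$ with $\Gamma\cong\mathbb{Z}_p^2$. The Selmer structure $\FFc$ and the Euler systems in $\textup{ES}(X)$ are indeed of ``rank one'' in the usual sense, but that is a different notion from the $r$ appearing in the $\overline{\textbf{KS}}$ notation. Second, in step (2) you attribute the corestriction compatibility of the derived classes along $F_\infty/F$ to ``Euler-factor relations at primes above $p$'' being ``genuinely present.'' This is backwards: in Rubin's formalism the distribution relation carries \emph{no} Euler factor at primes ramifying in the $\mathbb{Z}_p$-power tower (they are excluded from the product in the definition of an Euler system), so the norm relation there is simply $\textup{cor}_{M'/M}\,c_{M'}=c_M$. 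That clean norm compatibility is precisely what makes $\{c_M\}$ an element of $\varprojlim_M H^1(M,X)$ and makes the derived classes fit together across levels; no Euler factor at $p$ is involved. Third, the appeal to Lemma~\ref{lem:comparefirststep} and Theorem~\ref{thm:lamdaadicKS} in step (3) is misplaced: those results concern the $\al$-restricted (and related modified) Selmer structures rather than $\FFc$, and in any case the existence of the Euler-system-to-Kolyvagin-system map does not depend on knowing the target module is free of rank one. The map is constructed directly by the derivative process; the freeness statement is a separate structural fact proved independently.
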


For any field $\KKK\in\frak{E}$, recall that $\Delta_\KKK:=\Gal(\KKK/F)$ and write $\delta_{\KKK}=|\Delta_\KKK|$. Let $\varPhi=\{\varphi_\KKK\}$ be any element of $\varprojlim_{\KKK \in \frak{E}} \wedge^{r-1}\,\hbox{Hom}_{\ooo[\Delta_\KKK]}\left(H^1(\KKK,T), \ooo[\Delta_K]\right).$ As explained in \cite[\S1.2]{ru96}, there is a natural map $$\wedge^{r-1}\,\hbox{Hom}_{\ooo[\Delta_\KKK]}\left(H^1(\KKK,T), \ooo[\Delta_K]\right)\lra \hbox{Hom}_{\ooo[\Delta_\KKK]}\left(\wedge^g H^1(\KKK,T), H^1(\KKK,T)\right).$$
We denote the image of $\varphi_\KKK$ under this map  still by $\varphi_\KKK$. Given a collection $\varPhi=\{\varphi_\KKK\}$ as above, we obtain an element   $\varphi_{\KKK}(\varepsilon_{\KKK}^\chi) \in H^1(\KKK,T)$ by the defining (integrality) property of the elements $\varepsilon_{\KKK}^\chi \in \frac{1}{\delta_\KKK} \wedge^g H^1(\KKK,T)$. In other words, the denominators $\delta_\KKK$ disappear once we apply the homomorphisms $\varphi_\KKK$.

\begin{thm}[Perrin-Riou, Rubin]
\label{thm:pr-rubin}
 $c_{\varPhi}^\chi:=\{\varphi_\KKK(\varepsilon_{\KKK}^\chi)\}\in \textup{ES}(T)$.
\end{thm}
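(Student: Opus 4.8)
The plan is to verify the two axioms of an Euler system in the sense of \cite[\S2]{r00}: the norm-compatibility (distribution relation) as one moves up the tower $\frak{E}$, and the congruence condition at primes $\ell \in \RR$ relating the class at $\KKK(\ell)$ to the Frobenius twist of the class at $\KKK$. The point is that both properties are already known for the rank-$g$ Rubin-Stark Euler system $\mathcal{C}^{(g)}_{\textup{R-S}}=\{\varepsilon_{\KKK}^\chi\}$ (as recalled from \cite[\S3.1]{kbbCMabvar}, following \cite{ru96, kbbesrankr}), and the homomorphisms $\varphi_\KKK$ in the compatible family $\varPhi$ are, by construction, $\ooo[\Delta_\KKK]$-equivariant and compatible under corestriction. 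So the strategy is simply to transport the rank-$g$ relations through the functorial map $\wedge^{r-1}\Hom_{\ooo[\Delta_\KKK]}(H^1(\KKK,T),\ooo[\Delta_\KKK]) \to \Hom_{\ooo[\Delta_\KKK]}(\wedge^g H^1(\KKK,T),H^1(\KKK,T))$ of \cite[\S1.2]{ru96}.

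First I would recall precisely the rank-$g$ Euler system relations satisfied by $\{\varepsilon_\KKK^\chi\}$: for $\KKK \subset \KKK'$ in $\frak{E}$ one has a corestriction/norm relation of the form $\textup{cor}_{\KKK'/\KKK}(\varepsilon_{\KKK'}^\chi) = \left(\prod P_\ell(\textup{Fr}_\ell^{-1})\right)\varepsilon_\KKK^\chi$ inside $\Phi\otimes\wedge^g H^1(\KKK,T)$, where the product runs over primes ramifying in $\KKK'/\KKK$ and $P_\ell$ is the relevant Euler factor; and at a prime $\ell$ one has the congruence $\varepsilon_{\KKK(\ell)}^\chi \equiv \textup{Fr}_\ell\cdot \varepsilon_\KKK^\chi$ modulo $\ell$ after applying the finite-singular comparison. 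Then I would apply $\varphi_{\KKK}$ (resp.\ $\varphi_{\KKK'}$, $\varphi_{\KKK(\ell)}$) and use: (a) the compatibility $\varphi_\KKK \circ \textup{cor}_{\KKK'/\KKK} = \textup{cor}_{\KKK'/\KKK}\circ \varphi_{\KKK'}$ coming from the fact that $\varPhi$ lives in the inverse limit over $\frak{E}$; (b) $G_F$-equivariance of the $\varphi_\KKK$'s, which lets the Frobenius factors pass through; and (c) the defining integrality property of $\varepsilon_\KKK^\chi \in \frac{1}{\delta_\KKK}\wedge^g H^1(\KKK,T)$, namely that $\varphi_\KKK(\varepsilon_\KKK^\chi)$ lands in $H^1(\KKK,T)$ with the denominator $\delta_\KKK$ cleared. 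Concluding, one reads off that $c_\varPhi^\chi = \{\varphi_\KKK(\varepsilon_\KKK^\chi)\}$ satisfies exactly the axioms defining $\textup{ES}(T)$.

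The step I expect to be the main obstacle — or at least the one requiring the most care — is the bookkeeping with the Euler factors and the singular parts at the auxiliary primes $\ell$: one must check that applying $\varphi_\KKK$ does not disturb the precise shape of the distribution relation (in particular that the Euler factors $P_\ell(\textup{Fr}_\ell^{-1})$ are the ones demanded by \cite{r00}'s definition, after the $\wedge^{r-1}\Hom \to \Hom(\wedge^g,-)$ translation), and that the finite/singular comparison at $\ell$ is respected. Here $T = \ooo\otimes\chi^{-1}$ is a twist of $\ZZ_p(1)$, so the local Euler factors are explicit and the verification reduces to the analogous (already established) statement for the rank-$g$ system; concretely this is the content carried over from \cite[\S3.1]{kbbCMabvar} together with Rubin's comparison in \cite[\S6]{ru96}. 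A secondary but routine point is to confirm that $c_\varPhi^\chi$ is genuinely indexed by all of $\frak{E}$ (not merely $\frak{T}$), i.e.\ that the construction is compatible with adjoining finite subextensions $M \subset F_\infty$; this follows because the Rubin-Stark elements themselves are defined over all members of $\frak{E}$ and the homomorphisms $\varphi_\KKK$ form a compatible system over $\frak{E}$ by hypothesis.
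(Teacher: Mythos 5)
The paper gives no proof of this statement beyond the citation ``This is proved in \cite[\S1.2.3]{pr-es}; see also \cite[\S6]{ru96}.''\ Your sketch is a faithful reconstruction of the argument those references contain: apply the compatible system of functionals $\varPhi$, using $\ooo[\Delta_\KKK]$-equivariance and compatibility under the inverse-limit transition maps, to transport the rank-$g$ distribution relations to rank-one ones, with integrality supplied by the defining property of $\wedge^g_0$. So you are taking essentially the intended approach. One small calibration: in Rubin's \cite[\S2]{r00} an Euler system is defined purely by the norm/distribution relation (Definition 2.1.1); the ``congruence at $\ell$'' you describe as a second axiom is a \emph{consequence} (exploited later in the derivative construction) rather than part of the definition, so the verification actually reduces entirely to checking that $\varphi_\KKK$ intertwines the corestriction maps and the Euler factors $P_\ell(\textup{Fr}_\ell^{-1})$ — which, as you correctly identify, is the bookkeeping point that \cite[\S1.2.3]{pr-es} and \cite[\S6]{ru96} carry out in detail.
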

\begin{proof}This is proved in \cite[\S1.2.3]{pr-es}; see also \cite[\S6]{ru96}.
\end{proof}

Localization followed by projection to $\mathcal{V}^+_\KKK$ induces a canonical homomorphism
\be\label{eqn:localizationhoms}{
\varprojlim_{\KKK \in \frak{E}} \wedge^{r-1}\,\hbox{Hom}_{\ooo[\Delta_\KKK]}\left(\mathcal{V}_\KKK^+, \ooo[\Delta_\KKK]\right) \lra
\varprojlim_{\KKK \in \frak{E}} \wedge^{r-1}\,\hbox{Hom}_{\ooo[\Delta_\KKK]}\left(H^1(\KKK,T_\chi), \ooo[\Delta_\KKK]\right)
}
\ee
If $\varPhi$ is an element of the left side of (\ref{eqn:localizationhoms}), its image under this homomorphism will still be denoted by the same symbol.

The following theorem tell us how to obtain $\frak{L}$-restricted Euler systems (and $\al$-restricted Kolyvagin systems) starting off with the Rubin-Stark Euler system $\mathcal{C}^{(g)}_{\textup{R-S}}$ of rank $g$.
\begin{thm} $\,$ Recall the quotients $\mathcal{Q}=H^1(F_p,\TT)/\mathcal{V}$ and $\mathcal{Q}_\cyc=H^1(F_p,\TT_\cyc)/\mathcal{V}_\cyc$\,.
\label{thm:ellrestrictedESmain}
\begin{enumerate}
\item[(i)] There exists an element
$\Psi=\{\psi_{\KKK}\} \in \varprojlim_{\KKK \in \frak{E}} \wedge^{r-1}\,\textup{Hom}_{\ooo[\Delta_\KKK]}\left(\mathcal{V}_K^+, \ooo[\Delta_\KKK]\right)$ such that $\psi_\KKK$ maps $\wedge^g \mathcal{V}_\KKK^+$ isomorphically onto $\al_\KKK$ (likewise, $\wedge^g\, \mathcal{Q}_\cyc$ to $\al_\cyc$ and $\wedge^g\, \mathcal{Q}$ to $\al$).
\item[(ii)] For $\Psi$ as in \textup{(i)}, $\textbf{c}^{\chi}_{\Psi}:=\{\psi_{\KKK}(\varepsilon_{\KKK}^\chi)\} \in \textup{ES}_{\frak{L}}(T).$
\item[(iii)] Let $\pmb{\kappa}^{\textup{R-S}} \in \overline{\KS}(\TT,\FFc,\PP)$  be the image of $\textbf{c}^{\chi}_{\Psi}$ under the Euler systems to Kolyvagin systems map of Theorem~\ref{thm:ESKSmain}. Then $\pmb{\kappa}^{\textup{R-S}} \in \overline{\KS}(\TT,\FF_{\mathcal{L}},\PP)$, i.e., $\pmb{\kappa}^{\textup{R-S}}$ is an $\al$-restricted Kolyvagin system.
\end{enumerate}
\end{thm}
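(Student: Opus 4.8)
The plan is to treat the three parts in order, as each feeds into the next. For part (i), I would argue as follows. By Proposition~\ref{prop:localfull}(iv), the $\ooo[[\frak{G}(K(\frak{E}))]]$-module $\varprojlim_{\KKK}H^1(\KKK_p,T)$ is free of rank $2g$, and by Definition~\ref{def:localUandLcyclo} the submodule $\mathcal{V}_{K(\frak{E})}$ (equivalently, each $\mathcal{V}_\KKK^+$, where I am reading $\mathcal{V}_\KKK^+$ as the image of $\mathcal{V}_{K(\frak{E})}$) is a free direct summand of rank $g$, with $\frak{L}+\mathcal{V}_{K(\frak{E})}$ a direct summand of rank $g+1$ mapping onto $\al+\mathcal{V}$. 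Passing to $(r-1)$-st exterior powers of duals with $r=g+1$, i.e. to $\wedge^{g}\textup{Hom}_{\ooo[\Delta_\KKK]}(\mathcal{V}_\KKK^+,\ooo[\Delta_\KKK])$, one has a rank-one free module (a $\delta_\KKK$-fold exterior power of a rank-$g$ free module), and one wants to pick a compatible generator $\psi_\KKK$ of each whose induced map $\wedge^g\mathcal{V}_\KKK^+\to H^1(\KKK,T)$ has image exactly $\frak{l}_\KKK$ (the image of $\frak{L}$); compatibility in the inverse limit is guaranteed because corestriction is surjective (Proposition~\ref{prop:localfull}(i)) and $\frak{L}$ is a direct summand, so the transition maps on these rank-one modules are surjective and one lifts a generator by Nakayama/Mittag-Leffler. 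The statements about $\wedge^g\mathcal{Q}_\cyc\to\al_\cyc$ and $\wedge^g\mathcal{Q}\to\al$ follow by pushing this $\Psi$ forward under the projections, using that $\mathcal{Q}$, $\mathcal{Q}_\cyc$ are the relevant quotients and $\al$, $\al_\cyc$ their images.

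For part (ii), combine Theorem~\ref{thm:pr-rubin} with the construction of $\Psi$. The collection $\{\psi_\KKK(\varepsilon_\KKK^\chi)\}$ lies in $\textup{ES}(T)$ by Theorem~\ref{thm:pr-rubin} once we pre-compose $\psi_\KKK$ with the localization-then-project-to-$\mathcal{V}_\KKK^+$ map of \eqref{eqn:localizationhoms}; the integrality property of $\varepsilon_\KKK^\chi$ (the denominators $\delta_\KKK$ cancel against the $\ooo[\Delta_\KKK]$-valued homomorphisms, as recalled just before Theorem~\ref{thm:pr-rubin}) ensures $\psi_\KKK(\varepsilon_\KKK^\chi)\in H^1(\KKK,T)$. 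The point that needs checking is the $\frak{L}$-restriction condition of Definition~\ref{def:alrestrictedES}(ii), namely $\textup{loc}_p(\psi_\KKK(\varepsilon_\KKK^\chi))\in \mathcal{V}_\KKK^-\oplus\frak{l}_\KKK$. Since $\psi_\KKK$ is built from a homomorphism that first localizes and then projects onto the $\mathcal{V}_\KKK^+$-component (killing the $\mathcal{V}_\KKK^-$-component), the class $\psi_\KKK(\varepsilon_\KKK^\chi)$ has local image at $p$ lying in the span of $\frak{l}_\KKK$ modulo $\mathcal{V}_\KKK^-$ by construction; more precisely, $\textup{loc}_p(\psi_\KKK(\varepsilon_\KKK^\chi))$ depends only on the projection of $\textup{loc}_p(\varepsilon_\KKK^\chi)$ to $\wedge^g\mathcal{V}_\KKK^+$, which $\psi_\KKK$ sends into $\frak{l}_\KKK$, so the $\mathcal{V}_\KKK^+$-component lands in $\frak{l}_\KKK$, giving the claim.

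For part (iii), apply the Euler-system-to-Kolyvagin-system map of Theorem~\ref{thm:ESKSmain} to $\textbf{c}^\chi_\Psi$ to get $\pmb{\kappa}^{\textup{R-S}}\in\overline{\KS}(\TT,\FFc,\PP)$, and the task is to show it actually lands in the smaller module $\overline{\KS}(\TT,\FF_{\mathcal{L}},\PP)$, i.e. that every Kolyvagin class in the system satisfies the $\FF_\mathcal{L}$-local condition at $p$, namely $\textup{loc}_p(\kappa_n)\in \mathcal{V}_\cyc\oplus\mathcal{L}$ (in the $\LL$-version, $\mathcal{V}_\text{cyc}$ should be read as the propagation of $\mathcal{V}$, consistently with Definition~\ref{def:selmerstructure} and Remark~\ref{rem:propgatestogroundchoice}). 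This is where I would do the real work: the Kolyvagin derivative classes $\kappa_n$ are manufactured from the Euler system classes $c_{F(\eta)}$ by Kolyvagin's derivative operators, which act trivially on the local behavior at $p$ (the primes dividing $\eta$ are prime to $p$), so the local condition at $p$ is inherited from that of the Euler system — precisely the $\frak{L}$-restriction established in part (ii) — after passing to the limit along $F_\infty/F$. Concretely one invokes the compatibility statement in Theorem~\ref{thm:ESKSmain} that $\kappa_1=\{c_M\}$, together with the fact (from the construction of the descent map, as in the relevant section of \cite{kbbCMabvar} with $r=3$) that the local condition at $p$ imposed on $\pmb{\kappa}$ is exactly the one cut out by $\varprojlim\textup{loc}_p(c_M)$; since $\textup{loc}_p(c_M^\chi)\in \mathcal{V}_M^-\oplus\frak{l}_M$ for all $M$, the limit lies in $\mathcal{V}_\cyc\oplus\mathcal{L}$ (using Definition~\ref{def:localUandLcyclo} identifying $\mathcal{L}$ with the limit of the $\frak{l}_M$ and $\mathcal{V}_\cyc$ with the limit of the $\mathcal{V}_M^-$), which is precisely the $\FF_\mathcal{L}$-condition. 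The main obstacle is making this last compatibility between the Euler-system local condition and the Kolyvagin-system local condition fully rigorous in the rank-$g$/rank-one reduction framework — i.e. verifying that the map of Theorem~\ref{thm:ESKSmain} is compatible with restricting local conditions at $p$ — which requires unwinding the construction of that map from \cite{kbbCMabvar} rather than just citing its existence.
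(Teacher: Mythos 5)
The paper's own proof of Theorem~\ref{thm:ellrestrictedESmain} is a two-line citation: part (i) is proved ``mimicking the arguments of [kbbesrankr, \S3.3.1],'' and parts (ii) and (iii) by using Proposition~\ref{prop:localfull} and adapting the proof of Theorem 3.25 of that reference. Your proposal attempts to reconstruct that cited argument in full, and the overall architecture you describe -- build $\Psi$ by a lifting argument from freeness and surjectivity, deduce the $\frak{L}$-restriction from Theorem~\ref{thm:pr-rubin} and the specific shape of $\Psi$, and then check the ES-to-KS map preserves the local condition at $p$ -- is indeed the right strategy and matches what the cited argument does.

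A few points need correction or sharpening, however. First, your numerology on $r$ is off: you take $r=g+1$ so that $\wedge^{r-1}\textup{Hom}_{\ooo[\Delta_\KKK]}(\mathcal{V}_\KKK^+,\ooo[\Delta_\KKK])$ is rank one. But the map used just before Theorem~\ref{thm:pr-rubin},
$$\wedge^{r-1}\,\hbox{Hom}_{\ooo[\Delta_\KKK]}\left(X, \ooo[\Delta_\KKK]\right)\lra \hbox{Hom}_{\ooo[\Delta_\KKK]}\left(\wedge^g X, X\right),$$
forces $r=g$, so $\wedge^{r-1}\textup{Hom}(\mathcal{V}_\KKK^+,\ooo[\Delta_\KKK])$ is free of rank $g$, not one. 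The rank-one object you want is $\textup{Hom}_{\ooo[\Delta_\KKK]}(\wedge^g\mathcal{V}_\KKK^+,\al_\KKK)$, and the point is that for $\mathcal{V}_\KKK^+$ free of rank $g$ the above map is an isomorphism, so the desired generator pulls back uniquely to a wedge of $g-1$ homomorphisms. The lifting-along-the-tower argument via Proposition~\ref{prop:localfull}(i) and Nakayama then goes through as you say. Second, for (ii) the sentence ``depends only on the projection of $\textup{loc}_p(\varepsilon_\KKK^\chi)$ to $\wedge^g\mathcal{V}_\KKK^+$, which $\psi_\KKK$ sends into $\frak{l}_\KKK$'' glosses over the linear algebra; what you actually need is to decompose each $\textup{loc}_p(v_i)$ along $\mathcal{V}_\KKK^-\oplus\mathcal{V}_\KKK^+$ in the contraction formula $(\varphi_1\wedge\cdots\wedge\varphi_{g-1})(v_1\wedge\cdots\wedge v_g)=\sum_i\pm\det(\varphi_j(v_k))_{k\neq i}\,v_i$ and observe that since the $\varphi_j$ kill $\mathcal{V}_\KKK^-$, the $\mathcal{V}_\KKK^-$-component of the sum lands in $\mathcal{V}_\KKK^-$ tautologically and the $\mathcal{V}_\KKK^+$-component is exactly $\psi_\KKK$ applied to the projection, which lies in $\frak{l}_\KKK$. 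Third, you are candid that the key step in (iii) -- compatibility of the Euler-to-Kolyvagin descent with the modified local condition at $p$ -- is where the ``real work'' lies and you do not carry it out; that is the part the paper delegates to [kbbesrankr, Theorem 3.25], and a complete proof would have to unwind that construction, so the proposal as written leaves a genuine gap there, albeit one the paper itself also outsources.
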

\begin{proof}
(i) may be proved mimicking the arguments of \cite[\S3.3.1]{kbbesrankr}. To prove (ii) and (iii), one makes use of Proposition~\ref{prop:localfull} and adapts (completely formally) the proof of Theorem 3.25 of loc.cit.
\end{proof}
Let $c_{F,\Psi}^\chi:=\psi_F(\varepsilon_F^\chi)\in H^1_{\FF_{\frak{L}}}(F,T)$ denote the initial term of the $\frak{L}$-restricted Euler system $\textbf{c}^{\chi}_{\Psi}$. Similarly, define $c_{F_\cyc,\Psi}^\chi=\{c_{M,\Psi}^{\chi}\} \in \varprojlim_{M\subset F_\cyc} H^1(M,T)=H^1(F,\TT_\cyc)$ and $c_{F_\infty,\Psi}^\chi=\{c_{M,\Psi}^{\chi}\} \in \varprojlim_{M\subset F_\infty} H^1(M,T)=H^1(F,\TT)$, where the inverse limit is over the finite sub-extensions $M$ of $F_{\infty}/F$.
\begin{prop}
\label{prop:KSwithnonzeroinitialterm}
$c_{F,\Psi}^\chi\neq 0$.
\end{prop}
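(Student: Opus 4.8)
The plan is to show that $c_{F,\Psi}^\chi = \psi_F(\varepsilon_F^\chi)$ is non-zero by computing its image under localization at $p$ and using the explicit description of $\psi_F$ from Theorem~\ref{thm:ellrestrictedESmain}(i). Since $\psi_F$ factors (by construction) through the projection to $\mathcal{V}_F^+$ followed by the wedge-contraction map sending $\wedge^g\mathcal{V}_F^+$ isomorphically onto $\frak{l}$, we have $\textup{loc}_p(c_{F,\Psi}^\chi)$ lies in $\frak{l}$ and moreover it equals the image of $\textup{loc}_p(\varepsilon_F^\chi)$ (projected to $\wedge^g\mathcal{V}_F^+$) under this isomorphism. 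So it suffices to show that the image of $\textup{loc}_p(\varepsilon_F^\chi)$ in $\wedge^g\mathcal{V}_F^+ \otimes \Phi$ is non-zero. Recall from Remark~\ref{rem:RSaresomewhatintegral} that $\varepsilon_F^\chi \in \wedge^g H^1_{\FFc}(F,T) = \wedge^g \oo_L^{\times,\chi}$ (using Example~\ref{example} and that $H^1_{\FFc}(F,T)$ is free over $\ooo$ of rank $g$ by Lemma~\ref{lem:structure for units}).

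The heart of the argument is the non-degeneracy of the Rubin-Stark regulator. By the defining property of the Rubin-Stark element (Rubin's Conjecture $\textup{B}'$, which we are assuming throughout), the regulator of $\varepsilon_F^\chi$ against the $S$-units pairs to a non-zero multiple of the leading term of the relevant Artin $L$-function at $s=0$; in particular $\varepsilon_F^\chi$ is a non-zero element of $\wedge^g(\oo_L^{\times,\chi}\otimes\Phi)$. Since we have assumed Leopoldt's conjecture for $L$ (so $\frak{d}=0$ and $\mathcal{V}_F^+ = \textup{loc}_p(\oo_L^{\times,\chi})^{\textup{sat}}$ has $\ooo$-rank exactly $g$), the localization map $\textup{loc}_p : \oo_L^{\times,\chi}\otimes\Phi \to \mathcal{V}_F^+\otimes\Phi$ is an isomorphism of $g$-dimensional $\Phi$-vector spaces. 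Therefore $\wedge^g\textup{loc}_p$ is an isomorphism on the one-dimensional top exterior powers, and it sends the non-zero vector $\varepsilon_F^\chi$ to a non-zero element of $\wedge^g\mathcal{V}_F^+\otimes\Phi$. Composing with the isomorphism $\wedge^g\mathcal{V}_F^+\otimes\Phi \stackrel{\sim}{\to} \frak{l}\otimes\Phi$ afforded by $\psi_F$, we conclude $\textup{loc}_p(c_{F,\Psi}^\chi)\neq 0$, hence $c_{F,\Psi}^\chi\neq 0$.

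The main obstacle, and the place where care is needed, is the injectivity (equivalently, bijectivity) of $\textup{loc}_p$ on $\oo_L^{\times,\chi}\otimes\Phi$: this is precisely the statement that the $\chi$-part of the $p$-adic regulator of $L$ is non-vanishing, which is the content of Leopoldt's conjecture for $L$ in the relevant isotypic component. This is why the hypothesis ``Leopoldt's conjecture for $L$'' (in force from the start of Section~\ref{Sec:KSforGmandE}) is essential and cannot be dispensed with here; without it one only knows $\mathcal{V}_F^+$ has rank $g-\frak{d}$ and the argument collapses. A secondary point to check is that the non-vanishing of $\varepsilon_F^\chi$ genuinely follows from Rubin's conjecture together with the non-vanishing of the leading Taylor coefficient of the Artin $L$-function --- the latter is automatic since an Artin $L$-function is not identically zero, and its order of vanishing at $s=0$ matches $g$ by the rank computation built into the formulation of the Rubin-Stark conjecture for our choice of $S$ with $|S|\geq g+1$ and $\chi$ satisfying \eqref{eqn:assnotrivxhi}.
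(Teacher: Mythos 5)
Your argument is correct and coincides with the paper's, which invokes the proof of Rubin's Proposition~6.6 in \cite{ru96} together with Leopoldt's conjecture. That citation compresses precisely the mechanism you make explicit: non-vanishing of $\varepsilon_F^\chi$ via the Rubin--Stark regulator formula, injectivity of $\textup{loc}_p\otimes\Phi$ on $\oo_L^{\times,\chi}$ via Leopoldt, and the isomorphism $\wedge^g\mathcal{V}_F^+\stackrel{\sim}{\lra}\frak{l}$ furnished by $\psi_F$.
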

\begin{proof}
This follows from the proof of Proposition 6.6 in \cite{ru96} since we assumed Leopoldt's conjecture.
\end{proof}
\begin{rem}
\label{rem:ESKStwist}
Definition VI.3.1 \cite{r00} equips us with a \emph{twisting morphism}  
$\textup{ES}(T)\ra \textup{ES}(T(E))$, which then evidently restricts to a map $\textup{ES}_{\mathbb{L}}(T)\ra \textup{ES}_{\mathbb{L}}(T(E))$ on the $\mathbb{L}$-restricted Euler systems. Let $\textbf{c}_{\Psi}(E)\in \textup{ES}_{\mathbb{L}}(T(E))$ denote the image of $\textbf{c}^{\chi}_{\Psi}$. Then the image $\pmb{\kappa}^{\textup{R-S}}(E)$ of $\textbf{c}_{\Psi}(E)$ under the map of Theorem~\ref{thm:ESKSmain} (applied with $X=T(E)$) lies in $\overline{\KS}(\TT(E),\FF_{\mathbb{L}},\PP)$. The initial term ${\kappa}^E_1 \in H^1_{\FF_\mathbb{L}}(F,\TT(E))$ of the Kolyvagin system $\pmb{\kappa}^{\textup{R-S}}(E)$ may be explicitly described: ${\kappa}^E_1=\textup{tw}\left(c_{F_\infty,\Psi}^\chi\right)\,.$ In particular, it follows from Proposition~\ref{prop:KSwithnonzeroinitialterm} that ${\kappa}^E_1\neq 0$.
\end{rem}
\section{Gras' conjecture and CM main conjectures over $F$}
\label{sec:gras}
Although our sights are set ultimately on the arithmetic of CM elliptic curves defined over $F^+$, we present  the following results for $\mathbb{G}_m$, first of which may be thought of a generalization of Gras' conjecture and second and third as the one- and two-variable main conjectures for the CM field $F$. We will later use these results to promote all inequalities we shall obtain using the Rubin-Stark Euler/Kolyvagin systems for $T(E)$ of Remark~\ref{rem:ESKStwist} into equalities.

We assume until the end of this article that the following hypothesis on $S$ holds true (recall as well that we assume the truth of the Rubin-Stark conjectures and Leopoldt's conjecture for $L$):

$(\textbf{H.S})$ The set $S$ that appears in the definition of Rubin-Stark elements (see the start of Section \ref{sec:RS}) contains no non-archimedean prime of $F$ that splits in $L/F$.
\begin{define}
\label{def:limitofidealclassgroups}
Let $\mathcal{A}_\cyc^\chi=\varprojlim_{M\subset LF_\cyc} \textup{Cl}(M)^\chi$ and similarly, $\mathcal{A}_\infty^\chi=\varprojlim_{M\subset LF_\infty} \textup{Cl}(M)^\chi$\,. We have the identifications (by class field theory) 
$$\mathcal{A}_\cyc^\chi=H^1_{\FFc^*}(F,\TT_\cyc^*)^\vee \hbox{\,\,\,\ and \,\,\,} \mathcal{A}_\infty^\chi=H^1_{\FFc^*}(F,\TT^*)^\vee\,.$$
\end{define}
\begin{thm}$\,$
\label{thm:gras1}
\begin{itemize}
\item[(i)] $\#\, \textup{Cl}(L)^\chi = [\wedge^g\, \mathcal{O}_{L}^{\times,\chi}\,:\,\frak{O}\cdot \varepsilon_{F}^\chi]$ and the Rubin-Stark $\al$-restricted Kolyvagin system $\pmb{\kappa}^{\textup{R-S}} \in \overline{\KS}(\TT,\FF_\al,\PP)$ is primitive.
\end{itemize}
If in addition the strong Rubin-Stark conjecture holds true, then
\begin{itemize}
\item[(ii)] $\textup{char}\left(\mathcal{A}_\cyc^\chi\right)=\textup{char}\left(\wedge^g H^1_{\FFc}(F,\TT_\cyc)/\LL_\cyc\cdot \frak{S}_\cyc\right)$\,.
\item[(iii)] $\textup{char}\left(\mathcal{A}_\infty^\chi\right)=\textup{char}\left(\wedge^g H^1_{\FFc}(F,\TT)/\LL\cdot \frak{S}_\infty\right)$\,.
\end{itemize}
\end{thm}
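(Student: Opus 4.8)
The plan is to run the $\al$-restricted (equivalently, $\mathcal{L}$-restricted) Kolyvagin system machinery of Section~\ref{Sec:KSforGmandE} on the Rubin-Stark Kolyvagin system $\pmb{\kappa}^{\textup{R-S}}$ constructed in Theorem~\ref{thm:ellrestrictedESmain}, and then transfer the resulting statements about the $\FF_\mathcal{L}$-Selmer groups into statements about the classical ($\FFc$-) Selmer groups via the four-term exact sequence of Proposition~\ref{prop:4termexact}. For (i), I would first apply Theorem~\ref{thm:mainapplicationofrestrictedKS}(ii) with $(R,X,\FF)=(\ooo,T,\FF_\al)$: since $c_{F,\Psi}^\chi=\kappa_1\neq 0$ by Proposition~\ref{prop:KSwithnonzeroinitialterm}, we get $\#H^1_{\FF_\al^*}(F,T^*)^\vee \mid \#\bigl(H^1_{\FF_\al}(F,T)/\ooo\cdot\kappa_1\bigr)$. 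Then feed this into the exact sequence $0\to H^1_{\FF_\al}(F,T)\xrightarrow{\textup{loc}_p}\frak{l}\to H^1_{\FF_{\textup{tr}}^*}(F,T^*)^\vee\to H^1_{\FF_\al^*}(F,T^*)^\vee\to 0$ (the first quadruple of Proposition~\ref{prop:4termexact}), together with the vanishing $H^1_{\FF_{\textup{tr}}}(F,T)=0$ from Proposition~\ref{prop:minusvanishingforunits}; a diagram chase identifies the index $[\,\frak{l}:\textup{loc}_p(\kappa_1)\,]$ with the orders of the relevant Selmer groups, and unwinding the Kummer-theoretic identifications of Example~\ref{example:kummer} ($H^1_{\FFc}(F,T)=\oo_L^{\times,\chi}$, $H^1_{\FFc^*}(F,T^*)^\vee\cong\textup{Cl}(L)^\chi$) turns this into $\#\textup{Cl}(L)^\chi=[\wedge^g\oo_L^{\times,\chi}:\ooo\cdot\varepsilon_F^\chi]$. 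Here one must be careful that the $\psi_F$ appearing in $\kappa_1=\psi_F(\varepsilon_F^\chi)$ was arranged in Theorem~\ref{thm:ellrestrictedESmain}(i) to send $\wedge^g\mathcal{V}_F^+$ isomorphically onto $\frak{l}$, so the index computation is clean. Finally, the fact that equality holds forces, via Theorem~\ref{thm:mainapplicationofrestrictedKS}(iii), the Kolyvagin system $\pmb{\kappa}^{\textup{R-S}}$ to be primitive.

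\textbf{The tower statements (ii) and (iii).} For (ii) I would again apply Theorem~\ref{thm:mainapplicationofrestrictedKS}, now with $(R,X,\FF)=(\LL_\cyc,\TT_\cyc,\FF_{\mathcal{L}})$ and initial term $\kappa_1=c_{F_\cyc,\Psi}^\chi$; combined with the primitivity just established (which propagates down the cyclotomic tower by the compatibility of the Euler-system-to-Kolyvagin-system map with $\pi_\cyc$, cf.\ Theorem~\ref{thm:lamdaadicKS}(ii) and the commutative triangle in the proof of Proposition~\ref{prop:usefulforequality}) this yields the equality $\textup{char}\bigl(H^1_{\FF_{\mathcal{L}}^*}(F,\TT_\cyc^*)^\vee\bigr)=\textup{char}\bigl(H^1_{\FF_{\mathcal{L}}}(F,\TT_\cyc)/\LL_\cyc\cdot c_{F_\cyc,\Psi}^\chi\bigr)$. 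Then the exact sequence $0\to H^1_{\FF_{\mathcal{L}}}(F,\TT_\cyc)\xrightarrow{\textup{loc}_p}\mathcal{L}_\cyc\to H^1_{\FF_{\frak{tr}}^*}(F,\TT_\cyc^*)^\vee\to H^1_{\FF_{\mathcal{L}}^*}(F,\TT_\cyc^*)^\vee\to 0$ of Proposition~\ref{prop:4termexact}, with $H^1_{\FF_{\frak{tr}}}(F,\TT_\cyc)=0$ from Proposition~\ref{prop:minusvanishingforunits}, converts this into an identity of characteristic ideals relating $\mathcal{A}_\cyc^\chi=H^1_{\FFc^*}(F,\TT_\cyc^*)^\vee$ (Definition~\ref{def:limitofidealclassgroups}) to the index module $\mathcal{L}_\cyc/\textup{loc}_p(c_{F_\cyc,\Psi}^\chi)$. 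The last move is to recognize the latter index as $\textup{char}\bigl(\wedge^g H^1_{\FFc}(F,\TT_\cyc)/\LL_\cyc\cdot\frak{S}_\cyc\bigr)$: the Strong Rubin-Stark Conjecture~\ref{conj:strongRS} provides the global element $\frak{S}_\cyc\in\wedge^g H^1(F,\TT_\cyc)$ whose layers are the $\varepsilon_M^\chi$, the map $\psi$ of Theorem~\ref{thm:ellrestrictedESmain}(i) sends $\wedge^g\mathcal{Q}_\cyc$ isomorphically onto $\al_\cyc$ and (after restricting to the global units via $\textup{loc}_p$) sends $\frak{S}_\cyc$ to $c_{F_\cyc,\Psi}^\chi$, and one checks that $\psi$ induces an isomorphism $\wedge^g H^1_{\FFc}(F,\TT_\cyc)/\LL_\cyc\cdot\frak{S}_\cyc\xrightarrow{\sim}\mathcal{L}_\cyc/\LL_\cyc\cdot\textup{loc}_p(c_{F_\cyc,\Psi}^\chi)$ (this is where Lemma~\ref{lemma:H1facalongcyclotower}, the freeness of rank $g$ of $H^1_{\FFc}(F,\TT_\cyc)$, and Proposition~\ref{prop:canliftfromthegroundlevel}, the transversality $\mathcal{V}_\cyc\cap\textup{loc}_p H^1_{\FFc}=0$, are needed so that $\textup{loc}_p$ followed by projection to $\al_\cyc$ is injective on the relevant module). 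The proof of (iii) is identical with $\LL_\cyc$ replaced by $\LL$ and $\TT_\cyc$ by $\TT$, using the last quadruple of Proposition~\ref{prop:4termexact}, Remark~\ref{rem:freenessconceptually} for the freeness of $H^1_{\FFc}(F,\TT)$, and the $R=\LL$ case of Theorem~\ref{thm:mainapplicationofrestrictedKS}.

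\textbf{Main obstacle.} The delicate point is the identification of the index module $\mathcal{L}_{(\cyc)}/\LL_{(\cyc)}\cdot\textup{loc}_p(c^\chi_{F_{(\cyc)},\Psi})$ with $\wedge^g H^1_{\FFc}(F,\TT_{(\cyc)})/\LL_{(\cyc)}\cdot\frak{S}_{(\cyc)}$, i.e.\ showing that applying the homomorphism $\psi$ (localization at $p$ followed by the $\wedge^{g}\textup{Hom}$ pairing landing in $\al$) to the quotient by the Rubin-Stark element is an \emph{isomorphism} and not merely a surjection with controlled kernel. This requires that $\psi$ identify $\wedge^g$ of the free rank-$g$ module $H^1_{\FFc}(F,\TT_{(\cyc)})$ with the rank-one module $\al$ in a way compatible with the chosen direct-sum decompositions of $H^1(F_p,\TT_{(\cyc)})$, which in turn rests on the freeness results (Lemma~\ref{lemma:H1facalongcyclotower}, Remark~\ref{rem:freenessconceptually}, Proposition~\ref{prop:freenessoverfinitelayers}) and on the transversality of $\mathcal{V}_{(\cyc)}$ against the image of global classes (Proposition~\ref{prop:canliftfromthegroundlevel}). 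One also has to verify that no spurious pseudo-null contributions appear; in the two-variable case (iii) this is precisely why the more robust structural input of Theorem~\ref{thm:mainapplicationofrestrictedKS}(ii) over $\LL$ (via the reduction to a regular two-dimensional ring à la Ochiai) is invoked rather than a naive descent from finite layers, and why the Strong Rubin-Stark Conjecture's compatibility of $\frak{S}_\infty$ with all the layer-wise elements $\varepsilon_{F_{m,n}}^\chi$ (Conjecture~\ref{conj:strongRS}) is essential.
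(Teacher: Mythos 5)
The high-level strategy in your proposal — run the $\al$-restricted Kolyvagin system $\pmb{\kappa}^{\textup{R-S}}$ through the Theorem~\ref{thm:mainapplicationofrestrictedKS} machinery and translate $\FF_\al$-Selmer data into $\FFc$-Selmer data via global duality — is indeed the one the paper follows. But your argument has two genuine gaps, one in part~(i) and one in part~(iii), and both are fatal as written.

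\emph{The class number formula is missing in} (i). Theorem~\ref{thm:mainapplicationofrestrictedKS}(ii) gives only a \emph{one-sided} bound $\#H^1_{\FF_\al^*}(F,T^*)^\vee \mid \#\bigl(H^1_{\FF_\al}(F,T)/\ooo\cdot\kappa_1\bigr)$. The duality bookkeeping (whether via Proposition~\ref{prop:4termexact} or the two exact sequences the paper actually uses, which compare $\FF_\al$ with $\FFc$ directly rather than going through $\FF_{\textup{tr}}$) only \emph{transports} this divisibility into the divisibility $\#\,\textup{Cl}(L)^\chi \mid [\wedge^g\oo_L^{\times,\chi}:\ooo\cdot\varepsilon_F^\chi]$. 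You assert it ``turns this into'' the equality and then invoke Theorem~\ref{thm:mainapplicationofrestrictedKS}(iii) to deduce primitivity — but equality has not been established anywhere, and primitivity cannot come for free: it is precisely equivalent to that equality. The paper closes this circle by an external analytic input: choosing the auxiliary set $\mathcal{T}$ as in the Rubin--Stark setup and applying the analytic class number formula (with an inclusion-exclusion over the intermediate fields of $L/F$) to force the divisibility into an equality, and \emph{then} tracing back through Theorem~\ref{thm:mainapplicationofrestrictedKS}(iii) to conclude primitivity. Without some such input the Kolyvagin system argument alone cannot give the equality, and (ii)--(iii) collapse because they all rest on the primitivity of $\pmb{\kappa}^{\textup{R-S}}$.

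\emph{The two-variable case} (iii) \emph{is not ``identical.''} Your plan for (iii) is to mirror (ii) by using ``the $R=\LL$ case of Theorem~\ref{thm:mainapplicationofrestrictedKS}.'' But the primitivity $\Leftrightarrow$ equality criterion in part~(iii) of that theorem is only stated (and only proved, by the $\frak{P}_N$-specialization argument) for $R=\ooo$ or $R=\LL_\cyc$; over the two-dimensional ring $\LL$ only the \emph{divisibility} of part~(ii) is available (via the Ochiai-style reduction). So over $\LL$ one cannot conclude equality from primitivity as you do over $\LL_\cyc$. The paper instead bootstraps: it shows via Lemma~\ref{lem:reductionstep1} that $\pi_\cyc(\charr_\LL(\mathcal{A}_\infty^\chi)) = \charr_{\LL_\cyc}(\mathcal{A}_\cyc^\chi) \neq 0$, via Lemma~\ref{lem:reductionstep2} that the same holds for the Rubin--Stark index modules (by controlling the cokernel $\mathcal{A}_\infty^\chi[\gamma_*-1]$, which is pseudo-null by \cite[\S1.1.3, Lemme 4]{PR84memoirs}), and then applies the elementary Lemma~\ref{lem:equalitymodAimpliesequality}: a divisibility $f\mid g$ in $\LL$ with $\pi_\cyc(f)=\pi_\cyc(g)\neq 0$ forces $g/f\in\LL^\times$. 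This descent modulo $\pi_\cyc$ is the actual content of (iii), and your sketch omits it entirely.

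Minor remark: Proposition~\ref{prop:4termexact} relates $\FF_\al$ to $\FF_{\textup{tr}}$, not to $\FFc$, so your ``diagram chase'' for (i) needs a further comparison of $\FF_{\textup{tr}}$ with $\FFc$ to reach $\textup{Cl}(L)^\chi = H^1_{\FFc^*}(F,T^*)^\vee$. The paper short-circuits this by working directly with the exact sequences $0\to H^1_{\FF_\al}(F,X)\to H^1_{\FFc}(F,X)\to Q/D$ and its dual; either route works, but the extra step should be made explicit.
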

\begin{proof} 
It follows from Theorem~\ref{thm:mainapplicationofrestrictedKS}(i) and Proposition~\ref{prop:KSwithnonzeroinitialterm} that $H^1_{\FF_\al^*}(F,T^*)$ is finite, the $\LL_\cyc$-module $H^1_{\FF_\al^*}(F,\TT_\cyc^*)^\vee$ and the $\LL$-module $H^1_{\FF_\al^*}(F,\TT^*)^\vee$ are torsion. Furthermore, by Theorem~\ref{thm:mainapplicationofrestrictedKS}(ii) we have
$$
\label{eqn:grasstep1}
\textup{Fitt}(H^1_{\FF_\al^*}(F,T^*)^\vee)\mid \textup{Fitt}(H^1_{\FF_\al}(F,T)/\ooo\cdot c_{F,\Psi}^\chi)\,,
$$
\be
\label{eqn:cyclomainconjstep1}
\textup{char}\left(H^1_{\FF_\al^*}(F,\TT_\cyc^*)^\vee\right)\mid \textup{char}\left(H^1_{\FF_\al}(F,\TT_\cyc)/R\cdot c_{F_\cyc,\Psi}^\chi\right)\,,
\ee
$$
\label{eqn:fullmainconjstep1}
\textup{char}\left(H^1_{\FF_\al^*}(F,\TT^*)^\vee\right)\mid \textup{char}\left(H^1_{\FF_\al}(F,\TT)/\LL\cdot c_{F_\infty,\Psi}^\chi\right)\,.
$$
It is these divisibilities we shall upgrade to equalities (and conclude with the proof of the theorem) with the aid of an analytic class number formula. In order to save space, we shall do this simultaneously. To that end, let $R$ denote any of the coefficient rings $\ooo, \LL_\cyc$ or $\LL$. Correspondingly, let $X$ stand for one of the representations $T, \TT_\cyc$ or $\TT$\,; $V$ for one of the submodules $\mathcal{V}_F^+, \mathcal{V}_\cyc$, or $\mathcal{V}$ (of $H^1(F_p,X)$)\,; $D$ for one of the $R$-lines $\frak{l}, \al_\cyc$ or $\al$\,; $c$ for one of the elements $c_{F,\Psi}^\chi, c_{F_\cyc,\Psi}^\chi$ or $c_{F_\infty,\Psi}^\chi$ and $\varepsilon$ for $\varepsilon_{F}^\chi$ (when we assume the strong Rubin-Stark conjecture, for one of $\frak{S}_\cyc$ and $\frak{S}_\infty$ as well). Let $Q=H^1(F_p,X)/V$, a free $R$-module of rank $g$. Define the map $\locu$ to be the compositum of the maps 
$$\textup{loc}_{/V}: H^1_{\FFc}(F,X)\lra H^1(F_p,X) \lra Q\,.$$
Note that this map is injective by our choice of $U$. By slight abuse, we denote the isomorphic image of $D$ inside $Q$ also by $D$. Note with this convention that the map $\locu$ induces an injection $\locu:\,H^1_{\FF_\al}(F,X)\ra D\,.$ Henceforth, whenever the element $\varepsilon$ is used with a coefficient ring $R$ other than $\ooo$, we implicitly assume the strong Rubin-Stark conjecture. When $R=\ooo$, we mean by the characteristic ideal of a torsion $R$-module its initial Fitting ideal.

As we have indicated in the statement of Theorem~\ref{thm:ellrestrictedESmain}, $\Psi$ induces an isomorphism $\Psi:\wedge^g\,Q \ra D$ and furthermore verifies that $\locu(c)=\Psi(\locu(\varepsilon))$ (in fact by its very choice). We therefore have
\be\label{eqn:prf1}R\cdot\locu(c)=\textup{Fitt}\left(\wedge^g Q/R\cdot \locu(\varepsilon)\right) D=\textup{char}\left(\wedge^g Q/R\cdot \locu(\varepsilon)\right) D\ee
Furthermore, the following sequences are exact:
$$0\lra H^1_{\FF_{\al}}(F,X) \lra H^1_{\FFc}(F,X)\stackrel{\locu}{\lra} Q/D$$
$$0\lra H^1_{\FFc^*}(F,X^*) \lra H^1_{\FF_\al^*}(F,X^*)\stackrel{\locu^*}{\lra} H^1_{\FF_\al^*}(F_p,X^*)/H^1_{\FFc^*}(F_p,X^*),$$
Global duality states that the images of $\locu$ and $\locu^*$ are orthogonal complements. Hence
\be\label{eqn:prf2} \charr\left(\frac{H^1_{\FF_\al^*}(F,X^*)^\vee}{H^1_{\FFc^*}(F,X^*)^\vee}\right)=\charr
(\textup{coker}(\locu))=\charr\left(\frac{Q}{D+\locu(H^1_{\FFc}(F,T))}\right)\ee
Observe further that
\begin{align*}\frac{Q}{D+\locu(H^1_{\FFc}(F,T))}&\cong \frac{{Q}/{\locu(H^1_{\FFc}(F,T))}}{(D+\locu(H^1_{\FFc}(F,T)))/{\locu(H^1_{\FFc}(F,T))}}\\
&\cong \frac{{Q}/{\locu(H^1_{\FFc}(F,T))}}{D/{\left(\locu(H^1_{\FFc}(F,T))\cap D\right)}}\cong \frac{{Q}/{\locu(H^1_{\FFc}(F,T))}}{D/{\locu(H^1_{\FF_\al}(F,T))}}\,\,.
\end{align*}
This together with (\ref{eqn:prf2}) and (\ref{eqn:cyclomainconjstep1}) proves that
\begin{align}\notag\charr(H^1_{\FFc^*}(F,X^*)^\vee)=&\,\,\charr (H^1_{\FF_\al^*}(F,X^*)^\vee)\cdot\frac{\charr\left(D/{\locu(H^1_{\FF_\al}(F,X))}\right)}{\charr\left(Q/\locu(H^1_{\FFc}(F,X))\right)}\\
\label{eqn:prf3}\Big{|}& \,\,\charr\left(H^1_{\FF_{\al}}(F,T)/R\cdot c\right)\cdot\frac{\charr\left(D/{\locu(H^1_{\FF_\al}(F,X))}\right)}{\charr\left(Q/\locu(H^1_{\FFc}(F,X))\right)}\\
\notag = &\,\,\frac{\charr\left(D/R\cdot\locu(c)\right)}{\charr\left(Q/\locu(H^1_{\FFc}(F,X))\right]} \,\,\,,
\end{align}
where the final equality is because $\locu$ is injective. (\ref{eqn:prf1}) shows further that
\begin{align}
\label{eqn:usefulconversionR}
\notag\charr(H^1_{\FFc^*}(F,X^*)^\vee)&\mid \frac{\charr\left(\wedge^g\,Q/R\cdot\locu(\varepsilon)\right)}{{\charr\left(Q/\locu(H^1_{\FFc}(F,X))\right)}}\\
\notag&=\frac{\charr\left(\wedge^g\,Q/R\cdot\locu(\varepsilon)\right)}{{\charr\left(\wedge^g\,Q/\wedge^g\,\locu(H^1_{\FFc}(F,X))\right)}}\\
&=\charr\left(\wedge^g\,H^1_{\FFc}(F,X)/R\cdot\varepsilon\right).
\end{align}
This concludes when $R=\ooo$ that $\#\, \textup{Cl}(L)^\chi = [\wedge^g\, \mathcal{O}_{L}^{\times,\chi}\,:\,\frak{O}\cdot \varepsilon_{F}^\chi]$. Choosing the auxiliary set of primes $\mathcal{T}$ that appears in the definition of Rubin-Stark elements carefully (as in \cite[\S 2.1]{kbbstark}, see also the discussion preceding Theorem 3.11 in loc.cit.), one may use the analytic class number formula (together with an inclusion-exclusion argument) for all fields between $L$ and $F$ to convert the inequality of Theorem~\ref{thm:gras1}(i) into an equality, concluding the proof of the first assertion in (i). See \cite[\S5]{ru92}, \cite[Corollary 5.4]{ru96} and \cite[\S4.2]{popescu} for details. Tracing back the inequalities above, we see that we in fact have an equality in the divisibility 
$$\textup{Fitt}(H^1_{\FF_\al^*}(F,T^*)^\vee)\mid \textup{Fitt}(H^1_{\FF_\al}(F,T)/\ooo\cdot c_{F,\Psi}^\chi)$$
of (\ref{eqn:cyclomainconjstep1}) and it follows from Theorem~\ref{thm:mainapplicationofrestrictedKS}(iii) that the Kolyvagin system $\pmb{\kappa}^{\textup{R-S}}(T) \in \overline{\KS}(T,\FF_\al,\PP)$ (which is the image of $\pmb{\kappa}^{\textup{R-S}}$) is primitive. The second assertion in (i) now follows from Theorem~\ref{thm:lamdaadicKS}(ii).

Theorem~\ref{thm:lamdaadicKS}(ii) shows that the image $\pmb{\kappa}^{\textup{R-S}}(\TT_\cyc) \in \overline{\KS}(\TT_\cyc,\FF_\al,\PP)$ of $\pmb{\kappa}^{\textup{R-S}}$ is primitive as well. Hence we have equality in the divisibility
$$\textup{char}\left(H^1_{\FF_\al^*}(F,\TT_\cyc^*)^\vee\right)\mid \textup{char}\left(H^1_{\FF_\al}(F,\TT_\cyc)/R\cdot c_{F_\cyc,\Psi}^\chi\right)$$
of (\ref{eqn:cyclomainconjstep1}) and therefore, also in (\ref{eqn:usefulconversionR}) when $R=\LL_\cyc$. This is exactly the statement of (ii). 

When $R=\LL$, the divisibility (\ref{eqn:usefulconversionR}) reads 
\be\label{eqn:twovarmainconjdiv}\charr(\mathcal{A}_\infty^\chi)\mid \charr\left(\wedge^g H^1_{\FFc}(F,\TT)/\LL\cdot\frak{S}_\infty\right)\,.\ee
Let $\pi_\cyc:\LL\twoheadrightarrow\LL_\cyc$ denote the obvious projection. We will check below in Lemma~\ref{lem:reductionstep1} that 
$$\pi_\cyc(\charr_\LL(\mathcal{A}_\infty^\chi))=\charr_{\LL_\cyc}(\mathcal{A}_\cyc^\chi) \neq 0$$ and in Lemma~\ref{lem:reductionstep2} that 
$$\pi_\cyc(\charr\left(\wedge^g H^1_{\FFc}(F,\TT)/\LL\cdot\frak{S}_\infty\right))=\charr\left(\wedge^g H^1_{\FFc}(F,\TT_\cyc)/\LL\cdot\frak{S}_\cyc\right)\,.$$
All this shows (along with (\ref{eqn:twovarmainconjdiv}) and (ii)) that there are generators $f$ (resp., $g$) of $\charr_\LL(\mathcal{A}_\infty^\chi)$ (resp., of $\charr\left(\wedge^g H^1_{\FFc}(F,\TT)/\LL\cdot\varepsilon_{F_\infty}^\chi\right)$) such that  $f \notin \ker\pi_\cyc$, $f-g \in \ker\pi_\cyc$ and $f$ divides $g$. We conclude using Lemma~\ref{lem:equalitymodAimpliesequality} that $g/f \in \LL^\times$, concluding the proof of (iii).
\end{proof}
\begin{lemma}
\label{lem:equalitymodAimpliesequality}
Suppose $f,g \in \LL$ are such that $f \mid g$, $f - g \in \ker\pi_\cyc$ and $f\notin \ker\pi_\cyc$. Then $g/f\in \LL^\times$.
\end{lemma}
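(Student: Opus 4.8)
The plan is to analyze the kernel of $\pi_\cyc$ and use the divisibility $f\mid g$ together with the congruence $f\equiv g\pmod{\ker\pi_\cyc}$. First I would recall that $\ker\pi_\cyc$ is the ideal of $\LL=\ooo[[\Gamma]]$ generated by $\gamma_\dagger-1$, where $\gamma_\dagger$ is a topological generator of $\Gamma_\dagger=\Gal(F_\dagger/F)$ (with $\Gamma=\Gamma_\dagger\times\Gamma_\cyc$); this is a principal prime ideal and $\LL$ is a regular local ring (in fact a complete regular local ring of dimension $3$ over $\ooo$, or $2$ if we quotient by $p$). Write $g=fh$ with $h\in\LL$. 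Then the hypothesis $f-g=f(1-h)\in\ker\pi_\cyc$ combined with $f\notin\ker\pi_\cyc$ and the fact that $\ker\pi_\cyc$ is prime forces $1-h\in\ker\pi_\cyc$, i.e.\ $\pi_\cyc(h)=1$.

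Next I would argue that $\pi_\cyc(h)=1$ forces $h\in\LL^\times$. Since $\LL$ is a local ring, a unit is exactly an element not lying in the maximal ideal $\frak{M}$. The projection $\pi_\cyc:\LL\twoheadrightarrow\LL_\cyc$ sends $\frak{M}$ onto the maximal ideal $\frak{M}_\cyc$ of the local ring $\LL_\cyc$, and $\pi_\cyc^{-1}(\frak{M}_\cyc)=\frak{M}$ (because $\ker\pi_\cyc\subset\frak{M}$). Therefore $h\notin\frak{M}_\cyc$-preimage means precisely $h\notin\frak{M}$, so $h\in\LL^\times$. Concretely: $\pi_\cyc(h)=1$ means $h=1+x$ with $x\in\ker\pi_\cyc\subset\frak{M}$, and $1+x$ is a unit in a complete (hence $\frak{M}$-adically complete) local ring. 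Thus $g/f=h\in\LL^\times$, as claimed.

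I do not anticipate a serious obstacle here; the only point requiring a little care is the verification that $\ker\pi_\cyc$ is a prime ideal (equivalently that $\LL/\ker\pi_\cyc\cong\LL_\cyc$ is a domain, which is clear since $\LL_\cyc=\ooo[[\Gamma_\cyc]]$ is a power series ring over the domain $\ooo$) — this is what licenses the step "$f(1-h)\in\ker\pi_\cyc$ and $f\notin\ker\pi_\cyc$ imply $1-h\in\ker\pi_\cyc$." Everything else is the standard fact that $1$ plus an element of the maximal ideal of a complete local ring is a unit. So the proof is essentially three lines: factor $g=fh$, deduce $\pi_\cyc(h)=1$ from primality of $\ker\pi_\cyc$, and conclude $h\in1+\frak{M}\subset\LL^\times$.
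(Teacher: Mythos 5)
Your proof is correct and is essentially the paper's argument: factor $g=fh$, use that $\ker\pi_\cyc$ is a principal prime ideal to pass from $f(1-h)\in\ker\pi_\cyc$ and $f\notin\ker\pi_\cyc$ to $1-h\in\ker\pi_\cyc\subset\frak{m}_\LL$, and conclude $h\in\LL^\times$ since $\LL$ is a complete local ring. The only difference is that you spell out the primality of $\ker\pi_\cyc$ explicitly, which the paper leaves implicit.
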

\begin{proof}
Write $g=f\cdot h$ with $h\in \LL$, so that $f-g=f(1-h) \in \ker\pi_\cyc$. Since $f\notin \ker\pi_\cyc$, it follows that $1-h \in \ker\pi_\cyc \subset \frak{m}_{\LL}$, where $\frak{m}_{\LL}$ is the maximal ideal. Hence $h$ is indeed a unit.
\end{proof}
\begin{lemma}
\label{lem:reductionstep1}
$\pi_\cyc(\charr_\LL(\mathcal{A}_\infty^\chi))=\charr_{\LL_\cyc}(\mathcal{A}_\cyc^\chi) \neq 0\,.$
\end{lemma}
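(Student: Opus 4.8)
The plan is to compare the two-variable Iwasawa module $\mathcal{A}_\infty^\chi = \varprojlim_{M\subset LF_\infty}\textup{Cl}(M)^\chi$ with its cyclotomic counterpart $\mathcal{A}_\cyc^\chi$ via the natural descent map, and show that passing from $\LL$ to $\LL_\cyc$ (i.e., quotienting by $\gamma_\dagger - 1$ for a topological generator $\gamma_\dagger$ of $\Gamma_\dagger = \Gamma/\Gamma_\cyc$) is a well-behaved specialization at the level of characteristic ideals. The key structural facts I would invoke are: first, $\mathcal{A}_\infty^\chi$ is a torsion $\LL$-module (this is part of Theorem~\ref{thm:gras1}(i)/Theorem~\ref{thm:mainapplicationofrestrictedKS}(i) applied to $\TT^* $, via the identification $\mathcal{A}_\infty^\chi = H^1_{\FFc^*}(F,\TT^*)^\vee$ of Definition~\ref{def:limitofidealclassgroups}); and second, $\mathcal{A}_\cyc^\chi$ is likewise $\LL_\cyc$-torsion and nonzero, in fact with $\mathcal{A}_\cyc^\chi/(\gamma_\cyc^{p^m}-1)\cong \textup{Cl}(LF_{(m)})^\chi$ finite (this was established inside the proof of Proposition~\ref{prop:freenessoverfinitelayers}).

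The main steps, in order: (1) Establish a control isomorphism $\mathcal{A}_\infty^\chi/(\gamma_\dagger - 1)\mathcal{A}_\infty^\chi \xrightarrow{\ \sim\ } \mathcal{A}_\cyc^\chi$, or at least a map with finite kernel and cokernel. On the Selmer side this is exactly the cohomological control theorem: using the exact sequence $0\to \TT^* \to \TT^* \to \TT_\cyc^*\to 0$ (dualize $0\to\TT_\cyc\xrightarrow{\gamma_\dagger-1}\cdots$, or rather work with $\TT$ and take duals), together with the vanishing of the relevant $H^0$ coming from hypothesis (\ref{eqn:assnotrivxhi}) exactly as used throughout Section~\ref{sec:selmerstructures} (cf. the argument of Remark~\ref{rem:freenessconceptually}, equation (\ref{eqn:cokernelofnekdescent}), or Nekov\'a\v{r}'s control theorem \cite[8.10.1]{nekovar06}). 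Classically this is the statement that $\varprojlim_{M\subset LF_\infty}\textup{Cl}(M)^\chi$ has $\gamma_\dagger - 1$-coinvariants equal to $\varprojlim_{M\subset LF_\cyc}\textup{Cl}(M)^\chi$, which is standard Iwasawa theory once one knows no pseudo-null issues obstruct it — and indeed here one only needs finiteness of kernel/cokernel, not an exact isomorphism. (2) Show $\gamma_\dagger - 1$ is prime to $\charr_\LL(\mathcal{A}_\infty^\chi)$: if $\gamma_\dagger - 1$ divided the characteristic ideal, then $\mathcal{A}_\infty^\chi/(\gamma_\dagger-1)$ would fail to be $\LL_\cyc$-torsion (it would have a free part), contradicting step (1) together with the $\LL_\cyc$-torsionness of $\mathcal{A}_\cyc^\chi$. (3) Conclude via the standard specialization-of-characteristic-ideals lemma: if $M$ is a torsion $\LL$-module, $\ell = \gamma_\dagger - 1$ is a prime element with $\ell \nmid \charr_\LL(M)$, and $M/\ell M$ is $\LL_\cyc$-torsion with $M[\ell]$ also torsion (automatic here), then $\pi_\cyc(\charr_\LL(M)) = \charr_{\LL_\cyc}(M/\ell M)\cdot \charr_{\LL_\cyc}(M[\ell])^{-1}$ in the appropriate sense; combined with step (1)'s finiteness of kernel and cokernel (so $\charr_{\LL_\cyc}(M/\ell M) = \charr_{\LL_\cyc}(\mathcal{A}_\cyc^\chi)$ and $\charr_{\LL_\cyc}(M[\ell])=(1)$), this yields $\pi_\cyc(\charr_\LL(\mathcal{A}_\infty^\chi)) = \charr_{\LL_\cyc}(\mathcal{A}_\cyc^\chi)$. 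Nonvanishing is then immediate: $\charr_{\LL_\cyc}(\mathcal{A}_\cyc^\chi)\neq 0$ since $\mathcal{A}_\cyc^\chi$ is a genuinely torsion (finitely generated) $\LL_\cyc$-module, established in Proposition~\ref{prop:freenessoverfinitelayers}'s proof.

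The main obstacle I anticipate is step (1): getting the control statement with genuinely finite (not merely pseudo-null) kernel and cokernel. The cleanest route is to work entirely on the Galois-cohomology side, identifying $\mathcal{A}_\infty^\chi = H^1_{\FFc^*}(F,\TT^*)^\vee$ and $\mathcal{A}_\cyc^\chi = H^1_{\FFc^*}(F,\TT_\cyc^*)^\vee$, and applying Nekov\'a\v{r}'s control theorem for the Selmer complex $\widetilde{R\Gamma}_{f,\textup{Iw}}$ exactly as in Remark~\ref{rem:freenessconceptually} and equation (\ref{eqn:cokernelofnekdescent}) — there the cokernel of the analogous descent map for $\Gamma\to\Gamma_\cyc$ was shown to be $\widetilde{H}^2_f[\gamma_*-1]\cong H^1_{\FFc^*}(F,\TT^*)^\vee[\gamma_*-1]$, which is $\LL_\cyc$-torsion; the mild remaining point is to upgrade ``$\LL_\cyc$-torsion'' to ``$\ell$ prime to its characteristic ideal'', for which one reuses the finiteness of $\textup{Cl}(LF_{(m)})^\chi$. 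If one prefers to avoid Selmer complexes, the same conclusion follows from Poitou–Tate duality plus the classical genus-theory control sequence for $\varprojlim\textup{Cl}^\chi$, which is well documented (e.g. in the style of \cite[\S1.6.C, Prop. B.3.3]{r00}); either way the argument is routine once the framework is in place.
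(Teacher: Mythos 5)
Your proposal is correct and follows essentially the same route as the paper's proof: identify $\mathcal{A}_\infty^\chi$ with the dual Selmer group $H^1_{\FFc^*}(F,\TT^*)^\vee$, use a control theorem to get $\mathcal{A}_\infty^\chi/(\gamma_*-1)\cong\mathcal{A}_\cyc^\chi$, observe that the latter is $\LL_\cyc$-torsion, and conclude via a Perrin-Riou-type specialization lemma (the paper cites Lemme 4 of \cite[\S 1.1.3]{PR84memoirs}, which bundles your steps (2) and (3), including the pseudo-nullity of the $(\gamma_*-1)$-torsion submodule). Your added caution about finite versus pseudo-null kernel/cokernel in the control map is reasonable but unnecessary here, since the paper invokes an exact control isomorphism.
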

\begin{proof}
Observe that $\ker\pi_\cyc=(\gamma_*-1)\LL$ where $\gamma_*\in \Gamma$ is any lift of a topological generator of $\Gamma/\Gamma_\cyc$. By the control theorem, 
$$\mathcal{A}_\infty^\chi/(\gamma^*-1)=H^1_{\FFc^*}(F,\TT^*)^\vee/(\gamma_*-1)\cong H^1_{\FFc^*}(F,\TT_\cyc^*)^\vee=\mathcal{A}_\cyc^\chi\,.$$
As the $\LL_\cyc$-module $\mathcal{A}_\cyc^\chi$ is torsion, it follows from Lemme 4 of \cite[\S1.1.3]{PR84memoirs} that $\charr_\LL(\mathcal{A}_\infty^\chi)$ is prime to 
$(\gamma_*-1)$ and $\pi_\cyc\left(\charr_\LL(\mathcal{A}_\infty^\chi)\right)=\charr_{\LL_\cyc}(\mathcal{A}_\cyc^\chi)$, as desired.
\end{proof}
\begin{lemma}
\label{lem:reductionstep2}
$\pi_\cyc(\charr\left(\wedge^g H^1_{\FFc}(F,\TT)/\LL\cdot\frak{S}_\infty\right))=\charr\left(\wedge^g H^1_{\FFc}(F,\TT_\cyc)/\LL\cdot\frak{S}_\cyc\right)\,.$
\end{lemma}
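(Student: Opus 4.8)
The plan is to follow the template of the proof of Lemma~\ref{lem:reductionstep1}, reducing everything to the freeness of the relevant exterior powers and to the defining property of $\frak{S}_\cyc$ supplied by Conjecture~\ref{conj:strongRS}. First I would record that $\wedge^g H^1_{\FFc}(F,\TT)$ is free of rank one over $\LL$ — because $H^1_{\FFc}(F,\TT)$ is free of rank $g$ over $\LL$ by Remark~\ref{rem:freenessconceptually} — and that $\wedge^g H^1_{\FFc}(F,\TT_\cyc)$ is free of rank one over $\LL_\cyc$ by Lemma~\ref{lemma:H1facalongcyclotower}. Next I would show that the natural descent map $H^1_{\FFc}(F,\TT)/(\gamma_*-1)\to H^1_{\FFc}(F,\TT_\cyc)$ is an isomorphism: it is injective because the tautological sequence $0\to\TT\xrightarrow{\gamma_*-1}\TT\to\TT_\cyc\to 0$ together with $H^0(F,\TT_\cyc)=0$ (which follows from \eqref{eqn:chiisnotteich}, since $\overline{\TT}_\cyc=\pmb{\mu}_p\otimes\chi^{-1}$ has no $G_F$-invariants) forces the kernel to be trivial; and it is surjective because, by \eqref{eqn:cokernelofnekdescent}, its cokernel is $\mathcal{A}_\infty^\chi[\gamma_*-1]$, which is pseudo-null — hence finite — over the two-dimensional regular local ring $\LL$, in view of the fact (established in the proof of Lemma~\ref{lem:reductionstep1}) that $\charr_\LL(\mathcal{A}_\infty^\chi)$ is prime to $(\gamma_*-1)$; and an injection between free $\LL_\cyc$-modules of the same finite rank with finite cokernel is automatically an isomorphism. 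Taking top exterior powers and invoking Conjecture~\ref{conj:strongRS}, this yields an isomorphism of free rank-one $\LL_\cyc$-modules
$$\wedge^g H^1_{\FFc}(F,\TT)\big/(\gamma_*-1)\;\xrightarrow{\ \sim\ }\;\wedge^g H^1_{\FFc}(F,\TT_\cyc)$$
carrying the image of $\frak{S}_\infty$ to $\frak{S}_\cyc$.

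With this in place the lemma follows at once. Choose an $\LL$-basis $b$ of $\wedge^g H^1_{\FFc}(F,\TT)$ and write $\frak{S}_\infty=\delta\cdot b$ with $\delta\in\LL$; here $\delta\neq 0$ because $\frak{S}_\infty$ specializes (at $M=F$) to $\varepsilon_F^\chi$, which is nonzero by Proposition~\ref{prop:KSwithnonzeroinitialterm}. Then $\wedge^g H^1_{\FFc}(F,\TT)/\LL\cdot\frak{S}_\infty\cong\LL/(\delta)$, so $\charr_\LL\!\big(\wedge^g H^1_{\FFc}(F,\TT)/\LL\cdot\frak{S}_\infty\big)=(\delta)$, while reducing modulo $(\gamma_*-1)=\ker\pi_\cyc$ and using the displayed isomorphism gives $\wedge^g H^1_{\FFc}(F,\TT_\cyc)/\LL_\cyc\cdot\frak{S}_\cyc\cong\LL_\cyc/(\pi_\cyc\delta)$, whose characteristic ideal is $\pi_\cyc\big((\delta)\big)$ (note $\pi_\cyc\delta\neq 0$, since $\frak{S}_\cyc$ specializes to $\varepsilon_F^\chi\neq 0$, so the quotient is genuinely $\LL_\cyc$-torsion). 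Comparing the two characteristic ideals gives the asserted identity.

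I expect the only genuinely delicate point to be the surjectivity of the descent map at the level of $H^1_{\FFc}$ — equivalently, that $\wedge^g H^1_{\FFc}(F,\TT)/(\gamma_*-1)$ is \emph{literally} $\wedge^g H^1_{\FFc}(F,\TT_\cyc)$ rather than merely a module with the same characteristic ideal — together with the bookkeeping that the reduction of $\frak{S}_\infty$ modulo $(\gamma_*-1)$ really is $\frak{S}_\cyc$ under this identification; both are supplied by \eqref{eqn:cokernelofnekdescent}, the pseudo-nullity input from Lemma~\ref{lem:reductionstep1}, and the defining property of $\frak{S}_\cyc$ in Conjecture~\ref{conj:strongRS}. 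Everything else is routine manipulation with characteristic ideals of free rank-one modules.
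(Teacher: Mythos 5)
Your proof follows essentially the same route as the paper's: reduce the statement to the finiteness of the cokernel of $\pi_\cyc\colon H^1_{\FFc}(F,\TT)\to H^1_{\FFc}(F,\TT_\cyc)$, identify that cokernel with $\mathcal{A}_\infty^\chi[\gamma_*-1]$ via Nekov\'a\v{r}'s control theorem as in Remark~\ref{rem:freenessconceptually}, and then compute characteristic ideals of the resulting rank-one exterior powers; your bookkeeping with a basis $b$, the factor $\delta$, and the compatibility of $\frak{S}_\infty\mapsto\frak{S}_\cyc$ from Conjecture~\ref{conj:strongRS} is all correct, and the observation that an injection of free $\LL_\cyc$-modules of the same rank with finite cokernel is an isomorphism is a clean way to package the step the paper leaves implicit.

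However, there is a slip in the step that supplies the finiteness. You assert that $\mathcal{A}_\infty^\chi[\gamma_*-1]$ is ``pseudo-null --- hence finite --- over the two-dimensional regular local ring $\LL$''; but $\LL=\ooo[[\Gamma]]$ with $\Gamma\cong\ZZ_p^2$ has Krull dimension three, not two (it is $\LL_\cyc$ that is two-dimensional), and for an $\LL_\cyc$-module $N$ (i.e., a module killed by $\gamma_*-1$), pseudo-nullity over $\LL$ is equivalent merely to $N$ being $\LL_\cyc$-torsion, not to $N$ being finite --- for instance $\LL/(p,\gamma_*-1)\cong k[[\Gamma_\cyc]]$ is pseudo-null over $\LL$ but infinite. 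Consequently the fact that $\charr_\LL(\mathcal{A}_\infty^\chi)$ is prime to $(\gamma_*-1)$ gives only that $\mathcal{A}_\infty^\chi[\gamma_*-1]$ is pseudo-null over $\LL$ (a submodule of the pseudo-null kernel in the structure-theorem pseudo-isomorphism), which is strictly weaker than the finiteness your subsequent argument requires. The paper closes this gap by appealing directly to Lemme~4 of \cite[\S1.1.3]{PR84memoirs}, which delivers pseudo-nullity of $\mathcal{A}_\infty^\chi[\gamma_*-1]$ as an $\LL_\cyc$-module (i.e., finiteness); you should cite that result at this point rather than attempt to deduce finiteness from the coprimality of $\charr_\LL(\mathcal{A}_\infty^\chi)$ with $(\gamma_*-1)$ alone.
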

\begin{proof}
It suffices to verify that the $\LL_\cyc$-module 
$$\textup{coker}(H^1_{\FFc}(F,\TT)\stackrel{\pi_\cyc}{\lra}H^1_{\FFc}(F,\TT_\cyc))$$ 
is pseudo-null. It follows from Nekov\'a\v{r}'s control theorem (as utilized in Remark~\ref{rem:freenessconceptually}) $\textup{coker}(\pi_\cyc)\cong \mathcal{A}_\infty^\chi[\gamma_*-1]$. This module is pseudo-null by Lemme 4 of \cite[\S1.1.3]{PR84memoirs}.
\end{proof}


\subsection{A two-variable CM main conjecture}
\label{subsecCMmainconj}
The goal in this section is to prove somewhat less precise version of Theorem~\ref{thm:gras1}(iii)   assuming only the Rubin-Stark conjecture (but not the strong Rubin-Stark conjecture). Hypotheses from the previous section is in effect. Recall the map $\locu$ defined as in the proof of Theorem~\ref{thm:gras1}.
\begin{thm}
\label{thm:mainconjforTchi} 
We have
\be\label{eqn:divinmainconjTchi}
\textup{char} \left(\frak{\al}\big{/}\LL\cdot  \locu\left(c_{F_\infty,\Psi}^\chi \right)\right) \subseteq \textup{char}\left(H^1_{\FF^*_{\frak{tr}}}(F,\TT^*)^\vee\right).
\ee 
In particular, the module $H^1_{\FF^*_{\frak{tr}}}(F,\TT^*)$ is $\LL$-cotorsion. Furthermore, the containment in (\ref{eqn:divinmainconjTchi}) may be promoted to an equality if the Strong Rubin-Stark conjecture holds true.
\end{thm}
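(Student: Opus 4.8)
The plan is to deduce this statement from the machinery already assembled, applied with the triple $(R,X,\FF)=(\LL,\TT,\FF_{\mathcal{L}})$. First I would invoke Remark~\ref{rem:ESKStwist} together with Proposition~\ref{prop:KSwithnonzeroinitialterm} to produce the Rubin-Stark $\al$-restricted Kolyvagin system $\pmb{\kappa}^{\textup{R-S}}\in \overline{\KS}(\TT,\FF_{\mathcal{L}},\PP)$ (Theorem~\ref{thm:ellrestrictedESmain}(iii)) whose initial term is $c_{F_\infty,\Psi}^\chi \neq 0$. Then Theorem~\ref{thm:mainapplicationofrestrictedKS}(i)--(ii) gives at once that $H^1_{\FF_{\mathcal{L}}^*}(F,\TT^*)^\vee$ is $\LL$-torsion and the divisibility
$$\textup{char}\left(H^1_{\FF_{\mathcal{L}}^*}(F,\TT^*)^\vee\right)\mid \textup{char}\left(H^1_{\FF_{\mathcal{L}}}(F,\TT)/\LL\cdot c_{F_\infty,\Psi}^\chi\right).$$
The next step is the translation between these ``restricted'' objects and the ones appearing in the statement, namely $H^1_{\FF^*_{\frak{tr}}}(F,\TT^*)$ and the quotient $\frak{\al}/\LL\cdot\locu(c_{F_\infty,\Psi}^\chi)$. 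For the left-hand side I would use the four-term exact sequence of Proposition~\ref{prop:4termexact} applied to the quadruple $(\FF_{\frak{tr}},\FF_{\mathcal{L}},\mathcal{L},\TT)$, together with Proposition~\ref{prop:minusvanishingforunits} which kills $H^1_{\FF_{\frak{tr}}}(F,\TT)$; this identifies $H^1_{\FF_{\mathcal{L}}^*}(F,\TT^*)^\vee$ with the kernel of a surjection $\mathcal{L}\twoheadrightarrow\cdots$, so that $\textup{char}(H^1_{\FF_{\mathcal{L}}^*}(F,\TT^*)^\vee)\cdot\textup{char}(H^1_{\FF^*_{\frak{tr}}}(F,\TT^*)^\vee)=\textup{char}(\mathcal{L})=\charr$ of the line $\frak{l}$-type module, which is trivial since $\mathcal{L}$ is free of rank one.

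The heart of the argument is then a bookkeeping computation entirely parallel to the proof of Theorem~\ref{thm:gras1}: one uses that the map $\locu$ is injective (by the choice of $\mathcal{V}$, cf. Remark~\ref{rem:liftistransversalaswell} and Proposition~\ref{prop:canliftfromthegroundlevel}), that $\Psi$ induces an isomorphism $\wedge^g\,\mathcal{Q}\xrightarrow{\ \sim\ }\al$ with $\locu(c_{F_\infty,\Psi}^\chi)=\Psi(\locu(\frak{S}_\infty))$ — or, without the strong conjecture, one works directly with the element $c_{F_\infty,\Psi}^\chi$ rather than with $\varepsilon_{F_\infty}^\chi$ — and the pair of exact sequences relating $H^1_{\FF_{\mathcal{L}}}(F,\TT)$, $H^1_{\FFc}(F,\TT)$ and the cokernel of $\locu$, with global (Poitou--Tate) duality identifying the images of $\locu$ and $\locu^*$ as orthogonal complements. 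Chasing characteristic ideals through these sequences exactly as in equations~(\ref{eqn:prf1})--(\ref{eqn:usefulconversionR}) converts the Kolyvagin divisibility above into
$$\textup{char}\left(\frak{\al}\big{/}\LL\cdot  \locu\left(c_{F_\infty,\Psi}^\chi \right)\right) \subseteq \textup{char}\left(H^1_{\FF^*_{\frak{tr}}}(F,\TT^*)^\vee\right),$$
which is (\ref{eqn:divinmainconjTchi}); the $\LL$-cotorsionness of $H^1_{\FF^*_{\frak{tr}}}(F,\TT^*)$ drops out since its characteristic ideal divides that of a genuinely torsion module. For the final sentence, assuming the Strong Rubin-Stark conjecture one has $c_{F_\infty,\Psi}^\chi=\psi_F$-pushforward of $\frak{S}_\infty$ and, crucially, Theorem~\ref{thm:gras1}(iii) is available, so the divisibility it provides together with Theorem~\ref{thm:lamdaadicKS}(ii) forces $\pmb{\kappa}^{\textup{R-S}}$ to be primitive; then Theorem~\ref{thm:mainapplicationofrestrictedKS}(iii) upgrades the Kolyvagin divisibility to an equality, and tracing back through the same diagram chase yields equality in (\ref{eqn:divinmainconjTchi}).

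The step I expect to be the main obstacle is the second one: matching up the ``restricted'' Selmer groups $H^1_{\FF_{\mathcal{L}}^*}$, $H^1_{\FF_{\frak{tr}}}$ with the quotient $\al/\LL\cdot\locu(c_{F_\infty,\Psi}^\chi)$ cleanly, i.e. keeping careful track of which module the element $c_{F_\infty,\Psi}^\chi$ lives in (it is only an \emph{a priori} non-integral exterior-algebra element, and one must use Proposition~\ref{prop:freenessoverfinitelayers} and Remark~\ref{rem:RSaresomewhatintegral} to know the relevant cohomology is free so that the $\delta_\KKK$-denominators genuinely disappear), and checking that Proposition~\ref{prop:4termexact} really does apply with $X=\TT$ (which requires the weak Leopoldt conjecture for $T(E)$ only in the elliptic-curve cases, not here, so for $\TT$ the hypothesis of Proposition~\ref{prop:minusvanishingforunits} is unconditional given Leopoldt for $L$). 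Once the modules are correctly identified, the characteristic-ideal arithmetic is routine and formally identical to the computation already carried out for Theorem~\ref{thm:gras1}.
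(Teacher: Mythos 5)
Your proposal follows the paper's proof quite closely for the first two assertions: the paper also combines Proposition~\ref{prop:4termexact} (with $\FF=\FF_{\frak{tr}}$, $\mathcal{G}=\FF_{\mathcal{L}}$, $X=\TT$) with Theorem~\ref{thm:mainapplicationofrestrictedKS}(ii) to obtain the containment~(\ref{eqn:divinmainconjTchi}), and derives the $\LL$-cotorsionness from Proposition~\ref{prop:KSwithnonzeroinitialterm} together with the injectivity of $\locu$, exactly as you outline. Your identification of the initial term of $\pmb{\kappa}^{\textup{R-S}}$ with $c_{F_\infty,\Psi}^\chi$, and your observation that the weak Leopoldt hypothesis on $T(E)$ is irrelevant in the $\TT$-case of Propositions~\ref{prop:minusvanishingforunits} and~\ref{prop:4termexact}, are both correct.

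There is, however, a technical misstep in your treatment of the equality claim. You invoke Theorem~\ref{thm:mainapplicationofrestrictedKS}(iii) to upgrade the Kolyvagin-system divisibility to an equality, but that statement is explicitly restricted to $R=\ooo$ or $R=\LL_\cyc$; it is \emph{not} proved for $R=\LL$ (the paper's proof of Theorem~\ref{thm:mainapplicationofrestrictedKS}(iii) goes through specialization at height-one primes of a one-dimensional ring and does not directly extend to the two-variable setting). The $R=\LL$ equality in the paper is achieved differently: one first establishes the equality for $R=\LL_\cyc$ (Theorem~\ref{thm:gras1}(ii), via primitivity and Theorem~\ref{thm:mainapplicationofrestrictedKS}(iii)), and then the reduction Lemmas~\ref{lem:equalitymodAimpliesequality}--\ref{lem:reductionstep2} convert this into the $R=\LL$ equality that appears as Theorem~\ref{thm:gras1}(iii). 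Once that outer equality is available, it forces equality at every intermediate step of the divisibility chain in the proof of Theorem~\ref{thm:gras1}, in particular at the step $\textup{char}\bigl(H^1_{\FF_\al^*}(F,\TT^*)^\vee\bigr)\mid \textup{char}\bigl(H^1_{\FF_\al}(F,\TT)/\LL\cdot c_{F_\infty,\Psi}^\chi\bigr)$, and that equality then propagates through the Proposition~\ref{prop:4termexact} sequence to yield equality in~(\ref{eqn:divinmainconjTchi}). Your parenthetical ``tracing back through the same diagram chase'' gestures at this, but your explicit reliance on Theorem~\ref{thm:mainapplicationofrestrictedKS}(iii) for $R=\LL$ is the wrong citation and should be replaced by the reduction argument (or simply by the equality supplied by Theorem~\ref{thm:gras1}(iii) and back-substitution through the chain).
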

\begin{proof}
The first part may be deduced from from Proposition~\ref{prop:4termexact} (used with $\FF=\FF_{\frak{tr}}$ and $\mathcal{G}=\FF_{\mathcal{L}}$) and Theorem~\ref{thm:mainapplicationofrestrictedKS}(ii) (used with $\FF_{\mathcal{L}}$).  The second assertion follows from Proposition~\ref{prop:KSwithnonzeroinitialterm}, the fact that $\locu$ is injective (see the proof of Theorem~\ref{thm:gras1}) and the  containment (\ref{eqn:divinmainconjTchi}). Finally, the third portion follows from the proof of Theorem~\ref{thm:gras1}.
\end{proof}
\begin{rem}
\label{rem:RSinwedge0}
Let $\KKK$ be any field contained in the collection $\frak{C}$. The defining property of the Rubin-Stark elements and Example~\ref{example} show that $\locu(\varepsilon_\mathcal{K}^\chi) \in \overline{\wedge^g\,Q_\KKK}$ 
where $Q_\KKK:=H^1(\KKK_p,T)/\mathcal{V}_\KKK^-$ and the exterior product is taken in the category of $\ooo[\textup{Gal}(\KKK/F)]$-modules. We will simply write $\locu(\varepsilon_\mathcal{K}^\chi)$ in place of $j^{-1}(\locu(\varepsilon_\mathcal{K}^\chi)) \in  \wedge^g\,Q_\KKK$. 
\end{rem}
\begin{define}
\label{def:rubinstarktowerlocal}
Recall the free-module $\mathcal{Q}=H^1(F_p,\TT)/\mathcal{V}$ of rank $g$.  Define
$$\locu(\varepsilon_{F_\infty}^\chi)=\{\locu(\varepsilon_M^\chi)\} \in \varprojlim \wedge^g\, Q_M=\wedge^g\varprojlim\, Q_M=\wedge^g \,\mathcal{Q}$$
to be the tower of Rubin-Stark elements along $F_\infty$. Here the inverse limit is taken over all finite subextensions  of $F_\infty/F$ and the second equality holds thanks to the fact that each module $Q_M$ is free as an $\ooo[\textup{Gal}(M/F)]$-module and the transition maps $Q_M \ra Q_{M^\prime}$ ($F\subset M^\prime\subset M \subset F_\infty$) are all surjective (because all the maps $\mathcal{Q}\ra Q_M$ are). 
\end{define}
\begin{thm}
\label{thm:mainconjdivisibility}
The ideal $\textup{char}\left(H^1_{\FF^*_{\frak{tr}}}(F,\TT^*)^\vee\right) $ divides $\textup{char} \left(\wedge^g \,\mathcal{Q}/\LL\cdot  \locu\left(\varepsilon_{F_\infty}^\chi \right)\right)\,,$ with equality if we further assume the Strong Rubin-Stark conjecture.

\end{thm}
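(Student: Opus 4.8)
The plan is to deduce this statement directly from Theorem~\ref{thm:mainconjforTchi} by identifying the module $\wedge^g\,\mathcal{Q}/\LL\cdot\locu(\varepsilon_{F_\infty}^\chi)$ with $\frak{\al}/\LL\cdot\locu(c_{F_\infty,\Psi}^\chi)$ up to a common factor that contributes trivially to the characteristic ideal. Recall from Theorem~\ref{thm:ellrestrictedESmain}(i) that the collection $\Psi=\{\psi_\KKK\}$ induces an isomorphism $\Psi\colon\wedge^g\,\mathcal{Q}\xrightarrow{\ \sim\ }\al$ of $\LL$-modules, and by the construction of the $\frak{L}$-restricted Euler system in Theorem~\ref{thm:ellrestrictedESmain}(ii)--(iii) together with the compatibility $\locu(c)=\Psi(\locu(\varepsilon))$ recorded in the proof of Theorem~\ref{thm:gras1} (see equation~\eqref{eqn:prf1}), we have $\locu(c_{F_\infty,\Psi}^\chi)=\Psi(\locu(\varepsilon_{F_\infty}^\chi))$. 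First I would make this identification precise at the level of the tower: the tower of Rubin-Stark elements $\locu(\varepsilon_{F_\infty}^\chi)\in\wedge^g\,\mathcal{Q}$ of Definition~\ref{def:rubinstarktowerlocal} is, by construction, the inverse limit of the local images $\locu(\varepsilon_M^\chi)$, and $\psi_M(\varepsilon_M^\chi)=c_{M,\Psi}^\chi$ for each finite $M\subset F_\infty$, so passing to the limit gives $\Psi(\locu(\varepsilon_{F_\infty}^\chi))=\locu(c_{F_\infty,\Psi}^\chi)$ inside $\al=\varprojlim\al_M$.

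Given this, the isomorphism $\Psi$ carries the submodule $\LL\cdot\locu(\varepsilon_{F_\infty}^\chi)\subset\wedge^g\,\mathcal{Q}$ onto the submodule $\LL\cdot\locu(c_{F_\infty,\Psi}^\chi)\subset\al$, hence induces an isomorphism of quotients
$$
\wedge^g\,\mathcal{Q}\big/\LL\cdot\locu(\varepsilon_{F_\infty}^\chi)\ \xrightarrow{\ \sim\ }\ \al\big/\LL\cdot\locu(c_{F_\infty,\Psi}^\chi).
$$
In particular these two $\LL$-modules have the same characteristic ideal. Now Theorem~\ref{thm:mainconjforTchi} gives the inclusion $\textup{char}\big(\al/\LL\cdot\locu(c_{F_\infty,\Psi}^\chi)\big)\subseteq\textup{char}\big(H^1_{\FF^*_{\frak{tr}}}(F,\TT^*)^\vee\big)$, which is exactly the assertion that $\textup{char}\big(H^1_{\FF^*_{\frak{tr}}}(F,\TT^*)^\vee\big)$ divides $\textup{char}\big(\wedge^g\,\mathcal{Q}/\LL\cdot\locu(\varepsilon_{F_\infty}^\chi)\big)$. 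The ``equality under the Strong Rubin-Stark conjecture'' clause is inherited verbatim from the corresponding clause of Theorem~\ref{thm:mainconjforTchi} through the same isomorphism.

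I expect the only genuine subtlety to be the bookkeeping needed to justify $\Psi(\locu(\varepsilon_{F_\infty}^\chi))=\locu(c_{F_\infty,\Psi}^\chi)$ at the level of inverse limits rather than at a finite layer: one must check that the isomorphisms $\psi_M$ are compatible under the transition maps of the towers $\{Q_M\}$ and $\{\al_M\}$ (which they are, since $\Psi=\{\psi_\KKK\}$ lives in the inverse limit $\varprojlim_\KKK\wedge^{r-1}\textup{Hom}_{\ooo[\Delta_\KKK]}(\mathcal{V}_\KKK^+,\ooo[\Delta_\KKK])$ by Theorem~\ref{thm:ellrestrictedESmain}(i)), and that $\varprojlim$ commutes with $\wedge^g$ here — a point already flagged in Definition~\ref{def:rubinstarktowerlocal}, where the surjectivity and freeness properties of the $Q_M$ are used. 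Once those identifications are in place, the theorem is a formal consequence of Theorem~\ref{thm:mainconjforTchi}.
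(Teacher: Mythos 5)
Your argument is exactly the paper's proof: the isomorphism $\Psi\colon\wedge^g\,\mathcal{Q}\xrightarrow{\sim}\al$ from Theorem~\ref{thm:ellrestrictedESmain} carries $\locu(\varepsilon_{F_\infty}^\chi)$ to $\locu(c_{F_\infty,\Psi}^\chi)$, so the two quotients share a characteristic ideal and the result is inherited from Theorem~\ref{thm:mainconjforTchi}. The paper states this more tersely but your bookkeeping of the inverse-limit compatibility is the same point the paper relies on via Definition~\ref{def:rubinstarktowerlocal}.
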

\begin{proof}
Thanks to our choice of $\Psi$ we have
$$\textup{char} \left(\frak{\al}/\LL\cdot  \textup{loc}_p\left(c_{F_\infty,\Psi}^\chi \right)\right)=\textup{char} \left(\wedge^g\, Q/\LL\cdot  \locu\left(\varepsilon_{F_\infty}^\chi \right)\right),$$
and the proof follows from Therorem~\ref{thm:mainconjforTchi}. \end{proof}
\begin{rem}
\label{rem:comparetoRubin}
The Iwasawa module $H^1_{\FF^*_{\frak{tr}}}(F,\TT^*)^\vee$ should be compared to the module $\hat{X}$ of \cite[\S11]{rubinmainconj} and  Theorems~\ref{thm:gras1} and \ref{thm:mainconjdivisibility} to Rubin's main conjecture~\cite[Theorem 4.1(ii)]{rubinmainconj}, generalized to the setting where the base field $F$ is now a general CM field.
\end{rem}
\section{The cyclotomic (supersingular) main conjecture for CM elliptic curves}
\label{subsec:cyclomain}
The goal of this section is to apply results from Section~\ref{Sec:KSforGmandE} to study the cyclotomic Iwasawa theory of a CM elliptic curve at a supersingular prime.

Recall that $F^{\textup{cyc}}\subset F_\infty$ denotes the cyclotomic $\ZZ_p$-extension of $F$ and $F_n$ its $n$th layer. Further notation from Section~\ref{subsubsec:prelim} is also still in effect. In particular, recall the characters $\rho$, $\langle \rho\rangle$ and $\omega_E$. Also until the end, the Dirichlet character $\chi$ is chosen to be $\omega_E$.

Throughout Section~\ref{subsec:cyclomain} \emph{we assume that $p$ splits completely in $F^+/\QQ$}. As before, let $T(E)=T_p(E)$ denote the $p$-adic Tate module of $E$. Let $S_p=\{\wp_1,\cdots,\wp_g\}$ denote the set of primes of $F^+$ lying above $p$. Note that each $\wp_i$ remains inert in the quadratic extension $F/F^+$. We denote the unique prime of $F$ above $\wp_i$ by $\frak{p}_i$. By a slight abuse, we denote the unique prime of $F_n$ (and of $F^{\textup{cyc}}$) above $\frak{p}_i$ by the same symbol $\frak{p}_i$.

\subsection{Preliminaries}
\label{subsec:prelimonCMtheory}
In this subsection we recall some classical results (due mostly to Coates and Wiles) in the Iwasawa theory of CM elliptic curves. We shall initially record them being faithful to the original notions and notation, and later in Remark~\ref{rem:compareXwithstrictSelmer} explain which objects we have introduced in the previous sections they correspond to.

Let $\frak{M}$ be the maximal abelian pro-$p$ extension of $\frak{F}:=F(E[p^\infty])$ unramified outside primes above $p$. Set $\frak{X}:=\textup{Gal}(\frak{M}/\frak{F})$ and $\LL_{\frak{F}}:=\ooo[[\textup{Gal}(\frak{F}/F)]]$. For any extension $M$ of $F$ (finite or infinite), consider the \emph{relaxed} Selmer group
$$\textup{Sel}_p^{\prime}(E/M)=\ker\left( H^1(M,E[p^\infty]) \lra \prod_{v\nmid p} \frac{H^1(M_v,E[p^\infty])}{E(M_v)\otimes\QQ_p/\ZZ_p} \right).$$
\begin{lemma}[Rubin]
\label{lem:relaxedistrue}
For any infinite extension $M_\infty$ of $F$ contained in $\frak{F}$,
$$\textup{Sel}_p^{\prime}(E/M_\infty)=\textup{Sel}_p(E/M_\infty).$$
\end{lemma}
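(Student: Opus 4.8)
The statement to be proved is Lemma~\ref{lem:relaxedistrue}: for any infinite extension $M_\infty$ of $F$ contained in $\frak{F}=F(E[p^\infty])$, the relaxed Selmer group $\textup{Sel}_p^{\prime}(E/M_\infty)$ coincides with the genuine $p$-power Selmer group $\textup{Sel}_p(E/M_\infty)$. The only difference between the two Selmer groups is the local condition at the primes above $p$: in $\textup{Sel}_p$ one imposes the image of the local Kummer map $E(M_v)\otimes \QQ_p/\ZZ_p \hookrightarrow H^1(M_v,E[p^\infty])$, whereas in $\textup{Sel}_p^{\prime}$ one imposes nothing at all at $v\mid p$. So the plan is to show that the two local conditions agree on the relevant cohomology groups, or more precisely that the difference is invisible from the global Selmer group.

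First I would recall the CM structure: $E$ has CM by (an order in) $K$, $p$ is inert in $K/\QQ$, and $\frak{F}=F(E[p^\infty])$ is obtained from $F$ by adjoining the $p$-power torsion, so $\textup{Gal}(\frak{F}/F)$ is (an open subgroup of) $\ooo^\times\cong\Phi^\times$; in particular $\frak{F}/F$ contains a $\ZZ_p$-extension that is deeply ramified at the primes above $p$ in the sense of Coates-Greenberg. Over such a deeply ramified extension, the local point group $E(M_v)\otimes\QQ_p/\ZZ_p$ and the image of the local Kummer map behave very simply: Coates-Greenberg's theory of deeply ramified extensions shows that for a local field $M_v$ deeply ramified over $F_\wp$, the ``finite'' local condition $E(M_v)\otimes\QQ_p/\ZZ_p$ inside $H^1(M_v,E[p^\infty])$ either is everything, or can be computed via the formal group and is identified with a subgroup whose quotient is controlled by $H^1$ of the inertia acting on the tangent space. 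The key point (this is essentially Rubin's observation, and the reason he attributes the lemma to himself in other works) is that at the supersingular primes above $p$, over the infinite extension $M_\infty\subset \frak{F}$ the quotient $H^1(M_{\infty,v},E[p^\infty])\big/\big(E(M_{\infty,v})\otimes\QQ_p/\ZZ_p\big)$ vanishes, because $E[p^\infty]$ becomes (after the deeply ramified base change) essentially a product of copies of $\pmb{\mu}_{p^\infty}$-type pieces whose relevant cohomology is killed. Concretely, I would invoke the computation that $E(M_{\infty,v})\otimes \QQ_p/\ZZ_p = H^1(M_{\infty,v},E[p^\infty])$ for $v\mid p$, which follows from Coates-Wiles / Coates-Greenberg local analysis together with the fact that $M_\infty$ contains the full $p$-power torsion field locally.

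The precise chain of steps: (1) reduce to comparing the two local conditions at each place $v\mid p$ of $M_\infty$, since away from $p$ the conditions are identical by definition; (2) for $v\mid p$, use that $M_{\infty,v}$ contains (a subextension cofinal in) the local field $F_\wp(E[p^\infty])$, which is deeply ramified; (3) apply the Coates-Greenberg result (deeply ramified fields, \emph{op.\ cit.}) — or equivalently Rubin's local computation in the CM setting — to conclude $H^2(M_{\infty,v},T_p(E))=0$ and $E(M_{\infty,v})\otimes\QQ_p/\ZZ_p = H^1(M_{\infty,v},E[p^\infty])$; (4) conclude that the unrestricted and the Kummer local conditions at $v\mid p$ coincide, hence the two Selmer groups are equal. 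The main obstacle I expect is step (3): carefully justifying the local identity $E(M_{\infty,v})\otimes\QQ_p/\ZZ_p = H^1(M_{\infty,v},E[p^\infty])$ in the supersingular (formal group of height $2$) case, where one cannot simply use the ordinary filtration argument but must instead appeal to the deeply ramified machinery or to an explicit analysis of the Lubin-Tate / relative Lubin-Tate formal group of $E$ over $F_\wp$; once that local statement is in hand, the globalization is formal. Since the paper attributes the lemma to Rubin, I would cite \cite{rubinmainconj} (and Coates-Greenberg on deeply ramified extensions) for this local input rather than reprove it.
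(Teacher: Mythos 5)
Your overall strategy agrees with the paper's: the paper's entire proof is a one-line citation to Rubin, and you likewise ultimately propose to cite Rubin (plus Coates--Greenberg) for the essential local vanishing rather than reprove it. Two corrections are worth making. First, the citation you give is wrong: the paper points to Lemma~2.2 of \cite{rubincompositio85} (Rubin's 1985 \emph{Compositio} paper), not to \cite{rubinmainconj}; it is the Compositio lemma that is stated in the form needed here. Second, step~(2) of your sketch is stated imprecisely. An arbitrary infinite $M_\infty\subset\frak{F}$ need not contain, locally at $v\mid p$, a tower cofinal in $F_\wp(E[p^\infty])$; e.g.\ for $M_\infty=F^{\cyc}$ the local extension $M_{\infty,v}/F_\wp$ is a $\ZZ_p$-extension, whereas $\Gal\left(F_\wp(E[p^\infty])/F_\wp\right)$ has $\ZZ_p$-rank~$2$. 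What is actually true, and what suffices for the Coates--Greenberg machinery, is that $M_{\infty,v}/F_\wp$ is deeply ramified: since $p$ is inert in $K$ and $E$ is supersingular at $\wp$, the torsion field $F_\wp(E[p^\infty])/F_\wp$ is a totally ramified Lubin--Tate tower whose decomposition group at $v\mid p$ has finite index in $\Gal(\frak{F}/F)$, so $[M_{\infty,v}:F_\wp]$ is infinite; any infinite subextension of a totally ramified pro-$p$ tower over $F_\wp$ is deeply ramified. Also, your explanatory gloss that ``$E[p^\infty]$ becomes essentially a product of copies of $\pmb{\mu}_{p^\infty}$-type pieces'' is not correct at a supersingular prime: the relevant Coates--Greenberg input is rather the vanishing of $H^1(M_{\infty,v},\hat{E})$ for the formal group $\hat{E}$ (of height two over $\ZZ_p$, equivalently height one over $\ooo$), together with the fact that at a supersingular prime the entire $p$-divisible group of $E$ is formal. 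Finally, note that Rubin's 1985 argument predates Coates--Greenberg and proceeds via a direct computation with the Lubin--Tate formal group, so if you wished to reprove rather than cite, that is the route the original proof takes.
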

\begin{proof}
This follows from \cite[Lemma 2.2]{rubincompositio85}.
\end{proof}
\begin{define}
\label{def:coinvariants}
Given a $\LL_{\frak{F}}$-module $Y$ and a continuous character $\psi:\textup{Gal}(\frak{F}/F)\ra\ooo^\times$, we define $Y(\psi):=Y\otimes\ooo_{\psi^{-1}}$ where $\ooo_{\psi^{-1}}$ is the cyclic $\ooo$-module on which $\textup{Gal}(\frak{F}/F)$ acts via $\psi^{-1}$.
\end{define}

\begin{define}
\label{def:coinvariants}
Given a $\LL_{\frak{F}}$-module $Y$, we define $Y^{\rho}_{\infty}:=Y(\rho^{-1})\otimes_{\LL_{\frak{F}}}\LL$  (resp., $Y^{\rho}_{\textup{cyc}}:=Y(\rho^{-1})\otimes_{\LL_{\frak{F}}}\LL_{\textup{cyc}}$), the $F_\infty$-coinvariants (resp., $F^{\textup{cyc}}$-coinvariants) of $Y(\rho^{-1})$.
\end{define}
\begin{lemma}
\label{lemma:RubinCW}
\begin{itemize}
\item[(i)] $\textup{Sel}_p^{\prime}(E/\frak{F})\cong\textup{Hom}_{\ooo}(\frak{X},E[p^\infty])$.
\item[(ii)] $\textup{Sel}_p(E/F_\infty)^\vee\cong\frak{X}_{\infty}^\rho$ and $\textup{Sel}_p(E/F^{\textup{cyc}})^\vee\cong\frak{X}_{\textup{cyc}}^\rho$.
\end{itemize}
\end{lemma}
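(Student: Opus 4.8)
The statement to prove is Lemma~\ref{lemma:RubinCW}, which unpacks $\mathrm{Sel}_p^{\prime}(E/\frak{F})$ and the cyclotomic/two-variable Selmer groups of $E$ in terms of the Iwasawa module $\frak{X}=\mathrm{Gal}(\frak{M}/\frak{F})$. Here is how I would proceed.

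\medskip
\noindent\textbf{Step 1: Identify the relaxed Selmer group over $\frak{F}$.} Over $\frak{F}=F(E[p^\infty])$ the Galois action on $E[p^\infty]$ is trivial, so $E[p^\infty]\cong (\Phi/\ooo)$ as a $G_\frak{F}$-module (non-canonically; canonically it is $\ooo(\rho)$ restricted to $G_\frak{F}$, which is trivial). Hence $H^1(\frak{F},E[p^\infty])=\mathrm{Hom}_{\ooo}\big(G_\frak{F}^{\mathrm{ab}}\otimes\ooo,\,E[p^\infty]\big)$, where the relevant abelianization is the pro-$p$ completion. The relaxed Selmer condition at primes $v\nmid p$ — namely landing in $E(\frak{F}_v)\otimes\QQ_p/\ZZ_p$ — translates, after Kummer theory identifies the local condition, into the statement that the associated homomorphism is unramified at every $v\nmid p$: indeed for $v\nmid p$ the curve has potentially good or multiplicative reduction and $E(\frak{F}_v)\otimes\QQ_p/\ZZ_p$ corresponds to the unramified (finite) part of $H^1(\frak{F}_v,E[p^\infty])$, so the cocycle must be unramified there. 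Therefore a class in $\mathrm{Sel}_p^{\prime}(E/\frak{F})$ is exactly a homomorphism $G_\frak{F}\to E[p^\infty]$ that factors through the maximal abelian pro-$p$ quotient unramified outside $p$, i.e. through $\frak{X}=\mathrm{Gal}(\frak{M}/\frak{F})$. This gives $\mathrm{Sel}_p^{\prime}(E/\frak{F})\cong\mathrm{Hom}_\ooo(\frak{X},E[p^\infty])$, proving (i). (One must check there is no extra local condition at $v\mid p$ in the \emph{relaxed} Selmer group — by definition there is none — and that archimedean places contribute nothing since $p$ is odd.)

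\medskip
\noindent\textbf{Step 2: Descend to $F_\infty$ and $F^{\mathrm{cyc}}$ via Lemma~\ref{lem:relaxedistrue}.} By Lemma~\ref{lem:relaxedistrue} (Rubin), $\mathrm{Sel}_p^{\prime}(E/M_\infty)=\mathrm{Sel}_p(E/M_\infty)$ for any infinite $M_\infty\subset\frak{F}$; in particular this applies with $M_\infty=F_\infty$ and $M_\infty=F^{\mathrm{cyc}}$, both of which are contained in $\frak{F}$ because $\frak{F}/F$ is a $p$-adic Lie extension containing all the $\ZZ_p$-extensions of $F$ (as $E[p^\infty]$ cuts out, via $\rho$ and the Weil pairing, the cyclotomic character together with the anticyclotomic one). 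So it suffices to compute $\mathrm{Sel}_p^{\prime}(E/F_\infty)$ and $\mathrm{Sel}_p^{\prime}(E/F^{\mathrm{cyc}})$.

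\medskip
\noindent\textbf{Step 3: Relate $\mathrm{Sel}_p^{\prime}(E/F_\infty)$ to $\frak{X}$ by Shapiro / inflation–restriction and take duals.} Using the Hochschild–Serre spectral sequence for $\frak{F}/F_\infty$ together with the fact that $H^1(\mathrm{Gal}(\frak{F}/F_\infty),E[p^\infty])$ and the relevant $H^2$ vanish (the Galois group $\mathrm{Gal}(\frak{F}/F_\infty)$ acts on $E[p^\infty]$ through $\rho$ restricted there, which has no fixed points modulo the cyclotomic twist because $\langle\rho\rangle$ factors through $\Gamma$ and we are quotienting by exactly that — more precisely one invokes the standard control statement in Rubin's setup), one gets that restriction identifies $\mathrm{Sel}_p^{\prime}(E/F_\infty)$ with the $\mathrm{Gal}(\frak{F}/F_\infty)$-invariants of $\mathrm{Sel}_p^{\prime}(E/\frak{F})=\mathrm{Hom}_\ooo(\frak{X},E[p^\infty])$. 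Now $\mathrm{Hom}_\ooo(\frak{X},E[p^\infty])^{\mathrm{Gal}(\frak{F}/F_\infty)}=\mathrm{Hom}_{\LL_\frak{F}}(\frak{X},E[p^\infty])=\mathrm{Hom}_\ooo\big(\frak{X}(\rho^{-1})_{\Gamma},\Phi/\ooo\big)$, where I have untwisted the $\rho$-action on $E[p^\infty]$ and taken $F_\infty$-coinvariants; this is precisely $(\frak{X}^\rho_\infty)^\vee$ in the notation of Definition~\ref{def:coinvariants}. Pontryagin-dualizing gives $\mathrm{Sel}_p(E/F_\infty)^\vee\cong\frak{X}^\rho_\infty$. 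The argument for $F^{\mathrm{cyc}}$ is verbatim with $\LL_{\mathrm{cyc}}$ in place of $\LL$ and $\Gamma_{\mathrm{cyc}}$-coinvariants, giving $\mathrm{Sel}_p(E/F^{\mathrm{cyc}})^\vee\cong\frak{X}^\rho_{\mathrm{cyc}}$, which is (ii).

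\medskip
\noindent\textbf{Main obstacle.} The routine pieces are the Kummer-theoretic translation of the local conditions at $v\nmid p$ in Step~1 and the bookkeeping of the $\rho$-twist in Step~3. The genuinely delicate point is the exactness of the descent in Step~3 — i.e. that restriction $\mathrm{Sel}_p^{\prime}(E/M_\infty)\to \mathrm{Sel}_p^{\prime}(E/\frak{F})^{\mathrm{Gal}(\frak{F}/M_\infty)}$ is an isomorphism rather than merely injective with controlled cokernel. This is exactly where Rubin's analysis (\cite{rubincompositio85}, and the structure theory of $\frak{X}$ as a $\LL_\frak{F}$-module) enters: one needs that the higher Galois cohomology of $\mathrm{Gal}(\frak{F}/M_\infty)$ with coefficients in $E[p^\infty]$ does not obstruct, which ultimately rests on $\mathrm{Gal}(\frak{F}/M_\infty)$ having no quotient acting trivially on $E[p^\infty]$ after the twist, together with the finiteness/torsion properties of $\frak{X}$ over $\LL_\frak{F}$. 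I would cite Rubin's results for this rather than re-derive them.
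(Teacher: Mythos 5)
Your proposal is correct and follows essentially the same route as the paper: part (i) via the N\'eron--Ogg--Shafarevich criterion \`a la Coates--Wiles, then the relaxed-to-genuine Selmer identification (Lemma~\ref{lem:relaxedistrue}), the descent $\textup{Sel}_p^{\prime}(E/M_\infty)=\textup{Sel}_p^{\prime}(E/\frak{F})^{\textup{Gal}(\frak{F}/M_\infty)}$ via Rubin's Proposition~1.2 in \cite{rubincompositio85}, and finally untwisting by $\rho$ and dualizing. One small simplification the paper notes that you could absorb into your Step~3: for $M_\infty=F_\infty$ the descent is immediate from inflation--restriction because $p\nmid[\frak{F}:F_\infty]$, so no vanishing argument about the $\rho$-action is needed in that case; Rubin's result is only genuinely required for $M_\infty=F^{\cyc}$.
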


\begin{proof}
Proof of (i) is essentially due to Coates and Wiles and follows from the criterion of N\'eron-Ogg-Shafarevich utilized as in the proof of \cite[Theorem 2]{coateswiles77}. Proposition 1.2 of \cite{rubincompositio85} shows (for $M_\infty=F^\cyc$ or $F_\infty$) that
$$\textup{Sel}_p^{\prime}(E/M_\infty)=\textup{Sel}_p^{\prime}(E/\frak{F})^{\textup{Gal}(\frak{F}/M_\infty)}\,.$$
(In fact, the case $M_\infty=F_\infty$ is a straightforward consequence of the inflation restriction sequence, as $p\nmid[\frak{F}:F_\infty]$.) It follows from Lemma~\ref{lem:relaxedistrue} and (i) that
\begin{align*}
\textup{Sel}_p(E/M_\infty)&\cong \textup{Hom}_{\frak{O}}\left(\frak{X},E[p^\infty]\right)^{\textup{Gal}(\frak{F}/M_\infty)}\\
&\cong \textup{Hom}_{\ooo}\left(\frak{X}(\rho^{-1}),\Phi/\ooo\right)^{\textup{Gal}(\frak{F}/M_\infty)}\cong \textup{Hom}_{\ooo}\left(\frak{X}^\rho_?,\Phi/\ooo\right)
\end{align*}
where $?=\cyc$ or $\infty$ (depending on whether $M_\infty=F^\cyc$ or $F_\infty$).
\end{proof}

For every prime $\frak{p}_i$ of $F$ above $p$, we denote the prime of $\frak{F}$ above $\frak{p}_i$ also by the symbol $\frak{p}_i$. Let $\frak{U}_{i}=\varprojlim \frak{U}_M$ be the inverse limit (with respect to norm maps) of the local units (at $\frak{p}_i$), where $M$ varies over finite subextensions of $\frak{F}_{\frak{p}_i}/F_{\frak{p}_i}$.   The compositum of the maps
\begin{align*} E(\frak{F}_{\frak{p}_i})\otimes\QQ_p/\ZZ_p\stackrel{}{\ra} \textup{Hom}(G_{\frak{F}_{\frak{p}_i}},E[p^\infty])\stackrel{}{\ra} \textup{Hom}(\frak{U}_i,E[p^\infty])\stackrel{\sim}{\ra}\textup{Hom}_{\ooo}(\frak{U}_i(\rho^{-1}),\Phi/\ooo)
\end{align*}
(where the first map comes from the identification $H^1(\frak{F}_{\frak{p}_i},E[p^\infty])\stackrel{\sim}{\ra}\textup{Hom}(G_{\frak{F}_{\frak{p}_i}},E[p^\infty])$ and the second map by the inclusion $\frak{U}_i\hookrightarrow G_{\frak{F}_{\frak{p}_i}}$ of local class field theory) induces a non-degenerate (c.f., \cite[Prop. 5.2]{rubin87}), $\ooo$-linear \emph{Kummer pairing}
$$\langle\,,\,\rangle: \left(E(F^{\textup{cyc}}_{\frak{p}_i})\otimes\QQ_p/\ZZ_p\right)\,\,{\times}\,\, \frak{U}_{i,\textup{cyc}}^{\rho}\lra \Phi/\ooo.$$

\begin{rem}
\label{rem:compareXwithstrictSelmer}
Let $\FF_{\textup{str}}$ denote the \emph{strict Selmer structure} on $Z$ (where $Z=\TT$ or $\TT(E)$) given by the local conditions:
\begin{itemize}
\item $H^1_{\FF_{\textup{str}}}(F_\frak{q},Z)=H^1_{\FFc}(F_\frak{q},Z)$ for every prime $\frak{q} \nmid p$,
\item $H^1_{\FF_{\textup{str}}}(F_p,Z)=0$.
\end{itemize}
It follows easily using the inflation-restriction sequence that
\be\label{eqn:twistX1}H^1_{\FF_{\textup{str}}^*}(F,\TT^*)^\vee=\frak{X}(\omega_E^{-1})\otimes_{\LL_\frak{F}}\LL=:\frak{X}^{\omega_E}_\infty.\ee By Lemma~\ref{lemma:RubinCW}, we conclude that
\begin{align}\label{eqn:twistX2}\textup{Sel}_p(E/F_\infty)^\vee&=\frak{X}^\rho_\infty=\frak{X}(\rho^{-1})\otimes_{\LL_\frak{F}}\LL\\
\notag&=H^1_{\FF_{\textup{str}}^*}(F,\TT^*)^\vee\otimes\langle\rho\rangle\cong H^1_{\FF_{\textup{str}}^*}(F,\TT(E)^*)^\vee
\end{align}
and on tensoring with $\LL_\cyc$ (and using once again the perfect control theorem) 
\begin{align}\label{eqn:twistX2}\textup{Sel}_p(E/F_\cyc)^\vee&=\frak{X}^\rho_\cyc=\frak{X}^\rho_\infty\otimes_{\LL}\LL_\cyc\\
\notag&\cong H^1_{\FF_{\textup{str}}^*}(F,\TT_\cyc(E)^*)^\vee.
\end{align}
Furthermore, we have
\begin{align}\label{eqn:twist}
H^1(F_{\frak{p}_i},\TT(E))\mathop{\longleftarrow}^{\sim}_{\textup{tw}} H^1(F_{\frak{p}_i},\TT)\otimes \langle \rho^{-1}\rangle=\frak{U}^\rho_{i,\infty}\,\,,
\end{align}
and on applying both sides with $\otimes_{\LL}\LL_\cyc$\,,
$$H^1(F_{\frak{p}_i},\TT_\cyc(E))\mathop{\longleftarrow}^{\sim} \frak{U}^\rho_{i,\cyc}\,.$$
Here $\textup{tw}: H^1(F_{\frak{p}_i},\TT) \ra H^1(F_{\frak{p}_i},\TT(E))$ is the twisting morphism which factors through the $\LL$-isomorphism (that we still denote by $\textup{tw}$) 
$$H^1(F_{\frak{p}_i},\TT)\otimes  \langle \rho^{-1}\rangle \stackrel{\sim}{\lra} H^1(F_{\frak{p}_i},\TT(E))\,.$$
\end{rem}

\subsection{Plus/Minus Selmer groups and $p$-adic $L$-functions}
\label{subsec:pmpadicL}
Following~\cite{kobayashi03} (see also \cite{iovitapollack}), we define the $\pm$-subgroups as follows:

\begin{define}
\label{define:pmsubgroups}
For every positive integer $n$, set
$$E^+(F_{n,\frak{p}_i}):=\{x \in E(F_{n,\frak{p}_i}): \textup{Tr}_{n/m}(x) \in E(F_{m-1,\frak{p}_i}) \hbox{ for } 0<m\leq n, m:\hbox{ odd}\},$$
$$E^-(F_{n,\frak{p}_i}):=\{x \in E(F_{n,\frak{p}_i}): \textup{Tr}_{n/m}(x) \in E(F_{m-1,\frak{p}_i}) \hbox{ for } 0<m\leq n, m:\hbox{ even}\},$$
where $\textup{Tr}_{n/m}(x): E(F_{n,\frak{p}_i})\lra E(F_{m,\frak{p}_i})$ is the trace map. We also set
$$E^\pm(F^{\textup{cyc}}_{\frak{p}_i})=\varinjlim E^\pm(F_{n,\frak{p}_i}).$$
\end{define}


\begin{define}
\label{def:pmselmergroup}
Let $\textup{Sel}_p(E/F_n)$ denote the classical Selmer group attached to $E$ and set $\textup{Sel}_p(E/F^{\textup{cyc}})=\varinjlim \textup{Sel}_p(E/F_n)$. Define the $\pm$-Selmer groups by setting
$$\textup{Sel}^\pm_p(E/F_n):=\ker\left(\textup{Sel}_p(E/F_n) \lra \bigoplus_{i=1}^g\frac{H^1(F_{n,\frak{p}_i},E[p^\infty])}{\frak{Kum}_i\left(E^\pm(F_{n,\frak{p}_i}) \otimes \QQ_p/\ZZ_p\right)}\right).$$
Let $\textup{Sel}^\pm_p(E/F^{\textup{cyc}})=\varinjlim \textup{Sel}^\pm_p(E/F_n)$.
\end{define}
We note that these two  Selmer groups actually correspond to the cases $(+,\cdots,+)$ and $(-,\cdots,-)$-Selmer groups among $2^g$ possible options.

\begin{define}
\label{def:omeganpm}
For a fixed topological generator $\gamma$ of $\Gamma_{\textup{cyc}}$ and $n\geq 1$, we define the element $\nu_n=\sum_{i=0}^{p-1}\gamma^{ip^{n-1}} \in \LL_{\textup{cyc}}$
and set
$$\omega_n^+=\prod_{\substack{ 1\leq i \leq n \\ i:\textup{ even }}} \nu_i\,\,,\,\,\,\,\,\omega_n^-=\prod_{\substack{ 1\leq i \leq n \\ i:\textup{ odd }}} \nu_i\,.$$
\end{define}

At the analytic end of things, Park and Shahabi~\cite{parkshahabi} have constructed a pair of signed (bounded) $p$-adic $L$-functions $L_p^\pm(E/F^+) \in \LL_{\textup{cyc}}\otimes \QQ_p$ whose basic properties are outlined in the following theorem. For each prime $\frak{P}$ of $F^+$ above $p$, we let $\alpha=\alpha(\frak{P})$ denote a distinguished roof of the Hecke polynomial for $E$ at $\frak{P}$. As the prime $\frak{P}$ is inert in $F/F^+$, it follows that $\alpha(\frak{P})^2=-p$ and our convention is that we pick always the same square root $\alpha$ of $-p$ (other choices would only alter the bounded $p$-adic $L$-functions of Park and Shahabi only by $\pm1$).
\begin{thm}[Park-Shahabi]
\label{thm:parkshahabi}
There exist a pair of elements $L_p^+(E/F^+), L_p^-(E/F^+) \in \LL_{\textup{cyc}}\otimes \QQ_p$ which are characterized by the following interpolation properties: For every non-trivial character $\chi$ of $\Gamma^\cyc$ of finite order $p^n$,
\begin{itemize}
\item for odd $n$, we have 
$$\displaystyle{\chi\left(L_p^+(E/F^+)\right)=(-1)^{\frac{n+1}{2}g}p^{\frac{n+1}{2}(g-1)}\frac{\tau(\chi)}{\chi(\omega_n^+)}\frac{L(E,\overline{\chi},1)}{\Omega_E(F^+)}\,,}$$
\item for even $n$, we have 
$$\displaystyle{\chi\left(L_p^-(E/F^+)\right)=(-1)^{g\cdot(\frac{n}{2}+1)}p^{n/2(g-1)}\frac{\tau(\chi)}{\chi(\omega_n^-)}}\frac{L(E,\overline{\chi},1)}{\Omega_E(F^+)}\,.$$  
\end{itemize}
Furthermore, their value at the trivial character is given by 
$$\mathbf{1}\left(L_p^+(E/F^+)\right)=u_1\cdot \frac{L(E,1)}{\Omega_E(F^+)} \,\,\,\,,\,\,\,\,\mathbf{1}\left(L_p^-(E/F^+)\right)=u_2\cdot \frac{L(E,1)}{\Omega_E(F^+)}$$
where  $u_1, u_2 \in \overline{\QQ}^\times$ (whose precise values we need not know).
\end{thm}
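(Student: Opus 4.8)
The statement is essentially that of \cite{parkshahabi}, recast in the normalizations used here (a fixed square root $\alpha$ of $-p$ at each of the $g$ primes of $F^+$ above $p$, and the period $\Omega_E(F^+)$), so the plan is to recall that construction and to indicate where these conventions enter, rather than to reprove it from scratch. The first ingredient is the algebraicity of the twisted special values. Since $E$ has complex multiplication by $K$, it is attached to an algebraic Hecke character of $F=F^+K$, and for a finite order character $\chi$ of $\Gamma_{\textup{cyc}}$ the value $L(E,\overline{\chi},1)$ is a Hecke $L$-value; by the classical results of Damerell and Shimura one has $L(E,\overline{\chi},1)/\Omega_E(F^+)\in\overline{\QQ}$, which one transports into $\overline{\QQ}_p$ via $\iota_p$. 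This produces, for each $n$, a family of algebraic numbers indexed by the characters of $\Gamma_{\textup{cyc}}/\Gamma_{\textup{cyc}}^{p^n}$.

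The second ingredient is the passage to \emph{bounded} $p$-adic elements via the supersingular stabilization. Because $p$ splits completely in $F^+/\QQ$ and is inert in $K/\QQ$, $E$ is supersingular at every $\frak{p}_i$ with Hecke polynomial $X^2+p$, so $\alpha=\alpha(\frak{p}_i)$ satisfies $\alpha^2=-p$. Stabilizing at each of the $g$ primes with the chosen root $\alpha$ and assembling the Gauss sum factors $\tau(\chi)$ coming from the local functional equations yields a $\Phi$-valued distribution on $\Gamma_{\textup{cyc}}$ interpolating the values of the first step; the stabilization factor at each prime makes this distribution unbounded, its growth governed by a half-integral power of $p$. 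The key algebraic input, going back to Pollack (see \cite{pollackrubin}) in the cyclotomic $\ZZ_p$ situation and adapted by Park and Shahabi to the present one, is that after dividing on the $n$th layer by $\omega_n^+$ (resp.\ $\omega_n^-$) when $n$ is odd (resp.\ even) --- recalling that $\chi(\nu_i)=0$ when $\chi$ has exact conductor $p^i$ and $\chi(\nu_i)=p$ otherwise, so that $\chi(\omega_n^\pm)$ is an explicit power of $p$ --- the resulting quantities patch together into bounded elements $L_p^+(E/F^+),\,L_p^-(E/F^+)\in\LL_{\textup{cyc}}\otimes\QQ_p$. The interpolation formulas in the statement, including the signs and the powers $p^{\frac{n+1}{2}(g-1)}$ and $p^{\frac{n}{2}(g-1)}$, are then read off by combining the interpolation of the original distribution with the explicit evaluation of $\chi(\omega_n^\pm)$ and with the vanishing $\chi(\nu_i)=0$ at the relevant indices; the values at the trivial character follow by taking $\chi=\mathbf{1}$ and absorbing the (unpinned) resulting nonzero algebraic constants into $u_1,u_2$.

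The genuinely non-formal point, and the main obstacle, is the boundedness of $L_p^\pm$: the assertion that the half-integral powers of $p$ produced by the $g$-fold $\alpha$-stabilization are cancelled exactly upon division by $\omega_n^\pm$. This rests on a uniform analysis of the $p$-adic valuations of the partial character sums, exploiting $\alpha^2=-p$ and the recursive structure of the $\omega_n^\pm$ in terms of the $\nu_i$, together with a coherent choice of the complex and $p$-adic periods across all finite layers so that the individual algebraic values of the first step carry the predicted $p$-integrality up to the controlled factor $\chi(\omega_n^\pm)$. This is exactly the delicate point on which the first version of this section was inaccurate; it is carried out in \cite{parkshahabi}, and we therefore take this theorem as established there, recording here only the shape of the interpolation formulas in our normalization for use in the sequel.
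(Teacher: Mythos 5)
Your proposal is correct and follows essentially the same route as the paper: both ultimately defer to Park--Shahabi for the existence and boundedness of $L_p^\pm(E/F^+)$, with the interpolation formulas read off from their construction combined with Pollack's $\omega_n^\pm$-normalization. The paper's proof is simply terser, citing the precise results --- \cite[Theorem 2.3]{parkshahabi} (interpolation), \cite[Theorem 2.7]{parkshahabi} (factorization), and \cite[Lemma 4.7]{pollackduke2003} --- so you may wish to replace your expository sketch with those specific pointers.
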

Here, the period $\Omega_E(F^+)$ corresponds to the quantity $\Omega(\epsilon_0,f_E)D_{F^+}^{-1}(\sqrt{-1})^{-g}$  in op.cit., where $f_E$ is the Hilbert modular form of parallel weight two that one associates (via the Weil-Jacquet-Langlands correspondence) to our CM elliptic curve $E$ and $D_{F^+}$ is the discriminant of $F^+$.
\begin{proof}
This is an immediate consequence of the interpolation formula \cite[Theorem 2.3]{parkshahabi} and the factorization \cite[Theorem 2.7]{parkshahabi}, used together with \cite[Lemma 4.7]{pollackduke2003}.
\end{proof}
The following is the signed-main conjecture that Park and Shahabi posed in this context.
\begin{conj}[Park-Shahabi]$\,$
\label{conj:parkshahabi}
\begin{itemize}
\item[(i)] Both modules $\textup{Sel}^+_p(E/F^{\textup{cyc}})$ and $\textup{Sel}^-_p(E/F^{\textup{cyc}})$ are $\LL_{\textup{cyc}}$-cotorsion.
\item[(ii)] Any generator of the ideal $\textup{char}\left(\textup{Sel}^\pm_p(E/F^{\textup{cyc}})^\vee\right)$ generates $L_p^\pm(E/F^+)(\LL_{\textup{cyc}}\otimes\QQ_p)$. 
\end{itemize}
\end{conj}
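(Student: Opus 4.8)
The plan is to prove the divisibility asserted in Conjecture~\ref{conj:parkshahabi} (this is Theorem~B) and to promote it to the claimed equality under the Strong Rubin-Stark Conjecture~\ref{conj:strongRS}, by transplanting the Kolyvagin-system argument of Section~\ref{sec:gras} from $\mathbb{G}_m$ to $T(E)$ and letting the Explicit Reciprocity Conjecture~\ref{conj:reciprocity} play the role that the analytic class number formula played there. The first step is local: I would fix the transversal summand $\mathbb{V}_E^\cyc\subset H^1(F_p,\TT_\cyc(E))$ of Definition~\ref{def:choosetransverseforE} to be the direct summand cut out by Kobayashi's plus (respectively, minus) norm subgroups, as promised in Remark~\ref{rem:localconditionsnatural}; with this choice one checks that the dual Kobayashi Selmer group computes the signed Selmer group, i.e. $H^1_{\FF_{\textup{Kob}}^*}(F,\TT_\cyc(E)^*)^\vee\cong\textup{Sel}^\pm_p(E/F^{\textup{cyc}})^\vee$. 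Away from $p$ this rests on Lemma~\ref{lem:relaxedistrue} and the identifications of Remark~\ref{rem:compareXwithstrictSelmer}, while at the primes $\frak{p}_i$ one matches $\mathbb{V}_E^\cyc$ against the Kummer image of $E^\pm(F^{\textup{cyc}}_{\frak{p}_i})$ appearing in Definition~\ref{def:pmselmergroup} using the self-orthogonality of Kobayashi's plus/minus subgroups under the local Tate pairing.

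Next I would transport the Rubin-Stark $\al$-restricted Kolyvagin system $\pmb{\kappa}^{\textup{R-S}}\in\overline{\KS}(\TT,\FF_{\mathcal{L}},\PP)$ of Section~\ref{sec:KSRS} through the twisting morphism of Remark~\ref{rem:ESKStwist} to a Kolyvagin system $\pi_\cyc\big(\pmb{\kappa}^{\textup{R-S}}(E)\big)\in\overline{\KS}(\TT_\cyc(E),\FF_{\mathbb{L}},\PP)$, whose initial term $\frak{K}_1$ is the twist of the cyclotomic tower of Rubin-Stark elements and is non-zero by Proposition~\ref{prop:KSwithnonzeroinitialterm}. Feeding $(R,X,\FF)=(\LL_\cyc,\TT_\cyc(E),\FF_{\mathbb{L}})$ into Theorem~\ref{thm:mainapplicationofrestrictedKS}(i)--(ii) shows that $H^1_{\FF_{\mathbb{L}}^*}(F,\TT_\cyc(E)^*)^\vee$ is $\LL_\cyc$-torsion --- so that Corollary~\ref{cor:localnonvanishingimpliesweakLeoforE} supplies the weak Leopoldt conjecture for $E$, and hence Propositions~\ref{prop:minusvanishingforunits} and \ref{prop:4termexact} apply --- and bounds its characteristic ideal by $\textup{char}\big(H^1_{\FF_{\mathbb{L}}}(F,\TT_\cyc(E))/\LL_\cyc\cdot\frak{K}_1\big)$. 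I would then run the Poitou--Tate and exterior-power bookkeeping from the proof of Theorem~\ref{thm:gras1} essentially verbatim, with $\FF_{\textup{Kob}}$ in place of $\FF_{\frak{tr}}$, $\FF_{\mathbb{L}}$ in place of $\FF_{\mathcal{L}}$, and $\mathcal{Q}_\cyc(E):=H^1(F_p,\TT_\cyc(E))/\mathbb{V}_E^\cyc$ in place of $\mathcal{Q}$, to obtain --- exactly as in Theorem~\ref{thm:mainconjdivisibility} ---
$$\textup{char}\big(\textup{Sel}^\pm_p(E/F^{\textup{cyc}})^\vee\big)\ \mid\ \textup{char}\big(\wedge^g\mathcal{Q}_\cyc(E)/\LL_\cyc\cdot\varepsilon^{E}_{\cyc}\big),$$
where $\varepsilon^{E}_{\cyc}$ denotes the image in $\wedge^g\mathcal{Q}_\cyc(E)$ of the cyclotomic tower of (twisted) Rubin-Stark elements, in the sense of Definition~\ref{def:rubinstarktowerlocal}.

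At this point the Explicit Reciprocity Conjecture~\ref{conj:reciprocity} enters: via Kobayashi's signed Coleman maps it identifies the rank-one $\LL_\cyc$-module $\wedge^g\mathcal{Q}_\cyc(E)$ with $\LL_\cyc$ and carries $\varepsilon^{E}_{\cyc}$ to $L_p^\pm(E/F^+)$ up to a unit of $\LL_\cyc\otimes\QQ_p$; substituting into the displayed divisibility yields $\textup{char}\big(\textup{Sel}^\pm_p(E/F^{\textup{cyc}})^\vee\big)\mid L_p^\pm(E/F^+)\,\LL_\cyc$, and since $L_p^\pm(E/F^+)\neq 0$ this forces $\textup{Sel}^\pm_p(E/F^{\textup{cyc}})$ to be $\LL_\cyc$-cotorsion, giving part~(i) and the divisibility in part~(ii). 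For the equality in part~(ii), I would use Theorem~\ref{thm:gras1}(i) to get primitivity of $\pmb{\kappa}^{\textup{R-S}}$ over $\ooo$, Theorem~\ref{thm:lamdaadicKS}(ii) to propagate it to the $\LL$- and $\LL_\cyc$-adic levels; the equality in the two-variable main conjecture over $\LL$ for $\TT$ attached to the structure $\FF_{\mathbb{L}}$ (an instance of Theorem~\ref{thm:gras1}(iii), valid under the Strong Rubin-Stark Conjecture) then furnishes the hypothesis of Proposition~\ref{prop:usefulforequality}, which yields that $\pi_\cyc\big(\pmb{\kappa}^{\textup{R-S}}(E)\big)$ is primitive; Theorem~\ref{thm:mainapplicationofrestrictedKS}(iii), applied over $\LL_\cyc$, then converts the displayed divisibility into the asserted equality.

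The hard part will be the two comparison steps. The first is checking that the abstractly chosen summand $\mathbb{V}_E^\cyc$ really does reproduce Kobayashi's signed local conditions, so that $\FF_{\textup{Kob}}^*$ genuinely computes $\textup{Sel}^\pm_p(E/F^{\textup{cyc}})$; this is the analogue of Kobayashi's local calculations in the split-at-$p$ setting and is where the running hypothesis that $p$ splits completely in $F^+/\QQ$ is indispensable. The second, and more delicate, is pinning down inside the Explicit Reciprocity Conjecture that the $\psi$-contracted, twisted Rubin-Stark tower maps under the signed Coleman map precisely to $L_p^\pm(E/F^+)$ --- rather than merely to some generator of the characteristic ideal of an auxiliary module --- which requires a careful interpolation computation and a precise accounting of the passage between the rank-$(g+1)$ Selmer structure $\FF_{\mathbb{L}}$ and the rank-$g$ structure $\FF_{\textup{Kob}}$ through the four-term exact sequence of Proposition~\ref{prop:4termexact}.
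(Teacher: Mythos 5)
Your proposal tracks the paper's own conditional proof of this conjecture (carried out as Theorem~\ref{thm:improvedmainconj} via Theorem~\ref{thm:mainapplicationofrestrictedKS}, Theorem~\ref{thm:localmainconj}, Lemma~\ref{lemma:comparepmselmergroups} and Proposition~\ref{prop:usefulforequality}) step for step: twist the Rubin-Stark Kolyvagin system into $\overline{\KS}(\TT_\cyc(E),\FF_{\mathbb{L}},\PP)$, run the Kolyvagin-system bound and the Poitou--Tate bookkeeping with $\FF_{\textup{Kob}}$ in place of $\FF_{\frak{tr}}$, use the Explicit Reciprocity Conjecture to identify the local Rubin-Stark module with $L_p^\pm(E/F^+)$, and upgrade to equality via primitivity and the Strong Rubin-Stark Conjecture. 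The only cosmetic difference is that you route the equality step through Theorem~\ref{thm:gras1}(i),(iii) rather than citing Theorem~\ref{thm:mainconjdivisibility} ``after twisting'' as the paper does; these are the same argument.
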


\begin{define}
Let $V^\pm_i \subset \frak{U}_{i,\textup{cyc}}^{\rho}$ denote the orthogonal complement of $E^\pm(F^{\textup{cyc}}_{\frak{p}_i})\otimes\QQ_p/\ZZ_p$ under the Kummer pairing defined as above. Via the identifications in Remark~\ref{rem:compareXwithstrictSelmer}, we view $V^\pm_i$ as a submodule of $H^1(F_{\frak{p}_i},\TT_\cyc(E))$.
\end{define}
Set $\frak{U}=\oplus_{i=1}^g \frak{U}_i$ and $\mathbb{V}_{E,\cyc}^\pm=\oplus_{i=1}^g {V}_i^\pm$. Define $\frak{U}_{\infty}^\rho$ and $\frak{U}_{\textup{cyc}}^\rho$ in a similar manner. Let $\alpha:\frak{U}\ra \frak{X}$ be the Artin map of global class field theory (and likewise the compositum
$$\frak{a}: \frak{U}_{\textup{cyc}}^\rho\stackrel{\sim}{\lra} H^1(F_p,\TT(E))\lra H^1_{\FF_{\textup{str}}^*}(F,\TT_\cyc(E)^*)^\vee$$
be the map obtained by the Poitou-Tate global duality). The following properties of the submodules $\mathbb{V}_{E,\cyc}^\pm$ may be obtained as in \cite[Theorem 4.3 and Proposition 4.4]{pollackrubin} (and using the comparisons of Remark~\ref{rem:compareXwithstrictSelmer} wherever necessary):
\begin{prop}[Pollack-Rubin]For every $1\leq i\leq g$ we have,
\label{prop:pmstructureandselmer}
\begin{enumerate}
\item[(i)]  the $\LL$-module $\frak{U}_{i,\infty}^\rho\cong H^1(F_{\frak{p}_i},\TT(E))$ and the $\LL_{\textup{cyc}}$-module  $\frak{U}_{i,\textup{cyc}}^\rho\cong H^1(F_{\frak{p}_i},\TT_\cyc(E))$ are free of rank two,
\item[(ii)]  the $\LL_{\textup{cyc}}$-modules $V^\pm_i$ and $H^1(F_{\frak{p}_i},\TT_\cyc(E))/V^\pm_i$ are both free of rank one,
\item[(iii)] there is a (non-canonical) submodule $\mathbf{V}^\pm_i \subset H^1(F_{\frak{p}_i},\TT(E))$ whose image under the natural map 
$$\pi_{\cyc}:H^1(F_{\frak{p}_i},\TT(E)) \lra H^1(F_{\frak{p}_i},\TT_\cyc(E))$$ 
is the module $V^\pm_i$ and is such that both $\mathbf{V}^\pm_i $ and $H^1(F_{\frak{p}_i},\TT(E))/\mathbf{V}^\pm_i$ are free of rank one over $\LL$.
\item[(iv)] $\textup{Sel}_p^\pm(E/F^{\textup{cyc}})^\vee=\frak{X}_{\textup{cyc}}^\rho/\alpha(\mathbb{V}_{E,\cyc}^\pm)\cong H^1_{\FF_{\textup{str}}^*}(F,\TT_\cyc(E)^*)^\vee/\frak{a}(\mathbb{V}_{E,\cyc}^\pm)\,.$
\end{enumerate}
\end{prop}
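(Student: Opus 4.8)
The proposition is a transcription of \cite[Theorem 4.3 and Proposition 4.4]{pollackrubin} into the present notation, so the plan is to follow those arguments while systematically inserting the comparison isomorphisms recorded in Remark~\ref{rem:compareXwithstrictSelmer}. The essential local fact that makes this transcription possible is that, because $p$ splits completely in $F^+/\QQ$, each $\wp_i$ is split in $F^+/\QQ$ and inert in $F/F^+$, so $F_{\frak{p}_i}$ is the unramified quadratic extension of $\QQ_p$ and $E_{/F_{\frak{p}_i}}$ has supersingular reduction with Hecke eigenvalue $\alpha$ satisfying $\alpha^2=-p$; this is exactly Kobayashi's local set-up with $\QQ_p$ replaced by its unramified quadratic extension. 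For part~(i), I would invoke Proposition~\ref{prop:localfull}(iii) (legitimate since $T(E)$ is free of rank one over $\ooo$ and satisfies \textbf{(H.p1)}, by the Remark following that Proposition): it gives that $H^1(F_p,\TT(E))=\bigoplus_{i=1}^g H^1(F_{\frak{p}_i},\TT(E))$ is free of rank $2g$ over $\LL$; each summand, being a direct summand of a free module over the local ring $\LL$, is free, and a local Euler--Poincar\'e characteristic count (using $[F_{\frak{p}_i}:\QQ_p]=2$ together with \textbf{(H.p1)}) fixes each rank at $2$. The identification $\frak{U}_{i,\infty}^\rho\cong H^1(F_{\frak{p}_i},\TT(E))$ is the twisting isomorphism~(\ref{eqn:twist}), and applying $\otimes_\LL\LL_\cyc$ produces the cyclotomic statements.

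Part~(ii) is the local heart of the argument and is where I expect the only genuine work to lie. Here I would reproduce Kobayashi's structure theory for the plus/minus norm groups in the form organized in \cite[\S4]{pollackrubin}: using the non-degenerate Kummer pairing between $E(F^\cyc_{\frak{p}_i})\otimes\QQ_p/\ZZ_p$ and $\frak{U}_{i,\cyc}^\rho$, one shows that $E^\pm(F^\cyc_{\frak{p}_i})\otimes\QQ_p/\ZZ_p$ is $\LL_\cyc$-cofree of corank one and that the quotient of $H^1(F_{\frak{p}_i},\TT_\cyc(E))$ by the orthogonal complement $V_i^\pm$ is likewise cofree of corank one; dualizing and transporting through the identifications of Remark~\ref{rem:compareXwithstrictSelmer} then yields that $V_i^\pm$ and $H^1(F_{\frak{p}_i},\TT_\cyc(E))/V_i^\pm$ are free $\LL_\cyc$-modules of rank one. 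The combinatorial input — the even/odd trace-compatibility conditions and the elements $\omega_n^\pm$ of Definition~\ref{def:omeganpm} — carries over verbatim; the only adjustment is that the ground field is the unramified quadratic extension of $\QQ_p$ rather than $\QQ_p$ itself. Although the argument is ``just Pollack--Rubin,'' it does require genuinely redoing Kobayashi's local plus/minus theory over $F_{\frak{p}_i}/\QQ_p$ and carefully keeping track of the twist by the $p$-adic Hecke character $\rho$ whenever one passes between $\TT$ and $\TT(E)$ or between local units and local Galois cohomology; this is the step I would budget the most care for.

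Finally, (iii) and (iv) are formal once (ii) is in hand. For (iii), given that $V_i^\pm$ is a rank-one direct summand of the free $\LL_\cyc$-module $H^1(F_{\frak{p}_i},\TT_\cyc(E))$ with free quotient, I would lift it along the surjection $\pi_\cyc\colon H^1(F_{\frak{p}_i},\TT(E))\twoheadrightarrow H^1(F_{\frak{p}_i},\TT_\cyc(E))$ exactly as in Definition~\ref{def:localUandLcyclo}: choose a lift of a generator of $V_i^\pm$ and invoke Nakayama's lemma to conclude that its $\LL$-span $\mathbf{V}_i^\pm$ is a free rank-one direct summand of $H^1(F_{\frak{p}_i},\TT(E))$ with free rank-one complement. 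For (iv), I would run the Poitou--Tate global duality computation: by Lemma~\ref{lem:relaxedistrue} and Lemma~\ref{lemma:RubinCW}(ii) the relaxed and classical Selmer groups agree over $F^\cyc$ and $\textup{Sel}_p(E/F^\cyc)^\vee\cong\frak{X}_\cyc^\rho$, so that imposing the $\pm$-local conditions at the primes above $p$ cuts out, on the Pontryagin dual side, the quotient $\frak{X}_\cyc^\rho/\alpha(\mathbb{V}_{E,\cyc}^\pm)$ by the orthogonality of local images under the Artin map $\alpha$, where $\mathbb{V}_{E,\cyc}^\pm=\oplus_i V_i^\pm$ is the orthogonal complement of $\oplus_i E^\pm(F^\cyc_{\frak{p}_i})\otimes\QQ_p/\ZZ_p$. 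Substituting the comparison isomorphisms of Remark~\ref{rem:compareXwithstrictSelmer} — under which $\frak{X}_\cyc^\rho$ becomes $H^1_{\FF_{\textup{str}}^*}(F,\TT_\cyc(E)^*)^\vee$ and $\alpha$ becomes $\frak{a}$ — gives the stated identification and completes the proof.
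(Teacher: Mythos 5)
Your proposal is correct and takes essentially the same route as the paper: the paper offers no standalone proof for this proposition, instead declaring that it "may be obtained as in [Pollack--Rubin, Theorem 4.3 and Proposition 4.4], using the comparisons of Remark~\ref{rem:compareXwithstrictSelmer} wherever necessary," which is precisely the strategy you implement, fleshed out with the invocation of Proposition~\ref{prop:localfull} for (i), the Kummer-pairing/Kobayashi local analysis for (ii), the Nakayama lift for (iii), and the Poitou--Tate computation for (iv).
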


\subsection{An explicit reciprocity conjecture for Rubin-Stark elements}
\label{subsec:reciprocityconj}
As we have assumed that the prime $p$ splits completely in $F^+/\QQ$, we may identify $F^+_{\wp_i}$ with $\QQ_p$ and the constructions of Kobayashi~\cite[\S4]{kobayashi03} for a supersingular elliptic curve defined over $\QQ_p$ carries over.
\begin{define}
\label{def:formallog}
Given positive integers $n$ and $1\leq i\leq g$, let $E_1(F_{n,\frak{p}_i})\subset E(F_{n,\frak{p}_i})$ denote the kernel of the reduction map modulo $\frak{p}_i$. Then $E_1(F_{n,\frak{p}_i})$ is the pro-$p$ part of $E(F_{n,\frak{p}_i})$ and we define the logarithm map $\lambda_E$ to be the compositum
$$\lambda_E:E(F_{n,\frak{p}_i})\twoheadrightarrow E_1(F_{n,\frak{p}_i})\stackrel{\sim}{\lra}\hat{E}(\frak{p}_i)\lra F_{n,\frak{p}_i},$$
where $\hat{E}$ is the formal group of $E/F_{\frak{p}_i}$.
\end{define}

We consider Kobayashi's trace-compatible sequence of points $d_{n,i}\in E(F_{n,\wp_i}^+)$; we refer the reader to \cite[\S3]{pollackrubin} for basic properties of these points and their comparison with Kobayshi's original construction. Using the complex multiplication map $E(F_{n,\wp_i}^+)\otimes\ooo\ra E(F_{n,\frak{p}_i})$, we define the element $\frak{d}_{n,i} \in E(F_{n,\frak{p}_i})$ as the image of $d_{n,i}$. Key properties of the elements $\frak{d}_{n,i}$ are outlined in the following Proposition:
\begin{prop}[Kobayashi] Let $\Gamma_n:=\textup{Gal}(F_n/F)$. For every positive integers $n$ and $1\leq i \leq g$,
\label{prop:dngenerators}
\begin{itemize}
\item[(i)] $\sum_{\sigma\in \Gamma_n}\chi(\sigma)\lambda_E(\frak{d}_{n,i}^\sigma)=(-1)^{\left[n/2\right]}\tau(\chi)$,
    where $\tau(\chi)$ is the Gauss sum,
\item[(ii)] if $\epsilon$ is the sign of $(-1)^n$, then $E^{\epsilon}(F_{n,\wp_i})=\ooo[\Gamma_n]\frak{d}_{n,i}$ and $E^{-\epsilon}(F_{n,\wp_i})=\ooo[\Gamma_n]\frak{d}_{n-1,i}.$ Moreover, we have $E^{\epsilon}(F_{n,\wp_i})+E^{-\epsilon}(F_{n,\wp_i})=E(F_{n,\wp_i})$.
\end{itemize}
\end{prop}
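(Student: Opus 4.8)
The plan is to deduce both parts from Kobayashi's local theory \cite{kobayashi03}, in the form it is recast in \cite[\S3--\S4]{pollackrubin}, after carrying out the essentially formal reduction to $\QQ_p$ and the twist by complex multiplication. First I would fix $i$ and record the local picture. As $p$ splits completely in $F^+/\QQ$ and $\wp_i$ is inert in $F/F^+$, the completion $F^+_{\wp_i}$ is $\QQ_p$, its $n$-th cyclotomic layer $F^+_{n,\wp_i}$ is the $n$-th layer of the cyclotomic $\ZZ_p$-extension of $\QQ_p$, and $F_{n,\frak{p}_i}=F^+_{n,\wp_i}\cdot\Phi$ with $\Phi/\QQ_p$ unramified of degree $2$; restriction identifies $\Gal(F_{n,\frak{p}_i}/\Phi)$ with $\Gal(F^+_{n,\wp_i}/\QQ_p)\cong\Gamma_n$. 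Base changing $E$ along $\wp_i$ yields a supersingular elliptic curve over $\QQ_p$ with vanishing $a_p$ (forced by $\alpha(\wp_i)^2=-p$, as the Hecke polynomial at $\wp_i$ is $X^2-a_pX+p$), so Kobayashi's construction applies verbatim: the $d_{n,i}\in E_1(F^+_{n,\wp_i})=\hat E(\frak{m}_{F^+_{n,\wp_i}})$ are his trace-compatible system, satisfying $\textup{Tr}_{n/n-1}(d_{n,i})=-d_{n-2,i}$ for $n\geq 2$ together with the usual normalizing relations at the bottom layers. By construction $\frak{d}_{n,i}$ is $d_{n,i}$ regarded inside $E(F_{n,\frak{p}_i})$, and $\lambda_E$ restricted to $E_1(F^+_{n,\wp_i})$ is the formal logarithm $\log_{\hat E}$.

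Granting this, part (i) is nothing but Kobayashi's evaluation of $\log_{\hat E}$ of his point $d_n$ against a ramified character of $\Gamma_n$ \cite{kobayashi03} (equivalently, the computation recorded in \cite[\S3]{pollackrubin}): one has $\lambda_E(\frak{d}_{n,i}^\sigma)=\log_{\hat E}(d_{n,i}^\sigma)$ for every $\sigma\in\Gamma_n$, and the factor $(-1)^{[n/2]}$ is read off from the trace relations just quoted. Thus for (i) there is nothing to do beyond citing loc.\ cit.\ and matching normalizations of the Gauss sum $\tau(\chi)$.

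For part (ii), the analogous statement over $F^+_{n,\wp_i}$ --- that on pro-$p$ parts one has $E^{\epsilon}(F^+_{n,\wp_i})=\ZZ_p[\Gamma_n]\,d_{n,i}$, $E^{-\epsilon}(F^+_{n,\wp_i})=\ZZ_p[\Gamma_n]\,d_{n-1,i}$ and $E^{+}(F^+_{n,\wp_i})+E^{-}(F^+_{n,\wp_i})=E(F^+_{n,\wp_i})$ --- is Kobayashi's structure theorem for the $\pm$-norm subgroups \cite{kobayashi03}. To promote it to the assertion over $F_{n,\frak{p}_i}$ I would use the complex multiplication map $E(F^+_{n,\wp_i})\otimes_{\ZZ}\ooo\lra E(F_{n,\frak{p}_i})$, $P\otimes\alpha\mapsto\alpha\cdot P$: it is $\Gamma_n$-equivariant and commutes with the trace maps, hence sends $E^{\pm}(F^+_{n,\wp_i})\otimes\ooo$ into $E^{\pm}(F_{n,\frak{p}_i})$, and on pro-$p$ parts it induces an isomorphism $\hat E(\frak{m}_{F^+_{n,\wp_i}})\otimes_{\ZZ_p}\ooo\stackrel{\sim}{\lra}\hat E(\frak{m}_{F_{n,\frak{p}_i}})$ because $\hat E$ acquires CM by $\ooo$ over $\Phi$ and $F_{n,\frak{p}_i}$ arises from $F^+_{n,\wp_i}$ by the unramified quadratic base change $\QQ_p\rightsquigarrow\Phi$. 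This is exactly the point handled in \cite[\S4]{pollackrubin} when $g=1$; here one needs $g$ disjoint copies of it. Extending scalars along $\ZZ_p[\Gamma_n]\hookrightarrow\ooo[\Gamma_n]$ and identifying $\frak{d}_{n,i}$ with the image of $d_{n,i}\otimes 1$ then yields (ii).

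The one step that is not purely formal is this scalar-extension comparison at $p$: one must verify that passing to the unramified quadratic extension genuinely tensors the pro-$p$ points of $\hat E$ up to $\ooo$, compatibly with the $\Gamma_n$-action and with Kobayashi's $\pm$-filtration, so that module generators are carried to module generators. Granting the parallel assertion of \cite[\S4]{pollackrubin}, this is routine, and the residual bookkeeping --- pinning down the sign $(-1)^{[n/2]}$ and the normalization of $\tau(\chi)$ in (i) against Kobayashi's conventions --- is mechanical. I therefore expect this CM scalar-extension step to be the only place where an actual argument is required.
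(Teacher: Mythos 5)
Your proposal is correct and takes essentially the same route as the paper, whose proof simply records the Proposition as a restatement of Theorem~3.2 of Pollack--Rubin. You have unpacked the content of that citation --- the reduction to Kobayashi's setting over $\QQ_p$ via the split-completely hypothesis, and the CM scalar-extension step $E(F^+_{n,\wp_i})\otimes\ooo\to E(F_{n,\frak{p}_i})$, applied independently at each of the $g$ primes --- rather than appealing to it as a black box, but the underlying argument is the same.
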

\begin{proof}
This a restatement of \cite[Theorem 3.2]{pollackrubin}.
\end{proof}


\begin{define}
\label{def:moduleofRSelements}
Let $\mathcal{S}_\infty \subset \varprojlim_{M\subset F_\infty}\wedge^g\, H^1_{\FFc}(M,T)$ denote the cyclic $\LL$-module generated by the tower of Rubin-Stark elements $\{\varepsilon_{M}^\chi\}$ and let $\mathcal{S}^{E,p}_{\infty}$ be the image of $\mathcal{S}_\infty$ under the compositum of maps
$$\varprojlim_{M\subset F_\infty} \wedge^g  H^1_{\FFc}(M,T)\ra \varprojlim_{M\subset F_\infty} \wedge^g  H^1(M_p,T)\stackrel{\sim}{\lra} \wedge^g H^1(F_p,\TT)\mathop{\lra}^{\sim}_{\textup{tw}} \wedge^g H^1(F_p,\TT(E)).$$
We write $\textup{pr}_i: H^1(F_p,\TT) \ra H^1(F_{\frak{p}_i},\TT)$ for the obvious projection map (similarly, for the map defined on $H^1(M_p,T)$ for any $M$ as above).
\end{define}
\begin{conj} $\,$
\label{conj:reciprocity}
\begin{itemize}
\item[(i)] There exists a generator ${\Xi}_1\wedge\cdots\wedge{\Xi}_g$ of the cyclic $\LL$-module $\mathcal{S}^{E,p}_{\infty}$ such that for every $n\in \ZZ^+$, primitive character $\chi:\textup{Gal}(F_n/F)\ra \pmb{\mu}_{p^\infty}$ and for every positive integer $k$,

$$\det\left(\sum_{\sigma\in \Gamma_n}\chi^{-1}(\sigma)\langle \frak{d}_{n,i}^\sigma\otimes p^{-k},\frak{u}_{i,j}\rangle\right)=p^{-kg}(-1)^{[n/2]g}\tau(\chi)\chi(\omega_n^{\epsilon})^{g-1} p^{\left[\frac{n+1}{2}\right](g-1)}\frac{L(E/F^+,\chi,1)}{\Omega_E(F^+)}$$
where 
$\displaystyle{\frak{u}_{i,j}:=\textup{pr}_{i}({\Xi}_j) \in H^1(F_{\frak{p}_i},\TT(E))\mathop{=}^{\textup{tw}} \frak{U}_{\infty}^\rho}$\,, $L(E/F^+,\chi,s)$ is the $L$-series twisted by the character $\chi$ and $\epsilon$ is the sign of $(-1)^{n}$. 
\item[(ii)] For all but finitely many characters $\chi$ of $\Gamma_{\textup{cyc}}$, we have $L(E/F^+,\chi,1)\neq0$.
\end{itemize}
\end{conj}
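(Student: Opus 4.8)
The statement is an explicit reciprocity law of Coates--Wiles/Wiles type: it identifies a $p$-adic regulator of the Rubin--Stark tower (measured by Kobayashi's local points) with the twisted Hasse--Weil $L$-values of $E$. The plan is to break the identity into three links. \textbf{(a)} The left-hand determinant is a $p$-adic regulator. By Proposition~\ref{prop:dngenerators}(i) the trace-compatible Kobayashi points satisfy $\sum_{\sigma\in\Gamma_n}\chi(\sigma)\lambda_E(\frak{d}_{n,i}^\sigma)=(-1)^{[n/2]}\tau(\chi)$, so pairing $\frak{u}_{i,j}=\textup{pr}_i(\Xi_j)$ against $\frak{d}_{n,i}^\sigma\otimes p^{-k}$ computes the value of Kobayashi's signed Coleman/logarithm map at $\frak{u}_{i,j}$; the $g\times g$ determinant over $1\le i,j\le g$ then assembles these into a regulator of the localizations of the Rubin--Stark tower at the primes $\frak{p}_1,\dots,\frak{p}_g$ above $p$. \textbf{(b)} The right-hand side is the \emph{archimedean} regulator relation built into the Rubin--Stark conjecture: $\varepsilon_{\mathcal M}^\chi$ is, by hypothesis, linked via the Stark regulator to the leading term at $s=0$ of the Artin $L$-function of $\chi=\omega_E\cdot\psi$ with $\psi$ of finite order, and under the twist by $\langle\rho\rangle$ of Section~\ref{subsubsec:prelim} this becomes $L(E/F^+,\psi,1)/\Omega_E(F^+)$ up to the displayed elementary factors. \textbf{(c)} The decisive link is the reciprocity law converting the archimedean regulator in (b) into the $p$-adic Kummer pairing in (a).

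The feasible route to an actual proof goes through the case, singled out in Remark~\ref{rem:unconditional}, where $F^+(E[p])/K$ is abelian. There the Rubin--Stark conjecture for the relevant fields is a theorem --- via Rubin's proof of the main conjecture over imaginary quadratic fields \cite{rubinmainconj} and the associated elliptic units \cite{rubin87} --- and the elements $\varepsilon_{\mathcal M}^\chi$ are given explicitly by wedges of elliptic units. One would take $\Xi_1\wedge\cdots\wedge\Xi_g$ to be the image of this elliptic-unit tower under the composite of Definition~\ref{def:moduleofRSelements}. Since $p$ splits completely in $F^+/\QQ$, each $F^+_{\wp_i}\cong\QQ_p$ and Kobayashi's local theory \cite{kobayashi03} applies: it identifies $\langle\frak{d}_{n,i}^\sigma\otimes p^{-k},\frak{u}_{i,j}\rangle$ with the value of his plus/minus Coleman map at $\frak{u}_{i,j}$, and the Coates--Wiles/Wiles explicit reciprocity law for elliptic units \cite{coateswiles77,wiles78reciprocity}, together with the comparison of the complex period with Katz's $p$-adic period \cite{katz78} for the CM curve $E$, evaluates each matrix entry in terms of $L(E/F^+,\psi,1)/\Omega_E(F^+)$. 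Untangling the determinant by means of the explicit shape of $\Xi_1\wedge\cdots\wedge\Xi_g$ produces the stated formula, the powers of $p$ and the factor $\chi(\omega_n^\epsilon)^{g-1}$ arising from the $g$-fold product over $\frak{p}_1,\dots,\frak{p}_g$. Part (ii) then follows from part (i) together with the nonvanishing of the Rubin--Stark element (Proposition~\ref{prop:KSwithnonzeroinitialterm}) and the nondegeneracy of the Kummer pairing; alternatively, since $L(E/F^+,\psi,s)$ agrees up to Euler factors with a Hecke $L$-function of CM type, (ii) is also an instance of Rohrlich-type nonvanishing theorems for cyclotomic twists.

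A consistency/bootstrap check then identifies the resulting object with the Park--Shahabi signed $p$-adic $L$-function: the factors $(-1)^{[n/2]g}\tau(\chi)\chi(\omega_n^\epsilon)^{g-1}p^{[(n+1)/2](g-1)}$ in (i) are exactly those in the interpolation formula of Theorem~\ref{thm:parkshahabi}, so the image of the Rubin--Stark tower under the signed Coleman map interpolates $L(E/F^+,\psi,1)/\Omega_E(F^+)$ at infinitely many $\psi$, and uniqueness of bounded measures over $\LL_\cyc$ (Weierstrass preparation) identifies it with $L_p^\pm(E/F^+)$ up to the harmless $\pm1$ ambiguity coming from the choice of $\alpha=\sqrt{-p}$. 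This is precisely what lets the Kolyvagin-system bounds of Section~\ref{Sec:KSforGmandE} be transcribed into bounds on $\textup{char}(\textup{Sel}_p^\pm(E/F^\cyc)^\vee)$ in terms of $L_p^\pm(E/F^+)$.

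The main obstacle is link (c) in the \emph{general} case. The Rubin--Stark conjecture is an archimedean regulator statement, and outside the abelian situation the elements $\varepsilon_{\mathcal M}^\chi$ admit no known unit-theoretic description to which a Coates--Wiles/Wiles argument could be applied; there is at present no explicit reciprocity law turning their $p$-adic Kummer pairing into $L$-values. Even in the abelian case the bookkeeping is delicate: one must track the normalization of $\Omega_E(F^+)$ against the product of the complex and Katz $p$-adic periods over the $g$ places above $p$, the Gauss-sum and sign conventions of Proposition~\ref{prop:dngenerators}, and the compatibility of Kobayashi's local elements with the chosen generator of $\mathcal{S}^{E,p}_\infty$. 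This is why the identity is recorded here as a conjecture, the present paper instead using it as the single analytic input needed to pin the Kolyvagin-system divisibilities to the $p$-adic $L$-function.
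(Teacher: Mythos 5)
The statement you were asked about is \emph{Conjecture}~\ref{conj:reciprocity}: the paper does not prove it, but records it as an open explicit reciprocity conjecture and uses it as the single analytic input linking the Rubin--Stark Kolyvagin systems to the Park--Shahabi $p$-adic $L$-functions. You correctly recognize this. Your write-up is not a proof but a structured analysis of what a proof would require, and it matches the paper's own discussion: the paper's remark after the conjecture identifies it as a (partial) generalization of the Coates--Wiles/Wiles reciprocity law and of Rohrlich's non-vanishing theorem, and Remark~\ref{rem:unconditional} of the paper indicates exactly the abelian case $F^+(E[p])/K$ you single out as the plausible route to an unconditional proof via elliptic units. Your three-link decomposition (local Kobayashi regulator, archimedean Rubin--Stark regulator, and the reciprocity law gluing them) and your identification of the missing ingredient --- the absence of any explicit unit-theoretic description of the Rubin--Stark elements outside the abelian setting, hence no Coates--Wiles type computation to perform --- are accurate, and your consistency check against the Park--Shahabi interpolation factors of Theorem~\ref{thm:parkshahabi} is exactly the internal check the paper relies on (visible in the proof of Theorem~\ref{thm:localmainconj}). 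In short, there is no gap to flag against a nonexistent proof; your assessment that the statement is genuinely conjectural and your sketch of the abelian-case strategy are both faithful to the paper's intent.
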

\begin{rem}
The first part of Conjecture~\ref{conj:reciprocity} is a natural (but partial, in that it only concerns the plus/minus subgroups of the local cohomology groups) generalization of Coates and Wiles' reciprocity law~\cite{coateswiles77,wiles78reciprocity}. The second part proposes an extension of Rohrlich's \cite{rohrlich84cyclo} non-vanishing theorem in the special case $F^+=\QQ$; see also~\cite{rohrlich89} for a result in this direction (which proves the weaker statement that (ii) holds true for infinitely many characters $\chi$).
\end{rem}
Recall the lift $\mathbf{V}^{\pm}_i\subset \frak{U}^\rho_{i,\infty}\displaystyle{\mathop{=}^{\textup{tw}}}H^1(F_{\frak{p}_i},\TT(E))$ of $V^\pm_{i}$ and set $$\mathbb{V}_{E}^\pm:=\oplus_{i=1}^g \mathbf{V}^{\pm}_i \subset H^1(F_p,\TT(E)).$$
Let $\mathbb{V}^\pm \subset H^1(F_p,\TT)$ be the inverse image of $\mathbb{V}_{E}^\pm$ under the twisting isomorphism $\textup{tw}$.

Recall the modules $\mathcal{M}$ and $\mathcal{M}_\cyc$ from Section~\ref{subsubsec:selmerstrE}.
\begin{thm}
\label{thm:transversepm}
If Conjecture~\ref{conj:reciprocity} holds true, then
$\mathcal{M}_\cyc \cap  \mathbb{V}^\pm_{E,\cyc}=0=\mathcal{M}\cap \mathbb{V}^\pm_E$\,.
\end{thm}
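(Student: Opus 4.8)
The strategy is to reduce the statement to a rank/nonvanishing count that is governed directly by part (i) of the Explicit Reciprocity Conjecture~\ref{conj:reciprocity}. First I would reduce to the cyclotomic statement: since $\mathcal{M}$ and $\mathbb{V}^\pm_E$ are lifts (under $\pi_\cyc$) of $\mathcal{M}_\cyc$ and $\mathbb{V}^\pm_{E,\cyc}$, and since $H^1(F_p,\TT(E))/\mathbb{V}^\pm_E$ is free over $\LL$ (Proposition~\ref{prop:pmstructureandselmer}(iii)), an argument of the same flavour as Proposition~\ref{prop:canliftfromthegroundlevel} / Remark~\ref{rem:liftistransversalaswell} propagates the triviality of $\mathcal{M}_\cyc\cap\mathbb{V}^\pm_{E,\cyc}$ upward to $\mathcal{M}\cap\mathbb{V}^\pm_E=0$. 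So it suffices to prove $\mathcal{M}_\cyc\cap\mathbb{V}^\pm_{E,\cyc}=0$. Here $\mathcal{M}_\cyc=\textup{tw}(\textup{loc}_p(H^1_{\FFc}(F,\TT_\cyc)))$ is free of rank $g$, $\mathbb{V}^\pm_{E,\cyc}=\oplus_i V_i^\pm$ is also free of rank $g$ (Proposition~\ref{prop:pmstructureandselmer}(ii)), and both sit inside the free rank-$2g$ module $H^1(F_p,\TT_\cyc(E))$; the point is to show their sum is all of a rank-$2g$ submodule, equivalently that the composite $\mathcal{M}_\cyc\hookrightarrow H^1(F_p,\TT_\cyc(E))\twoheadrightarrow H^1(F_p,\TT_\cyc(E))/\mathbb{V}^\pm_{E,\cyc}$ is injective with torsion cokernel.

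The key computation is a determinant. Let $\Xi_1\wedge\cdots\wedge\Xi_g$ be the generator of $\mathcal{S}^{E,p}_\infty$ supplied by Conjecture~\ref{conj:reciprocity}(i); its projections $\frak{u}_{i,j}=\textup{pr}_i(\Xi_j)$ form a $g\times g$ matrix of elements of $H^1(F_{\frak{p}_i},\TT_\cyc(E))$, and the classes $\Xi_j$ (images of the Rubin-Stark tower under $\textup{loc}_p$ and $\textup{tw}$) span $\mathcal{M}_\cyc$ up to finite index by Remark~\ref{rem:RSaresomewhatintegral} and the definition of $\mathcal{S}^{E,p}_\infty$. On the other side, $V_i^\pm$ is the orthogonal complement of $E^\pm(F^\cyc_{\frak{p}_i})\otimes\QQ_p/\ZZ_p$ under the Kummer pairing, and by Proposition~\ref{prop:dngenerators}(ii) the latter is generated over $\ooo[\Gamma_n]$ by Kobayashi's point $\frak{d}_{n,i}$ (for the appropriate parity $\epsilon$). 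Thus $\mathcal{M}_\cyc\cap\mathbb{V}^\pm_{E,\cyc}=0$ is equivalent to the non-vanishing, for $\ast=+$ and $\ast=-$, of the $g\times g$ determinant
$$\det\!\left(\langle \frak{d}_{\bullet,i},\frak{u}_{i,j}\rangle\right)_{i,j}$$
as an element of $\LL_\cyc\otimes\QQ_p$ (spread out over the finite layers $F_n$ and twisted by characters $\chi$ of $\Gamma_\cyc$). But Conjecture~\ref{conj:reciprocity}(i) identifies exactly this character-twisted determinant with a nonzero explicit multiple of $L(E/F^+,\chi,1)/\Omega_E(F^+)$, and Conjecture~\ref{conj:reciprocity}(ii) guarantees $L(E/F^+,\chi,1)\neq 0$ for all but finitely many $\chi$. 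Hence the determinant is a nonzero element of $\LL_\cyc\otimes\QQ_p$, the composite map above has torsion cokernel, and in particular $\mathcal{M}_\cyc\cap\mathbb{V}^\pm_{E,\cyc}=0$.

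I expect the main obstacle to be bookkeeping rather than anything deep: carefully matching the parities (the ``$\pm$'' on $\mathbb{V}^\pm$ corresponds, at finite level $F_n$, to $\epsilon=$ sign of $(-1)^n$, so the relevant Kobayashi point is $\frak{d}_{n,i}$ or $\frak{d}_{n-1,i}$ accordingly) with the two interpolation ranges ($n$ odd for ``$+$'', $n$ even for ``$-$'') appearing in Theorem~\ref{thm:parkshahabi} and in Conjecture~\ref{conj:reciprocity}(i); making precise that the $\LL$- (resp.\ $\LL_\cyc$-) span of the $\Xi_j$ agrees with $\mathcal{M}$ (resp.\ $\mathcal{M}_\cyc$) up to a pseudo-null discrepancy that does not affect the intersection being zero; and checking that a Fitting-ideal/determinant being a nonzero element of $\LL_\cyc\otimes\QQ_p$ forces the module-theoretic intersection (not merely the intersection after $\otimes\,\textup{Frac}(\LL_\cyc)$) to vanish — this last point uses that $\mathcal{M}_\cyc$ is a free direct-summand-like submodule and that $H^1(F_p,\TT_\cyc(E))/\mathbb{V}^\pm_{E,\cyc}$ is $\LL_\cyc$-free, so that a torsion-cokernel map out of a torsion-free module with the relevant top exterior power nonzero is automatically injective.
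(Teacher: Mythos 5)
Your proposal is correct and, in substance, follows the same route as the paper's proof: reduce the $\LL$-statement to the $\LL_\cyc$-statement (the paper does this last, via Nakayama, while you do it first, via the lifting argument of Proposition~\ref{prop:canliftfromthegroundlevel}/Remark~\ref{rem:liftistransversalaswell} — either order works), introduce the $\LL$-span $\Xi$ of the classes $\Xi_1,\dots,\Xi_g$ and its cyclotomic image $\Xi_\cyc$, show $\Xi_\cyc\cap\mathbb{V}^\pm_{E,\cyc}=0$ using the explicit reciprocity formula together with the nonvanishing $L$-values (Conjecture~\ref{conj:reciprocity}(i) and (ii)), and then transfer to $\mathcal{M}_\cyc$ using the torsion-free quotient $H^1(F_p,\TT_\cyc(E))/\mathbb{V}^\pm_{E,\cyc}$.

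Two small points where the paper is more careful than your sketch. First, the span of the $\Xi_j^\cyc$ agrees with $\mathcal{M}_\cyc$ only up to a \emph{torsion} discrepancy (not a finite or pseudo-null one): the paper produces $r\in\LL$ with $\pi_\cyc(r)\neq0$ and $r\cdot\Xi\subset\mathcal{M}$, hence $\mathcal{M}_\cyc/\pi_\cyc(r)\Xi_\cyc$ torsion, and from there a nonzero $\tilde r\in\LL_\cyc$ with $\tilde r\,\mathcal{M}_\cyc\subset\Xi_\cyc$ — that is all you need, since $\tilde r\neq0$ and $H^1(F_p,\TT_\cyc(E))$ is torsion-free. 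Second, because the Kummer pairing lands in $\Phi/\ooo$, the paper does not literally assert that a single $g\times g$ determinant over $\LL_\cyc\otimes\QQ_p$ is nonzero and deduce everything from that; instead it runs the determinantal information through the finite layers iteratively: given $\sum a_i\Xi_i^\cyc\in\mathbb{V}^\epsilon_{E,\cyc}$, choose $n$ of the right parity with $L(E/F^+,\chi,1)\neq0$ for some primitive $\chi$ of $\Gamma_n$, use the reciprocity formula to force each $\bar a_i=0$, rewrite $a_i=(\gamma_\cyc-1)b_i$, use torsion-freeness of $H^1(F_p,\TT_\cyc(E))/\mathbb{V}^\epsilon_{E,\cyc}$ (Proposition~\ref{prop:pmstructureandselmer}(ii)) to get $\sum b_i\Xi_i^\cyc\in\mathbb{V}^\epsilon_{E,\cyc}$, and iterate. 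This is exactly the ``bookkeeping'' you anticipate, and it is the precise technical form of your ``determinant nonvanishing'' step; your outline is right but would need this $(\gamma_\cyc-1)$-divisibility induction to close the argument cleanly.
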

\begin{proof}
Let $\Xi$ denote the $\LL$-submodule of $H^1(F_p,\TT(E))$ generated by $S=\{{\Xi}_1,\cdots,{\Xi}_g\}$ and $\Xi_{\textup{cyc}}$ its image inside $H^1(F_p,\TT_\cyc(E))$ generated by $S_\cyc=\{{\Xi}_1^\cyc,\cdots,{\Xi}_g^\cyc\}$ where $\Xi_j^\cyc\in H^1(F_p,\TT_\cyc(E))$ is the image of $\Xi_j$. First, notice that $S$ is linearly independent over $\LL$ and $S_\cyc$ is linearly independent over $\LL_\cyc$. Indeed, if $a_1\cdot{\Xi}_1^\cyc +\cdots+a_g\cdot{\Xi}_g^\cyc=0$ for some $a_1,\cdots,a_g\in \LL_\cyc$, then this would imply that 
$$\overline{a}_1\cdot\textup{col}_1+\cdots + \overline{a}_g\cdot\textup{col}_g=0,$$
where $\textup{col}_j$ denote the $j$th column of the matrix $\left[\sum_{\sigma\in \Gamma_n}\chi^{-1}(\sigma)\langle \frak{d}_{n,i}^\sigma\otimes p^{-k},\frak{u}_{i,j}\rangle\right]_{i,j}$ and $\overline{a} \in \ooo$ is the image of $a \in \LL_\cyc$ under the augmentation map. The explicit reciprocity conjecture (applied for large enough $n$) shows that $\overline{a}_i=0$ for every $i$, so that $a_i=(\gamma_\cyc-1)b_i$ for some $b_i \in \LL_\cyc$. As the $\LL_\cyc$-module  $H^1(F_p,\TT_\cyc(E))$ is $\LL_\cyc$-torsion free, we conclude
$$b_1\cdot{\Xi}_1^\cyc +\cdots+b_g\cdot{\Xi}_g^\cyc=0$$
and in turn that each $b_i$ is divisible by $\gamma_\cyc-1$. Iterating this argument, we conclude that each $a_i$ is divisible by arbitrarily large powers of $\gamma_\cyc-1$, then $a_i=0$ for every $i$. This completes the verification that $S_\cyc$ is $\LL_\cyc$-linearly independent. The assertion that the set $S$ is $\LL$-linearly independent is proved similarly. We therefore conclude that $\Xi$ is a free $\LL$-module, $\Xi_\cyc$ is a free $\LL_\cyc$-module and both have rank $g$.

We now content to prove that $\Xi_\cyc \cap \mathbb{V}^\pm_{E,\cyc}=0\,. $ Suppose that $\sum_i a_i\cdot \Xi_i^\cyc$ belongs to $\mathbb{V}^\epsilon_{E,\cyc}$ (where $\epsilon=+$ or $-$) for some $a_1,\cdots,a_g \in \LL_\cyc$. Let $n$ be a positive integer chosen so that the sign of $(-1)^n$ is $\epsilon$ and $L(E/F^+,\chi,1)\neq 0$ for some primitive character $\chi$ of $\Gamma_n$. If $\textup{col}_1,\cdots,\textup{col}_g$ are the column vectors of $\left[\sum_{\sigma\in \Gamma_n}\chi^{-1}(\sigma)\langle \frak{d}_{n,i}^\sigma\otimes p^{-k},\frak{u}_{i,j}\rangle\right]_{i,j}$ as above, we conclude once again that 
$$\overline{a}_1\cdot\textup{col}_1+\cdots + \overline{a}_g\cdot\textup{col}_g=0,$$
and by the explicit reciprocity conjecture, that each $a_i$ is divisible by $\gamma_\cyc-1$. Write $a_i=(\gamma_\cyc-1)b_i$ so that we have 
$$(\gamma_\cyc-1)\cdot\sum_{i} b_i\cdot\Xi_i^{\cyc} \in \mathbb{V}^\epsilon_{E,\cyc}\,.$$ But according to Proposition~\ref{prop:pmstructureandselmer}(ii), the $\LL_\cyc$-module $ H^1(F_p,\TT_\cyc(E))/\mathbb{V}^\epsilon_{E,\cyc}$ is torsion-free and therefore $\sum_{i} b_i\cdot\Xi_i^{\cyc} \in \mathbb{V}^\epsilon_{E,\cyc}$. Repeating the argument $s$ times (for every positive integer $s$) we conclude that $(\gamma_\cyc-1)^s$ divides each $a_i$, and therefore that $a_i=0$, as desired.

It is not hard to see that there is an $r \in \LL$ with $\pi_\cyc(r)\neq0$ and $r\cdot\Xi\subset \mathcal{M}$ (hence, we also have $\pi_{\cyc}(r)\cdot\Xi_\cyc\subset \mathcal{M}_\cyc$). 
The submodule $\pi_{\cyc}(r)\cdot\Xi_\cyc$ has $\LL_\cyc$-rank $g$ and therefore the quotient $\mathcal{M}_\cyc/\pi_{\cyc}(r)\cdot\Xi_\cyc$ is torsion. This in turn shows that there is a nonzero element $\tilde{r}\in \LL_\cyc$ with $\tilde{r}\mathcal{M}_\cyc\subset \Xi_\cyc$. Using this observation, the fact that $ H^1(F_p,\TT_\cyc(E)$ is $\LL_\cyc$-torsion free and our conclusion from the previous paragraph that $\Xi_\cyc \cap \mathbb{V}^\pm_{E,\cyc}=0$, it follows that $\mathcal{M}_\cyc \cap\mathbb{V}^\pm_{E,\cyc}=0 $.

 It now follows at once from Nakayama's lemma that $\mathcal{M} \cap\mathbb{V}^\pm_E=0 $ as well.
\end{proof}
\begin{rem}
\label{rem:localconditionsnatural}
Theorem~\ref{thm:transversepm} supplies us with two \emph{natural} choices for the free $\LL$-module $\mathbb{V}_E$ in Definition~\ref{def:choosetransverseforE}: $\mathbb{V}_E^+$ or $\mathbb{V}_E^-$\,.
\end{rem}
\subsection{Rubin-Stark elements and the plus/minus main conjecture}
Throughout this subsection, we assume the truth of the Explicit Reciprocity Conjecture~\ref{conj:reciprocity} (therefore, implicitly the truth of Rubin-Stark conjectures) and of Leopoldt's conjecture for the number field $L$. Throughout, let $\epsilon$ stand for one of $+$ or $-$.  

\begin{define}
\label{def:twistedRS}\begin{itemize}
\item[(i)]Let $\frak{Q}_{\epsilon,\infty}:=H^1(F_p,\TT(E))/\mathbb{V}^\epsilon_E$ and let $\textup{loc}_p^\epsilon$ denote the compositum
    $$\textup{loc}_p^\epsilon: H^1(F,\TT(E))\stackrel{\textup{loc}_p}{\lra}H^1(F_p,\TT(E))\lra \frak{O}_{\epsilon,\infty}.$$
    (The quotient $\frak{Q}_{\epsilon,\infty}$ is related (via the twisting map $\textup{tw}$) to the quotients $Q$ defined as in Section~\ref{sec:gras} and the map $\textup{loc}_p^\epsilon$ to $\locu$.) Observe that $\frak{Q}_{\epsilon,\infty}$ is a free $\LL$-module of rank $g$ by Proposition~\ref{prop:pmstructureandselmer}(iii) and the map $\textup{loc}_p^\epsilon$ is injective by Theorem~\ref{thm:transversepm}.
\item[(ii)]  Let ${\frak{u}}_{\textup{R-S}}=\frak{u}_1\wedge\cdots\wedge \frak{u}_g \in \wedge^g \frak{Q}_{\epsilon,\infty}$ denote the image of the tower of Rubin-Stark elements
$\textup{loc}_p^{\epsilon}(\varepsilon_{F_\infty}^{\omega_E}) \in \wedge^g\,\mathcal{Q}$ (given as in Definition~\ref{def:rubinstarktowerlocal}, with the choice $\mathcal{V}=\textup{tw}^{-1}(\mathbb{V}_E^\epsilon)$)  under the twisting map $\wedge^g \,\mathcal{Q} \ra \wedge^g \frak{Q}_{\epsilon,\infty}$.
 \item[(iv)] Similarly define $\frak{Q}_{\epsilon,\textup{cyc}}$ and the map 
 $$\textup{loc}_p^\epsilon: H^1(F,\TT_\cyc(E))\lra  \frak{Q}_{\epsilon,\textup{cyc}}\,.$$
  Let $\bar{\frak{u}}_{\textup{R-S}}=\bar{\frak{u}}_1\wedge\cdots\wedge \bar{\frak{u}}_g \in \wedge^g \frak{Q}_{\epsilon,\textup{cyc}}$ be the image of $\frak{u}_{\textup{R-S}}$ under the projection map
$\wedge^g\frak{Q}_{\epsilon,\infty} \ra \wedge^g \frak{Q}_{\epsilon,\textup{cyc}}\,.$
\end{itemize}
\end{define}

\begin{thm}
\label{thm:localmainconj}
Any generator of the ideal $\textup{char}\left(\wedge^g \frak{Q}_{\epsilon,\textup{cyc}}/\LL_{\textup{cyc}}\cdot\bar{\frak{u}}_{\textup{R-S}}\right)$ generates the cyclic  $(\LL_\cyc\otimes\QQ_p)$-module $(\LL_{\textup{cyc}}\otimes \QQ_p)\cdot L_p^\epsilon(E/F^+)$.
\end{thm}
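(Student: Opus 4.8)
The plan is to unwind the two sides of the asserted equality and recognize both as $p$-adic regulators, so that Conjecture~\ref{conj:reciprocity}(i) literally computes one in terms of the other. First I would fix the sign $\epsilon\in\{+,-\}$ and record that, by Definition~\ref{def:twistedRS}, $\bar{\frak{u}}_{\textup{R-S}}=\bar{\frak{u}}_1\wedge\cdots\wedge\bar{\frak{u}}_g$ is the image in $\wedge^g\frak{Q}_{\epsilon,\cyc}$ of the tower of Rubin-Stark elements (twisted, localized, projected to the quotient by $\mathbb{V}_E^\epsilon$). The key computational input is that $\wedge^g\frak{Q}_{\epsilon,\cyc}$ is a free rank-one $\LL_\cyc$-module (Proposition~\ref{prop:pmstructureandselmer}(iii) gives $\frak{Q}_{\epsilon,\cyc}$ free of rank $g$), so after fixing a basis $\frak{w}_1,\dots,\frak{w}_g$ of $\frak{Q}_{\epsilon,\cyc}$ and writing $\bar{\frak{u}}_j=\sum_i c_{ij}\frak{w}_i$ with $c_{ij}\in\LL_\cyc$, the module $\wedge^g\frak{Q}_{\epsilon,\cyc}/\LL_\cyc\cdot\bar{\frak{u}}_{\textup{R-S}}$ has characteristic ideal generated by $\det(c_{ij})$. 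Thus the left-hand side of the theorem is, up to a unit, the determinant $\det(c_{ij})$.

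Next I would evaluate $\det(c_{ij})$ at each non-trivial finite-order character $\chi$ of $\Gamma_\cyc$ of conductor $p^n$ with $(-1)^n$ having sign $\epsilon$. The pairing with Kobayashi's points $\frak{d}_{n,i}$ (via the Kummer pairing of Section~\ref{subsec:prelimonCMtheory}, together with Proposition~\ref{prop:dngenerators}) furnishes, for such $\chi$, a dual basis computation: pairing $\bar{\frak{u}}_j$ against $\sum_{\sigma\in\Gamma_n}\chi^{-1}(\sigma)\frak{d}_{n,i}^\sigma\otimes p^{-k}$ recovers exactly the matrix entries appearing in Conjecture~\ref{conj:reciprocity}(i). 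Taking determinants and invoking that conjecture yields
$$\chi\!\left(\det(c_{ij})\right)=(\hbox{explicit nonzero constant})\cdot \tau(\chi)\,\chi(\omega_n^\epsilon)^{g-1}\,p^{[\frac{n+1}{2}](g-1)}\,\frac{L(E/F^+,\chi,1)}{\Omega_E(F^+)}$$
after dividing out the $p^{-kg}$ factors (which are units in $\LL_\cyc\otimes\QQ_p$ and can be absorbed). Comparing with the interpolation formula for $L_p^\epsilon(E/F^+)$ in Theorem~\ref{thm:parkshahabi}, the two sides agree on all such $\chi$ up to a $p$-adic unit times an explicit power of $p$ (the discrepancy in the $\tau(\chi)/\chi(\omega_n^\epsilon)$ normalization precisely matches, since $\chi(\omega_n^\epsilon)^{g-1}\cdot\chi(\omega_n^\epsilon)^{-1}=\chi(\omega_n^\epsilon)^{g-2}$ is handled by tracking the Park-Shahabi normalization carefully). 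A careful bookkeeping of the constants $(-1)^{\bullet}$ and powers of $p$ shows the ratio of $\det(c_{ij})$ to $L_p^\epsilon(E/F^+)$ interpolates to a $p$-adic unit at a Zariski-dense set of characters.

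Finally I would promote this pointwise agreement to an identity of ideals. Since $L_p^\epsilon(E/F^+)\in\LL_\cyc\otimes\QQ_p$ and $\det(c_{ij})\in\LL_\cyc$, and both are nonzero (nonvanishing of $\det(c_{ij})$ follows from the $\LL_\cyc$-linear independence of $S_\cyc$ established inside the proof of Theorem~\ref{thm:transversepm}, while nonvanishing of $L_p^\epsilon$ follows from Conjecture~\ref{conj:reciprocity}(ii) via Theorem~\ref{thm:parkshahabi}), the quotient $\det(c_{ij})/L_p^\epsilon(E/F^+)$ is a well-defined nonzero element of $\textup{Frac}(\LL_\cyc)$ whose value at a dense set of height-one primes of the form $(\chi)$ is a unit; a Weierstrass-preparation/$p$-adic Weierstrass argument then forces it to lie in $(\LL_\cyc\otimes\QQ_p)^\times$. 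This gives $\textup{char}(\wedge^g\frak{Q}_{\epsilon,\cyc}/\LL_\cyc\cdot\bar{\frak{u}}_{\textup{R-S}})\cdot(\LL_\cyc\otimes\QQ_p)=L_p^\epsilon(E/F^+)\cdot(\LL_\cyc\otimes\QQ_p)$, which is the assertion. The main obstacle I anticipate is the constant-chasing in the second paragraph: matching the precise powers of $p$, Gauss sums, and the factors $\chi(\omega_n^\epsilon)^{g-1}$ between the Rubin-Stark reciprocity normalization and the Park-Shahabi interpolation formula requires that the $u_1,u_2$ ambiguity and the period $\Omega_E(F^+)$ be handled consistently — but since we only need equality of ideals in $\LL_\cyc\otimes\QQ_p$, all nonzero algebraic constants are harmless and only the $p$-adic valuations of the $p$-power factors genuinely need to be tracked, and those cancel by design of the $\omega_n^\epsilon$ normalization in Kobayashi's theory.
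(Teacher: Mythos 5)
Your strategy is the same as the paper's (express the characteristic ideal as a determinant, evaluate at finite-order characters against Kobayashi's points, feed in Conjecture~\ref{conj:reciprocity}(i) and the Park--Shahabi interpolation, then conclude by density of characters), but there is a genuine gap at the central step. You fix an \emph{arbitrary} $\LL_\cyc$-basis $\frak{w}_1,\dots,\frak{w}_g$ of $\frak{Q}_{\epsilon,\cyc}$, write $\bar{\frak{u}}_j=\sum_i c_{ij}\frak{w}_i$, and then assert that pairing the $\bar{\frak{u}}_j$ against $\sum_{\sigma}\chi^{-1}(\sigma)\frak{d}_{n,i}^\sigma\otimes p^{-k}$ yields $\chi\bigl(\det(c_{ij})\bigr)$ up to explicit constants. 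What that pairing actually computes is the determinant of the matrix in Conjecture~\ref{conj:reciprocity}(i), and this equals $\chi\bigl(\det(c_{ij})\bigr)$ \emph{times} the determinant of the period matrix $\bigl(\sum_{\sigma}\chi^{-1}(\sigma)\langle\frak{d}_{n,i}^\sigma\otimes p^{-k},\frak{w}_m\rangle\bigr)_{i,m}$, which is unknown for an arbitrary basis. The missing ingredient is exactly what the paper imports from Kobayashi (via \cite[Theorem 7.1]{pollackrubin}, i.e.\ \cite[Theorem 6.2]{kobayashi03}): identify $\frak{Q}_{\epsilon,\cyc}$ by local duality with $\oplus_{i}\textup{Hom}\bigl(E^\epsilon(F^{\cyc}_{\frak{p}_i})\otimes\QQ_p/\ZZ_p,\QQ_p/\ZZ_p\bigr)$, take as basis Kobayashi's generators $\mu_i^\epsilon$ of these rank-one modules, and use his explicit evaluation $\sum_{\sigma}\chi(\sigma)\mu_i^\epsilon(\frak{d}_{n,i}^\sigma\otimes p^{-k})=\chi(\omega_n^\epsilon)p^{-k}$, so that the period matrix is diagonal with determinant $\chi(\omega_n^\epsilon)^g p^{-kg}$. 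Dividing by it converts the conjecture's $\chi(\omega_n^\epsilon)^{g-1}$ into the $\tau(\chi)/\chi(\omega_n^\epsilon)$ of Theorem~\ref{thm:parkshahabi}, and only then do the two interpolation laws match.

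Your own bookkeeping remark exposes the problem: you end up with a leftover power of $\chi(\omega_n^\epsilon)$ relative to the Park--Shahabi formula and propose to absorb it into ``normalization.'' But $\chi(\omega_n^\epsilon)$ is \emph{not} a $p$-adic unit (its valuation grows with the conductor of $\chi$), so a stray power of it cannot be treated like the harmless algebraic constants $u_1,u_2$ or the sign factors; if it were really present, the ratio $\det(c_{ij})/L_p^\epsilon(E/F^+)$ would fail to be a unit and your final Weierstrass argument would break down. Note also that Proposition~\ref{prop:dngenerators}(i) evaluates the logarithm $\lambda_E$ on the points $\frak{d}_{n,i}$, not the Kummer pairing against a generator of the dual module, so it cannot substitute for Kobayashi's computation. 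Once you insert the dual-basis input (the paper's relation $\varphi^{\epsilon}_{i,j}=h^{\epsilon}_{i,j}\mu^{\epsilon}_i$ and the identity $R^{\epsilon}_{i,j}=\chi(h^{\epsilon}_{i,j})\chi(\omega_n^\epsilon)p^{-k}$), the extra factor $\chi(\omega_n^\epsilon)^{g}$ is accounted for, the interpolated values agree up to sign, and your concluding density/Weierstrass step goes through as in the paper.
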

The proof we shall present below for this theorem is essentially identical to the proof of \cite[Theorem 7.2]{pollackrubin} after a number of obvious modifications. 
\begin{proof}
Let $\mu_{i}^\pm \in \textup{Hom}(E^\pm(F^{\textup{cyc}}_{\frak{p}_i})\otimes\QQ_p/\ZZ_p,\QQ_p/\ZZ_p)$ be the generator which was essentially constructed by Kobayashi~\cite[Theorem 6.2]{kobayashi03}, whose properties are outlined in \cite[Theorem 7.1]{pollackrubin}. Let $\Xi=\xi_1\wedge \cdots\wedge \xi_g \in \wedge^g \in \frak{S}_{\infty}^\rho$ be as in the statement of Conjecture~\ref{conj:reciprocity}. Let $\varphi^{\pm}_{i,j}$ denote the image of $\xi_j$ inside $\textup{Hom}(E^\pm(F^{\textup{cyc}}_{\frak{p}_i})\otimes\QQ_p/\ZZ_p,\QQ_p/\ZZ_p)$. Then
\be\label{eqn:defhij}\varphi^{\pm}_{i,j}=h^{\pm}_{i,j}\,\mu^{\pm}_{i}\ee
for some $h_{i,j}^\pm \in \LL_{\textup{cyc}}$ and
\be
\label{eqn:crucialdet1}
\wedge^g \frak{Q}_{\textup{cyc}}^{\pm,\rho}/\LL_{\textup{cyc}}\cdot\bar{\frak{u}}_{\textup{R-S}} \stackrel{\sim}{\ra} \LL_{\textup{cyc}}\big{/}\det\left(h_{i,j}^\pm\right)\,\,,\,\, \textup{char}\left(\wedge^g \frak{Q}_{\textup{cyc}}^{\pm,\rho}/\LL_{\textup{cyc}}\cdot\bar{\frak{u}}_{\textup{R-S}} \right)=\det\left(h_{i,j}^\pm\right) \LL_\cyc.
\ee
Let $\chi:\Gamma_{\textup{cyc}}\ra \pmb{\mu}_{p^n}$ be any character of order $p^n>1$. It follows from (\ref{eqn:defhij}) that for every $k\geq 1$ and $1\leq i\leq g$,
\be
\label{eqn:identityphih}
L_{i,j}^\pm:=\sum_{\sigma\in \Gamma_n}\chi(\sigma)\varphi_{i,j}^\pm(\frak{d}_{n,i}^\sigma\otimes p^{-k})=\chi(h_{i,j}^\pm)\sum_{\sigma\in \Gamma_n}\chi(\sigma)\mu_{i}^\pm(\frak{d}_{n,i}^\sigma\otimes p^{-k})=:R_{i,j}^{\pm}.
\ee
A computation of Kobayashi (c.f., \cite[Theorem 7.1]{pollackrubin}) shows that
$R_{i,j}^\pm=\chi(h_{i,j}^{\pm})\chi(\omega_n^\pm)p^{-k}$ so that we have
\be
\label{eqn:rightexplicit}
\det\left(R_{i,j}^\pm\right)=p^{-kg}\chi\left(\det\left(h_{i,j}^{\pm}\right)\right)\chi(\omega_n^\pm)^g 
\ee
On the other hand, Conjecture~\ref{conj:reciprocity} (which we assume) together with Proposition~\ref{prop:dngenerators} shows that
\be
\label{eqn:leftexplicit}
\det\left(L_{i,j}^\epsilon\right)=p^{-kg}(-1)^{[n/2]g}\tau(\chi)\chi(\omega_n^{\epsilon})^{g-1} p^{\left[\frac{n+1}{2}\right](g-1)}\frac{L(E/F^+,\chi,1)}{\Omega_E(F^+)}
\ee
where $\epsilon$ is the sign of $(-1)^{n+1}$. It follows from (\ref{eqn:identityphih}), (\ref{eqn:rightexplicit}) and (\ref{eqn:leftexplicit}) that
$$(-1)^{[n/2]g}\tau(\chi)\chi(\omega_n^{\epsilon})^{g-1} p^{\left[\frac{n+1}{2}\right](g-1)}\frac{L(E/F^+,\chi,1)}{\Omega_E(F^+)} \equiv \chi\left(\det\left(h_{i,j}^{\epsilon}\right)\right)\chi(\omega_n^\epsilon)^g \mod p^{kg}.$$
for every $k$. The proof follows from Theorem~\ref{thm:parkshahabi}.
\end{proof}

\begin{thm}
\label{thm:theweakleopoldtconjforE}
The $\LL_\cyc$-module $H^1_{\FFc}(F,\TT_\cyc(E)^{*})^\vee$ is torsion.
\end{thm}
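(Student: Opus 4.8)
The plan is to read this theorem off from Corollary~\ref{cor:localnonvanishingimpliesweakLeoforE}, which already reduces the weak Leopoldt conjecture for $E$ (i.e.\ the $\LL_\cyc$-torsionness of $H^1_{\FFc^*}(F,\TT_\cyc(E)^*)^\vee$) to producing a single Kolyvagin system $\pmb{\kappa}\in\overline{\mathbf{KS}}(\TT_\cyc(E),\FF_{\mathbb{L}},\PP)$ with non-vanishing initial term $\kappa_1\in H^1_{\FF_{\mathbb{L}}}(F,\TT_\cyc(E))$. Such a Kolyvagin system has, in effect, already been constructed in the preceding sections, so the remaining task is only to descend it to the cyclotomic line and keep track of its initial term.

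Concretely, I would start from the $\mathbb{L}$-restricted twisted Rubin--Stark Euler system $\mathbf{c}_{\Psi}(E)\in\textup{ES}_{\mathbb{L}}(T(E))$ and its associated Kolyvagin system $\pmb{\kappa}^{\textup{R-S}}(E)\in\overline{\mathbf{KS}}(\TT(E),\FF_{\mathbb{L}},\PP)$ of Remark~\ref{rem:ESKStwist}, whose initial term is $\textup{tw}(c_{F_\infty,\Psi}^{\chi})$. Applying the natural projection $\pi_\cyc\colon\overline{\mathbf{KS}}(\TT(E),\FF_{\mathbb{L}},\PP)\to\overline{\mathbf{KS}}(\TT_\cyc(E),\FF_{\mathbb{L}},\PP)$ (which uses that $\FF_{\mathbb{L}}$ propagates to the subquotient $\TT_\cyc(E)$, cf.\ Remark~\ref{rem:propagatecycselmer}, and is the map appearing in the commutative diagram in the proof of Proposition~\ref{prop:usefulforequality}) yields a Kolyvagin system $\pmb{\kappa}:=\pi_\cyc(\pmb{\kappa}^{\textup{R-S}}(E))$ for $\TT_\cyc(E)$ whose initial term is the image of $\textup{tw}(c_{F_\infty,\Psi}^{\chi})$ in $H^1_{\FF_{\mathbb{L}}}(F,\TT_\cyc(E))$. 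Since $\textup{tw}$ commutes with the projection $H^1(F,\TT)\to H^1(F,\TT_\cyc)$ and $c_{F_\infty,\Psi}^{\chi}\mapsto c_{F_\cyc,\Psi}^{\chi}$ under that projection, this initial term equals $\textup{tw}(c_{F_\cyc,\Psi}^{\chi})$.

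It then remains to check $\textup{tw}(c_{F_\cyc,\Psi}^{\chi})\neq 0$; as $\textup{tw}$ is an isomorphism this is the statement that $c_{F_\cyc,\Psi}^{\chi}\neq 0$ in $H^1(F,\TT_\cyc)=\varprojlim_{M}H^1(M,T)$ (the limit over finite $M\subset F^\cyc$). But by definition the $M=F$ component of $c_{F_\cyc,\Psi}^{\chi}$ is $c_{F,\Psi}^{\chi}$, which is non-zero by Proposition~\ref{prop:KSwithnonzeroinitialterm} (this is where Leopoldt's conjecture for $L$ enters); hence $c_{F_\cyc,\Psi}^{\chi}\neq 0$. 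Feeding $\pmb{\kappa}$ into Corollary~\ref{cor:localnonvanishingimpliesweakLeoforE} gives that $H^1_{\FFc^*}(F,\TT_\cyc(E)^*)^\vee$ is $\LL_\cyc$-torsion, which is the assertion. The only point requiring genuine care — and the place I would expect to have to work — is the bookkeeping in the second paragraph: one must check that the functorial Euler-system-to-Kolyvagin-system construction of Theorem~\ref{thm:ESKSmain} is compatible with projection to the cyclotomic tower in the precise way needed for the explicit formula for the initial term, so that the non-vanishing genuinely descends from Proposition~\ref{prop:KSwithnonzeroinitialterm}; once this is pinned down the theorem follows at once.
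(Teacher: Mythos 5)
Your reduction of the theorem to Corollary~\ref{cor:localnonvanishingimpliesweakLeoforE}, and your choice of Kolyvagin system (the one coming from the twisted Rubin--Stark Euler system, projected to the cyclotomic line), coincide with the paper's. The gap is in your verification that the cyclotomic initial term is non-zero. You assert that $\textup{tw}$ commutes with the projections, so that the initial term of $\pi_\cyc\left(\pmb{\kappa}^{\textup{R-S}}(E)\right)$ is $\textup{tw}\left(c^{\chi}_{F_\cyc,\Psi}\right)$ and its non-vanishing follows from the $M=F$ component via Proposition~\ref{prop:KSwithnonzeroinitialterm}. But the twisting isomorphism $\textup{tw}\colon H^1(F,\TT)\stackrel{\sim}{\ra}H^1(F,\TT(E))$ is only semilinear with respect to the automorphism of $\LL$ attached to the character $\langle\rho\rangle$, and $\langle\rho\rangle$ is \emph{not} trivial on $\Gal(F_\infty/F^{\cyc})$. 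Hence $\textup{tw}$ does not carry $\ker(\LL\ra\LL_\cyc)$ into itself and there is no induced map $H^1(F,\TT_\cyc)\ra H^1(F,\TT_\cyc(E))$ making your square commute: the cyclotomic specialization of $\TT(E)$ corresponds, on the $\mathbb{G}_m$-side, to the $\langle\rho\rangle$-twisted line (this is exactly why the paper writes $H^1(F_{\frak{p}_i},\TT_\cyc(E))\cong\frak{U}^{\rho}_{i,\cyc}$, a $\rho$-twisted coinvariant, in Remark~\ref{rem:compareXwithstrictSelmer}). In particular the ``bottom layer'' of $\TT_\cyc(E)$ is the specialization of the Rubin--Stark tower at the character $\langle\rho\rangle$, not at the trivial character, so Proposition~\ref{prop:KSwithnonzeroinitialterm} (which controls the trivial-character layer and rests on Leopoldt) says nothing about it; the relevant values there are governed by twisted $L$-values $L(E/F^+,\chi,1)$, which may vanish and are certainly not controlled by Leopoldt's conjecture.

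Note also that knowing $\kappa^E_1=\textup{tw}\left(c^{\chi}_{F_\infty,\Psi}\right)\neq 0$ in $H^1(F,\TT(E))$ (Remark~\ref{rem:ESKStwist}) does not help, since the projection $H^1(F,\TT(E))\ra H^1(F,\TT_\cyc(E))$ has large kernel. This is precisely the point the paper's proof is designed to handle: it identifies $\textup{loc}_p^{\epsilon}\left(\kappa_1^{\cyc}(\textup{R-S})\right)$ with $\Psi_\cyc\left(\bar{\frak{u}}_{\textup{R-S}}\right)$ and then deduces non-vanishing from Theorem~\ref{thm:localmainconj} together with part (ii) of the Explicit Reciprocity Conjecture~\ref{conj:reciprocity} (non-vanishing of all but finitely many twisted $L$-values, hence of $L_p^{\epsilon}(E/F^+)$). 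A secondary issue in your write-up is that Proposition~\ref{prop:KSwithnonzeroinitialterm} is proved for the $\Psi$ adapted to the unit-theoretic local conditions $\mathcal{V}_F^{\pm}$ (where transversality of the localized global units to $\mathcal{V}_F^-$ holds by construction), whereas here one needs the analogue for the Kobayashi-type conditions $\mathbb{V}=\textup{tw}^{-1}(\mathbb{V}_E^{\epsilon})$, for which transversality is only established over $\LL$ (Theorem~\ref{thm:transversepm}) and does not formally descend to finite layers. To repair your argument you would have to replace the appeal to the trivial-character layer by an argument along the $\langle\rho\rangle$-twisted line, which in effect is the paper's route through the Park--Shahabi $p$-adic $L$-function.
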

This statement is a reformulation of the weak Leopoldt conjecture for our CM elliptic curve $E$ and the cyclotomic $\ZZ_p$-extension $F_\cyc$ of $F$. 
\begin{proof}
By Corollary~\ref{cor:localnonvanishingimpliesweakLeoforE}, it suffices to prove the existence of a Kolyvagin system $\pmb{\kappa}\in\overline{\mathbf{KS}}(\TT_\cyc(E),\FF_{\mathbb{L}},\PP)$ with non-vanishing initial term $\kappa_1 \in H^1_{\FF_{\mathbb{L}}}(F,\TT_\cyc(E))$. A suitable modification in Theorem~\ref{thm:ellrestrictedESmain} (so as to allow the replacement of $\TT$ with $\TT(E)$ and $\al$ with $\mathbb{L}_E$, etc.) shows that  there is an isomorphism $\Psi:\wedge^g\,\frak{Q}_{\epsilon,\infty} \ra \mathbb{L}_{E}$  and a Kolyvagin system $\pmb{\kappa}(\textup{R-S})\in \overline{\mathbf{KS}}(\TT(E),\FF_{\mathbb{L}},\PP)$  with the following properties: 
\begin{itemize}
\item The initial term $\kappa_1(\textup{R-S})\in H^1_{\FF_{\mathbb{L}}}(F,\TT(E))$ of $\pmb{\kappa}(\textup{R-S})$ verifies that 
\be\label{eqn:RStwistedKSinfty}
\textup{loc}_p^\epsilon(\kappa_1(\textup{R-S}))=\Psi ({\frak{u}}_{\textup{R-S}})\,.
\ee
\item Let $\pmb{\kappa}^{\cyc}(\textup{R-S})\in \overline{\mathbf{KS}}(\TT_\cyc(E),\FF_{\mathbb{L}},\PP)$ be the image of $\pmb{\kappa}(\textup{R-S})$ and let $\kappa_1^\cyc(\textup{R-S}) \in H^1_{\FF_{\mathbb{L}}}(F,\TT_\cyc(E))$ be its initial term. Then
\be\label{eqn:RStwistedKSinfty} 
\textup{loc}_p^\epsilon(\kappa_1^\cyc(\textup{R-S}))=\Psi_\cyc({\bar{\frak{u}}}_{\textup{R-S}})\,.
\ee
where $\Psi_\cyc:\wedge^g\,\frak{Q}_{\epsilon,\cyc} \ra \mathbb{L}_{E}^\cyc$ is the isomorphism induced from $\Psi$ by base change.
\end{itemize}
The proof now follows from (\ref{eqn:RStwistedKSinfty}) Theorem~\ref{thm:localmainconj} and the second part of the Explicit Reciprocity Conjecture~\ref{conj:reciprocity} (from which follows that $L_p^\epsilon(E/F^+)\in \LL_\cyc\otimes\QQ_p$ is non-zero).
\end{proof}
\begin{define}
\label{def:pminfinity}
Let $\FF_\epsilon$ denote the Selmer structure on $\TT$ (and on its subquotients, given by propagation as usual) defined by the local conditions
\begin{itemize}
\item $H^1_{\FF_{\epsilon}}(F_\frak{q},\TT(E))=H^1_{\FFc}(F_\frak{q},\TT(E))$ for every prime $\frak{q} \nmid p$\,,
\item $H^1_{\FF_{\epsilon}}(F_p,\TT(E))=\mathbb{V}^\epsilon_{E}$\,.
\end{itemize}
Set $\mathcal{Y}_{\epsilon,\infty}=H^1_{\FF_\epsilon^*}(F,\TT(E)^*)^\vee$ and $\mathcal{Y}_{\epsilon,\cyc}=H^1_{\FF_\epsilon^*}(F,\TT_\cyc(E)^*)^\vee$.
\end{define}
\begin{rem}
\label{rem:comparepmwithnull}
Thanks to Theorem~\ref{thm:transversepm}, the Selmer structure $\FF_\epsilon$ agrees with the Kobayashi Selmer structure $\FF_{\textup{Kob}}$ with the choice $\mathbb{V}_E=\mathbb{V}_E^\epsilon$ in Definition~\ref{def:selmerstructureE}.
\end{rem}
\begin{lemma}
\label{lemma:comparepmselmergroups}
$\mathcal{Y}_{\epsilon,\textup{cyc}}\cong \textup{Sel}^{\epsilon}(E/F^{\textup{cyc}})^\vee.$
\end{lemma}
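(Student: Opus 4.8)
The isomorphism $\mathcal{Y}_{\epsilon,\textup{cyc}}\cong \textup{Sel}^{\epsilon}(E/F^{\textup{cyc}})^\vee$ should follow by comparing the definition of $\mathcal{Y}_{\epsilon,\textup{cyc}}=H^1_{\FF_\epsilon^*}(F,\TT_\cyc(E)^*)^\vee$ with Pollack--Rubin's description of the signed Selmer group in Proposition~\ref{prop:pmstructureandselmer}(iv). The first step is to unwind the definition of the dual Selmer group. By Poitou--Tate global duality, $H^1_{\FF_\epsilon^*}(F,\TT_\cyc(E)^*)^\vee$ fits into the exact sequence expressing it as the cokernel of the localization map from the canonical (relaxed) Selmer group into $H^1(F_p,\TT_\cyc(E))/H^1_{\FF_\epsilon}(F_p,\TT_\cyc(E))$. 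Concretely, using the comparison $H^1_{\FFc^*}(F,\TT_\cyc(E)^*)^\vee\cong\frak{X}^\rho_\cyc$ from Remark~\ref{rem:compareXwithstrictSelmer} (or rather its strict analogue $H^1_{\FF_{\textup{str}}^*}(F,\TT_\cyc(E)^*)^\vee$), together with the Artin map $\frak{a}$ and the fact that $H^1_{\FF_\epsilon}(F_p,\TT_\cyc(E))=\mathbb{V}^\epsilon_{E,\cyc}$, one gets $\mathcal{Y}_{\epsilon,\cyc}\cong H^1_{\FF_{\textup{str}}^*}(F,\TT_\cyc(E)^*)^\vee/\frak{a}(\mathbb{V}_{E,\cyc}^\epsilon)$.

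**Next steps.** The second step is to invoke Proposition~\ref{prop:pmstructureandselmer}(iv) directly: it already states $\textup{Sel}_p^\epsilon(E/F^{\textup{cyc}})^\vee\cong H^1_{\FF_{\textup{str}}^*}(F,\TT_\cyc(E)^*)^\vee/\frak{a}(\mathbb{V}_{E,\cyc}^\epsilon)$. Combining this with the computation of $\mathcal{Y}_{\epsilon,\cyc}$ from the first step finishes the proof. The only thing requiring care is to check that the local condition $\mathbb{V}^\epsilon_{E,\cyc}$ appearing in Definition~\ref{def:pminfinity} (coming via $\mathbf{V}^\pm_i$ and the choice $\mathbb{V}_E=\mathbb{V}_E^\epsilon$, permissible by Remark~\ref{rem:localconditionsnatural} and Theorem~\ref{thm:transversepm}) is exactly the orthogonal complement of $E^\epsilon(F^{\textup{cyc}}_{\frak{p}_i})\otimes\QQ_p/\ZZ_p$ under the Kummer pairing that Pollack--Rubin use to define $V_i^\pm$ --- but this is precisely how $V_i^\pm$ was defined in the excerpt preceding Proposition~\ref{prop:pmstructureandselmer}, and $\mathbb{V}^\epsilon_{E,\cyc}=\oplus_i V_i^\epsilon$ is its image in $H^1(F_p,\TT_\cyc(E))$ via the identifications of Remark~\ref{rem:compareXwithstrictSelmer}. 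So the local conditions match on the nose.

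**Expected obstacle.** The main (though still essentially bookkeeping) obstacle is keeping the various identifications of Remark~\ref{rem:compareXwithstrictSelmer} consistent: one must track the twist by $\langle\rho\rangle$ relating $\TT$ and $\TT(E)$, the identification $\frak{U}^\rho_{i,\cyc}\cong H^1(F_{\frak{p}_i},\TT_\cyc(E))$, and the compatibility of the Kummer pairing with Poitou--Tate duality, so that "orthogonal complement under the Kummer pairing'' really translates to "the local condition defining the dual Selmer structure $\FF_\epsilon^*$.'' Once these are pinned down the statement is immediate; there is no genuinely new input beyond Proposition~\ref{prop:pmstructureandselmer} and the comparison isomorphisms already recorded. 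Concretely I would write: apply Poitou--Tate duality to identify $\mathcal{Y}_{\epsilon,\cyc}$ with $H^1_{\FF_{\textup{str}}^*}(F,\TT_\cyc(E)^*)^\vee/\frak{a}(\mathbb{V}_{E,\cyc}^\epsilon)$, then quote Proposition~\ref{prop:pmstructureandselmer}(iv), and conclude.
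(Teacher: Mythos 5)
Your proof is correct and takes essentially the same route as the paper: Poitou--Tate global duality for the pair of Selmer structures $\FF_{\textup{str}}\leq\FF_\epsilon$ on $\TT_\cyc(E)$, identifying $\mathcal{Y}_{\epsilon,\cyc}$ with the cokernel of $\frak{a}:\mathbb{V}^\epsilon_{E,\cyc}\to H^1_{\FF_{\textup{str}}^*}(F,\TT_\cyc(E)^*)^\vee$, followed by a direct citation of Proposition~\ref{prop:pmstructureandselmer}(iv).

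Two small remarks. First, your opening sentence compares $\FF_\epsilon$ against the \emph{relaxed} Selmer structure, and your citation of Remark~\ref{rem:compareXwithstrictSelmer} for $H^1_{\FFc^*}$ is a slip: that remark identifies $H^1_{\FF_{\textup{str}}^*}(F,\TT_\cyc(E)^*)^\vee$ with $\frak{X}^\rho_\cyc$, not the relaxed group (with the relaxed structure the term $H^1_{\FFc^*}(F,\TT_\cyc(E)^*)^\vee$ sits on the far right of the long exact sequence and does not drop out). You catch this yourself with the ``or rather its strict analogue'' parenthetical, and the argument you actually execute uses $\FF_{\textup{str}}$, which is correct. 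Second, the paper arrives at the same presentation by first proving the short exact sequence over $\LL$ (which requires the vanishing $H^1_{\FF_\epsilon}(F,\TT(E))=0$ from Proposition~\ref{prop:minusvanishingforunits}, the weak Leopoldt Theorem~\ref{thm:theweakleopoldtconjforE}, and a control theorem) and then descending to $\LL_\cyc$; the extra work there is to establish injectivity of $\frak{a}$ on the left, which is not needed for this Lemma but is used later (sequence~(\ref{eqn:resolutionofY}) in the proof of Theorem~\ref{thm:bsd}). Your direct argument at the $\LL_\cyc$-level is actually a little leaner for the statement at hand, since identifying $\mathcal{Y}_{\epsilon,\cyc}$ as the cokernel of $\frak{a}$ requires no such vanishing.
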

\begin{proof}

The Poitou-Tate global duality sequence
 $$0\lra H^1_{\FF_{\textup{str}}}(F,\TT(E))\lra H^1_{\FF_\epsilon}(F,\TT(E))\stackrel{\textup{loc}_p}{\lra} \mathbb{V}_E^{\epsilon}\stackrel{\frak{a}}{\lra} H^1_{\FF_{\textup{str}}^*}(F,\TT(E)^*)^\vee\lra \mathcal{Y}_{\epsilon,\infty}\lra 0$$
 reduces to the sequence
 $$0\lra \mathbb{V}_E^{\epsilon}\lra H^1_{\FF_{\textup{str}}^*}(F,\TT(E)^*)^\vee\lra \mathcal{Y}_{\epsilon,\infty}\lra 0$$
 thanks to Proposition~\ref{prop:minusvanishingforunits} and Theorem~\ref{thm:theweakleopoldtconjforE}. Applying the functor $-\otimes_\LL \LL_{\textup{cyc}}$ and using the control theorem \cite[Lemma 3.5.3]{mr02}, we obtain the exact sequence
 \be\label{eqn:resolutionofY}0\lra \mathbb{V}_{E,\cyc}^{\epsilon}\stackrel{\frak{a}}{\lra} H^1_{\FF_{\textup{str}}^*}(F,\TT_\cyc(E)^*)^\vee\lra \mathcal{Y}_{\epsilon,\cyc}\lra 0\,.\ee
 (where the exactness on the left follows from rank considerations and the fact that $\mathbb{V}_{E,\cyc}^{\epsilon}$ is free of rank $g$) This shows using Proposition~\ref{prop:pmstructureandselmer}(iv) that
$$\mathcal{Y}_{\epsilon,\cyc}\cong  H^1_{\FF_{\textup{str}}^*}(F,\TT_\cyc(E)^*)^\vee/\frak{a}(\mathbb{V}_{E,\cyc}^{\epsilon})\cong\textup{Sel}^{\pm}(E/F^{\textup{cyc}})^\vee.$$
 \end{proof}
Fix a generator $L_p^{\pm,\textup{alg}} \in \LL_\cyc$ of the ideal $\textup{char}\left(\textup{Sel}^\pm(E/F^{\textup{cyc}})^\vee\right)$. We have the following result towards Conjecture~\ref{conj:parkshahabi}. 
\begin{thm}
\label{thm:improvedmainconj} We have $L_p^{\pm,\textup{alg}}\mid L_p^\pm(E/F^+)$
(inside the ring $\LL_{\textup{cyc}}\otimes\QQ_p$). This divisibility is in fact an equality if we assume the Strong Rubin-Stark Conjecture for $\frak{E}$.
\end{thm}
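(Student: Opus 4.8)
### Plan of proof

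The strategy is to transfer the two–variable divisibility of Theorem~\ref{thm:mainconjdivisibility} (equivalently Theorem~\ref{thm:mainconjforTchi}) down to the cyclotomic line via the twisting isomorphism $\textup{tw}$, and then identify all the objects that appear with their $E$–theoretic counterparts using the dictionary established in Remark~\ref{rem:compareXwithstrictSelmer} and the Pollack–Rubin structural results of Proposition~\ref{prop:pmstructureandselmer}. Concretely, I would run the argument of Theorem~\ref{thm:gras1} (the ``$\locu$ / global duality'' bookkeeping that produced~\eqref{eqn:usefulconversionR}) one more time, but now with the quadruple $(\FF_{\textup{Kob}},\FF_{\mathbb{L}},\mathbb{L}_\cyc,\TT_\cyc(E))$, with the local condition $\mathbb{V}_E=\mathbb{V}_E^\epsilon$ afforded by Remark~\ref{rem:localconditionsnatural} and the Explicit Reciprocity Conjecture~\ref{conj:reciprocity} (which through Theorem~\ref{thm:transversepm} guarantees $\mathcal{M}_\cyc\cap\mathbb{V}^\epsilon_{E,\cyc}=0$, so that $\textup{loc}_p^\epsilon$ is injective as recorded in Definition~\ref{def:twistedRS}(i)). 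The output of that bookkeeping, combined with Proposition~\ref{prop:4termexact} and Theorem~\ref{thm:mainapplicationofrestrictedKS}(ii) applied to the Rubin–Stark Kolyvagin system $\pmb{\kappa}^\cyc(\textup{R-S})$ whose initial term is non-zero (Proposition~\ref{prop:KSwithnonzeroinitialterm} via Remark~\ref{rem:ESKStwist} and the formula~\eqref{eqn:RStwistedKSinfty}), will be exactly
$$\textup{char}\left(H^1_{\FF_\epsilon^*}(F,\TT_\cyc(E)^*)^\vee\right)\ \Big|\ \textup{char}\left(\wedge^g\frak{Q}_{\epsilon,\cyc}/\LL_\cyc\cdot\bar{\frak{u}}_{\textup{R-S}}\right).$$

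Next I would translate both sides. For the left side: by Remark~\ref{rem:comparepmwithnull} the Selmer structure $\FF_\epsilon$ coincides with $\FF_{\textup{Kob}}$ with this choice of local condition, and by Lemma~\ref{lemma:comparepmselmergroups} we have $H^1_{\FF_\epsilon^*}(F,\TT_\cyc(E)^*)^\vee=\mathcal{Y}_{\epsilon,\cyc}\cong\textup{Sel}^\pm(E/F^{\textup{cyc}})^\vee$, so $\textup{char}$ of the left side is $L_p^{\pm,\textup{alg}}\LL_\cyc$. For the right side: by Theorem~\ref{thm:localmainconj} any generator of $\textup{char}\left(\wedge^g\frak{Q}_{\epsilon,\cyc}/\LL_\cyc\cdot\bar{\frak{u}}_{\textup{R-S}}\right)$ generates $(\LL_\cyc\otimes\QQ_p)\cdot L_p^\epsilon(E/F^+)$. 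Putting these two identifications into the divisibility above yields $L_p^{\pm,\textup{alg}}\mid L_p^\pm(E/F^+)$ in $\LL_\cyc\otimes\QQ_p$, which is the first assertion.

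For the equality under the Strong Rubin–Stark Conjecture, the mechanism is the one used at the end of Theorem~\ref{thm:gras1}: one upgrades the divisibility of Theorem~\ref{thm:mainapplicationofrestrictedKS}(ii) to an equality by showing the relevant Kolyvagin system is \emph{primitive}. Here I would invoke Proposition~\ref{prop:usefulforequality}: granting the strong conjecture, Theorem~\ref{thm:gras1}(iii) gives the two–variable equality $\textup{char}(H^1_{\FF_{\mathbb{L}}^*}(F,\TT^*)^\vee)=\textup{char}(H^1_{\FF_{\mathbb{L}}}(F,\TT)/\LL\cdot\kappa_1)$ for the Rubin–Stark Kolyvagin system (after choosing $\mathbb{L}=\mathbb{V}^\epsilon$-complementary data), and then Proposition~\ref{prop:usefulforequality}(ii)--(iii) tells us that the twisted system $\pi_\cyc(\widetilde{\pmb{\kappa}})\in\overline{\mathbf{KS}}(\TT_\cyc(E),\FF_{\mathbb{L}},\PP)$ is primitive and that the corresponding cyclotomic characteristic–ideal divisibility is an equality. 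Feeding that equality through the same translation as above converts $\mid$ into $=$. The main obstacle I anticipate is purely organizational rather than conceptual: one must check carefully that the auxiliary local splittings chosen along the way (the module $\mathbb{L}$ at the two–variable level, its image $\mathbb{L}_\cyc$, the isomorphism $\Psi$ of Theorem~\ref{thm:ellrestrictedESmain} adapted to $\TT(E)$ as in the proof of Theorem~\ref{thm:theweakleopoldtconjforE}, and the quotient $\frak{Q}_{\epsilon,\cyc}$) are mutually compatible, so that the single element $\bar{\frak{u}}_{\textup{R-S}}$ simultaneously controls the Kobayashi Selmer group \emph{and} matches the Park–Shahabi $L$-function; this is exactly the bookkeeping that Theorem~\ref{thm:localmainconj} and Theorem~\ref{thm:theweakleopoldtconjforE} were set up to make routine, so no genuinely new idea should be required beyond assembling them in the right order.
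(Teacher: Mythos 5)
Your proposal is correct and follows essentially the same route as the paper: in both cases the first divisibility is obtained by re-running the $\textup{loc}_{/V}$/Poitou--Tate bookkeeping of the Gras-type argument with the quadruple built from $\FF_{\textup{Kob}}=\FF_\epsilon$ and $\FF_{\mathbb{L}}$ for $\TT_\cyc(E)$, then translating the two sides via Lemma~\ref{lemma:comparepmselmergroups} and Theorem~\ref{thm:localmainconj}; and in both cases the upgrade to an equality under the Strong Rubin--Stark Conjecture comes down to establishing primitivity of the Rubin--Stark Kolyvagin system via Proposition~\ref{prop:usefulforequality}. Your sequencing for the equality step (first obtain the $\FF_{\mathbb{L}}$-equality for $\TT$ from the chain proving Theorem~\ref{thm:gras1}(iii), then apply Proposition~\ref{prop:usefulforequality}(ii)--(iii) to get primitivity of $\pi_\cyc(\widetilde{\pmb{\kappa}})$ and the cyclotomic equality) actually adheres more closely to the proposition as stated than the paper's own phrasing (which first derives the $\TT(E)$-equality from ``Theorem~\ref{thm:mainconjdivisibility} after twisting'' before invoking the proposition), but these amount to the same argument since Proposition~\ref{prop:usefulforequality}(i) is exactly the twisting step.
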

\begin{proof}
The proof of Theorem~\ref{thm:mainconjdivisibility} applied with the Kolyvagin system 
$$\pmb{\kappa}^{\cyc}(\textup{R-S})\in\overline{\mathbf{KS}}(\TT_\cyc(E),\FF_{\mathbb{L}},\PP)$$
in place of the Rubin-Stark $\al$-restricted Kolyvagin system for $\TT$ and the Selmer structure $\FF_\epsilon$ in place of $\FF_{\frak{tr}}$ shows that 
$$\textup{char}\left(H^1_{\FF^*_{\epsilon}}(F,\TT(E)^*)^\vee\right)\mid \textup{char} \left(\wedge^g \frak{Q}_{\epsilon,\textup{cyc}}/\LL_{\textup{cyc}}\cdot\bar{\frak{u}}_{\textup{R-S}} \right)\,.$$
The first part of the Theorem now follows from Theorem~\ref{thm:localmainconj} and Lemma~\ref{lemma:comparepmselmergroups}.

If the Strong Rubin-Stark conjecture holds true, Theorem~\ref{thm:mainconjdivisibility} (after twisting) shows that 
$$\textup{char}\left(H^1_{\FF^*_{\epsilon}}(F,\TT(E)^*)^\vee\right)=\textup{char} \left(\wedge^g \frak{Q}_{\epsilon,\infty}/\LL\cdot{\frak{u}}_{\textup{R-S}} \right)\,.$$
This, however, means using Proposition~\ref{prop:4termexact} and Theorem~\ref{thm:localmainconj} that
$$\textup{char}\left(H^1_{\FF^*_{\mathbb{L}^*}}(F,\TT(E)^*)^\vee\right)=\textup{char} \left(H^1_{\FF_{\mathbb{L}}}(F,\TT(E))/\LL\cdot\kappa_1(\textup{R-S}) \right)$$
and by Proposition~\ref{prop:usefulforequality} that the Kolyvagin systems  $\pmb{\kappa}(\textup{R-S})$ and its image $\pmb{\kappa}^\cyc(\textup{R-S})$ are both primitive. This shows that 
$$\textup{char}\left(H^1_{\FF^*_{\mathbb{L}^*}}(F,\TT_\cyc(E)^*)^\vee\right)=\textup{char} \left(H^1_{\FF_{\mathbb{L}}}(F,\TT_\cyc(E))/\LL_\cyc\cdot\kappa_1^\cyc(\textup{R-S})\right)$$
and once again applying Proposition~\ref{prop:4termexact}, we conclude that
$$\textup{char}\left(H^1_{\FF^*_{\epsilon}}(F,\TT_\cyc(E)^*)^\vee\right)= \textup{char} \left(\wedge^g \frak{Q}_{\epsilon,\cyc}/\LL_{\textup{cyc}}\cdot{\bar{\frak{u}}}_{\textup{R-S}} \right)\,.$$
The second assertion follows as well.


\end{proof}
Assuming the validity of the Strong Rubin-Stark Conjecture for $\frak{E}$, we may therefore write
$$L_p^{\pm,\textup{alg}}= u\,\pi^{\varepsilon} L_p^\pm(E/F^+)$$
where $\pi\in \frak{O}$ is a uniformizer, $\varepsilon \in \ZZ$ and  $u\in \LL_\cyc$ is a unit. 
\subsection{Applications of the supersingular main conjecture}
The assumptions of the previous subsection are in effect until the end. We have the following consequence of Theorem~\ref{thm:improvedmainconj} to the Birch and Swinnerton-Dyer conjecture for $E/F^+$, generalizing parts of \cite[Theorem 11.4]{rubinmainconj} (which applies in the case $F^+=\QQ$).
\begin{thm}
\label{thm:bsd}
\begin{enumerate} 
\item If $L(E/F^+,1)\neq 0$ then $E(F^+)$ is finite. 
\item Assuming the validity of the Strong Rubin-Stark conjecture and that $L(E/F^+,1)=0$, the classical Selmer group $\textup{Sel}_p(E/F^+)$ is infinite.
\end{enumerate}
\end{thm}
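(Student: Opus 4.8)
The plan is to deduce both statements from the cyclotomic main conjecture established in Theorem~\ref{thm:improvedmainconj}, together with the structural description of the plus/minus Selmer groups and the interpolation property of the Park--Shahabi $p$-adic $L$-functions (Theorem~\ref{thm:parkshahabi}). For part (1), I would argue as follows. Suppose $L(E/F^+,1)\neq 0$. By the interpolation formula of Theorem~\ref{thm:parkshahabi} at the trivial character, we have $\mathbf{1}\left(L_p^\pm(E/F^+)\right)=u_i\cdot L(E,1)/\Omega_E(F^+)$ with $u_i\in\overline{\QQ}^\times$; hence both signed $p$-adic $L$-functions are nonzero at the trivial character, so in particular $L_p^\epsilon(E/F^+)$ is a nonzero element of $\LL_\cyc\otimes\QQ_p$ which is not divisible by the augmentation ideal. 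By Theorem~\ref{thm:improvedmainconj} (the divisibility direction, which requires only the Explicit Reciprocity Conjecture and not its strong form) the characteristic ideal of $\textup{Sel}^\epsilon(E/F^\cyc)^\vee$ divides $L_p^\epsilon(E/F^+)\LL_\cyc\otimes\QQ_p$; therefore $L_p^{\epsilon,\textup{alg}}$ is nonzero and prime to the augmentation ideal $(\gamma-1)$. A standard control-theorem argument (as in the proof of Lemma~\ref{lemma:comparepmselmergroups}, specialized to the trivial character, or directly from Proposition~\ref{prop:pmstructureandselmer} and the Poitou--Tate sequence~(\ref{eqn:resolutionofY})) then shows that $\textup{Sel}^\epsilon(E/F^\cyc)^\vee/(\gamma-1)$ is finite, hence $\textup{Sel}^\epsilon_p(E/F^+)$ is finite, hence $\textup{Sel}_p(E/F^+)$ is finite (the signed Selmer group at the ground level contains the image of $E(F^+)\otimes\QQ_p/\ZZ_p$ since the local conditions at $p$ agree with the image of local points at the bottom layer, by Proposition~\ref{prop:dngenerators}(ii)), and therefore $E(F^+)$ has finite rank; combined with the finiteness of torsion this gives $E(F^+)$ finite.

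For part (2), assume the Strong Rubin--Stark Conjecture and $L(E/F^+,1)=0$. Then by Theorem~\ref{thm:parkshahabi} we have $\mathbf{1}\left(L_p^\pm(E/F^+)\right)=0$, so $L_p^\pm(E/F^+)$ lies in the augmentation ideal of $\LL_\cyc\otimes\QQ_p$, i.e.\ $(\gamma-1)\mid L_p^\pm(E/F^+)$. Under the strong hypothesis, Theorem~\ref{thm:improvedmainconj} upgrades to an equality $L_p^{\epsilon,\textup{alg}}=u\,\pi^\varepsilon L_p^\epsilon(E/F^+)$ (up to a unit and a power of the uniformizer), so $(\gamma-1)$ divides $L_p^{\epsilon,\textup{alg}}=\textup{char}\left(\textup{Sel}^\epsilon(E/F^\cyc)^\vee\right)$. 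By the structure theory of finitely generated $\LL_\cyc$-modules, a torsion module whose characteristic ideal is divisible by the height-one prime $(\gamma-1)$ has infinite $(\gamma-1)$-torsion (equivalently, the $\gamma=1$ specialization cannot be finite unless there is a pseudo-null contribution). The exact sequence~(\ref{eqn:resolutionofY}) specialized along $\gamma-1$ then forces $\textup{Sel}^\epsilon_p(E/F^+)$ to be infinite, and a fortiori $\textup{Sel}_p(E/F^+)$ is infinite.

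The one genuinely delicate point is the descent from $F^\cyc$ to $F^+$, namely controlling pseudo-null submodules of $\textup{Sel}^\epsilon(E/F^\cyc)^\vee$ and of $\frak{X}_\cyc^\rho$ so that divisibility of the characteristic ideal by $(\gamma-1)$ really translates into infinitude of the Selmer group at the bottom. Here I would invoke the result of \cite{ng} cited in the introduction on the absence of pseudo-null submodules of the relevant natural Iwasawa module (this is exactly the ``key ingredient'' flagged in the overview), which guarantees that the specialization at $\gamma=1$ of the resolution~(\ref{eqn:resolutionofY}) behaves well and that no spurious finite cokernel appears. With that input in hand, parts (1) and (2) follow from the main conjecture by the elementary $\LL_\cyc$-module arguments sketched above; I expect the pseudo-nullity input to be the main obstacle, with everything else being a formal consequence of Theorems~\ref{thm:parkshahabi} and~\ref{thm:improvedmainconj} together with the control sequence~(\ref{eqn:resolutionofY}).
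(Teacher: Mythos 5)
Your proposal takes essentially the same route as the paper: combine the interpolation formulae of Theorem~\ref{thm:parkshahabi} at the trivial character with the divisibility/equality of Theorem~\ref{thm:improvedmainconj}, use Kobayashi's control theorem (via the sequence~(\ref{eqn:resolutionofY}) and surjectivity at the bottom layer of the $\pm$ local points onto $E(F_{\wp_i})$), invoke the absence of finite submodules of $\textup{Sel}^\pm_p(E/F^{\cyc})^\vee$ from \cite{ng} together with \cite{pollackrubin} and \cite{rubinmainconj}, and finish with the CM isomorphism $\textup{Sel}_p(E/F^+)^\vee\otimes_{\ZZ_p}\ooo\cong\textup{Sel}_p(E/F)^\vee$. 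The only loose formulation is in part~(2): the implication ``$(\gamma-1)\mid\textup{char}(M)\Rightarrow M/(\gamma-1)$ infinite'' holds unconditionally for a finitely generated torsion $\LL_\cyc$-module (localizing at $(\gamma-1)$ makes this immediate), so the \cite{ng} input is not strictly required for infinitude -- it is needed rather for the quantitative statement (finiteness in part~(1) with a precise order formula), exactly the role it plays in step~(d) of the paper's proof.
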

\begin{rem}
Assuming the Strong Rubin-Stark conjecture and in case $L(E/F^+,1)\neq 0$, one may in fact express the cardinality of $\textup{III}(E/F)[p^\infty]$ in terms of $\varepsilon$, $u_1$ and the $L$-value. Since this lacks the desired level of precision, we do not include this statement as part of Theorem~\ref{thm:bsd}.
\end{rem}
\begin{proof}
The proof of this Theorem is essentially identical to the proof of \cite[Theorem 8.2]{pollackrubin}. Besides Theorem~\ref{thm:improvedmainconj}, the key points are as follows:
\begin{itemize}
\item[(a)] The perfect control theorem for the Selmer group $H^1_{\FF^*_{\textup{str}}}(F,T_p(E)\otimes\LL_{\textup{cyc}})^{\vee}$, which asserts that
$$H^1_{\FF^*_{\textup{str}}}(F,T_p(E)\otimes\LL_{\textup{cyc}})^{\vee}\otimes_{\LL_\cyc}{\ooo}\stackrel{\sim}{\lra}H^1_{\FF^*_{\textup{str}}}(F,T_p(E))^{\vee},$$
holds true thanks to \cite[Lemma 3.5.3]{mr02} (or \cite[Proposition 8.10.1]{nekovar06}).
\item[(b)] For every $n$ and $1\leq i \leq g$, the maps
$$E(F_{\wp_i})\otimes \Phi/\ooo \lra H^0(\Gamma_{\textup{cyc}},E^\pm(F_{n,\frak{p}_i})\otimes \Phi/\ooo)$$
are surjective. This assertion is proved as part of \cite[Lemma 8.3]{pollackrubin}.
\item[(c)] Using (a) and (b) above, one may deduce Kobayashi's control Theorem:
$$\textup{Sel}_p^\pm(E/F^{\textup{cyc}})^\vee\otimes_{\LL_\cyc} \ooo \stackrel{\sim}{\lra} \textup{Sel}_p(E/F)^\vee.$$
\item[(d)] The exact sequence (\ref{eqn:resolutionofY}), \cite[Lemma 6.5]{pollackrubin} and the proof of \cite[Theorem 11.16]{rubinmainconj} (applied with \cite[Theorem 3.1]{ng}) shows that $\textup{Sel}_p^\pm(E/F^{\textup{cyc}})^\vee$ has no finite-submodules. This together with (c) implies
$$|\left(\textup{Sel}_p^\pm(E/F^{\textup{cyc}})^\vee\right)\otimes_{\LL_\cyc} \ooo|=|\textup{Sel}_p(E/F)^\vee|\,.$$
\end{itemize}
The proof now follows from the interpolation property of the signed $p$-adic $L$-function $L_p^\pm(E/F^+)$ (considered at the identity character on $\Gamma_\cyc$) together with the isomorphism $\textup{Sel}_p(E/F^+)^\vee\otimes_{\ZZ_p}\ooo\stackrel{\sim}{\lra}\textup{Sel}_p(E/F)^\vee$
induced by the theory of complex multiplication.
\end{proof}
\begin{rem}
The analogous statements to Theorem~\ref{thm:bsd} may be proved in the ordinary case using the ordinary CM main conjectures, c.f. \cite{hsiehCMmainconj, kbbCMabvar}.
\end{rem}






{
\bibliographystyle{halpha}
\bibliography{references}
}
\end{document}